\newtheorem{lemm}{Lemma}[section]
\newtheorem{theorem}{Theorem}[section]
\newtheorem{lemma}{Lemma}[section]
\newtheorem{cor}{Corollary}[section]
\newtheorem{propos}[lemm]{Proposition}
\newenvironment{defi}{\medskip\noindent{\sc
Definition}. }{\goodbreak\medskip}
\newenvironment{nota}{\medskip\noindent{\sc
Notation}.}{\goodbreak\medskip}
\newenvironment{remk}{\noindent{\sc
Remark}. }{\goodbreak\vskip10pt}
\newenvironment{notas}{\medskip\noindent{\sc
Notations}. }{\goodbreak\medskip}
\newenvironment{exa}{\noindent{\sc
Example}. }{\goodbreak\vskip10pt}
\newenvironment{ques}{\noindent{\sc
Question}. }{\goodbreak\vskip10pt}
\def\P{{\mathcal P}}
\def\cb{{\mathcal B}}
\def\cs{{\mathcal S}}
\def\cC{{\mathcal C}}
\def\ci{{\mathcal I}}
\def\cii{{\overline{\mathcal I}}}
\def\cg{{\mathcal G}}
\def\pg{\mathcal{PG}}
\def\cm{{\mathcal M}}
\def\cu{{\mathcal U}}
\def\wt{\widetilde\Theta}
\def\R{\mathbb{R}}
\def\A{\mathbb{A}}
\def\Z{\mathbb{Z}}
\def\N{\mathbb{N}}
\def\T{\mathbb{T}}
\def\Q{\mathbb{Q}}
\def\x{\xi}
\def\smallskip{\par\vspace{1mm}}
\def\medskip{\par\vspace{2mm}}
\def\bigskip{\par\vspace{3mm}}
\def\thenumber{0}
\def\eq#1{\global\advance\equationcount by 1
   \def\thenumber{\number\equationcount}
                        {$$#1\eqno(\thenumber)$$}}
\tikzset{
xmin/.store in=\xmin, xmin/.default=-1.5, xmin=-1.5,
xmax/.store in=\xmax, xmax/.default=7.5, xmax=7.55,
ymin/.store in=\ymin, ymin/.default=-0.75, ymin=-0.75,
ymax/.store in=\ymax, ymax/.default=3.25, ymax=3.25,
}
\begin{document}

\title[T-weak K.A.M.]{ {Weak K.A.M. solutions and minimizing orbits of twist maps.}}

\author{Marie-Claude Arnaud$^{\dag,\ddag}$, Maxime Zavidovique$^{*,**}$}

\email{marie-claude.arnaud@math.univ-paris-diderot.fr \\ maxime.zavidovique@upmc.fr}

\date{}

\keywords{ Weak K.A.M. Theory, Aubry-Mather theory,
generating  functions, integrability.}

\subjclass[2010]{37E40, 37J50, 37J30, 37J35}

\thanks{$\dag$ Universit\' e de Paris Cit\' e, Sorbonne Universit\' e, CNRS, Institut de Math\' ematiques de Jussieu-Paris Rive Gauche,
F-75013 Paris, France } 
\thanks{$\ddag$ member of the {\sl Institut universitaire de France.}}
\thanks{ $*$ Sorbonne Universit\' e, Universit\' e de Paris Cit\' e, CNRS, Institut de Math\' ematiques de Jussieu-Paris Rive Gauche,
F-75005 Paris, France}
\thanks{ $**$ financ\' e par une bourse PEPS du CNRS}

\begin{abstract} 
 For exact symplectic twist maps of the annulus, we etablish a choice of weak K.A.M. solutions $u_c=u(\cdot, c)$  that depend in a Lipschitz-continuous way on the cohomology class $c$. This allows us to make a bridge between weak K.A.M. theory of Fathi, Aubry-Mather theory for semi-orbits as developped by Bangert and existence of backward invariant pseudo-foliations as seen by Katnelson \& Ornstein. We deduce a very precise description of the pseudographs of the weak K.A.M. solutions and many interesting results as
\begin{itemize}
\item  the Aubry-Mather sets are contained in pseudographs that are vertically ordered by their rotation numbers;
\item on every image of a vertical of the annulus, there is at most two points whose negative orbit is minimizing with a given rotation number;
\item all the corresponding pseudographs are filled by minimizing semi-orbits and we provide a description of a smaller selection of full pseudographs whose union contains all the minimizing orbits; 
\item there exists an exact symplectic twist map that has a minimizing negative semi-orbit that is not contained in the pseudograph of a weak K.A.M. solution.
\end{itemize}

\end{abstract}


\maketitle
\section{Introduction and Main Results.}\label{SecIntro}
In the 80s, Aubry  and Mather elaborated a deep theory describing the dynamics of an exact symplectic twist diffeomorphism (ESTwD) of the 2-dimensional annulus restricted to the union of its minimizing orbits \cite{Aubry,Mat1}. Twenty five years later, Katznelson and Ornstein introduced a notion of pseudograph that allowed them to reprove in a geometric way some part of Aubry-Mather theory as well as a theorem of Birkhoff, \cite{KatOrn} . 

Meanwhile, Fathi made a striking connection between Aubry-Mather theory for Hamiltonian dynamical systems and the PDE approach of Hamilton-Jacobi equation. His weak K.A.M. solutions also define pseudographs. But it seems that an in depth study of weak K.A.M. solutions in the context of exact symplectic twist diffeomorphism has little been done.

Here, we fill that gap and give a precise description of weak K.A.M. solutions for  an ESTwD. Also, we revisit a theory for minimizing semi-orbits, developed by Bangert \cite{Ban2}, in the spirit of Aubry-Mather results on minimizing full orbits. Our approach  is based on a method of Lipschitz selection of weak K.A.M. solutions that we elaborate. Among other results, we prove that 
\begin{itemize}
\item our selection of full pseudographs of weak K.A.M. solutions is a vertically ordered filling of the whole annulus; all the corresponding pseudographs are filled by minimizing semi-orbits and we provide a description of a smaller selection of full pseudographs whose union contains all the minimizing orbits; 
\item every minimizing semi-orbit has a rotation number\footnote{This is already proved in \cite{Ban2}.}. These semi-orbits are  vertically arranged by their rotation numbers;

\item for a fixed rotation number, every twisted vertical\footnote{ This refer tho the forward image of a vertical.} contains at most two minimizing semi-orbit having this rotation number and every vertical contains at least one minimizing semi-orbit having this rotation number; 
\item ultimately, we provide a detailed description of the pseudographs of the weak K.A.M. solutions, especially in case of a rational rotation number,  see Proposition \ref{ordrerat}.
\end{itemize}

\subsection{Main results}
In this article we study weak K.A.M. solutions and infinite minimizing orbits of Exact Symplectic Twist Diffeomorphisms (ESTwDs in short). Along the way, we recover classical results of Aubry,  Mather and Bangert with a more weak K.A.M. approach. The aim of this paper is to be as much self-contained as can be, only the most basic results of Aubry-Mather theory for twist maps are used.

We recall briefly that\footnote{ Precise definitions will be given later.} 
\begin{itemize}
\item  there is natural variational setting for the ESTwDs: a {\em generating function} can be associated to   a ESTwD as well as an action and minimizing orbits are minimizers of this action.
\item a 1-parameter family $(T^c)_{c\in\R}$ of variational operators is defined on the set $C^0(\T, \R)$ of continuous functions on $\T$ whose fixed points are called  {\em weak K.A.M. solutions}; 
\item then, if $u$ is a fixed point of $T^c$, the associated {\em pseudograph}, which  is the partial graph $\cg(c+u')$ of $c+u'$, is backward invariant by the ESTwD; the corresponding parameter is called the cohomology class; the corresponding 
{\em full pseudograph} $\P\cg(c+u')$ is an essential curve\footnote{ This refers to a simple loop that is not isotopic to a point.}  that is the union of  $\cg(c+u')$ and some vertical segments.

 \end{itemize}
 
The following   result is reminiscent of Aubry-Mather theory for two-sided minimizing orbits (see \cite{Ban,Ban2}):  on every twisted vertical, there are at most two points with a fixed rotation number and whose negative orbit is minimizing. If $\theta\in \T$, we set $V_\theta = \{\theta\} \times \R$.  In all the article, $\Z_-$ will refer to the set of nonpositive integers. The beginning of the following already appears in Bangert \cite{Ban2}:


\begin{theorem}\label{Ttwistvertical}  Let $f$ be a $C^1$ ESTwD of $\T\times \R$. Then every negative minimizing orbit has a rotation number.
Let $\theta\in \T$ and $\rho_0 \in \R$, then
\begin{itemize}
\item if $\rho_0\notin \Q$, there exists at most one $(x,p)\in f(V_\theta)$ such that $\big(\pi_1\circ f^i(x,p)\big)_{i\in \Z_-}$ is minimizing with rotation number $\rho_0$;
\item if $\rho_0\in \Q$, there exists at most two $(x,p)\in f(V_\theta)$ such that $\big(\pi_1\circ f^i(x,p)\big)_{i\in \Z_-}$ is minimizing with rotation number $\rho_0$.
\end{itemize}
\end{theorem}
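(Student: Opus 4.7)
The proof splits naturally into two parts, corresponding to the two assertions of the theorem. For the existence of a rotation number for a negative minimizing orbit, I would follow Bangert's classical approach: lift the orbit to $(\tilde x_i)_{i\in\Z_-}\subset\R$ and compare it with translates of periodic Aubry--Mather minimizers of various rational rotation numbers $p/q$. The Aubry crossing lemma---two distinct minimizing configurations cross at most once---implies that if $(\tilde x_i)$ is bracketed between two such periodic minimizers of rotations $p/q$ and $p'/q'$ at time $0$, the bracketing persists, up to a bounded error, at every negative time. Shrinking the bracket $[p/q,p'/q']$ then forces $\tilde x_n/n$ to converge as $n\to-\infty$.

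For the vertical count, I would work in the universal cover, lifting $f$ to $\tilde f\colon\R^2\to\R^2$. Suppose $(x^{(1)},p^{(1)}),(x^{(2)},p^{(2)}) \in f(V_\theta)$ are two distinct points whose backward orbits are minimizing with rotation $\rho_0$. Lift the first coordinates so that $\tilde x^{(j)}_{-1}=\tilde\theta$. The twist property gives $\tilde x^{(1)}_0\neq\tilde x^{(2)}_0$; assume $\tilde x^{(1)}_0>\tilde x^{(2)}_0$. The Aubry crossing lemma---two minimizers agreeing at two consecutive indices coincide, and the sign of their difference changes at most once---then forces $\tilde x^{(1)}_i<\tilde x^{(2)}_i$ for every $i\leq -2$. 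Applying this pairwise to a hypothetical third orbit would produce a total order at $i=0$ which is strictly reversed for $i\leq -2$, together with the $o(|i|)$ growth constraint imposed by the common rotation number.

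The bound then comes from the structure of the Aubry--Mather set $\mathcal{M}_{\rho_0}$. In the irrational case, any negative minimizer of rotation $\rho_0$ is backward-asymptotic to the unique minimal set $\mathcal{M}_{\rho_0}$; unique ergodicity (or, for a KAM invariant curve, the graph property) forces any two such orbits on the same twisted vertical to have the same backward asymptote, which is incompatible with the strict ordering for $i\leq -2$ unless they coincide, yielding at most one. In the rational case $\rho_0=p/q$, the set $\mathcal{M}_{\rho_0}$ consists of $p/q$-periodic minimizers, and every negative minimizer of rotation $\rho_0$ is either periodic or a backward-heteroclinic to such a minimizer; each gap between consecutive periodic orbits carries (up to time shift) one heteroclinic along each of its two boundaries, hence the bound $2$. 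The delicate point I anticipate is the irrational case when $\mathcal{M}_{\rho_0}$ is a Cantor set: one must rule out several distinct backward-heteroclinics to gap boundaries hitting the same twisted vertical, for which the Lipschitz selection of weak K.A.M. solutions $u_c$ constructed earlier in the paper should provide the appropriate tool, each such orbit being forced to lie on a single pseudograph $\mathcal{PG}(c+u_c')$ associated with $\rho_0$.
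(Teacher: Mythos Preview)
Your approach to the rotation number is Bangert's classical one and would work, but it differs from the paper's: the paper instead uses the pseudograph covering of $\A$ (Theorem~\ref{Tpseudofol}) to place $(\tilde\theta_0,r_0)$ on some $\mathcal{PG}(c+u_c')$, and then squeezes $(\tilde\theta_k)$ between the backward orbits of the two endpoints of the vertical segment $\partial u_c(\tilde\theta_0)$, both of which have rotation number $\rho(c)$ by Lemma~\ref{Lrotnumber} (this is Proposition~\ref{Protnumber}).

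Your argument for the vertical count, however, has genuine gaps. In the irrational case you assert that two orbits agreeing at index $-1$ and strictly ordered for $i\le -2$ cannot share the same backward asymptote. This is true, but it is itself a nontrivial lemma (essentially \cite[Lemma~3.9]{Ban}, invoked in the paper's Proposition~\ref{noncrossing}): one crossing plus $\alpha$-asymptoticity contradicts minimality, and you have not supplied this step. More seriously, in the rational case your heteroclinic count within a single gap does not translate into a bound on a \emph{twisted} vertical $f(V_\theta)$: the image $f(V_\theta)$ need not lie in one gap, several gaps may contribute orbits, and ruling out three of them hitting the same $f(V_\theta)$ requires more than you have written.

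The paper's route is structurally different and bypasses these issues. It first shows (Theorem~\ref{caliball}, proved via Propositions~\ref{cross2}, \ref{noncrossing}, \ref{minrat} and Theorem~\ref{min-irr}) that every negative minimizing sequence of rotation number $\rho_0$ calibrates one of the extremal weak K.A.M. solutions $u_a$ or $u_b$, where $[a,b]=\rho^{-1}(\{\rho_0\})$, with $a=b$ in the irrational case. The crux is then Proposition~\ref{imageWK.A.M.}: for any weak K.A.M. solution $u_c$, the backward image $f^{-1}\big(\mathcal{PG}(c+u_c')\big)$ is the graph of a continuous function, hence meets each vertical $V_\theta$ in exactly one point. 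The bound of one (irrational) or two (rational) then falls out immediately (Theorem~\ref{bang}). Your final sentence correctly identifies the weak K.A.M. pseudographs as the right tool, but they are not a patch for a delicate subcase---they are the whole argument.
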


 Now we state the existence of a Lipschitz continuous choice of fixed point $u_c$ of $T^c$, that generates a continuous  and ordered choice of the associated pseudograph.
 \begin{theorem}\label{Tgenecont}
Let $f$ be a $C^1$ ESTwD of $\T\times \R$. Then there exists a  continuous map $u:\T\times \R\rightarrow \R$ such that
\begin{enumerate}
\item $u(0,c)=0$;
\item  the map $(\theta, c)\mapsto \frac{\partial u_c}{\partial \theta}(\theta)$ is continuous on its set of definition; 
\item each $u_c= u(\cdot ,c)$ is a weak K.A.M. solution for the cohomology class $c$, this implies that:
\begin{itemize}
\item each $u_c= u(\cdot ,c)$ is semi-concave (hence  derivable almost everywhere)\footnote{The definition of a semi-concave function is given in subsection \ref{ssweakk}.};
 \item each   partial graph $\mathcal{G}(c+u_c')$ of $c+\frac{\partial u_c}{\partial \theta}$ is backward invariant by $f$;
 \item {the negative orbit $(f^{-n}(\theta, r))_{n\geq 0}$ of every point $(\theta, r)\in \cg(c+u'_c)$ is minimizing;}
  \end{itemize}
 
  \item\label{croissant} {    for all $c\leqslant c'$,     we have  $c+u'_c(\theta) \leqslant c'+u'_{c'}(\theta)$ at all $\theta\in \T$ where both derivatives exist;}
\item\label{u-lip}{the function $u$ is locally Lipschitz continuous (and even $1$--Lipschitz with respect to $c$).}
\end{enumerate}
\end{theorem}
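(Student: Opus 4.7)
The plan is to construct $u_c$ as a limit of discounted Lax--Oleinik solutions, a technique designed to furnish regularity in $c$ from the outset and to accommodate the monotonicity of the associated pseudographs. For each discount parameter $\lambda \in (0,1)$ and cohomology class $c \in \R$, I would introduce the discounted operator on $C^0(\T,\R)$,
\[
T^c_\lambda u(\theta) \;=\; \inf_{\tilde\theta \in \R}\bigl\{(1-\lambda)\,u(\tilde\theta) + S(\tilde\theta,\theta) - c(\theta-\tilde\theta)\bigr\},
\]
where $S$ is a generating function of $f$; periodicity of $S$ makes this an operator on $C^0(\T,\R)$, and the factor $1-\lambda$ renders it a strict contraction in the sup norm, so it admits a unique fixed point $u^c_\lambda$.

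The next step is to collect estimates on $u^c_\lambda$ that are uniform as $\lambda\to 0^+$. Semi-concavity in $\theta$ with a modulus depending only on $S$ is inherited by iteration of $T^c_\lambda$, and combined with periodicity this yields equi-Lipschitz bounds in $\theta$. For the dependence on $c$, the elementary comparison $|T^c_\lambda v(\theta) - T^{c'}_\lambda v(\theta)| \le |c-c'|$ (since minimisers $\tilde\theta$ may be localised within distance one of $\theta$), together with the contracting character of $T^c_\lambda$, yields, after properly normalising the fixed points (for instance by subtracting $u^c_\lambda(0)$), a uniform Lipschitz dependence in $c$.

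The delicate and (I expect) most challenging step is the monotonicity property~(\ref{croissant}): that $c + u_c'$ is nondecreasing in $c$ wherever the derivatives exist. I would first prove this at the discounted level: a crossing of the pseudographs of $c + (u^c_\lambda)'$ and $c' + (u^{c'}_\lambda)'$ with $c\le c'$ would force a crossing of backward minimising configurations for $T^c_\lambda$ and $T^{c'}_\lambda$, contradicting the twist condition via a classical Aubry--Mather exchange argument. The vertical ordering of pseudographs then passes to the limit $\lambda\to 0^+$.

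Finally, I would let $\lambda\to 0^+$ using Arzel\`a--Ascoli together with a selection argument in the spirit of Davini--Fathi--Iturriaga--Zavidovique, so that $u^c_\lambda - u^c_\lambda(0)$ converges to a continuous function $u_c$ that is a weak K.A.M. solution normalised by $u_c(0)=0$. Conclusions~(1), (3) and~(\ref{u-lip}) then hold by construction; semi-concavity, backward invariance of $\cg(c+u_c')$, and minimality of negative orbits are standard consequences of the weak K.A.M. nature of $u_c$ for a twist map; and~(\ref{croissant}) survives the limit. Property~(2) follows last: at any $(\theta_0,c_0)$ where $\partial_\theta u_{c_0}(\theta_0)$ exists, upper semi-continuity of the super-differential (coming from the uniform semi-concavity) combined with the vertical ordering (\ref{croissant}) forces uniqueness of any cluster value of $\partial_\theta u_c(\theta)$ as $(\theta,c)\to(\theta_0,c_0)$, whence continuity on the domain of definition.
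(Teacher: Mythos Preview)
Your approach via discounted approximation is genuinely different from the paper's, which instead exploits the fact that at the endpoints $a_1,a_2$ of each plateau $\rho^{-1}(\{R\})=[a_1,a_2]$ the weak K.A.M. solution is \emph{unique} up to constants (Proposition~\ref{unicite}), and then interpolates by taking the convex combination $v_c=\lambda u_{a_1}+(1-\lambda)u_{a_2}$ on the Mather set $\cm(R)$ and extending it to the unique weak K.A.M. solution agreeing with $v_c$ there. Monotonicity~(\ref{croissant}) is obtained not by a crossing argument but by a lemma showing that if $\tilde\theta\mapsto(\tilde v_2-\tilde v_1)(\tilde\theta)+(c_2-c_1)\tilde\theta$ is nondecreasing then the same holds after applying the Lax--Oleinik operators, and Lipschitz in $c$ is then read off from this monotonicity.

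Your sketch has two concrete gaps. First, the claim that ``minimisers $\tilde\theta$ may be localised within distance one of $\theta$'' is false: for a twist map the optimal $\tilde\theta$ satisfies $\theta-\tilde\theta\approx\rho(c)$, which is unbounded in $c$, so the estimate $|T^c_\lambda v-T^{c'}_\lambda v|\le|c-c'|$ does not follow this way. Second, and more seriously, the paper's Appendix~\ref{sscomplint} gives an explicit $C^\infty$ integrable example in which the discounted limit $\cu_c$ (in the sense of \cite{DFIZ2,SuThi}) is \emph{not} continuous in $c$: the normalisation constants jump at rational plateaus. After subtracting $\cu_c(0)$ the resulting family does satisfy Theorem~\ref{Tgenecont} (this is the Corollary following Proposition~\ref{discount}), but the paper's proof of that fact relies on the very uniqueness and concavity structure developed in the main construction, not on contraction estimates for $T^c_\lambda$. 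In particular your proposed monotonicity argument at the discounted level via an ``Aubry--Mather exchange'' is not justified: Aubry's crossing lemma concerns minimisers of the undiscounted action, and discounted calibrated configurations need not be minimising, so the no-crossing property for $c+(u^c_\lambda)'$ versus $c'+(u^{c'}_\lambda)'$ requires a separate argument that you have not supplied.
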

 From Theorems \ref{Tgenecont} and \ref{Ttwistvertical}, we deduce that the negative orbits of  the  points of $\cg(c+u'_c)$ have a unique rotation number that we denote by $\rho(c)$.

{ Next Theorem explains that  the associated full pseudographs make a vertically ordered continuous filling of the whole annulus.}

\begin{theorem}\label{Tpseudofol}
 With the notations of Theorem \ref{Tgenecont}, we have
 \begin{enumerate}
 \item the map $c\mapsto  \mathcal{PG} (c+u'_c)$ is continuous for the Hausdorff topology; 
 \item $\displaystyle { \bigcup_{c\in \R} \mathcal{PG} (c+u'_c)= \A;}$
 \item if 
{  $\rho(c)<\rho(c')$}, then for all $(q,p)\in \mathcal{PG} (c+u'_c)$ and $(q, p')\in \mathcal{PG} (c+u'_{c'})$, we have $p<p'$.
  \end{enumerate} 
 \end{theorem}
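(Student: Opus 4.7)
The plan is to address the three parts in order, relying on Theorem \ref{Tgenecont} (particularly (\ref{croissant}) and (\ref{u-lip})) and Theorem \ref{Ttwistvertical}. Throughout I use that the fiber of $\mathcal{PG}(c+u'_c)$ over $\theta$ is $c + \partial^+ u_c(\theta)$, the super-differential of the semi-concave function $u_c$ at $\theta$---a compact interval degenerating to a point when $u_c$ is differentiable at $\theta$.

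For (1), Theorem \ref{Tgenecont}(\ref{u-lip}) gives that $c' \mapsto u_{c'}$ is locally Lipschitz into $C^0(\T,\R)$, and uniform semi-concavity and Lipschitz bounds on compact ranges of $c'$ are inherited from the construction of $u_c$ sketched in the preceding sections. Upper semi-continuity of the super-differential under uniform convergence of uniformly semi-concave functions then forces any limit of a sequence $(\theta_n, p_n) \in \mathcal{PG}(c_n + u'_{c_n})$ with $c_n \to c$ to lie in $\mathcal{PG}(c+u'_c)$. For the reverse inclusion, given $(\theta_0, p_0) \in \mathcal{PG}(c + u'_c)$, I first approximate $(\theta_0, p_0)$ by points of the partial graph $\mathcal{G}(c + u'_c)$ (using density of differentiability points and connectedness of each fiber), and then use the continuity of $(\theta, c) \mapsto u'_c(\theta)$ on its domain (Theorem \ref{Tgenecont}(2)) to realize these as limits of points of $\mathcal{PG}(c_n + u'_{c_n})$. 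The two inclusions together give Hausdorff continuity.

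For (2), fix $\theta_0 \in \T$ and set $F(c) := \mathcal{PG}(c+u'_c) \cap V_{\theta_0}$. By Part (1) and Theorem \ref{Tgenecont}(\ref{croissant}), $F(c)$ is a compact interval varying Hausdorff-continuously and monotone non-decreasingly in $c$. The normalization $u_c(0) = 0$ together with the uniform Lipschitz bound on $u_c$ confines $F(c)$ to $\{\theta_0\} \times [c-C, c+C]$ on compact ranges of $c$, so $F(c)$ diverges to $\pm \infty$ as $c \to \pm \infty$. A monotone, Hausdorff-continuous family of compact intervals whose extremes escape in both directions must cover $\R$ by an elementary intermediate-value argument; hence $\bigcup_c F(c) = V_{\theta_0}$, and since $\theta_0$ was arbitrary this yields (2).

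For (3), I argue by contradiction: suppose $c < c'$ with $\rho(c) < \rho(c')$ and $(\theta_0, p_0) \in \mathcal{PG}(c+u'_c) \cap \mathcal{PG}(c'+u'_{c'})$. Every point of a full pseudograph admits a minimizing negative orbit: on the partial graph this is Theorem \ref{Tgenecont}(3), and for an interior point of a vertical filling segment one approximates from either side of the corner and uses compactness of minimizing negative configurations. Theorem \ref{Ttwistvertical} then assigns this single negative orbit a unique rotation number, forcing $\rho(c) = \rho(c')$---a contradiction. I expect the main obstacle to be the rigorous extension of the minimality-plus-rotation-number dictionary from $\mathcal{G}$ to the full $\mathcal{PG}$, including interior points of vertical segments, together with the attendant upper semi-continuity estimates for super-differentials in (1); once these are cleanly in place, (2) reduces to a monotone intermediate-value argument and (3) to the dynamical uniqueness just sketched.
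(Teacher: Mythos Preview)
The main gap is in Part (2): the claim that $F(c) \to \pm\infty$ as $c \to \pm\infty$ does not follow from what you wrote. A uniform Lipschitz bound on $u_c$ is only available on \emph{compact} $c$-ranges, and nothing you invoke prevents $\mathrm{Lip}(u_c)$ from growing linearly in $|c|$, in which case $F(c) \subset [c - C(c), c + C(c)]$ gives no divergence. The paper establishes this separately using the generating function (Lemma \ref{Lhorscomp} and its corollary): for $c$ large, the minimizer $\tilde\theta_{-1}$ in the Lax--Oleinik formula for $\tilde u_c(\tilde\theta_0)$ is forced outside any fixed compact, whence $c+u'_c(\tilde\theta_0) = \partial_{\widetilde\Theta} S(\tilde\theta_{-1},\tilde\theta_0)$ is large in absolute value, and the sign is pinned down by the vertical ordering already obtained. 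Once divergence is in hand, your monotone intermediate-value argument on fibers is a legitimate alternative to the paper's topological route (which shows $\bigcup_c \mathcal{PG}(c+u'_c)$ is both closed and open in the region it spans, via Jordan's theorem and Hausdorff continuity).

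Two smaller issues. In Part (1), interior points of a vertical segment of $\mathcal{PG}(c+u'_c)$ are not in $\overline{\mathcal{G}(c+u'_c)}$, so ``approximate $(\theta_0,p_0)$ by points of the partial graph'' fails there; the paper circumvents this by using that each full pseudograph is an essential Lipschitz circle (Lemma \ref{Lfullman}), so a failure of Hausdorff convergence would force the approximating circles near an arc, contradicting that they separate $\A$. In Part (3), your contradiction hypothesis is only the equality case $p = p'$; the negation of the statement also allows $p > p'$, which you must reduce to nonempty intersection via the endpoint monotonicity of Theorem \ref{Tgenecont}(\ref{croissant}). Moreover, to assign rotation number $\rho(c)$ to the backward orbit of an \emph{interior} point of a vertical segment you need a squeezing argument between the backward orbits of the two endpoints (these lie in $\overline{\mathcal{G}(c+u'_c)}$ and have rotation number $\rho(c)$ by Lemma \ref{Lrotnumber}); beware that the paper's proof of Theorem \ref{Ttwistvertical} itself uses Theorem \ref{Tpseudofol}(2), so invoking it here is circular---but Lemma \ref{Lrotnumber} together with Aubry's non-crossing lemma suffices. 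The paper's own route for (3) is to prove the strict ordering first on $\overline{\mathcal{G}}$ (Proposition \ref{Pordreirrat}) by a direct two-crossings contradiction, then pass to $\mathcal{PG}$ by fiberwise convex hull.
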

As a result of the proof, we will deduce (see Proposition \ref{Pordreirrat}) that the Aubry-Mather\footnote{The definition of Aubry-Mather set is given in subsection \ref{ssAM}.} sets are contained in pseudographs that are vertically ordered by their rotation numbers. 

{ The next statement explains  that the weak K.A.M. solutions reflect all the richness of negative minimizing semi-orbits.}
 \begin{theorem} \label{solK.A.M.calib} {With the notations of Theorem \ref{Tgenecont}, }
let  $(\theta_i,r_i)_{i\in \Z_-}\in \A^{\Z_-}$ be a minimizing negative orbit of $f$, then there exist $c\in \R$ and a weak K.A.M. solution $u_c : \T \to \R$ at cohomology $c$  such that 
$$(\theta_i,r_i)_{i\in \Z_-} \subset \mathcal{PG} (c+u'_{c}),$$
$$(\theta_i,r_i)_{i<0} \subset \mathcal{G} (c+u'_{c}).$$

\end{theorem}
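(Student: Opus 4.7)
The plan is to exhibit a single cohomology class $c$ for which the pseudograph captures the orbit, using the filling property to produce $c$ and then backward invariance, together with a structural property of $\mathcal{PG}$ versus $\mathcal{G}$, to propagate through the orbit. By Theorem \ref{Ttwistvertical} the minimizing negative orbit has a well-defined rotation number $\rho_0$. By Theorem \ref{Tpseudofol}(2), the family of full pseudographs covers $\A$, so we may pick $c\in\R$ with $(\theta_0, r_0) \in \mathcal{PG}(c+u_c')$. The rest of the argument is devoted to showing that this $c$ witnesses both inclusions in the statement.

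The main obstacle is the following structural inclusion between the partial graph and the full pseudograph:
\[
f^{-1}\bigl(\mathcal{PG}(c+u_c')\bigr) \ \subset\ \mathcal{G}(c+u_c').
\]
Equivalently, $f\bigl(\mathcal{G}(c+u_c')\bigr) \supset \mathcal{PG}(c+u_c')$: each vertical segment of $\mathcal{PG}$ (above a non-differentiability point $\theta$ of $u_c$) should arise as the $f$-image of a non-degenerate family of differentiability points, monotonically parametrized by the twist of $f$. I would prove this via the weak K.A.M. fixed-point equation $T^c u_c = u_c$ together with semi-concavity of $u_c$: at a non-differentiable $\theta$, the super-differential interval of $u_c$ is realized by a family of backward calibrated predecessors, each of which is interior to a calibrated curve and therefore a differentiability point of $u_c$; the twist condition then gives the required monotone parametrization of the interval of predecessors by the vertical segment. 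This structural fact (essentially that $\mathcal{PG}$ is the complete forward image of $\mathcal{G}$) encodes why the construction of $u_c$ is compatible with the one-step dynamics at non-smooth points.

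Granted this inclusion, the endgame is routine. Applying $f^{-1}$ to $(\theta_0, r_0) \in \mathcal{PG}(c+u_c')$ yields $(\theta_{-1}, r_{-1}) \in \mathcal{G}(c+u_c')$, and the backward invariance of $\mathcal{G}(c+u_c')$ from Theorem \ref{Tgenecont}(3) propagates this to $(\theta_i, r_i) \in \mathcal{G}(c+u_c')$ for every $i \leqslant -1$. This gives the second inclusion of the statement and, since $\mathcal{G}(c+u_c') \subset \mathcal{PG}(c+u_c')$, also the first inclusion for $i \leqslant -1$; the remaining index $i=0$ is our starting choice of $c$. Finally the rotation numbers automatically match: the tail $(\theta_i,r_i)_{i\leqslant -1}$ lies in $\mathcal{G}(c+u_c')$, whose points have minimizing negative orbit with rotation number $\rho(c)$ by the observation following Theorem \ref{Tgenecont}, and this tail also has rotation number $\rho_0$ by Theorem \ref{Ttwistvertical}; hence $\rho(c) = \rho_0$, in harmony with the vertical ordering of Theorem \ref{Tpseudofol}(3).
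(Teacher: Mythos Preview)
Your key structural inclusion $f^{-1}\bigl(\mathcal{PG}(c+u_c')\bigr)\subset\mathcal{G}(c+u_c')$ is false. In the simple pendulum $H(\theta,p)=\tfrac12 p^2+\cos(2\pi\theta)$ at cohomology $0$, the full pseudograph of the symmetric weak K.A.M. solution has a vertical bar over $\theta=\tfrac12$ running from $(\tfrac12,2)$ to $(\tfrac12,-2)$; the interior point $(\tfrac12,0)$ is the elliptic fixed point, so $f^{-1}(\tfrac12,0)=(\tfrac12,0)$, yet $u_0$ is not differentiable at $\tfrac12$, hence $(\tfrac12,0)\notin\mathcal{G}(u_0')$. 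Your sketched justification fails for the same reason: the super-differential $\partial^+u_c(\theta_0)$ is the \emph{convex hull} of the values $\partial S/\partial\Theta(\theta',\theta_0)$ over the set of minimizers $\theta'$, and that set of minimizers need not be an interval. In the pendulum it consists of exactly two points, and the interior of the bar corresponds to no calibrated predecessor at all. The inclusion that does hold is only $f^{-1}\bigl(\overline{\mathcal{G}(c+u_c')}\bigr)\subset\mathcal{G}(c+u_c')$ (point~\ref{ptinvgraph}), and Proposition~\ref{imageWK.A.M.} shows $f^{-1}\bigl(\mathcal{PG}(c+u_c')\bigr)$ is a continuous graph, but it is the graph of $c+(T^c_+u_c)'$, not of $c+u_c'$.

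Notice that what fails in the counterexample is precisely a non-minimizing orbit, so the minimizing hypothesis is not decorative here---it is the whole point, and your argument never uses it in the crucial step. The paper's route is genuinely different: having located $(\theta_0,r_0)$ in some $\mathcal{PG}(c+u_c')$, one first extracts the rotation number $\rho_0=\rho(c)$ via a sandwich between the two calibrating orbits through the endpoints of the bar (Proposition~\ref{Protnumber} and Proposition~\ref{coince}); then one analyses the asymptotics of $(\tilde\theta_i)$ relative to the Mather-set orbits $y^{\pm i}(\tilde\theta_0)$, splitting on whether $\rho_0$ is irrational or rational (Propositions~\ref{noncrossing} and~\ref{minrat}, Theorem~\ref{min-irr}); finally a calibration-comparison lemma (Proposition~\ref{cross2}) transfers calibration from a known calibrating sequence to $(\tilde\theta_i)$. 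This identifies the correct cohomology---one of the endpoints $a$ or $b$ of $\rho^{-1}(\{\rho_0\})$---rather than the arbitrary $c$ you start from, and calibration then gives the inclusion in $\mathcal{G}$ via point~\ref{calibprop}.
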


Jean-Pierre Marco raised the following question. 

\begin{ques} If $(\theta_i,r_i)_{i\in \Z_-}\in \A^{\Z_-}$ is a minimizing negative semi-orbit of $f$, is it necessarily  contained in $\overline{\mathcal{G} (c+u'_{c})}$?\end{ques} In part \ref{SSJPMarco}, we answer negatively to this question and provide an example where a minimizing negative semi-orbit is not contained in such a set.

{ Finally, we prove that we can use only a particular subset of $\{ \cg (c+u'_c); c\in\R\}$ to recover the union of all the pseudographs of weak K.A.M. solutions.}
\begin{theorem}\label{TPlacesolK.A.M.} { For every $\rho_0\in\R$, $\rho^{-1}(\{\rho_0\})$ is a segment $[a, b]$. With the notations of Theorem \ref{Tgenecont},}
if  $c\in [a,b]$ and $u$ is a weak K.A.M. solution at cohomology $c$, then 
$$\mathcal G (c+ u') \subset \mathcal G (a+ u'_a)\cup \mathcal G (b+ u'_b),$$
and by taking closures:
$$\overline{\mathcal G (c+ u')} \subset \overline{ \mathcal G (a+ u'_a)} \cup \overline{\mathcal G (b+ u'_b)}.$$
{ More precisely, 
\begin{itemize}
\item when $\rho_0$ is irrational, $\rho^{-1}(\{\rho_0\})$ is a single point;
\item when $\rho_0$ is rational, the union of  two pseudographs  contain all pseudographs with this rotation number. Moreover, those two pseudographs intersect along minimizing periodic orbits.
\end{itemize}
 Moreover, every minimizing semi-orbit $(\theta_i,r_i)_{i\in \Z_-}$ with rotation number $\rho_0$ is contained in 
$$\mathcal{PG} (a+ u'_a)\cup \mathcal{PG} (b+ u'_b).$$

}
\end{theorem}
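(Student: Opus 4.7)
The plan is to proceed in four stages. First, I establish that $\rho^{-1}(\{\rho_0\})$ is a closed interval $[a,b]$. The map $\rho$ is well-defined by Theorems \ref{Tgenecont} and \ref{Ttwistvertical}, non-decreasing because of the monotonicity in Theorem \ref{Tgenecont}(\ref{croissant}) (vertically ordered backward-invariant graphs yield ordered rotation numbers), and continuous via the Hausdorff continuity of Theorem \ref{Tpseudofol}(1) together with the standard semi-continuity of rotation numbers under Hausdorff limits of invariant essential curves. Hence $\rho^{-1}(\{\rho_0\})=[a,b]$.

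For the irrational case, I show $a=b$ by contradiction. Suppose $a<b$. At any $\theta\in\T$ where both $u_a$ and $u_b$ are differentiable (a full-measure set by semi-concavity), the points $f(\theta,a+u_a'(\theta))$ and $f(\theta,b+u_b'(\theta))$ both lie on the twisted vertical $f(V_\theta)$ and have minimizing backward orbits of rotation $\rho_0$ by Theorem \ref{Tgenecont}. Theorem \ref{Ttwistvertical} permits at most one such point in the irrational case, so injectivity of $f$ on the vertical $V_\theta$ forces $a+u_a'(\theta)=b+u_b'(\theta)$ almost everywhere. Integration over $\T$ together with the normalization from Theorem \ref{Tgenecont}(1) gives $a=b$, the desired contradiction; the inclusion is then trivial.

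The rational case is the heart of the argument. Take $c\in[a,b]$, a weak K.A.M. solution $u$ at $c$, and $(x,p)\in\mathcal{G}(c+u')$. Set $\theta_0=\pi_1(f^{-1}(x,p))$, so that $(x,p)\in f(V_{\theta_0})$; backward invariance places $(\theta_0,c+u'(\theta_0))$ in $\mathcal{G}(c+u')$ with minimizing backward orbit of rotation $\rho(c)=\rho_0$. Assuming $\theta_0$ is also a differentiability point of $u_a$ and $u_b$ (a generic condition, with the exceptional set handled by density and by taking closures), I compare the three candidate points $f(\theta_0,a+u_a'(\theta_0))$, $f(\theta_0,c+u'(\theta_0))=(x,p)$, and $f(\theta_0,b+u_b'(\theta_0))$ on the twisted vertical $f(V_{\theta_0})$. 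Each has minimizing backward orbit of rotation $\rho_0$, and Theorem \ref{Ttwistvertical} allows at most two such points in the rational case. Combined with the vertical ordering $a+u_a'(\theta_0)\leq c+u'(\theta_0)\leq b+u_b'(\theta_0)$ (extending Theorem \ref{Tgenecont}(\ref{croissant}) from the canonical selection to arbitrary weak K.A.M. solutions at $c$ via the non-crossing of same-rotation minimizing orbits), this forces $c+u'(\theta_0)\in\{a+u_a'(\theta_0),b+u_b'(\theta_0)\}$. Applying $f$ and propagating the equality forward along the orbit then places $(x,p)$ in $\mathcal{G}(a+u_a')\cup\mathcal{G}(b+u_b')$, with the closure inclusion by density.

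Finally, the set $\mathcal{G}(a+u_a')\cap\mathcal{G}(b+u_b')$ coincides with the minimizing periodic orbits of rotation $\rho_0$ via the Aubry-Mather structure at rational rotation: the periodic minimizers form the Aubry set and belong to every weak K.A.M. pseudograph at cohomology in $[a,b]$, while heteroclinic minimizers are split between the two extremal pseudographs by the direction of their asymptotics. The semi-orbit statement then combines Theorem \ref{solK.A.M.calib} (every minimizing negative semi-orbit lies in some $\mathcal{PG}(c+u_c')$ with $c\in[a,b]$) with the main inclusion, promoted to full pseudographs by including the vertical segments at non-differentiability points. The principal obstacle will be the forward propagation step in the third paragraph: deducing $(x,p)\in\mathcal{G}(a+u_a')\cup\mathcal{G}(b+u_b')$ from the equality at the pre-image $(\theta_0,c+u'(\theta_0))$ requires handling the forward image of a pseudograph point, which is not immediate from backward invariance alone and relies on the specific regularity of the Lipschitz-selected weak K.A.M. solutions $u_a$ and $u_b$.
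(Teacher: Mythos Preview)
Your approach has a circularity problem. You take the ``at most one / at most two'' count of Theorem~\ref{Ttwistvertical} as the engine of the argument, but in the paper's logical order that count is proved \emph{after} Theorem~\ref{TPlacesolK.A.M.}, and indeed as a corollary of it: the proof (Theorem~\ref{bang}) reads off the two candidate points on $f(V_\theta)$ as the intersections of $V_\theta$ with the graphs $f^{-1}\big(\mathcal{PG}(a+u_a')\big)$ and $f^{-1}\big(\mathcal{PG}(b+u_b')\big)$, using Proposition~\ref{imageWK.A.M.} for the graph property and Proposition~\ref{minrat} for the fact that every minimizing semi-orbit of rotation $\rho_0$ lands in one of these two pseudographs. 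The latter is exactly the last assertion of Theorem~\ref{TPlacesolK.A.M.}. So your deduction runs backwards: the ``at most two'' is the output, not the input, and invoking it here is circular.

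Even if one granted the count, the forward-propagation step you flag as the principal obstacle is a genuine gap, not a technicality. From $c+u'(\theta_0)=a+u_a'(\theta_0)$ you obtain $(\theta_0,c+u'(\theta_0))\in\mathcal G(a+u_a')$, but applying $f$ only gives $(x,p)\in f\big(\mathcal G(a+u_a')\big)$, and forward images of pseudograph points need not lie in $\mathcal G(a+u_a')$ or even in $\overline{\mathcal G(a+u_a')}$; Appendix~\ref{SSJPMarco} constructs exactly such a phenomenon. The paper avoids this by working the other way: for a differentiability point $\tilde\theta_0$ of $u_c$ it studies the calibrating orbit directly, compares it with its own $(p,q)$-translate via the dichotomy of Proposition~\ref{noncrossing}, and then uses the ``almost-cross-twice'' criterion (Proposition~\ref{cross2}) to show the orbit calibrates $u_a$ or $u_b$ (Proposition~\ref{minrat}, Theorem~\ref{equivration}). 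A continuity argument for semi-concave derivatives then forces $u_a$ or $u_b$ to be differentiable at $\tilde\theta_0$ with the same value (Theorem~\ref{K.A.M.f}). The irrational case is handled by the same squeezing between $y^{-n}(\tilde\theta_0)$ and $y^{+n}(\tilde\theta_0)$, giving uniqueness of the weak K.A.M. solution directly (Theorem~\ref{min-irr}).
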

In fact, we will provide a more precise description of how the full pseudographs $\pg(a+u'_a)$ and $\pg(b+u'_b)$ are positioned and of the way $\pg(c+u')$ is built by taking pieces of  $\pg(a+u'_a)$ and $\pg(b+u'_b)$ and gluing them with vertical segments.

Once we have proved that  there always exists a continuous choice $u(\theta, c)$ of  weak K.A.M. solutions, we wonder when $u$ can be more regular. We  recall that an ESTwD is said to be $C^0$-integrable if the annulus $\T\times \R$ is $C^0$-foliated by $C^0$ invariant graphs. 
\begin{theorem}\label{Tgeneder}
With the notations of Theorem \ref{Tgenecont}, we have equivalence of 
\begin{enumerate}
\item\label{pt1Tgeneder} $f$ is $C^0$-integrable; 
\item\label{pt2Tgeneder}  the map $u$ is  $C^1$. 
\end{enumerate} Moreover, in this case, $u$ is unique and  we have\footnote{See the notation $\pi_1$ at the beginning of subsection \ref{ssnota}.}
 \begin{itemize}
 \item the graph of $c+u_c'$ is a leaf of the invariant foliation;
 \item $h_c:\theta \mapsto \theta+\frac{\partial u}{\partial c}(\theta,c)$ is a semi-conjugation between the projected Dynamics $g_c: \theta\mapsto \pi_1\circ f\big(\theta, c+\frac{\partial u}{\partial \theta}(\theta,c)\big)$ and a rotation $R$ of $\T$, i.e. $h_{c}\circ g_c=R\circ h_{c}.$
   \end{itemize}
 \end{theorem}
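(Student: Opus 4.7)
For the easier direction (2)$\Rightarrow$(1): if $u \in C^1$, then for every $c$ the derivative $u_c' = \partial_\theta u(\cdot, c)$ is continuous on $\T$, so $\cg(c+u_c') = \pg(c+u_c')$ is a genuine essential continuous graph. By Theorem \ref{Tgenecont}(3) this graph is backward invariant, and since it is an essential graph under the diffeomorphism $f$ it is also forward invariant. Theorem \ref{Tpseudofol} then ensures that the family $\{\pg(c+u_c')\}_{c\in\R}$ is a continuous, vertically ordered covering of $\A$ by invariant graphs, i.e.\ a $C^0$-foliation.

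For (1)$\Rightarrow$(2), assume $\A$ is $C^0$-foliated by invariant graphs. By Birkhoff's theorem each leaf is Lipschitz; since leaves are ordered with continuously varying cohomology, I parameterize them by cohomology class, writing $\Gamma_c$ for the leaf at cohomology $c$. Every orbit on a Lipschitz invariant graph is globally minimizing, so any primitive of $\Gamma_c - c$ is a weak K.A.M. solution at cohomology $c$. The vertical ordering and covering of Theorem \ref{Tpseudofol}, matched against the analogous properties of the foliation, force $\pg(c+u_c') = \Gamma_c$; hence $u_c' = \Gamma_c - c$ is continuous on $\T$, and joint continuity of $\partial_\theta u$ on $\T\times\R$ follows from the $C^0$-continuity of the foliation. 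Uniqueness of $u_c$ modulo the normalization $u_c(0)=0$ is then automatic: any weak K.A.M. solution at cohomology $c$ has its pseudograph inside $\Gamma_c$, a full graph, so its derivative is prescribed.

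There remains to establish differentiability of $u$ in $c$ together with the semi-conjugation formula; this is the main obstacle, since the $1$-Lipschitz bound of Theorem \ref{Tgenecont}(5) only gives $\partial_c u$ almost everywhere a priori. The plan is to use the integrable structure: on each leaf $f$ induces a homeomorphism $g_c$ of $\T$ with rotation number $\rho(c)$, and Poincar\'e's theorem provides a semi-conjugation $h_c$ from $g_c$ to the rotation $R = R_{\rho(c)}$, normalized by $h_c(0)=0$. Along a negative calibrated orbit $(\theta_{-n})_{n\geq 0}$ with $\theta_0 = \theta$, the weak K.A.M. equation iterates to
\begin{equation*}
u_c(\theta) - u_c(\theta_{-n}) = \sum_{i=-n}^{-1}\bigl(S(\theta_i,\theta_{i+1}) - c(\theta_{i+1}-\theta_i)\bigr) + n\alpha(c),
\end{equation*}
where $S$ is the generating function and $\alpha$ the critical value. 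An envelope argument (the minimizer is stationary in $c$ to first order) produces $\partial_c u_c(\theta) - \partial_c u_c(\theta_{-n}) = -(\theta - \theta_{-n}) + n\alpha'(c)$; substituting $\alpha'(c) = -\rho(c)$ and using $h_c(\theta_{-n}) = h_c(\theta) - n\rho(c)$, a subsequential limit as $n\to\infty$ (with $\theta_{-n}\to 0$, so $\partial_c u_c(\theta_{-n})\to 0$) yields $\partial_c u(\theta, c) = h_c(\theta) - \theta$. Joint continuity of $h_c$ — inherited from $C^0$-continuity of the foliation and of rotation numbers — gives joint continuity of $\partial_c u$, so $u \in C^1$. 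The remaining assertions (the leaf $\Gamma_c$ is the graph of $c+u_c'$, and $h_c$ semi-conjugates $g_c$ with $R$) are built into this construction.
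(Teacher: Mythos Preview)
Your proposal for $(2)\Rightarrow(1)$ has a genuine gap. You claim that Theorem~\ref{Tpseudofol} makes the family $\{\pg(c+u_c')\}_{c\in\R}$ a $C^0$-foliation, but that theorem only gives a \emph{covering} by continuous invariant graphs that are \emph{weakly} vertically ordered (strict separation is asserted only when $\rho(c)<\rho(c')$). A continuous, monotone-in-$c$, surjective family of graphs need not be a foliation: two graphs with the same rational rotation number can touch at some points while differing elsewhere, and nothing you cite rules this out. The paper handles exactly this case: assuming two graphs $\eta_{c_1},\eta_{c_2}$ with $c_1\neq c_2$ intersect, their common rotation number must be rational; then one invokes the Aubry--Mather/Bangert fact that above each $\theta$ there are at most two minimizing orbits with that rotation number, so $c\mapsto\eta_c(\theta)$ maps $[c_1,c_2]$ continuously into a two-point set while taking distinct values at the endpoints---a contradiction. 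This disjointness argument is the substance of the paper's proof in Section~\ref{secsecond}, and you have skipped it.

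For $(1)\Rightarrow(2)$ (and the ``Moreover'' clauses), note first that the present paper does not prove this direction at all: it is deferred to the companion paper \cite{ArZav}. Your attempted argument has several problems. The envelope computation differentiates the calibration identity in $c$, but this presupposes that $\partial_c u_c$ exists, which is precisely what you want to prove; the formal calculation identifies a candidate but does not establish differentiability. You also write $\alpha'(c)=-\rho(c)$, whereas the paper's convention is $\rho(c)=\alpha'(c)$. Most seriously, the step ``a subsequential limit as $n\to\infty$ with $\theta_{-n}\to 0$'' is unfounded: on a leaf with irrational rotation number the backward orbit is equidistributed (not convergent), and on a leaf with rational rotation number it is eventually periodic; in neither case can you arrange $\theta_{-n}\to 0$ for a generic starting point. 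Without this, the passage from the finite-$n$ identity to the claimed formula $\partial_c u(\theta,c)=h_c(\theta)-\theta$ does not go through.
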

We will prove here the implication \eqref{pt2Tgeneder} $\Rightarrow$ \eqref{pt1Tgeneder}. The reverse implication is addressed in the companion paper \cite{ArZav}.

\subsection{A double pendulum} Let us illustrate some of our results on a simple example. Let $H : \T\times \R$ be the Hamiltonian  defined by $(\theta, p) \mapsto  \frac12 |p|^2+ \cos(4\pi \theta)$ and $f= \phi_H^{t_0} : \A \to \A$ be the Hamiltonian flow of $H$ for a small time $t_0$. Then it is known that for small enough $t_0$, $f$ is an ESTwD. Moreover, weak K.A.M. solutions for $H$ and $f$ can be proven to be the same.
 
With that in mind, we obtain that for $\rho_0 = 0$, then $\rho^{-1}(\{0\}) = [-a,a]$, where $a=\int_0^1 \sqrt{2-2\cos(4\pi\theta)} d\theta$. The integrated function $\theta \mapsto  \sqrt{2-2\cos(4\pi\theta)}$,  denoted $f^+$, corresponds to  the upper part of the level set $H^{-1}(\{1\})$. The lower part is the graph of $-f_+$.

The unique (up to constants) weak K.A.M. solution $u_a$, at cohomology $a$  is $C^1$ and such that $a+u'_a$ is the graph on $f_+$, in  blue in figure \ref{double1}. Similarly, the unique (up to constants) weak K.A.M. solution $u_{-a}$, at cohomology $-a$  is $C^1$ and such that $-a+u'_{-a}$ is the graph on $-f_+$, in  red in figure \ref{double1}. Note that those two graphs intersect at the only minimizing fixed points of $f$ that are of coordinates $(0,0)$ and $(\frac12,0)$. This fact will be generalized in Proposition \ref{ordrerat}.

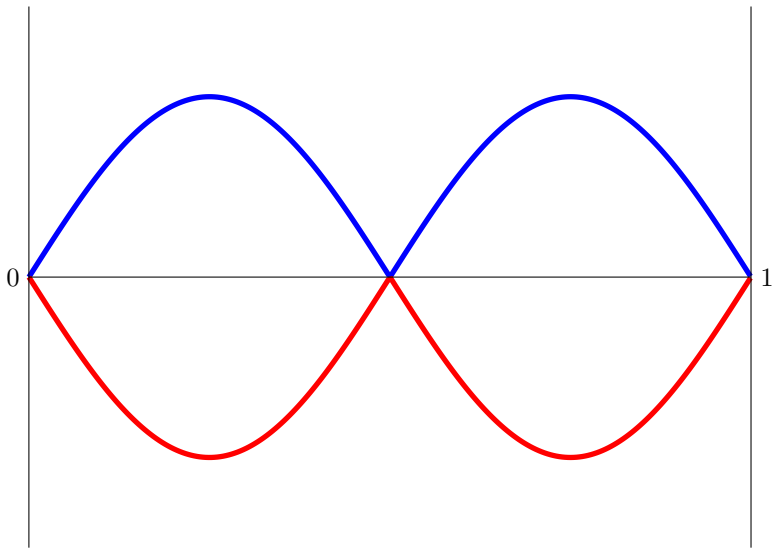
\begin{figure}[h!]
\begin{center}

 \begin{tikzpicture}[scale = 1.2]
    \draw[color=blue][line width=2pt][domain=0:4][samples=250] plot (2
   *\x,2* abs{sin(pi*\x/2 r)} );
    \draw[color=red][line width=2pt][domain=0:4][samples=250] plot (2
   *\x,-2* abs{sin(pi*\x/2 r)} );
   
   \draw  (0,-3)--(0,3) ;
     \draw  (8,-3)--(8,3) ;
     \draw (0,0) -- (8,0);

\draw (0,0) node[left]{0} ;
\draw (8,0) node[right]{1} ;
\end{tikzpicture}

\caption{The level set $H^{-1}(\{1\})$ is the union of the graphs of $a+u'_a$ in blue and $-a + u'_{-a}$ in red.}
\label{double1}
\end{center}
\end{figure}

Let us now focus at weak K.A.M. solutions at cohomology $0$. Their derivative lie in  $H^{-1}(\{1\})$. As weak K.A.M. solutions are semi--concave the derivative can only jump downwards and must have vanishing integral. So it looks like the red part in figure \ref{double3}.

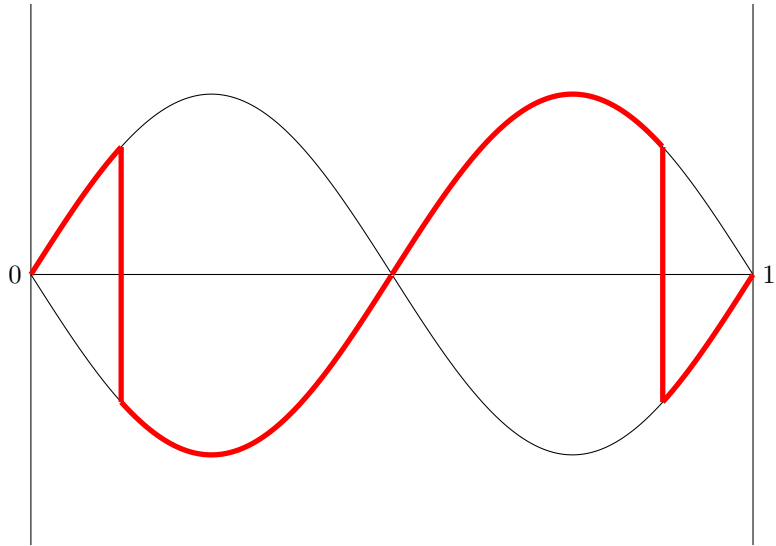
\begin{figure}[h!]
\begin{center}

  \begin{tikzpicture}[scale = 1.2]
    \draw[domain=0:4][samples=250] plot (2
   *\x,2* abs{sin(pi*\x/2 r)} );
    \draw[domain=0:4][samples=250] plot (2
   *\x,-2* abs{sin(pi*\x/2 r)} );
   
   \draw  (0,-3)--(0,3) ;
     \draw  (8,-3)--(8,3) ;
     \draw (0,0) -- (8,0);

\draw (0,0) node[left]{0} ;
\draw (8,0) node[right]{1} ;

  \draw[color=red][line width=2pt][domain=0:0.5][samples=250] plot (2
   *\x,2* abs{sin(pi*\x/2 r)} );
    \draw[color=red][line width=2pt][domain=1/2:2][samples=250] plot (2
   *\x,-2* abs{sin(pi*\x/2 r)} );

  \draw[color=red][line width=2pt][domain=2:3.5][samples=250] plot (2
   *\x,2* abs{sin(pi*\x/2 r)} );
    \draw[color=red][line width=2pt][domain=3.5:4][samples=250] plot (2
   *\x,-2* abs{sin(pi*\x/2 r)} );

\draw [color=red] [line width=2pt](1,2* abs{sin(pi*1/4 r)})--(1,-2* abs{sin(pi*1/4 r)}) ;
\draw [color=red] [line width=2pt](7,2* abs{sin(pi*1/4 r)})--(7,-2* abs{sin(pi*1/4 r)}) ;

\end{tikzpicture}

\caption{The full pseudograph of a weak K.A.M. solution at cohomology $0$ in red.}
\label{double3}
\end{center}
\end{figure}
\newpage 

The construction that we propose to prove Theorem \ref{Tgenecont} respects the $\frac12$ periodicity of $H$, hence the weak K.A.M. solution obtained is itself $\frac12$-periodic as shown in the next figure \ref{double2}.

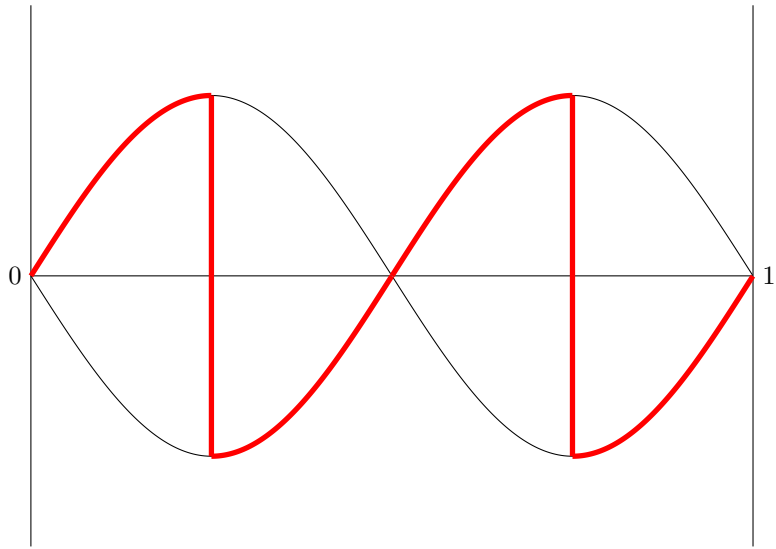
\begin{figure}[h!]
\begin{center}

  \begin{tikzpicture}[scale = 1.2]
    \draw[domain=0:4][samples=250] plot (2
   *\x,2* abs{sin(pi*\x/2 r)} );
    \draw[domain=0:4][samples=250] plot (2
   *\x,-2* abs{sin(pi*\x/2 r)} );
   
   \draw  (0,-3)--(0,3) ;
     \draw  (8,-3)--(8,3) ;
     \draw (0,0) -- (8,0);

\draw (0,0) node[left]{0} ;
\draw (8,0) node[right]{1} ;

  \draw[color=red][line width=2pt][domain=0:1][samples=250] plot (2
   *\x,2* abs{sin(pi*\x/2 r)} );
    \draw[color=red][line width=2pt][domain=1:2][samples=250] plot (2
   *\x,-2* abs{sin(pi*\x/2 r)} );

  \draw[color=red][line width=2pt][domain=2:3][samples=250] plot (2
   *\x,2* abs{sin(pi*\x/2 r)} );
    \draw[color=red][line width=2pt][domain=3:4][samples=250] plot (2
   *\x,-2* abs{sin(pi*\x/2 r)} );

\draw [color=red] [line width=2pt](2,2* abs{sin(pi*1/2 r)})--(2,-2* abs{sin(pi*1/2 r)}) ;
\draw [color=red] [line width=2pt](6,2* abs{sin(pi*1/2 r)})--(6,-2* abs{sin(pi*1/2 r)}) ;

\end{tikzpicture}

\caption{The full pseudograph of the weak K.A.M. solution at cohomology $0$ selected by the construction in Theorem \ref{Tgenecont} in red.}
\label{double2}
\end{center}
\end{figure}

\subsection{Further comments and related results}

\begin{itemize}
\item The beginning of Theorem \ref{Ttwistvertical}  is already present in Bangert's \cite{Ban2}. Actually, Bangert proves it for a more general setting that contains finite compositions of ESTwDs. However, restricting to an ESTwD allows us to obtain the two final items of Theorem \ref{Ttwistvertical} that would not hold in Bangert's setting. The same happens for other geometrical results as Proposition \ref{imageWK.A.M.} that holds for ESTwDs but not in Bangert's more general setting.

 From a methodology point of view, Bangert uses Buseman functions, that are functions defined on $\R$. We rather focus on weak K.A.M. solutions that are defined on the circle  $\T$ and have been widely studied  in recent years. This allows for proofs that we hope more accessible to people familiar with weak K.A.M. theory. Moreover as already explained in the Introduction, this draws a parallel with a variational approach to Katznelson and Orstein's results on backward invariant pseudographs. 
\item{\bf Theorem \ref{Tgenecont}}  selects in a continuous way a unique weak K.A.M. solution $u_c$ for every cohomology class $c\in\R$. Let us mention two related results.
\begin{itemize}
\item The recent works in \cite{DFIZ1} for the autonomous case and in  \cite{DFIZ2} and \cite{SuThi} for the discrete case select a unique solution, called discounted solution, for every cohomology class. We give in Appendix \ref{sscomplint} an example of a $C^\infty$ integrable ESTwD (coming from an autonomous Tonelli Hamiltonian)   for which the discounted method doesn't select a transversally continuous weak K.A.M. solution.  Hence our method is different from the discounted one.
\item If we have not a unique choice of a weak K.A.M. solution for  every cohomology class $c\in H^1(M, \R)$, we cannot speak of $C^1$ regularity with respect to $c$ for the map $c\mapsto \{u_c\}$ that sends $c$ to the whole set of weak K.A.M. solutions of cohomology class $c$. Observe nevertheless that a kind of local Lipschitz regularity was studied in \cite{LYY} (for weak K.A.M. solutions for Tonelli Hamiltonians) with no uniqueness.

\item Around the same time this research was done, similar results were established in \cite{Zhang}. However our results are more precise with some respect (for instance the Lipschitz selection of weak K.A.M. solutions). Moreover, our study and description of weak K.A.M. solutions has not been obtained elsewhere.
\end{itemize}

\item {\bf Theorem \ref{Tpseudofol}}   compares the  cohomology classes of pseudographs that correspond to distinct rotation numbers. In the setting of Hamiltonian flows with two degrees of freedom, an analogous statement is proved in \cite{CheXu} concerning the cyclic order of rotation and cohomology vectors.

\item Similarly to Theorem  \ref{Tpseudofol},   Katznelson \& Ornstein provide in \cite{KatOrn} a continuous covering of the annulus by full pseudographs. 

\end{itemize}

\subsection{Content of the different sections}
 We chose to present our results  in an order other than the order of the proofs.
 
To prove all these results, we will use together Aubry-Mather theory, weak K.A.M. theory in the discrete case. Let us detail what will be in the different sections
\begin{itemize}
\item Section \ref{secth11} contains some reminders on ESTwDs,  Aubry-Mather theory, on discrete weak K.A.M. theory, some new results on the weak K.A.M. solutions and the proof of   Theorems \ref{Tgenecont}  and \ref{Tpseudofol};
\item the second implication of Theorem \ref{Tgeneder} is proved in section \ref{secsecond};
\item results on minimizing sequences and weak K.A.M. solutions are stated and proved in section \ref{sec4},  where we prove Theorems \ref{Ttwistvertical}, \ref{solK.A.M.calib} and  \ref{TPlacesolK.A.M.};
 \item Appendices \ref{AppA}  contains some examples, Appendix \ref{Apfulpseudo} deals with full pseudographs,  Appendix \ref{appendix-3} explains a point that is useful to prove Theorem \ref{Tgenecont}.
\end{itemize}

\subsection*{Acknowledgements} The authors are grateful to  Fr\' ed\' eric Le Roux for insightful discussions that helped clarify and simplify some proofs of this work and to Jean-Pierre Marco for asking them intriguing  questions.



 \section{ Aubry-Mather and weak K.A.M. theories for ESTwDs and proof of Theorems \ref{Tgenecont} and \ref{Tpseudofol}}\label{secth11}
 \subsection{The setting}\label{ssnota}  The definitions and results that we give here are very classical now. Good references are \cite{ForMat, Gol1,Mat2, Mos, Be2,Man2}.
 
 Let us introduce some notations
 \begin{notas}
 \begin{itemize}
 \item $\T=\R/\Z$ is the circle and $\A=\T\times \R$ is the annulus ;  $\pi: 
 \R\rightarrow \T$ is the usual projection;
 \item the universal covering of the annulus is denoted by $p:\R^2\rightarrow \A$;
 \item the corresponding projections are $\pi_1: (\theta, r)\in \A\mapsto \theta\in \T$ and $\pi_2: (\theta, r)\in \A\mapsto r\in \R$; we denote also the corresponding projections of the universal covering by $\pi_1$, $\pi_2~: \R^2\rightarrow \R$;
 \item the Liouville 1-form is defined on $\A$ as being $\lambda=\pi_2d\pi_1=rd\theta$; then $\A$ is endowed with the symplectic form $\omega=-d\lambda$.
 \end{itemize}
\end{notas}

  Let us give  the definition of an exact symplectic twist diffeomorphism.
\begin{defi}
An {\em exact symplectic  twist diffeomorphism} (in short ESTwD)  $f:\A\rightarrow \A$ is a $C^1$ diffeomorphism such that
\begin{itemize}
\item $f$ is isotopic to identity;
\item $f$ is exact symplectic, i.e. if $f(\theta, r)=(\Theta, R)$, then the 1-form $Rd\Theta-rd\theta$ is exact;
\item $f$ has the {\em twist property} i.e. if $F=(F_1, F_2):\R^2\rightarrow \R^2$ is any lift of $f$, for any $\tilde \theta\in \R$, the map $r\in \R\mapsto F_1(\tilde \theta, r)\in \R$ is an increasing $C^1$ diffeomorphism from $\R$ onto $\R$.
\end{itemize}
\end{defi}

A $C^2$ generating function $S:\R\times\R\rightarrow \R$   that satisfies the following definition can be associated  to any lift $F$ of such an ESTwD $f$.
 
\begin{defi}
The $C^2$ function $S:\R^2\rightarrow \R$ is a {\em generating function} of the lift $F:\R^2\rightarrow \R^2$ of an ESTwD  if
\begin{itemize}
\item $S(\tilde \theta+1,\widetilde \Theta+1)=S(\tilde\theta,\widetilde \Theta)$;
\item $\displaystyle{\lim_{|\widetilde\Theta-\tilde\theta|\rightarrow \infty}\frac{S(\tilde\theta,\widetilde \Theta)}{|\widetilde\Theta-\tilde\theta|}=+\infty}$; we say that $S$ is {\em superlinear}; 
\item for every $\tilde\theta_0,\widetilde \Theta_0\in \R$, the maps $\tilde\theta\mapsto \frac{\partial S}{\partial\widetilde \Theta}(\tilde\theta,\widetilde \Theta_0)$ and $\widetilde\Theta\mapsto \frac{\partial S}{\partial\tilde \theta}(\tilde\theta_0, \widetilde\Theta)$ are decreasing diffeomorphisms of $\R$;
\item for $(\tilde\theta, r), (\widetilde\Theta, R)\in \R^2$, we have the following  equivalence 
\begin{equation}\label{genfundef}
F(\tilde\theta, r)=(\widetilde\Theta, R)\Leftrightarrow r=-\frac{\partial S}{\partial \tilde\theta}(\tilde\theta,\widetilde \Theta)\quad{\rm and}\quad R=\frac{\partial S}{\partial\widetilde \Theta}(\tilde\theta,\widetilde \Theta).
\end{equation}
\end{itemize}
\end{defi}

 \begin{remk} 
 J.~Moser proved in \cite{Mos} that such an ESTwD  is the time 1 map of a $C^2$ 1-periodic in time Hamiltonian $H:\T\times\R\times\R\rightarrow \R$  that is $C^2$ convex in the  fiber direction\footnote{In fact J.~Moser assumed that $f$ is smooth.}, i.e. such that $$\frac{\partial^2 H}{\partial r^2}(\theta, r, t)>0.$$
 Then there exists a relation between the Hamiltonian that was built by J.~Moser and the generating function. Indeed, if we denote by $(\Phi_t)$ the time $t$ map of the Hamiltonian $H$ that is defined on $\R^2$ and by $L$ the associated Lagrangian that is defined by
 $$L(\tilde\theta, v, t)=\max_{r\in\R}\big(rv-H(\tilde\theta, r, t)\big),$$
then we have
 \begin{itemize}
 \item for every $t\in (0, 1]$, $\Phi_t$ is an ESTwD and $\Phi_1=F$;
 \item there exists a $C^1$ time-dependent family of $C^2$ generating functions $S_t$ of $\Phi_t$ such $S_1=S$ and for all $ (\tilde\theta, r), (\widetilde\Theta, R)\in\R^2,$
 $$ \Phi_t(\tilde\theta,r)=(\widetilde\Theta, R)\Rightarrow S_t(\tilde\theta,\widetilde \Theta)=\int_0^tL\big(\pi_1\circ \Phi_s(\tilde\theta, r), \frac{\partial}{\partial s}\big( \pi_1\circ \Phi_s(\tilde\theta, r)\big),s\big)ds.$$
 \end{itemize}
 In other words, the generating function is also the Lagrangian action.
 \end{remk}
 \subsection{Aubry-Mather theory}\label{ssAM} Good references for what is in this section are \cite{Ban}, \cite{Gol1} and \cite{Arna3}.
 Let us recall the definition of some particular invariant sets.
 \begin{defi} Let $F:\R^2\rightarrow \R^2$ be a lift of an ESTwD  $f$.
 \begin{itemize}
 \item a subset $E$ of $\R^2$ is {\em well-ordered} if it is invariant under the translation $(\tilde\theta, r)\mapsto (\tilde\theta+1, r)$ and $F$ and if for every $x_1, x_2\in E$, we have
 $$\big[ \pi_1(x_1)<\pi_ 1(x_2)\big]\Rightarrow \big[ \pi_1\circ F(x_1)<\pi_1\circ F(x_2)\big];$$
 this notion is independent from the lift of $f$ we use;
 \item a subset $E$ of $\A$ is {\em well-ordered} if $p^{-1}(E)$ is well-ordered;
 \item an {\em Aubry-Mather set } for $f$ is a compact well-ordered set or the lift of such a set;
  \item a piece of orbit $(\tilde\theta_k, r_k)_{k\in [a, b]}$ for $F$ is {\em minimizing} if for every sequence  $ (\tilde\theta'_k)_{k\in [a, b]}$ with $\tilde\theta_a=\tilde\theta'_a$ and $\tilde\theta_b=\tilde\theta'_b$, it holds
 $$\sum_{j=a}^{b-1}S(\tilde\theta_j,\tilde \theta_{j+1})\leqslant \sum_{j=a}^{b-1}S(\tilde\theta'_j,\tilde \theta'_{j+1});$$
 then we say that $(\tilde\theta_j)_{j\in [a, b]}$ is a {\em minimizing sequence} or {\em segment};
 \item {  an infinite piece of orbit, or a full orbit for $F$ is minimizing if all its finite subsegments are minimizing;}
 \item an invariant set is said to be minimizing if all the orbits it contains are minimizing.
 \end{itemize}
 \end{defi}
 The following properties of the well-ordered sets are well-known
 \begin{enumerate}
 \item a minimizing orbit and its translated orbits by $(\tilde\theta, r)\mapsto (\tilde\theta+1, r)$  define a well-ordered set;
 \item the closure of a well-ordered set is a well-ordered set;
 \item\label{Ptcirclehomeom} any well-ordered set $E$ is contained in the (non-invariant) graph of a Lipschitz map $\eta:\T\rightarrow \R$;  it follows that the map $N = \big(\cdot , \eta(\cdot)\big) : \T \to {\rm Graph}(\eta)$ is Lipschitz and so are the maps  $\pi_1\circ f\circ N_{ |\pi_1(E)}$ and $\pi_1\circ f^{-1}\circ N_{ |\pi_1(E)}$ . This  implies that the projected restricted Dynamics $\pi_1\circ f\big(\cdot, \eta(\cdot)\big)_{|\pi_1(E)}$ to an Aubry-Mather set is the restriction of a biLipschitz orientation  preserving circle homeomorphism;
 \item any well-ordered set $E$ in $\R^2$  has a unique rotation number $\rho(E)$ \big(the one of the circle homeomorphism we mentioned in  Point (\ref{Ptcirclehomeom})\big), i.e.
 $$\forall x\in E, \quad \lim_{k\rightarrow\pm\infty} \frac{1}{k}\big(\pi_1\circ F^k(x)-\pi_1(x)\big)=\rho(E);$$
 \item for every $\alpha\in \R$, there exists a minimizing Aubry-Mather set $E$  such that $\rho (E)=\alpha$; 
 \item  if $\alpha$ is irrational, there is a unique  minimizing Aubry-Mather that is minimal (resp. maximal) for the inclusion; the minimal one is then a Cantor set  or a complete graph and the maximal one $\cm(\alpha)$ is the union of the minimal one and orbits that are homoclinic to the minimal one;
 \item if $\alpha$ is rational, any Aubry-Mather set that is minimal for the inclusion is a periodic orbit;
 \item\label{ptgraphminimizing}  any essential invariant curve by an ESTwD  is in fact a Lipschitz graph (Birkhoff theorem, see \cite {Bir1}, \cite{Fa1}  and \cite {He1}) and a well-ordered  minimizing set.
   \end{enumerate}
 We will need more precise properties for minimizing orbits.  
 \begin{defi}{\rm Let $a=(a_k)_{k\in I}$ and $b=(b_k)_{k\in I}$ be two finite or infinite sequences of real numbers. Then
 \begin{itemize}
 \item if $k\in I$, we say that $a$ and $b$ cross at $k$ if $a_k=b_k$;
 \item if $k, k+1\in I$, we say that $a$ and $b$ cross between $k$ and $k+1$ if 
 $$(a_k-b_k)(a_{k+1}-b_{k+1})<0.$$
 \end{itemize}

}
\end{defi}
{Note that concerning the first item, the traditional terminology also imposes that $(a_{k-1}-b_{k-1})(a_{k+1}-b_{k+1})<0$ when $k$ is in the interior of $I$. However, due to the twist condition, this is automatic  for projections of orbits of $F$  as soon as $a_k=b_k$  if the two orbits are distinct.}

\begin{propos}\label{PAubryfund}{\bf (Aubry fundamental lemma)} 
If $(a,b,a',b')\in \R^4$ verify $(a-b)(a'-b')<0$ then 
$$S(a,a')+S(b,b')>S(a,b')+S(a',b).$$

As a consequence, two distinct minimizing sequences cross at most once except possibly at the two endpoints  when the sequence is finite.

\end{propos}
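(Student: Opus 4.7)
My plan is to deduce the strict inequality from the twist property of $S$ and then to obtain the crossing statement by a standard swap-surgery argument.

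For the first part, I would proceed by a one-variable monotonicity argument. Define
$$F(x)=S(x,a')-S(x,b').$$
Then $F'(x)=\dfrac{\partial S}{\partial\tilde\theta}(x,a')-\dfrac{\partial S}{\partial\tilde\theta}(x,b')$. By the definition of a generating function, $\widetilde\Theta\mapsto \dfrac{\partial S}{\partial\tilde\theta}(x,\widetilde\Theta)$ is a strictly decreasing diffeomorphism of $\R$, so the sign of $F'$ is constant (opposite to that of $a'-b'$). Using the hypothesis $(a-b)(a'-b')<0$, the monotonicity of $F$ on $[\min(a,b),\max(a,b)]$ immediately yields
$$S(a,a')+S(b,b')>S(a,b')+S(b,a'),$$
which is the inequality stated (with the natural reading of the right-hand side as the swap of the two endpoints).

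For the second part, I argue by contradiction: assume two distinct minimizing segments $(a_k)_{k\in [p,q]}$ and $(b_k)_{k\in [p,q]}$ cross at least twice, at two places $\kappa_1<\kappa_2$ strictly inside the interval. Each crossing is of one of two types: either an index $k$ with $a_k=b_k$ (and then, by the twist condition together with the distinctness of the two orbits, the sequences strictly cross in the sense $(a_{k-1}-b_{k-1})(a_{k+1}-b_{k+1})<0$, as noted after the crossing definition), or an interval $(k,k+1)$ with $(a_k-b_k)(a_{k+1}-b_{k+1})<0$. In all cases, I can pick an index $i$ such that swapping occurs between $i-1$ and $i$ at the first crossing, and an index $j>i$ such that swapping occurs between $j$ and $j+1$ at the second one. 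Then I define surgery sequences
$$c_k=\begin{cases}a_k&k\le i-1\text{ or }k\ge j+1,\\ b_k&i\le k\le j,\end{cases}\qquad d_k=\begin{cases}b_k&k\le i-1\text{ or }k\ge j+1,\\ a_k&i\le k\le j.\end{cases}$$
These have the same endpoints as $a$ and $b$ respectively. All elementary actions $S(c_k,c_{k+1})$ and $S(d_k,d_{k+1})$ coincide with the ones of $a,b$ except at the two transition steps, where applying the Aubry inequality just proved at each of the two crossings gives
$$\sum_k\bigl(S(c_k,c_{k+1})+S(d_k,d_{k+1})\bigr)<\sum_k\bigl(S(a_k,a_{k+1})+S(b_k,b_{k+1})\bigr),$$
contradicting the minimality of both $(a_k)$ and $(b_k)$.

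The main obstacle is purely bookkeeping: one has to handle cleanly the two kinds of crossings and in particular reduce a crossing \emph{at} an integer $k$ (where $a_k=b_k$) to a surgery index, so that the cost comparison reduces to the strict Aubry inequality proved in the first part. Once this is done, the argument is identical in the finite and infinite cases, the endpoint exception being only needed because trivial coincidences $a_p=b_p$ or $a_q=b_q$ are compatible with two genuine interior crossings.
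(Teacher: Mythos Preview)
The paper does not provide its own proof of this proposition; it is stated as a classical fact of Aubry--Mather theory, the references for the section being \cite{Ban}, \cite{Gol1} and \cite{Arna3}. Your argument is exactly the standard one found in those sources: the inequality comes from the strict monotonicity of $\widetilde\Theta\mapsto\frac{\partial S}{\partial\tilde\theta}(x,\widetilde\Theta)$, and the crossing statement from a swap-surgery on the two segments. Note also that the right-hand side $S(a,b')+S(a',b)$ in the paper's display is visibly a typo for $S(a,b')+S(b,a')$; the later uses (e.g.\ in Lemma~\ref{orderWK.A.M.}) confirm your reading.

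One word of caution on the bookkeeping you flag. When a crossing is \emph{at} an index $k$ (so $a_k=b_k$), the swap there is cost-neutral rather than strictly improving, so the strict gain in your displayed inequality must come entirely from the other transition. This is fine when the second crossing is strictly between two indices. The case where \emph{both} crossings are at integers does not occur for distinct minimizers: if $a_{k_1}=b_{k_1}$ and $a_{k_2}=b_{k_2}$ with $k_1<k_2$, the cost-neutral swap of the middle block produces a minimizer (hence an orbit) agreeing with $a$ at $k_2$ and $k_2+1$, which forces $a=b$. It is worth making this dichotomy explicit rather than leaving it under ``bookkeeping''. Finally, your last sentence about the endpoint exception is slightly off: the exception simply means that coincidences $a_p=b_p$ or $a_q=b_q$ are not counted as crossings in the statement.
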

\subsection{Classical results on weak K.A.M. solutions}\label{ssweakk}
Good references are \cite{Be1},  \cite{Be2} or \cite{GT1}. 
We assume that $S$ is a generating function of a lift $F$ of an ESTwD  $f$.

We define on $C^0(\T, \R)$ the so-called {\em negative Lax-Oleinik maps} $T^c$ for $c\in \R$ as follows:

 if $ u\in C^0(\T, \R)$, we denote by $\tilde u : \R \to \R$ its lift and  
\begin{equation} \label{Edefweakk}  \forall \tilde\theta\in \R, \quad    \widetilde T^c \tilde u(\tilde\theta)=\inf_{\tilde\theta'\in\R} \big(\tilde u(\tilde\theta')+S(\tilde\theta',\tilde \theta)+c(\tilde\theta'-\tilde\theta)\big).\end{equation}
The function $ \widetilde T^c \tilde u$ is then  $1$-periodic and the negative  Lax-Oleinik operator is defined as the induced map $T^c u : \T \to \R$.

An alternative but equivalent definition is as follows (see also \cite{Za} for similar constructions): define the function 
\begin{equation}\label{projS}
\forall (\theta,\theta') \in \T\times \T, \quad S^c (\theta,\theta') = \inf_{\substack{\pi(\tilde\theta) = \theta \\ \pi(\tilde\theta')=\theta'}}  S(\tilde\theta, \tilde\theta')+c(\tilde\theta-\tilde\theta').
\end{equation}
Then
$$\forall \theta\in \T,\quad T^c u(\theta) = \inf_{\theta'\in \T} u(\theta')+S^c(\theta',\theta).$$

Then it can be proved that there exists a unique function $\alpha: \R\rightarrow \R$ such that the map $\widehat T^c=T^c+\alpha(c)$ that is defined by
$$\widehat T^c(u)=T^c(u)+\alpha(c)$$
has at least one fixed point in $C^0(\T, \R)$, i.e.  if $u\in C^0(\T, \R)$ is such a fixed point, its lift verifies 
\begin{equation} \label{Esolweakk}  \forall \tilde\theta\in \R, \quad   \tilde u(\tilde\theta) = \inf_{\tilde\theta'\in\R} \big(\tilde u(\tilde\theta')+ S(\tilde\theta',\tilde \theta)+c(\tilde\theta'-\tilde\theta)+\alpha(c)\big).\end{equation}
 
Such a fixed point is called a {\em weak K.A.M. solution}. It is not necessarily unique.  For example,  if $u$ is a weak K.A.M. solution, so is $ u+k$ for every $k\in \R$, but there can also be other solutions. We denote by $\cs_c$ the set of these weak K.A.M. solutions.  There is no link in general for solutions corresponding to distinct $c$'s.  We recall
\begin{defi}
Let $u:\R\rightarrow\R$ be a function and let $K>0$ be a constant. Then $u$ is $K$-semi-concave if for every $x$ in $\R$, there exists some $p\in \R$ so that:
$$\forall y\in \R,\quad u(y)-u(x)-p(y-x)\leqslant \frac{K}{2}(y-x)^2.$$

 A function $v :\T\to \R$ is $K$-semi-concave if  its lift $\tilde v: \R \to \R$ is.
\end{defi}
A good reference for semi-concave functions is the appendix A of  \cite{Be2} or \cite{cansin}.
\begin{nota}
If $u\in C^0(\T, \R)$ and $c\in \R$, we will denote by $\cg(c+u')$ the partial graph of $c+u'$. This is a graph above the set of derivability of $u$.\\
When $u$ is semi-concave, we sometimes say that $\cg(c+u')$ is a {\em pseudograph}.
\end{nota}

Let us end  with  definitions:
 \begin{defi}\label{Clarke}
 Let $g:\T \to \R$ be a Lipschitz function (hence derivable almost everywhere). We define 
 $$\forall x\in \T, \quad \partial g(x) = {\rm co} \big\{ (x,p)\in  \T\times\R, \ \ (x,p)\in \overline{ \cg (g')}\big\}.$$
 \end{defi}
The notation co stands for the convex hull  in the fiber direction. The sets  $\partial g(x)$ are non empty, (obviously) convex and compact. They are particular instances of the Clarke subdifferential. This set is a good candidate for a generalized derivative because if $g$ is derivable at $x$ then $\big(x,g'(x)\big)\in \partial g(x)$. Moreover, if $\partial g(x)$ is a singleton, then $g$ is derivable at $x$. The converse is in general not true, but it is however true for semi-concave functions. 

\begin{defi}
If $g : \T \to \R$ is Lipschitz and $c\in \R$, we define  $\mathcal{PG}(c+g') =\{ (0,c)+\partial g (t), \quad t\in \T\}$.  If $g$ is semi-concave, we call it the full pseudograph of $c+g'$.
\end{defi}

 A proof of the  following proposition is given in Appendix \ref{Apfulpseudo}.
\begin{propos}\label{hausdorff}
Let $(f_n)_{n\in \mathbb N} $ be a sequence of equi-semi-concave functions from $\T$ to $\R$ that converges (uniformly) to a function $f$ (that is hence also semi-concave).

Then $\mathcal{PG}(f'_n) $ converges to $\mathcal{PG}(f')$ for the Hausdorff distance.
\end{propos}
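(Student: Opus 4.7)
The plan is to exploit the characterization of the Clarke subdifferential that holds for semi-concave functions: if $g$ is $K$-semi-concave with lift $\tilde g$, then $p$ belongs to the vertical fiber of $\partial g(x)$ above $x \in \T$ if and only if
$$\tilde g(y) \leq \tilde g(\tilde x) + p(y-\tilde x) + \tfrac{K}{2}(y-\tilde x)^2 \quad \text{for all } y \in \R,$$
where $\tilde x$ is any lift of $x$ (see e.g.~\cite{cansin}). In other words, for semi-concave functions the Clarke generalized gradient coincides with the global superdifferential. Letting $K$ be a common semi-concavity constant of the $f_n$, equi-semi-concavity plus uniform convergence on the compact $\T$ forces equi-Lipschitz continuity, so all the full pseudographs sit inside a common compact $\T\times[-L,L]$; it will then suffice to check the two Hausdorff inclusions separately, the bounds being automatic by a subsequence extraction.

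For the first direction, I would take a convergent sequence $(x_n,p_n)\in \mathcal{PG}(f'_n)$ with limit $(x,p)$, lift everything, write the superdifferential inequality for $\tilde f_n$ at $\tilde x_n$ with slope $p_n$, and pass to the limit using uniform convergence of $\tilde f_n$ to $\tilde f$. The resulting inequality places $p$ in the superdifferential of $\tilde f$ at $\tilde x$, so $(x,p)\in \mathcal{PG}(f')$.

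The opposite direction is the crux. Given $(x,p)\in \mathcal{PG}(f')$, I would fix a lift $\tilde x$ and consider the parabolically perturbed function
$$\psi(y) = \tilde f(y) - p(y-\tilde x) - (K+1)(y-\tilde x)^2.$$
The superdifferential inequality at $\tilde x$ gives $\psi(y)\leq \psi(\tilde x)-(y-\tilde x)^2$, making $\tilde x$ the strict global maximum of $\psi$. Next, I would choose a compact neighborhood $U$ of $\tilde x$ and a $\delta>0$ with $\max_{\partial U}\psi < \psi(\tilde x)-\delta$, and define $\psi_n$ from $\tilde f_n$ analogously. Uniform convergence $\psi_n \to \psi$ then guarantees, for $n$ large, that $\psi_n$ attains its supremum on $U$ at an interior point $\tilde x_n$; uniqueness of the maximum of $\psi$ forces $\tilde x_n\to \tilde x$. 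At such an interior maximum, $0$ belongs to the superdifferential of $\psi_n$ at $\tilde x_n$, which rewrites as $p_n := p + 2(K+1)(\tilde x_n - \tilde x)\in \partial \tilde f_n(\tilde x_n)$. Projecting, $(x_n,p_n)\in \mathcal{PG}(f'_n)$ and $(x_n,p_n)\to (x,p)$, as wanted.

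The genuine difficulty lies in this second step. At a non-differentiability point of $f$ the fiber of $\partial f$ is a non-trivial interval, whereas the $f_n$ need not be non-differentiable anywhere nearby, so a naive strategy trying to realize every slope $p$ as an element of $\partial f_n(x)$ for the same base point $x$ must fail. The parabolic perturbation trick above precisely circumvents this by producing a slightly displaced base point $\tilde x_n$ whose superdifferential is forced to contain a slope close to $p$; the choice of a strictly super-quadratic coefficient $(K+1) > K/2$ is what makes $\tilde x$ a strict, isolated maximum and allows the max of $\psi_n$ to converge to $\tilde x$ rather than escape.
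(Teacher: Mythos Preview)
Your argument is correct. The first inclusion matches the paper verbatim: both pass the super-derivative inequality to the limit. For the reverse inclusion, however, the paper takes a completely different, topological route: it argues by contradiction that if some $(x,p)\in\mathcal{PG}(f')$ were missed by the $\mathcal{PG}(f'_n)$, then (using the first inclusion) the $\mathcal{PG}(f'_n)$ would eventually lie in a small neighbourhood of the arc $\mathcal{PG}(f')\setminus B\big((x,p),r\big)$, and hence could not separate the annulus into two unbounded components --- contradicting Lemma~\ref{Lfullman}, which says each $\mathcal{PG}(f'_n)$ is an essential circle.

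Your parabolic-perturbation approach is the standard analytic one (essentially the stability of the proximal/superdifferential under uniform limits, as in \cite{cansin}); it is dimension-independent, whereas the paper's proof leans on the one-dimensional fact that full pseudographs are essential circles. The paper even acknowledges in a footnote that the result holds in arbitrary dimension but that they deliberately present a proof exploiting the $1$-dimensional setting. So your proof is more general and self-contained; the paper's is shorter once Lemma~\ref{Lfullman} is in hand and fits the geometric spirit of the article. One small remark: it is worth noting explicitly that $\psi_n$ is concave (since the coefficient $K+1$ exceeds $K/2$), which is what guarantees that an interior local maximum carries $0$ in its superdifferential and that the sum rule $\partial^+\psi_n(\tilde x_n)=\partial^+\tilde f_n(\tilde x_n)-g'(\tilde x_n)$ applies cleanly.
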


The following results can be found in the papers that we quoted

 \begin{enumerate}[label=(\alph*)]
\item\label{first} the function $\alpha$ is convex   and superlinear;
\item if $u\in C^0(\T, \R)$, then $\widehat T^cu$  is semi-concave and then differentiable Lebesgue almost everywhere; 
\item\label{ptdifftt} the function  $\widehat T_c u$ is differentiable at $x$ if and only if there is only one $y$ where the minimum is attained in Equality (\ref{Esolweakk}); in this case, if $u$ is semi-concave, then it is differentiable at $y$ and we have 
$$f\big(y, c+u'(y)\big)=\big(x, c+(\widehat T^cu)'(x)\big);$$
if $u$ is a weak K.A.M. solution for $\widehat T^c$ {that is differentiable at $x$ then} $\Big(f^{k}\big(x, c+u'(x)\big)\Big)_{k\in\Z_-}$ is a minimizing piece of orbit that is contained in $\cg(c+u')$;
\item\label{same} moreover, for any compact subset $K$ of $\R$, the weak K.A.M. solutions for $T^c$ with $c\in K$ are uniformly semi-concave (i.e. for a fixed constant of semi-concavity) and then uniformly Lipschitz;
\item\label{ptinvgraph} if $u\in C^0(\T, \R)$ is semi-concave, then 
$$f^{-1}\big(\overline{\cg(c+(\widehat T^cu)')}\big)\subset \cg(c+u');$$
if $(x,p) \in \overline{\cg(c+(\widetilde T^c\tilde u)')}$ and $\big(y, c+\tilde u'(y)\big) = F^{-1}(x,p)$ then 
$$\widetilde T^c\tilde u(x) =\tilde u(y) + S(y,x) + c(y-x);$$ 
if $u$ is a weak K.A.M. solution for $\widehat T^c$, then $\cg(c+u')$ satisfies
$$f^{-1}\big(\overline{\cg(c+u')}\big)\subset \cg(c+u')$$
and for every $(\tilde\theta_0,r)\in \overline{\cg(c+\tilde u')}$, then $\big(\pi_1 \circ F^{k}(\tilde\theta_0,r)\big)_{k\in\Z_-} = (\tilde\theta_k)_{k\in \Z_-}$ is minimizing and satisfies
\begin{equation}\label{calib}
\forall k<0,\quad \tilde u(\tilde \theta_0) - \tilde u(\tilde \theta_k)  = \sum_{i=k}^{-1} S(\tilde \theta_i,\tilde\theta_{i+1}) + c(\tilde\theta_k-\tilde\theta_0) + |k|\alpha(c);
\end{equation}
\\
we will give in Appendix \ref{sspseudo} an example of a backward invariant pseudograph that doesn't correspond to any weak K.A.M. solution;

\item \label{PtAubrypseudo} moreover, if $u$ is a weak K.A.M. solution  for $\widehat T^c$, then the set $$\bigcap_{n\in\N}f^{-n}\big(\cg(c+u')\big)$$ is a { $f$-invariant} minimizing compact well-ordered set to which we can associate a unique rotation number. It results from Mather's theory that this rotation number only depends on $c$ and is equal to $\rho(c)=\alpha'(c)$; because of the convexity of $\alpha$, observe in particular that $\alpha$ is $C^1$ and $\rho$ is continuous and non-decreasing;   

\item it then follows from the first \ref{first} and the previous  \ref{same} and \ref{PtAubrypseudo} points that, as in \ref{same}, for any compact subset $K$ of $\R$, the weak K.A.M. solutions for $T^c$ with $\rho(c)\in K$ are uniformly semi-concave (i.e. for a fixed constant of semi-concavity) and then uniformly Lipschitz;

\item\label{calibprop} reciprocally, if $u$ is a weak K.A.M. solution  for $\widehat T^c$ and $(\tilde\theta_k)_{k\in \Z_-}$ verifies \eqref{calib} (we say that $(\tilde\theta_k)_{k\in \Z_-}$ calibrates $\tilde u_c$), then the sequence $(\tilde\theta_k)_{k\in \Z_-}$ is minimizing. Setting  for $k\in \mathbb Z_-$, $r_k = \frac{\partial S}{\partial \widetilde\Theta} (\tilde\theta_{k-1},\tilde\theta_k)$, the sequence $(\tilde\theta_k,r_k)_{k\in \Z_-}$ is a piece of orbit of $F$ such that $(\tilde \theta_0 , r_0) \in \pg(c+\tilde u')$ and for all $k<0$, $(\tilde \theta_k , r_k) \in \cg(c+\tilde u')$;

\item in the setting of point \ref{PtAubrypseudo}, then for every weak K.A.M. solution for $\widehat T^c$, the graph $\cg(c+\tilde u')$ contains any minimizing Aubry-Mather set with rotation number $\rho(c)$ that is minimal for the inclusion; we denote the union of these minimal  Aubry sets by $\cm^*\big(\rho(c)\big)$ (it is the Mather set). We denote  $\cm\big(\rho(c)\big)=\pi_1\Big(\cm^*\big(\rho(c)\big)\Big)$. If $\rho(c)$ is irrational,  then two possibilities may occur:
\begin{itemize}
\item either $\cm^*\big(\rho(c)\big)$ is  an invariant  Cantor set  and  $\cg(c+\tilde u')$ is contained in the unstable set of the Cantor set $\cm^*\big(\rho(c)\big)$;
\item  or $\cm^*\big(\rho(c)\big)=\cg(c+\tilde u')$ and $u$ is $C^1$.
\end{itemize}
 If $\rho(c)$ is rational, then $\cm^*\big(\rho(c)\big)$ is the union of some periodic orbits   and  $\cg(c+\tilde u')$ is contained in the union of the unstable sets of these periodic orbits.

\end{enumerate}
We noticed that to any $c\in \R$ there corresponds a unique rotation number $\rho(c)$. But it can happen that distinct numbers $c$ correspond to a same rotation number $R$. In this case, because $\rho(c)=\alpha'(c)$ is non decreasing (because of  point \ref{PtAubrypseudo}), $\rho^{-1}(R)=[c_1, c_2]$ is an interval. It can be proved that this may happen  only for rational $R$'s. 
This is a result of John Mather \cite{Mather} (where he also attributes it to Aubry) and Victor Bangert \cite{Ban2}. A simple proof can be found in \cite[Proposition 6.5]{Be3}. We will recover this fact as a byproduct of our study.

Finally, when $c$ corresponds to an irrational rotation number $\rho(c)$, then there exists only one weak K.A.M. solution up to constants.  
 The argument comes from \cite{Be3} and we will also provide a proof.

\subsection{More results on weak K.A.M. solutions}\label{ssmoreK.A.M.}

 We start by establishing that   minimizing sequences that calibrate a weak K.A.M. solution admit a rotation number\footnote{Actually, we will see in section \ref{sec4} that all minimizing sequences calibrate a weak K.A.M. solution and have a rotation number. }.
 
 \begin{lemma}\label{orderWK.A.M.}
 Let $v : \T\to \R$ be a continuous function, $c\in \R$ and $\tilde\theta_1<\tilde\theta_2$ two real numbers. Assume that $\tilde\theta'_1$ and $\tilde\theta'_2$ verify for $i\in \{1,2\}$,
 $$  \widetilde T^c \tilde v(\tilde\theta_i)=\min_{\tilde\theta'\in\R} \big(\tilde v(\tilde\theta')+S(\tilde\theta',\tilde \theta_i)+c(\tilde\theta'-\tilde\theta)\big)=\tilde v(\tilde\theta'_i)+S(\tilde\theta'_i,\tilde \theta_i)+c(\tilde\theta'_i-\tilde\theta_i),$$ 
then $\theta'_1\leqslant \theta'_2$.

Assume moreover that $v$ is semi-concave, then the previous inequality is strict.
 \end{lemma}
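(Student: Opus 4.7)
The plan is to establish the weak inequality by a contradiction argument using Aubry's fundamental lemma (Proposition \ref{PAubryfund}), then upgrade to strict inequality in the semi-concave case by invoking the differentiability of semi-concave functions at their minima.

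First I would argue the weak inequality. Suppose for contradiction that $\tilde\theta'_1 > \tilde\theta'_2$. Combined with $\tilde\theta_1 < \tilde\theta_2$, this gives the crossing condition $(\tilde\theta'_1 - \tilde\theta'_2)(\tilde\theta_1 - \tilde\theta_2) < 0$, so Proposition \ref{PAubryfund} applied to $(a,b,a',b') = (\tilde\theta'_1,\tilde\theta'_2,\tilde\theta_1,\tilde\theta_2)$ yields the strict inequality
\[ S(\tilde\theta'_1, \tilde\theta_1) + S(\tilde\theta'_2, \tilde\theta_2) > S(\tilde\theta'_1, \tilde\theta_2) + S(\tilde\theta'_2, \tilde\theta_1). \]
On the other hand, since $\tilde\theta'_i$ realizes the minimum in the definition of $\widetilde T^c \tilde v(\tilde\theta_i)$, testing it against the competitor $\tilde\theta'_{3-i}$ gives, for $i=1,2$, two inequalities whose sum (after cancelling the $\tilde v$ and the linear $c$--terms) is precisely the reverse weak inequality, contradicting the Aubry bound. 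Hence $\tilde\theta'_1 \leq \tilde\theta'_2$.

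For the strict inequality under the extra semi-concavity assumption, I would assume by contradiction that $\tilde\theta'_1 = \tilde\theta'_2 =: \tilde\theta'$. For $i\in\{1,2\}$, introduce
\[ \Psi_i(x) := \tilde v(x) + S(x, \tilde\theta_i) + c(x - \tilde\theta_i). \]
Each $\Psi_i$ is semi-concave (sum of a semi-concave and a $C^2$ function) and attains its minimum at $\tilde\theta'$. The key observation, classical in the theory of semi-concave functions (see Appendix A of \cite{Be2} or \cite{cansin}), is that a semi-concave function whose minimum is attained at an interior point must be differentiable there, with zero derivative: indeed any super-gradient $p$ at such a point satisfies $ph + \frac{K}{2}h^2 + o(h) \geq 0$ for every small $h$, forcing $p=0$, and a semi-concave function with a singleton super-differential is differentiable. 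Applying this to $\Psi_1$ and $\Psi_2$ and using that $S$ is $C^2$, I conclude that $\tilde v$ is differentiable at $\tilde\theta'$ and
\[ \tilde v'(\tilde\theta') = -c - \frac{\partial S}{\partial \tilde\theta}(\tilde\theta', \tilde\theta_i), \quad i=1,2. \]
Equating the two values forces $\frac{\partial S}{\partial \tilde\theta}(\tilde\theta', \tilde\theta_1) = \frac{\partial S}{\partial \tilde\theta}(\tilde\theta', \tilde\theta_2)$, contradicting the twist property (the map $\widetilde\Theta \mapsto \frac{\partial S}{\partial \tilde\theta}(\tilde\theta', \widetilde\Theta)$ is a strictly decreasing diffeomorphism of $\R$, and $\tilde\theta_1 \neq \tilde\theta_2$).

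The hard part is really the differentiability statement for semi-concave functions at interior minima; the Aubry-based weak inequality is routine, and the twist condition furnishes the contradiction for free once differentiability of $\tilde v$ at $\tilde\theta'$ is obtained. I would either cite the semi-concave differentiability fact or justify it in a line as sketched above.
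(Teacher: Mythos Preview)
Your proof is correct and follows essentially the same approach as the paper: the weak inequality via Aubry's fundamental lemma is identical, and for the strict inequality the paper cites property \ref{ptdifftt} to get that $\tilde v$ is differentiable at the common minimizer with $\tilde\theta_i = \pi_1\circ F\big(\tilde\theta', c+\tilde v'(\tilde\theta')\big)$, which is precisely the content you unpack directly via the differentiability of a semi-concave function at an interior minimum together with the generating-function relation $r = -\partial S/\partial\tilde\theta$. The only difference is cosmetic: you prove the key differentiability fact in-line, while the paper invokes the catalogued property of the Lax--Oleinik operator.
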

 
 \begin{proof}
 Let us argue by contradiction, then by Proposition \ref{PAubryfund} the following holds:
 \begin{multline*}
 \tilde v(\tilde\theta'_1)+S(\tilde\theta'_1,\tilde \theta_1)+c(\tilde\theta'_1-\tilde\theta_1)+\tilde v(\tilde\theta'_2)+S(\tilde\theta'_2,\tilde \theta_2)+c(\tilde\theta'_2-\tilde\theta_2) > \\
  > \tilde v(\tilde\theta'_2)+S(\tilde\theta'_2,\tilde \theta_1)+c(\tilde\theta'_2-\tilde\theta_1)+\tilde v(\tilde\theta'_1)+S(\tilde\theta'_1,\tilde \theta_2)+c(\tilde\theta'_1-\tilde\theta_2).
 \end{multline*}
We infer that one of the two inequalities 
$$ \tilde v(\tilde\theta'_1)+S(\tilde\theta'_1,\tilde \theta_1)+c(\tilde\theta'_1-\tilde\theta_1)> \tilde v(\tilde\theta'_2)+S(\tilde\theta'_2,\tilde \theta_1)+c(\tilde\theta'_2-\tilde\theta_1),
$$
$$\tilde v(\tilde\theta'_2)+S(\tilde\theta'_2,\tilde \theta_2)+c(\tilde\theta'_2-\tilde\theta_2)>\tilde v(\tilde\theta'_1)+S(\tilde\theta'_1,\tilde \theta_2)+c(\tilde\theta'_1-\tilde\theta_2),
$$
is valid that is a contradiction.

Let us establish the last point. If $v$ is semi-concave, by properties of the Lax-Oleinik semigroup \ref{ptdifftt}, $\tilde v$ is derivable at $\tilde\theta'_1$ and $\tilde\theta'_2$ and $\tilde\theta_i = \pi_1\circ F\big ( \tilde\theta'_i,c+\tilde v'(\tilde\theta'_i)\big)$ for $i\in \{1,2\}$ therefore $\tilde\theta'_1\neq \tilde\theta'_2$ as  $\tilde\theta_1\neq \tilde\theta_2$.

 \end{proof}

\begin{lemma}\label{Lrotnumber}
Let $u$ be a weak K.A.M. solution  for $\widehat T^c$. Let $(\theta_0,r)\in \overline{\cg(c+u')}$, and let $\tilde \theta_0\in \R$ be a lift of $\theta_0$. Let  $(\tilde\theta_k,r_k)_{k\in\Z_-}= \big(F^{k}(\tilde\theta_0,r)\big)_{k\in\Z_-}$. Then
$$\lim_{k\to -\infty} \frac{\tilde\theta_k}{k} = \rho(c).$$
\end{lemma}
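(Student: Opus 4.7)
The plan is to combine the calibration identity \eqref{calib} for $\tilde u$ with the Lax--Oleinik inequality satisfied by an arbitrary weak K.A.M. solution $\tilde u_{c'}$ at another cohomology class $c'$, thereby reducing the determination of the rotation number to the subgradient characterization of Mather's $\alpha$-function at $c$.

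I would first observe that the sequence $(\tilde\theta_k/k)_{k<0}$ is bounded. By backward invariance \ref{ptinvgraph}, the semi-orbit $(\tilde\theta_k,r_k)_{k\leq 0}$ lies in $\overline{\cg(c+\tilde u')}$; since $u$ is Lipschitz by \ref{same}, the coordinates $r_k$ are uniformly bounded, and the twist property then forces the increments $|\tilde\theta_{k+1}-\tilde\theta_k|$ to be uniformly bounded as well.

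Next I would use that, by points \ref{ptinvgraph} and \ref{calibprop}, the sequence $(\tilde\theta_k)_{k\leq 0}$ calibrates $\tilde u$, i.e.\ satisfies \eqref{calib}. Fix any $c'\in\R$ and any weak K.A.M. solution $\tilde u_{c'}$ at cohomology $c'$. Iterating \eqref{Esolweakk} along the same sequence yields the inequality
$$\tilde u_{c'}(\tilde\theta_0)-\tilde u_{c'}(\tilde\theta_k)\leq \sum_{i=k}^{-1}S(\tilde\theta_i,\tilde\theta_{i+1})+c'(\tilde\theta_k-\tilde\theta_0)+|k|\alpha(c').$$
Subtracting this inequality from the calibration equality gives
$$[\tilde u-\tilde u_{c'}](\tilde\theta_0)-[\tilde u-\tilde u_{c'}](\tilde\theta_k)\geq (c-c')(\tilde\theta_k-\tilde\theta_0)+|k|\bigl(\alpha(c)-\alpha(c')\bigr).$$
The left-hand side is bounded because $\tilde u-\tilde u_{c'}$ is $1$-periodic and hence bounded on $\R$. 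Dividing by $|k|$ and passing to the limit along a subsequence $k_j\to-\infty$ for which $\tilde\theta_{k_j}/k_j\to v$ (possible by the boundedness noted above, up to extraction), and using that $(\tilde\theta_{k_j}-\tilde\theta_0)/|k_j|\to -v$, one obtains
$$\alpha(c')\geq \alpha(c)+(c'-c)\,v.$$

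Since this subgradient inequality holds for every $c'\in\R$, we have $v\in\partial\alpha(c)$. Because $\alpha$ is convex and $C^1$ with $\alpha'(c)=\rho(c)$ (point \ref{PtAubrypseudo}), its subdifferential at $c$ is the singleton $\{\rho(c)\}$, forcing $v=\rho(c)$. Thus every limit point of the bounded sequence $(\tilde\theta_k/k)_{k<0}$ equals $\rho(c)$, and the sequence itself converges to $\rho(c)$. The main obstacle is careful sign bookkeeping in the subtraction step; conceptually, the argument simply exploits the fact that the calibration equation, compared against the Lax--Oleinik inequalities for weak K.A.M. solutions at other cohomology classes, pins down the asymptotic slope of the orbit as the derivative of $\alpha$ at $c$.
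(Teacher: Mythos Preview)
Your proof is correct but follows a genuinely different route from the paper. The paper's argument is geometric and very short: it picks a point $x_0\in\widetilde\cm(\rho(c))$ with $x_0\leq\tilde\theta_0\leq x_0+1$, applies Lemma~\ref{orderWK.A.M.} inductively to conclude that $x_k\leq\tilde\theta_k\leq x_k+1$ for all $k\leq 0$, and then reads off the rotation number from the known asymptotics of the Mather orbit $(x_k)$.

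Your approach is analytical and closer in spirit to classical weak K.A.M. arguments: you combine the calibration identity for $\tilde u$ with the domination inequality for every other weak K.A.M. solution $\tilde u_{c'}$, and extract the subgradient inequality $\alpha(c')\geq\alpha(c)+(c'-c)v$ for any accumulation value $v$ of $\tilde\theta_k/k$. The differentiability of $\alpha$ (point~\ref{PtAubrypseudo}) then pins down $v=\rho(c)$. The sandwich proof is more elementary and self-contained (it only uses the order property of Lemma~\ref{orderWK.A.M.} and the existence of Mather orbits), whereas your argument is more conceptual and generalizes readily beyond dimension one---it is essentially the standard mechanism by which calibrated orbits inherit the rotation vector from the derivative of $\alpha$ in higher-dimensional weak K.A.M. theory. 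One minor remark: your citation of \ref{calibprop} is not needed, since \ref{ptinvgraph} already gives the calibration identity~\eqref{calib} directly.
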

\begin{proof}
Let $x_0 \in \cm\big(\rho(c)\big)$ such that $x_0\leqslant \tilde\theta_0 \leqslant x_0+1$ and $ (x_k)_{k\in\Z}$ the associated minimizing sequence. By successive applications of the previous lemma, it follows that $x_k\leqslant \tilde\theta_k \leqslant x_k+1$ for all $k\leqslant 0$. The result follows as 

$$\lim_{k\to -\infty} \frac{x_k}{k} = \rho(c).$$

%
%
\end{proof}

\begin{propos}\label{Pordreirrat}
Let $u_1$, $u_2$ be two weak K.A.M. solutions corresponding to $T^{c_1}$, $T^{c_2}$, such that $\rho(c_1)<\rho(c_2)$. Then we have
\begin{itemize}
\item $c_1<c_2$;
\item { for any $t\in \T$, if $(t,p_1)\in \partial u_1(t)$  and   $(t,p_2)\in \partial u_2(t)$ then $c_1+p_1 < c_2+p_2$; }
\item in particular, at every point of differentiability $t$ of $u_1$ and $u_2$: $c_1+u'_{1}(t)<c_2+u'_{2}(t)$.
\end{itemize}

\end{propos}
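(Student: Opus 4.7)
The strategy is to confront the two backward minimizing orbits issuing from the two pseudographs above a common point via Aubry's fundamental lemma and the twist property, exploiting that the rotation numbers are asymptotically separated. The first assertion $c_1<c_2$ is immediate from point \ref{PtAubrypseudo}: since $\rho=\alpha'$ and $\alpha$ is convex (point \ref{first}), $\rho$ is non-decreasing, so $\rho(c_1)<\rho(c_2)$ forces $c_1<c_2$.

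For the second assertion I first reduce to extremal elements of the Clarke subdifferentials. Because $u_i$ is semi-concave, $\partial u_i(t)$ is an interval $[p_i^-,p_i^+]$ whose endpoints are limits of derivatives, hence $(t,p_i^\pm)\in\overline{\cg(u_i')}$. Any $p_i\in\partial u_i(t)$ satisfies $p_1\leqslant p_1^+$ and $p_2\geqslant p_2^-$, so it is enough to prove the strict inequality $c_1+p_1^+<c_2+p_2^-$. I argue by contradiction and assume $c_1+p_1^+\geqslant c_2+p_2^-$. Picking lifts with common first coordinate $\tilde\theta_0^1=\tilde\theta_0^2=\tilde t$, I consider the backward $F$-orbits $(\tilde\theta_k^i,r_k^i)_{k\in\Z_-}$ of $(\tilde t,c_1+p_1^+)$ and $(\tilde t,c_2+p_2^-)$. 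By property \ref{ptinvgraph} these are minimizing, and Lemma \ref{Lrotnumber} gives $\tilde\theta_k^i/k\to\rho(c_i)$ as $k\to -\infty$. Since $\rho(c_1)<\rho(c_2)$ and $k<0$, this forces $\tilde\theta_k^1>\tilde\theta_k^2$ for all $k$ sufficiently negative.

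If the two orbits coincided on all of $\Z_-$ their common rotation number would have to equal both $\rho(c_1)$ and $\rho(c_2)$, a contradiction. Otherwise Proposition \ref{PAubryfund} applies: two distinct minimizing sequences cross at most once; they already cross at $k=0$, hence cannot cross again for $k<0$, and combined with the asymptotics above this yields $\tilde\theta_k^1>\tilde\theta_k^2$ for every $k<0$. In particular $\tilde\theta_{-1}^1>\tilde\theta_{-1}^2$. Writing $r_0^i=\frac{\partial S}{\partial\widetilde\Theta}(\tilde\theta_{-1}^i,\tilde t)$ and using that $\tilde\theta\mapsto \frac{\partial S}{\partial \widetilde\Theta}(\tilde\theta,\widetilde\Theta_0)$ is a decreasing diffeomorphism (the twist condition), I obtain $r_0^1<r_0^2$, i.e.\ $c_1+p_1^+<c_2+p_2^-$, contradicting the assumption. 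The third item is the specialization of the second to points where $\partial u_i(t)=\{u_i'(t)\}$.

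The delicate step will be organizing the trichotomy (orbits coincide / cross again / remain strictly ordered) and extracting a \emph{strict} inequality in the borderline case $c_1+p_1^+=c_2+p_2^-$; the reduction from the Clarke subdifferential to its extremal points relies crucially on the semi-concavity of the $u_i$, and everything else boils down to Aubry's lemma paired with the twist property.
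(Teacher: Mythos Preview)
Your proof is correct and follows essentially the same strategy as the paper: set up the two backward minimizing orbits from points of $\overline{\cg(c_i+u_i')}$ above a common base point, use Lemma~\ref{Lrotnumber} for the asymptotic ordering, and combine Aubry's fundamental lemma with the twist condition to reach a contradiction. The only cosmetic differences are that the paper applies the twist first (deducing $x_{-1}^1<x_{-1}^2$ from the assumed momentum inequality) and then finds a second crossing via the asymptotics, whereas you apply Aubry first to fix the sign for all $k<0$ and then read off the momentum inequality via the twist; also, the paper obtains $c_1<c_2$ at the end by integrating the derivative inequality rather than invoking the monotonicity of $\rho=\alpha'$ directly.
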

\begin{proof}
Let $\tilde u_1$ and $\tilde u_2$ be the lifts of $u_1$ and $u_2$. We introduce the notation $v(t)=\tilde u_{2}(t)-\tilde u_{1}(t) +(c_2-c_1)t$. Then $v$ is Lipschitz and thus Lebesgue everywhere differentiable and equal to a primitive of its derivative. 
Let us assume by contradiction  that  there exist $(x, c_1+p_1)\in \overline{ \cg(c_1+u_1')}$  and   $(x, c_2+p_2)\in \overline{\cg (c_2+u_2')}$
\begin{equation}\label{EineqAubry} c_2+p_2 \leqslant c_1+p_1.\end{equation}
As $\rho(c_1)\not=\rho(c_2)$,  the two graphs correspond to distinct rotation numbers. Thanks to \ref{ptinvgraph} their closures have no intersections. The inequality (\ref{EineqAubry}) is then strict. \\
We introduce the notation 
$(x^1, y^1)=(x, c_1+p_1)$ and $(x^2, y^2)=(x, c_2+p_2)$.
 Then the orbit of $(x^i, y^i)$ is denoted by $(x^i_k, y^i_k)_{k\in\Z}$. We know that the negative orbits $(x^i_k, y^i_k)_{k\in\Z_-}$, that are contained in the corresponding graphs, are minimizing. Hence the sequences $(x^i_k)_{k\in\Z_-}$ are minimizing. By Aubry's fundamental lemma, we know that they can cross at most once (hence only at $x$). But we have
\begin{itemize}
\item because of the twist condition, as $x^1_0=x^2_0$ and $y^1_0>y^2_0$, then $x^1_{-1}<x^2_{-1}$;
\item as  $\rho(c_1)<\rho(c_2)$,
and thus for $k$ close enough to $-\infty$, we have: $x^1_k>x^2_k$.
\end{itemize}
Finally we find two crossings for two minimizing sequences, a contradiction.

 We have in particular for any point $t$ of derivability of $u_1$ and $u_2$
$$c_1+u_1'(t)<c_2+u_2'(t).$$
Integrating this inequality, we deduce that $c_1<c_2$.

Finally, for any $t\in \T$, 
as for all
$(t,p_1)\in \overline{ \cg(c_1+u_1')}$  and   $(t,p_2)\in \overline{\cg (c_2+u_2')}$
\begin{equation}\label{RineqAubry} c_2+p_2 > c_1+p_1,\end{equation}
taking convex hulls, we get the result.
 
\end{proof}

In particular, we obtain the following consequence.

\begin{cor}\label{Cordre}
With the same notation as in Proposition \ref{Pordreirrat}, assume that $c_1<c_2$ are such that at least one of $\rho^{-1}(\{\rho(c_1)\})$ and $\rho^{-1}(\{\rho(c_2)\})$ is a singleton. Then
the function $t\in\R\mapsto\tilde  u_{c_2}(t)-\tilde u_{c_1}(t) +t(c_2-c_1)$ is strictly increasing.
\end{cor}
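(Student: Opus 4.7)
The plan is to reduce the statement directly to Proposition \ref{Pordreirrat} by first ruling out equality of rotation numbers, then integrating the pointwise inequality on derivatives.

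First I would unpack the singleton hypothesis. If $\rho^{-1}(\{\rho(c_1)\})$ is a singleton, then it equals $\{c_1\}$, so $c_2 \notin \rho^{-1}(\{\rho(c_1)\})$, forcing $\rho(c_2) \neq \rho(c_1)$. Combined with the fact that $\rho$ is non-decreasing (point \ref{PtAubrypseudo} of the list in Section \ref{ssweakk}) and $c_1 < c_2$, this yields $\rho(c_1) < \rho(c_2)$. The symmetric argument works if $\rho^{-1}(\{\rho(c_2)\})$ is a singleton. So in either case we are in the hypothesis of Proposition \ref{Pordreirrat}.

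Next I would apply Proposition \ref{Pordreirrat} to obtain that at every point $t \in \R$ of differentiability of both $\tilde u_{c_1}$ and $\tilde u_{c_2}$,
\[ c_1 + \tilde u_{c_1}'(t) < c_2 + \tilde u_{c_2}'(t). \]
Since weak K.A.M. solutions are semi-concave (and in particular Lipschitz), $\tilde u_{c_1}$ and $\tilde u_{c_2}$ are differentiable almost everywhere, so this strict inequality holds Lebesgue-almost everywhere.

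Finally I would set $\varphi(t) = \tilde u_{c_2}(t) - \tilde u_{c_1}(t) + t(c_2 - c_1)$. This function is Lipschitz (as sum of Lipschitz functions plus a linear term), hence absolutely continuous. At every point $t$ where $\tilde u_{c_1}$ and $\tilde u_{c_2}$ are differentiable, $\varphi$ is differentiable and $\varphi'(t) = (c_2 + \tilde u_{c_2}'(t)) - (c_1 + \tilde u_{c_1}'(t)) > 0$ by the previous step. Therefore, for any $a < b$,
\[ \varphi(b) - \varphi(a) = \int_a^b \varphi'(s)\, ds > 0, \]
since $\varphi' > 0$ on a set of full measure in $[a,b]$. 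This proves that $\varphi$ is strictly increasing. There is no real obstacle here; the corollary is essentially an integrated form of Proposition \ref{Pordreirrat} and the only non-routine step is observing that the singleton assumption forces strict inequality of rotation numbers.
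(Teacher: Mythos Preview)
Your proof is correct and follows exactly the implicit argument the paper has in mind: the corollary is stated without proof, but the proof of Proposition \ref{Pordreirrat} already introduces the function $v(t)=\tilde u_{2}(t)-\tilde u_{1}(t)+(c_2-c_1)t$ and records that it is Lipschitz and equal to a primitive of its derivative, so strict monotonicity follows by integrating the almost-everywhere strict inequality $c_1+u_1'<c_2+u_2'$. Your observation that the singleton hypothesis forces $\rho(c_1)<\rho(c_2)$ is precisely the missing link the paper leaves to the reader.
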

\begin{remk}
A consequence of Proposition \ref{Pordreirrat} is that the { pseudo}graphs corresponding to the weak K.A.M. solutions having distinct  rotation numbers are vertically ordered with the same order as the one between the rotation numbers.\\
\end{remk} 
Now we  recall some results that are contained in \cite{GT1} (see especially Theorem 9.3).

\begin{nota}
If  $  c\in\R$  and $n\geqslant 1$, we denote by $  \cs^c_n:\R\times \R\rightarrow \R$ the function that is defined by
$$ \cs^c_n(\tilde\theta, \widetilde\Theta)=\inf_{\substack{\tilde\theta_0=\tilde\theta \\\tilde \theta_n-\widetilde\Theta\in\Z}} \left\{\sum_{i=1}^n\big( S(\tilde\theta_{i-1},\tilde\theta_i)+c(\tilde\theta_{i-1}-\tilde\theta_i)\big)\right\}.$$
 Observe that $\cs^c_n$ is $\mathbb Z^2$-periodic.
 \end{nota}

\begin{enumerate}
\item\label{Matherinv} If $R$ is any rotation number, for any $c\in\rho^{-1}(R)$ and any weak K.A.M. solution $u$ for $\widehat T^c$, the set of invariant Borel probability measures with support in $\cg(c+u')$ is independent from $c\in\rho^{-1}(R)$ and $u$.  Those measures are called Mather measures and the union of the supports of these measures is called the {\em Mather set} for $R$ and corresponds to $\cm^*(R)$; its projection on $\T$ is denoted $\cm(R)$ whose lift to $\R$ is $\widetilde \cm(R)$;
\item We say that a function $u$ defined on a part $A$ of $\T$ is $c$-dominated if,    denoting by $\widetilde A$ the lift of $A$ to $\R$,  and $\tilde u$ a lift of $u$, we have
$$\forall \tilde\theta, \tilde\theta'\in \widetilde A,\forall n\geqslant 1, \quad  \tilde u(\tilde\theta)-\tilde u(\tilde\theta')\leqslant \cs^c_n(\tilde\theta',\tilde \theta)+ n\alpha(c);$$
a weak K.A.M. solution for $\widehat T^c$ is always $c$-dominated; {if $A=\T$ a function $u : \T\to \R$ is $c$-dominated  if and only if
$$\forall \tilde\theta, \tilde\theta'\in \R, \quad  \tilde u(\tilde\theta)-\tilde u(\tilde\theta')\leqslant S(\tilde\theta',\tilde \theta)+c(\tilde\theta'-\tilde\theta)+\alpha(c);$$}
\item\label{K.A.M.ext} If $u:\cm\big(\rho(c)\big)\rightarrow \R$ is dominated, then there exists only one extension $U$ of $u$ to $\T$ that is a weak K.A.M. solution for $\widehat T^c$.   This function is   given by
$$\forall x\in \T, \quad U(x) = \inf_{\substack{\pi(\tilde\theta)\in  \cm\textrm{$\big($}\rho(c)\textrm{$\big)$} \\\pi(\tilde\theta')=x}} \tilde u(\tilde\theta) + \cs^c(\tilde\theta,\tilde\theta'),$$
 where 
$$\cs^c(\tilde\theta,\widetilde\Theta)=\inf_{n\in\N} \big(\cs^c_n(\tilde\theta, \widetilde\Theta)+n\alpha(c)\big).$$

As we have not found it exactly stated in this way in the literature, we provide a sketch of proof for the reader's convenience in  appendix \ref{appendix-3}.
\end{enumerate}

\subsection{Proof of Theorem \ref{Tgenecont}} 
  When there is no ambiguity in the notations, we will put $\sim$ signs to signify that we consider lifts of functions defined on $\T$.
 We will need the following lemma.
\begin{lemma}\label{Llimweak} Let $(c_n)$ be a  sequence of real numbers convergent to $c$ and let $(u_{c_n})$ be a  sequence of functions uniformly convergent to $v$ such that $u_{c_n}$ is  a weak K.A.M. solution for $\widehat T^{c_n}$. Then {\color{blue} $v$ } is a weak K.A.M. solution for $\widehat T^c$.
\end{lemma}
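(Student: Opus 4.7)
The plan is to verify that $v$ satisfies the weak K.A.M. fixed point equation $\widehat T^c v = v$ by proving both inequalities pointwise on lifts. Let $\tilde u_{c_n}$ and $\tilde v$ denote the respective lifts. Note that $\alpha$ is continuous on $\R$ (being convex and superlinear), and the sequence $(c_n)$, $(\alpha(c_n))$, as well as the family $(\tilde u_{c_n})$ (being lifts of continuous $1$-periodic functions on $\T$ converging uniformly to $\tilde v$) are all uniformly bounded.

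For the easy direction, fix $\tilde\theta,\tilde\theta' \in \R$. The weak K.A.M. property \eqref{Esolweakk} applied to $u_{c_n}$ gives
\[
\tilde u_{c_n}(\tilde\theta) \leq \tilde u_{c_n}(\tilde\theta') + S(\tilde\theta',\tilde\theta) + c_n(\tilde\theta' - \tilde\theta) + \alpha(c_n).
\]
Passing to the limit via uniform convergence $\tilde u_{c_n} \to \tilde v$, convergence $c_n \to c$, and continuity of $\alpha$ yields
\[
\tilde v(\tilde\theta) \leq \tilde v(\tilde\theta') + S(\tilde\theta',\tilde\theta) + c(\tilde\theta' - \tilde\theta) + \alpha(c),
\]
and taking the infimum over $\tilde\theta'$ gives $\tilde v(\tilde\theta) \leq \widetilde{T^c \tilde v}(\tilde\theta) + \alpha(c)$.

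For the matching reverse inequality, the key point is to exhibit a minimizer in the definition of $T^c v$ at $\tilde\theta$ by a compactness argument. Since each $u_{c_n}$ is a fixed point of $\widehat T^{c_n}$, the infimum in \eqref{Esolweakk} is attained (by superlinearity of $S$ combined with boundedness of $\tilde u_{c_n}$) at some $\tilde\theta'_n \in \R$:
\[
\tilde u_{c_n}(\tilde\theta) = \tilde u_{c_n}(\tilde\theta'_n) + S(\tilde\theta'_n,\tilde\theta) + c_n(\tilde\theta'_n - \tilde\theta) + \alpha(c_n).
\]
The crucial observation is that $(\tilde\theta'_n)_{n\in\N}$ is bounded: the left-hand side is bounded, $\tilde u_{c_n}(\tilde\theta'_n)$ is uniformly bounded, $(c_n)$ and $(\alpha(c_n))$ are bounded, and superlinearity of $S$ forces $S(\tilde\theta'_n,\tilde\theta) + c_n(\tilde\theta'_n - \tilde\theta) \to +\infty$ if $|\tilde\theta'_n|\to\infty$. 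Extracting a subsequence with $\tilde\theta'_n \to \tilde\theta'_*$ and passing to the limit (using uniform convergence of $u_{c_n}$, continuity of $S$, $c_n\to c$, and continuity of $\alpha$) gives
\[
\tilde v(\tilde\theta) = \tilde v(\tilde\theta'_*) + S(\tilde\theta'_*,\tilde\theta) + c(\tilde\theta'_* - \tilde\theta) + \alpha(c) \geq \widetilde{T^c \tilde v}(\tilde\theta) + \alpha(c).
\]
Combining both inequalities shows $\widehat T^c v = v$, i.e. $v$ is a weak K.A.M. solution for $\widehat T^c$.

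This is a standard stability argument; the only non-trivial input is the uniform bound on the minimizers $\tilde\theta'_n$, which is the main (mild) obstacle and follows directly from superlinearity of the generating function together with the uniform boundedness of the periodic lifts $\tilde u_{c_n}$.
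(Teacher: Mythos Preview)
Your proof is correct and follows essentially the same approach as the paper: both arguments hinge on the superlinearity of $S$ together with the uniform boundedness of the $\tilde u_{c_n}$ to localize the minimizers in a fixed compact set, then pass to the limit. The paper packages this slightly more compactly by restricting the infimum to a fixed compact interval $I$ and invoking uniform convergence of the infimands there, whereas you split into two inequalities and extract a convergent subsequence of minimizers; the content is the same.
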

\begin{proof}
We know from Equation (\ref{Esolweakk}) that 
$$\tilde u_{c_n}(\tilde\theta)=\inf_{\tilde\theta'\in\R} \big(\tilde u_{c_n}(\tilde\theta')+S(\tilde\theta', \tilde\theta)+c_n(\tilde\theta'-\tilde\theta)+\alpha(c_n)\big).$$
Because of the superlinearity of $S$ and the fact that the $u_{c_n}$ and $c_n$ are uniformly bounded, there exists a fixed compact set $I$ in $\R$ such that for every $n$, we have
$$\tilde u_{c_n}(\tilde\theta)=\inf_{\tilde\theta'\in I} \big(\tilde u_{c_n}(\tilde\theta')+S(\tilde\theta',\tilde \theta)+c_n(\tilde\theta'-\tilde\theta)+\alpha(c_n)\big).$$
We  deduce from the uniform convergence of $(u_{c_n})$ to $v$ that
$$\tilde v(\tilde\theta)=\inf_{\tilde\theta'\in I} \big(\tilde v(\tilde\theta')+S(\tilde\theta', \tilde\theta)+c(\tilde\theta'-\tilde\theta)+\alpha(c)\big).$$
As we could do the same proof for $I$ as large as wanted, we have in fact
\begin{equation}\label{ElimwehK.A.M.} \tilde v(\tilde\theta)=\inf_{\tilde\theta'\in \R} \big(\tilde v(\tilde\theta')+S(\tilde\theta', \tilde\theta)+c(\tilde\theta'-\tilde\theta)+\alpha(c)\big).\end{equation}

\end{proof}
Let us now prove Theorem  \ref{Tgenecont}. We will start with a fundamental uniqueness property of weak K.A.M. solutions for a wide class of cohomology classes.\\

\begin{propos}\label{unicite}
Let $R\subset \R$ be a real number and set $[a_1,a_2] = \rho^{-1}(\{R\})$. Then, up to constants, there exists a unique weak K.A.M. solution for $\widehat T^{a_1}$ (resp. $\widehat T^{a_2}$).
\end{propos}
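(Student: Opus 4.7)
The plan is to exploit the fact that $a_2$ is the right endpoint of $\rho^{-1}(\{R\})$, so that any $c>a_2$ has strictly larger rotation number; the case of $a_1$ is fully symmetric. The strategy is to squeeze an arbitrary weak K.A.M. solution for $\widehat T^{a_2}$ by solutions at cohomology classes $c_n \downarrow a_2$ via the strict comparison from Proposition \ref{Pordreirrat}, and then pass to the limit. The end goal is to show that every weak K.A.M. solution for $\widehat T^{a_2}$ coincides, up to an additive constant, with a fixed canonical limit function.

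First I would fix a strictly decreasing sequence $c_n \downarrow a_2$, noting that $\rho(c_n) > R$ for every $n$ by definition of $a_2$ as the right endpoint. For each $n$, select a weak K.A.M. solution $u_n$ for $\widehat T^{c_n}$, normalized by $u_n(0)=0$. Since $(c_n)$ stays in a compact neighborhood of $a_2$, the family $(u_n)$ is uniformly semi-concave and uniformly Lipschitz (point \ref{same} of the reminders). Up to extraction one has $u_n \to v$ uniformly for some semi-concave $v$, and Lemma \ref{Llimweak} guarantees that $v$ is itself a weak K.A.M. solution for $\widehat T^{a_2}$.

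Next, let $u$ be any weak K.A.M. solution for $\widehat T^{a_2}$. Applying Proposition \ref{Pordreirrat} to the ordered pair of cohomology classes $a_2 < c_n$, at every $t \in \T$ where both $u$ and $u_n$ are differentiable,
\[
a_2 + u'(t) < c_n + u_n'(t).
\]
By Proposition \ref{hausdorff} the full pseudographs converge in Hausdorff distance, so at every $t$ where $v$ is differentiable one has $u_n'(t) \to v'(t)$. Passing to the limit in the inequality yields $u'(t) \leq v'(t)$ on the full-measure set where $u$, $v$ and every $u_n$ are differentiable.

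To conclude, $u$ and $v$ are Lipschitz functions on $\T$, so $\int_\T u' = \int_\T v' = 0$. Combined with $u' \leq v'$ almost everywhere, this forces $u' = v'$ almost everywhere, hence $u - v$ is constant. Since this holds for an arbitrary $u$, any two weak K.A.M. solutions for $\widehat T^{a_2}$ differ by a constant. The case of $a_1$ is handled symmetrically, taking $c_n \uparrow a_1$ with $\rho(c_n) < R$, which reverses every strict inequality. The one delicate step is the passage to the limit in the strict comparison from Proposition \ref{Pordreirrat}; this is exactly where the uniform semi-concavity of $(u_n)$ and Proposition \ref{hausdorff} are essential to secure the a.e.\ convergence of derivatives.
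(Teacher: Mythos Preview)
Your proof is correct and follows essentially the same approach as the paper's: approximate $a_2$ from above by $c_n\downarrow a_2$ with $\rho(c_n)>R$, use Proposition~\ref{Pordreirrat} to obtain $a_2+u'<c_n+u'_n$ almost everywhere, pass to the limit to get $u'\le v'$ a.e., and conclude by integrating over~$\T$. The only cosmetic differences are that the paper exploits the monotonicity of $(c_n+u'_n)_n$ (itself a consequence of Proposition~\ref{Pordreirrat}) to obtain convergence of the full sequence and of the derivatives directly, whereas you extract a subsequence via Ascoli and invoke Proposition~\ref{hausdorff} for the a.e.\ convergence of derivatives; both routes are valid.
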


\begin{proof}
Let us prove the result for $a_2$, the proof being similar for $a_1$. Let $(c_n)_{n\in \N}$ be a decreasing sequence of real numbers converging to $a_2$, such that $\big(\rho(c_n)\big)_{n\in \N}$ is decreasing (and converges to $R$). For all $n\in \N$, let $u_n : \T \to \R$ be a weak K.A.M. solution at cohomology $c_n$ such that $u_n(0) = 0$.

Then by Proposition \ref{Pordreirrat}, $(c_n+u'_{n})_{n\in\N}$ is a decreasing sequence and then $\big(\tilde v_n:\tilde\theta\in[0, 1]\mapsto c_n\tilde\theta+\tilde u_{n}(\tilde\theta)\big)_{n\in\N}$ is also a decreasing sequence, thus convergent and even uniformly convergent by the Ascoli Theorem. By Lemma \ref{Llimweak}, $\displaystyle{ \tilde u_{a_2}(\tilde\theta)=\lim_{n\rightarrow \infty} \tilde v_n(\tilde\theta)-c_n\tilde\theta}$ defines a weak K.A.M. solution for $T^{a_2}$ such that $u_{a_2}(0)=0$ and $u_{a_2}' = \lim\limits_{n\rightarrow \infty} u'_n$ almost everywhere.

Let us assume that $v$ is another weak K.A.M. solution for $\widehat T^{a_2}$ that vanishes at $0$. Because of Proposition \ref{Pordreirrat}, we have for all $n\in \N$,
$$ c_n+u_n'>a_2+v'.$$
Taking the limit in these inequalities  and using the definition of $u_{a_2}$, we deduce that $v'\leqslant u'_{a_2}$. As $0=\int_\T v'=\int_\T u'_{a_2}$, we deduce that $u'_{a_2}=v'$ Lebesgue almost everywhere and then $v=u_{a_2}$.

\end{proof}

\begin{nota}
We use the notation  $\ci\subset \R$ is the set of $ c\in \R$ such that  $\rho^{-1}(\{ \rho(c)\}) = \{c\}$. This is the set of cohomology classes where $\rho$ is strictly increasing\footnote{Let us remind the reader that $\ci$ contains $\R \setminus \Q$. But we will not use this fact.}.
\end{nota}

It is easily verified that the closure $ \cii$ consists in the union of all the extremities $\{a_1,a_2\}$ of intervals $[a_1,a_2] = \rho^{-1}(\{R\})$ for $R\in \R$. This justifies the next:

\begin{nota}
When $c\in \cii$, we will denote by $u_c$ the  (unique) solution such that $u_c(0)=0$.\end{nota}

Let us prove that any extension $c\mapsto u_c$ that maps $c$ on a weak K.A.M. solution for $\widehat T^c$ that vanishes at $0$ is continuous at every $c\in \cii$. Let us consider a  sequence $(c_n)_{n\in \N}$  that converges to $c\in\cii$.  Then the sequence  $(u_{c_n})_{n\in \N}$ is made of equi semi-concave and then equiLipschitz functions. As all functions vanish at $0$, the sequence is also equi-bounded.
 Because of the Ascoli Theorem it is relatively compact for the uniform convergence.
Because of Lemma \ref{Llimweak}, all its accumulation points are weak K.A.M. solutions for $\widehat T^c$ that vanish at $0$. It follows that the sequence uniformly converges to the unique such function $u_c$.\\
This gives the wanted continuity at every point of $\cii$.\\

 Building  a function $u$, the only problem of continuity we have now to consider is at the points of the set $\R\setminus \cii$.\\
 Observe that if we find a continuous extension to $\T\times \R$ such that every $u_c$ is a weak K.A.M. solution for $\widehat T^c$, replacing $u_c$ by $u_c-u_c(0)$, we obtain an extension as wanted.

Let us now assume that $R$ is a real number such that $\rho^{-1}(\{R\})=[a_1,a_2]$, with $a_1<a_2$\footnote{Again, $R$ is necessarily rational but we do not need this fact.}.
Because $u_{a_1}$ and $u_{a_2}$ are weak K.A.M. solutions, they are dominated and we have
$$\forall x, y\in \R, \forall n\geqslant 1, \quad \tilde u_{a_i}(x)-\tilde u_{a_i}(y)\leqslant  \cs^{a_i}_n(x, y)+n\alpha (a_i).$$
Let $c=\lambda a_1+(1-\lambda) a_2\in [a_1, a_2]$. We use the notation $v_c=\lambda u_{a_1}+(1-\lambda) u_{a_2}$. Observe that $\alpha (c)=\lambda\alpha(a_1)+(1-\lambda)\alpha(a_2)$ because $\alpha'=R$ is constant on $[a_1, a_2]$. Then we have
$$\forall x, y\in \widetilde\cm\big(R\big),\quad \tilde v_c(y)- \tilde v_c(x)\leqslant \cs^c_n(x,y)+n\alpha(c);$$
i.e. $v_c$ is $c$-dominated on $\cm\big(R\big)$. We deduce from Point (\ref{K.A.M.ext}) of subsection \ref{ssmoreK.A.M.} that there exists only one extension   $u_c$ of  $v_{c}$ restricted to $\cm\big(R\big)$ that is a weak K.A.M. solution for $\widehat T^c$.\\

Let us  prove that $c\in [a_1, a_2]\mapsto u_c$ is continuous. By definition of $u_c$, the map $c\mapsto u_{c|\cm(R)}$ is continuous. Let us now consider a sequence $(c_n)$ in $[a_1, a_2]$ that converges to some $c\in [a_1, a_2]$. By the Ascoli Theorem the set $\{ u_{c_n}, n\in\N\}$ is relatively compact for the uniform convergence distance. Let $U$ be a limit point of the sequence $(u_{c_n})$. By Lemma \ref{Llimweak}, we know that $U$ is a weak K.A.M. solution for $\widehat T^c$. Moreover, we have $U_{|\cm(R)}=u_{c|\cm(R)}$.  Using Point (\ref{K.A.M.ext}) of subsection \ref{ssmoreK.A.M.}, we deduce that $U=u_c$ and the wanted continuity.

%
%

In appendix A of \cite{Be2}, it is proved that the uniform  convergence of a sequence of equi-semi-concave functions implies their convergence $C^1$ in some sense. This implies for the function $u$ given in Theorem  that  if $c_n\rightarrow c$, if $\theta_n \rightarrow \theta$ and if $u_{c_n}$ is derivable at $\theta_n$ and $u_c$ at $\theta$, we have
 $$\lim_{n\rightarrow \infty} \frac{\partial u}{\partial \theta}(\theta_n, c_n)=\frac{\partial u}{\partial \theta}(\theta, c),$$ i.e. that  the map $(\theta, c)\mapsto \frac{\partial u_c}{\partial \theta}(\theta)$ is continuous.\\


  We end this section with the proof of points \eqref{croissant} and \eqref{u-lip} of Theorem \ref{Tgenecont}. Let us state a lemma:

\begin{lemma}
Let  $c_1<c_2$  be two real numbers. Let  $v_1,v_2 : \T\to \R$ be continuous   functions.

If the function $\theta \mapsto (\tilde v_2-\tilde v_1)(\tilde\theta) + (c_2-c_1)\tilde\theta$ is non-decreasing, then so is the function $\tilde\theta \mapsto (\widetilde{ T}^{c_2}\tilde v_2-\widetilde T^{c_1}\tilde v_1)(\tilde\theta) + (c_2-c_1)\tilde\theta$.
\end{lemma}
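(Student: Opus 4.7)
The plan is to recast the lemma so that the dependence on $c$ is absorbed into the functions themselves, and then argue by cases using Aubry's fundamental lemma together with the monotonicity hypothesis. Set $w_i(\tilde\theta) := \tilde v_i(\tilde\theta) + c_i\tilde\theta$ for $i = 1, 2$, so the hypothesis reads simply ``$w_2 - w_1$ is non-decreasing''. A direct computation gives
$$\widetilde T^{c_i}\tilde v_i(\tilde\theta) + c_i\tilde\theta \;=\; \inf_{\tilde\theta' \in \R}\bigl(w_i(\tilde\theta') + S(\tilde\theta', \tilde\theta)\bigr) \;=:\; U^S(w_i)(\tilde\theta),$$
so the desired conclusion becomes: $U^S(w_2) - U^S(w_1)$ is non-decreasing. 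Fix $\tilde\theta_1 < \tilde\theta_2$. By superlinearity of $S$ and boundedness of each $\tilde v_i$ (the lift of a continuous function on the compact $\T$), the infimum defining $U^S(w_i)(\tilde\theta_i)$ is attained at some point $a_i \in \R$. The target inequality is then
$$U^S(w_2)(\tilde\theta_2) + U^S(w_1)(\tilde\theta_1) \;\geq\; U^S(w_2)(\tilde\theta_1) + U^S(w_1)(\tilde\theta_2).$$

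I would next split on the relative position of $a_1$ and $a_2$. If $a_1 \leq a_2$, I use the ``parallel'' test points $U^S(w_2)(\tilde\theta_1) \leq w_2(a_1) + S(a_1, \tilde\theta_1)$ and $U^S(w_1)(\tilde\theta_2) \leq w_1(a_2) + S(a_2, \tilde\theta_2)$; subtracting these inequalities from the equalities defining $U^S(w_2)(\tilde\theta_2)$ and $U^S(w_1)(\tilde\theta_1)$ makes all $S$-terms cancel, leaving a lower bound of
$$(w_2 - w_1)(a_2) - (w_2 - w_1)(a_1) \;\geq\; 0,$$
by the monotonicity hypothesis.

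If instead $a_1 > a_2$, I swap the test points: $U^S(w_2)(\tilde\theta_1) \leq w_2(a_2) + S(a_2, \tilde\theta_1)$ and $U^S(w_1)(\tilde\theta_2) \leq w_1(a_1) + S(a_1, \tilde\theta_2)$. Now the $w_i$-terms cancel and what remains is
$$S(a_1, \tilde\theta_1) + S(a_2, \tilde\theta_2) - S(a_1, \tilde\theta_2) - S(a_2, \tilde\theta_1),$$
which is strictly positive by Aubry's fundamental lemma (Proposition \ref{PAubryfund}), since $(a_1 - a_2)(\tilde\theta_1 - \tilde\theta_2) < 0$. Note that in this second case the monotonicity hypothesis is not even needed.

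The only genuine subtlety---and the step I would expect to cost the most thought---is realizing that one must pair minimizers with test points differently in the two cases: Aubry's lemma handles the ``wrong-order'' case $a_1 > a_2$, while the monotonicity hypothesis handles the ``right-order'' case $a_1 \leq a_2$. Once the pairing is spotted, each case reduces to a one-line computation, and neither differentiability nor uniqueness of the minimizers enters the argument.
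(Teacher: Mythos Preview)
Your proof is correct and follows essentially the same approach as the paper's: fix two points, pick the minimizers for $\widetilde T^{c_1}\tilde v_1$ at the smaller one and for $\widetilde T^{c_2}\tilde v_2$ at the larger one, then split into two cases according to the order of those minimizers, invoking Aubry's fundamental lemma when they are in the ``wrong'' order and the monotonicity hypothesis when they are in the ``right'' order. Your substitution $w_i=\tilde v_i+c_i\,\cdot$ is a harmless cosmetic simplification that absorbs the $c$-terms and makes the bookkeeping cleaner, but the underlying argument is the same as the paper's.
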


\begin{proof}
Let $\tilde\theta < \tilde\theta'$ be two real numbers. By definition of the operators $\widetilde T_{c_i}$ there exist $\tilde\theta_2'$ and $\tilde\theta_1$ such that
$$\widetilde T^{c_2}\tilde v_2(\tilde\theta') = \tilde v_2(\tilde\theta'_2) + S(\tilde\theta'_2,\tilde\theta') + c_2(\tilde\theta'_2-\tilde\theta'),$$
$$\widetilde T^{c_1}\tilde v_1(\tilde\theta) = \tilde v_1(\tilde\theta_1) + S(\tilde\theta_1,\tilde\theta) + c_1(\tilde\theta_1-\tilde\theta).$$
There are two cases to consider:
\begin{itemize}
\item if $\tilde\theta_2' <\tilde \theta_1$ we use Aubry's fundamental lemma to obtain
\begin{align*}
\widetilde T^{c_2}\tilde v_2(\tilde\theta') +\widetilde T^{c_1}\tilde v_1(\tilde\theta) &= \tilde v_2(\tilde\theta'_2) + S(\tilde\theta'_2,\tilde\theta') + c_2(\tilde\theta'_2-\tilde\theta')+ \tilde v_1(\tilde\theta_1) + S(\tilde\theta_1,\tilde\theta) + c_1(\tilde\theta_1-\tilde\theta) \\
&> \tilde v_2(\tilde\theta'_2) + S(\tilde\theta'_2,\tilde\theta) + c_2(\tilde\theta'_2-\tilde\theta')+ \tilde v_1(\tilde\theta_1) + S(\tilde\theta_1,\tilde\theta') + c_1(\tilde\theta_1-\tilde\theta) \\
& \geqslant  \widetilde T^{c_2}\tilde v_2(\tilde\theta) +\widetilde T^{c_1}\tilde v_1(\tilde\theta')+ (c_2-c_1) (\tilde\theta-\tilde\theta').
\end{align*}
After rearranging the terms, this reads
$$ \widetilde T^{c_2}\tilde v_2(\tilde\theta')-\widetilde T^{c_1}\tilde v_1(\tilde\theta') +(c_2-c_1)\tilde\theta' >  \widetilde T^{c_2}\tilde v_2(\tilde\theta)-\widetilde T^{c_1}\tilde v_1(\tilde\theta) +(c_2-c_1)\tilde\theta.$$

\item if $\tilde\theta_2' \geqslant\tilde \theta_1$ we use the hypothesis on $\tilde\theta \mapsto (\tilde v_2-\tilde v_1)(\tilde\theta) + (c_2-c_1)\tilde\theta$ to show that 
$ \tilde v_2(\tilde\theta'_2) + \tilde v_1(\tilde\theta_1) \geqslant \tilde v_2(\tilde\theta_1) + \tilde v_1(\tilde\theta'_2)+ (c_2-c_1) (\tilde\theta_1-\tilde\theta'_2) $ and then
\begin{align*}
\widetilde T^{c_2}\tilde v_2(\tilde\theta') +\widetilde T^{c_1}\tilde v_1(\tilde\theta) &= \tilde v_2(\tilde\theta'_2) + S(\tilde\theta'_2,\tilde\theta') + c_2(\tilde\theta'_2-\tilde\theta')+ \tilde v_1(\tilde\theta_1) + S(\tilde\theta_1,\tilde\theta) + c_1(\tilde\theta_1-\tilde\theta) \\
&\geqslant  \tilde v_2(\tilde\theta_1) + S(\tilde\theta'_2,\tilde\theta') + c_2(\tilde\theta_1-\tilde\theta')+ \tilde v_1(\tilde\theta'_2) + S(\tilde\theta_1,\tilde\theta) + c_1(\tilde\theta'_2-\tilde\theta) \\
&\geqslant     \widetilde T^{c_2}\tilde v_2(\tilde\theta) +\widetilde T^{c_1}\tilde v_1(\tilde\theta')+ (c_2-c_1) (\tilde\theta-\tilde\theta').
\end{align*}
As before, this gives the result after rearranging terms.
\end{itemize}

\end{proof}
Let us now conclude that the function $u$ constructed verifies the requirements of \eqref{croissant} and \eqref{u-lip}. Let $R$ be  a real number and let us, as previously, introduce the notations $\rho^{-1}(R)=[a_1,a_2]$. As seen before, we denote by $u_{a_1}$ and $u_{a_2}$ the unique weak K.A.M. solutions for $T^{a_1}$ and $T^{a_2}$ vanishing at $0$. We have proven that $\tilde\theta \mapsto (\tilde u_{a_2}-\tilde u_{a_1})(\tilde\theta) + (a_2-a_1)\tilde\theta$ is non-decreasing.

Let $c=\lambda a_1+(1-\lambda) a_2\in [a_1, a_2]$. We use again the notation $v_c=\lambda u_{a_1}+(1-\lambda) u_{a_2}$ and recall that $\alpha (c)=\lambda\alpha(a_1)+(1-\lambda)\alpha(a_2)$ because $\alpha'=R$ is constant on $[a_1, a_2]$. It follows that $\tilde v_c$ is $c$ dominated and that if $a_1\leqslant c<c' \leqslant a_2$, the function $\tilde\theta \mapsto (\tilde v_{c'}-\tilde v_c)(\tilde\theta) + (c'-c)\tilde\theta$ is non decreasing.  

Finally, as $v_c$ is $c$-dominated, it can be proved that the function $u_c$ constructed verifies
$$\forall \tilde\theta\in \R,\quad \tilde u_c(\tilde\theta) = \lim_{n\to +\infty} (\widetilde T^c)^n \tilde v_c (\tilde\theta)+n\alpha(c),$$
the limit being that of an increasing sequence. Hence the fact that $\tilde\theta \mapsto (\tilde u_{c'}-\tilde u_c)(\tilde\theta) + (c'-c)\tilde\theta$ is non decreasing follows from successive applications of the previous lemma.

To prove \eqref{u-lip}, if $c'\leqslant c $ and $\tilde\theta\in [0,1]$ then
$$0= (\tilde u_{c'}-\tilde u_c)(0) \leqslant (\tilde u_{c'}-\tilde u_c)(\tilde\theta) + (c'-c)\tilde\theta\leqslant (\tilde u_{c'}-\tilde u_c)(1) + (c'-c) = c'-c.$$
It follows that 
$$(c-c')\tilde\theta \leqslant (\tilde u_{c'}-\tilde u_c)(\tilde\theta) \leqslant (c'-c)(1-\tilde\theta).$$
Hence $\tilde u$ is uniformly $1$-Lipschitz in $c$ and the result follows.

\subsection{More on the constructed function: proof of Theorem \ref{Tpseudofol}}

In this paragraph, $u : \A \to \R$ is any function given by Theorem \ref{Tgenecont} meaning that 
\begin{itemize}
\item  $u$ is continuous;
\item $u(0,c)=0$;
\item each $u_c= u(\cdot ,c)$ is a weak K.A.M. solution for the cohomology class $c$. 
\end{itemize}
We aim to give the range of the 
 map 
$(\theta,c) \mapsto \big(\theta, c+\frac{\partial u}{\partial \theta}(\theta,c)\big)$.  The following proposition asserts that any ESTwD  is weakly integrable in the sense that $\A$ is covered by Lipschitz circles arising from weak K.A.M. solutions.

Recall that  $\mathcal{PG}(c+u'_c) =\{ (0,c)+\partial u_c (t), \quad t\in \T\}$ is the full pseudograph of $c+u'_c$.

\begin{propos}\label{+}
The following holds:
\begin{equation}\label{a0}
 \bigcup_{c\in \R} \mathcal{PG} (c+u'_c)=\bigcup_{\substack{t\in \T \\ c\in \R}} (0,c)+\partial u_c (t) = \A.
\end{equation}
\end{propos}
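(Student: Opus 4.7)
The plan is to fix a point $(\theta_0, r_0)\in\A$ and produce $c\in\R$ with $(\theta_0, r_0)\in\mathcal{PG}(c+u'_c)$. I introduce the upper and lower envelopes
\[
\phi(c) = \max\bigl(c+\partial u_c(\theta_0)\bigr), \qquad \psi(c) = \min\bigl(c+\partial u_c(\theta_0)\bigr),
\]
so that $\mathcal{PG}(c+u'_c)\cap V_{\theta_0} = \{\theta_0\}\times[\psi(c),\phi(c)]$; the statement then reduces to showing $\bigcup_{c\in\R}[\psi(c),\phi(c)] = \R$.

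I first establish that $\phi$ and $\psi$ are non-decreasing in $c$. By item \eqref{croissant} of Theorem \ref{Tgenecont}, for $c<c'$ the lifted function $\tilde\theta\mapsto (\tilde u_{c'}-\tilde u_c)(\tilde\theta)+(c'-c)\tilde\theta$ is non-decreasing on $\R$. Writing $\tilde g_c:=\tilde u_c+c\tilde\theta$ and observing that the right and left derivatives of the semi-concave function $\tilde g_c$ at $\tilde\theta_0$ coincide respectively with $\psi(c)$ and $\phi(c)$, taking one-sided derivatives of the preceding non-decreasing difference yields the monotonicity of $\phi$ and $\psi$. I then combine the $C^0$-continuity of $c\mapsto u_c$ (Theorem \ref{Tgenecont}) with the uniform semi-concavity on compact ranges of $c$ (item \ref{same}) and Proposition \ref{hausdorff} to conclude that $c\mapsto\mathcal{PG}(c+u'_c)$ is continuous in the Hausdorff metric. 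Since the fibers $\mathcal{PG}(c+u'_c)\cap V_{\theta_0}$ are compact and limits of their maxima (resp. minima) along $c_n\to c$ are bounded above by $\phi(c)$ (resp. below by $\psi(c)$), a direct argument gives that $\phi$ is upper semi-continuous and $\psi$ lower semi-continuous. Together with monotonicity, $\phi$ is right-continuous and $\psi$ is left-continuous.

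The main obstacle is the escape to infinity: $\phi(c)\to+\infty$ as $c\to+\infty$ and $\psi(c)\to-\infty$ as $c\to-\infty$. By convexity and superlinearity of $\alpha$, one has $\rho(c)=\alpha'(c)\to\pm\infty$. Fix $c$ and a lift $(\tilde\theta_0,r_0)$ realizing $\phi(c)$ in the closure of $\cg(c+\tilde u'_c)$, and set $(\tilde\theta_{-1}, r_{-1}) := F^{-1}(\tilde\theta_0, r_0)$, which by item \ref{ptinvgraph} still lies in that closure. The generating-function identity \eqref{genfundef} gives $r_0 = \partial_2 S(\tilde\theta_{-1}, \tilde\theta_0)$. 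The backward orbit of $(\tilde\theta_0, r_0)$ on the pseudograph projects to a well-ordered minimizing sequence of rotation number $\rho(c)$ (Lemma \ref{Lrotnumber}); the standard Aubry-Mather one-step bound $|\tilde\theta_0-\tilde\theta_{-1}-\rho(c)|\leq 1$ then yields $\tilde\theta_0-\tilde\theta_{-1}\geq \rho(c)-1\to+\infty$. The third defining property of a generating function combined with the $\mathbb Z^2$-periodicity of $S$ and a Dini-type argument on the compact $\T$ shows that $\partial_2 S(\tilde\theta',\tilde\theta)\to+\infty$ uniformly in $\tilde\theta\bmod 1$ as $\tilde\theta-\tilde\theta'\to+\infty$. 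Hence $\phi(c)=r_0\to+\infty$; the symmetric case $c\to-\infty$ is analogous.

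To conclude, given $r\in\R$ I set $c^*=\inf\{c\in\R:\phi(c)\geq r\}$, which is finite by the escape property. Right-continuity of $\phi$ at $c^*$ gives $\phi(c^*)\geq r$, while the defining inequality yields $\lim_{c\to c^{*-}}\phi(c)\leq r$. Since $\psi\leq\phi$ pointwise and $\psi$ is left-continuous, $\psi(c^*)=\lim_{c\to c^{*-}}\psi(c)\leq\lim_{c\to c^{*-}}\phi(c)\leq r$. Thus $r\in[\psi(c^*),\phi(c^*)]$, that is, $(\theta_0, r)\in\mathcal{PG}(c^*+u'_{c^*})$, which is the desired conclusion.
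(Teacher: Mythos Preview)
Your argument is correct and takes a genuinely different route from the paper's. The paper argues globally and topologically: it first shows that $\mathcal B=\bigcup_c\mathcal{PG}(c+u'_c)$ is closed in the open region $\A_0$ bounded by $\eta_\pm$, and then excludes a missing ball by a Jordan--curve connectedness argument (an essential circle in $\A\setminus B$ is either ``above'' or ``under'' $B$, both being open conditions, contradicting the connectedness of $\R$ via the Hausdorff continuity of $c\mapsto\mathcal{PG}(c+u'_c)$). The escape $\eta_+\equiv+\infty$ is then obtained through Lemma~\ref{Lhorscomp}, a direct generating--function estimate. By contrast you work fiberwise and analytically: you exploit the monotonicity item~\eqref{croissant} of Theorem~\ref{Tgenecont} to get $\phi,\psi$ non-decreasing, combine with Hausdorff continuity to get one-sided continuity, and conclude by an elementary infimum argument. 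This is more direct and avoids Jordan's theorem entirely, at the price of leaning on the finer ordering \eqref{croissant} (which the paper's proof does not use).

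Two small points to tighten. First, the bound you quote as $|\tilde\theta_0-\tilde\theta_{-1}-\rho(c)|\le 1$ is Corollary~\ref{circlelike}, which in the paper's logical order comes \emph{after} Proposition~\ref{+} (via Theorem~\ref{caliball} and Proposition~\ref{Protnumber}); invoking it here would be circular. What is available is the sandwich $x_k\le\tilde\theta_k\le x_k+1$ from the proof of Lemma~\ref{Lrotnumber}, together with the standard circle--homeomorphism estimate $|x_0-x_{-1}-\rho(c)|<1$ on the Mather set; this yields $|\tilde\theta_0-\tilde\theta_{-1}-\rho(c)|<2$, which is all you need. Second, since $\tilde\theta_0$ is fixed, the divergence $\partial_2 S(\tilde\theta_{-1},\tilde\theta_0)\to+\infty$ follows immediately from the third defining property of $S$ (the map $\tilde\theta'\mapsto\partial_2 S(\tilde\theta',\tilde\theta_0)$ is a decreasing diffeomorphism of $\R$); the Dini/uniformity remark is superfluous. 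Note also that your infimum $c^*$ requires $\phi(c)\to-\infty$ as $c\to-\infty$, which your escape argument gives symmetrically (you only stated $\psi(c)\to-\infty$, but the same reasoning applied to $r_0=\phi(c)$ works since the sandwich bounds $\tilde\theta_{-1}$ from below as well).
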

Let us define two auxiliary functions  with values in $\R\cup\{ +\infty, -\infty\}$:
$$ \forall \theta \in \T,\quad \eta_+(\theta) = \sup
 \big\{p\in \R ; \quad \exists c\in\R; \quad (\theta,p)\in \overline{\cg (c+u_c')}\big\},
$$
and 
$$ \forall \theta \in \T,\quad \eta_-(\theta) = \inf
 \big\{p\in \R ; \quad  \exists c\in\R; \quad (\theta,p)\in \overline{\cg (c+u_c')}\big\}.
$$
Finally define $\A_0 = \big\{(\theta,c)\in \A , \quad \eta_-(\theta)<c<\eta_+(\theta)\big\}$.

The following lemma is proved in Appendix \ref{ssLfullman}.

\begin{lemma}\label{Lfullman}
For all $c\in \R$, $\mathcal{PG}(c+u'_c)$ is a Lipschitz one dimensional compact manifold, hence it is an essential circle. 
\end{lemma}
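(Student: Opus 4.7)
The plan is to construct a $\Z$-equivariant Lipschitz parametrization of the lifted full pseudograph by the real line. By property \ref{same} of Subsection \ref{ssweakk} together with Theorem \ref{Tgenecont}, $u_c$ is $K$-semi-concave and $L$-Lipschitz for suitable constants. Lifting to $\tilde u_c:\R\to\R$, both one-sided derivatives $\tilde u'_c(\theta^-)$ and $\tilde u'_c(\theta^+)$ exist at every $\theta$ (since $\tilde u_c-\frac{K}{2}\theta^2$ is concave) and satisfy $\tilde u'_c(\theta^+)\leqslant \tilde u'_c(\theta^-)$, with both bounded in absolute value by $L$. A standard description of the Clarke subdifferential of a semi-concave function (e.g.\ Appendix~A of \cite{Be2}) yields
$$\partial \tilde u_c(\theta)=\{\theta\}\times\bigl[\tilde u'_c(\theta^+),\,\tilde u'_c(\theta^-)\bigr],$$
so the lift of $\mathcal{PG}(c+u'_c)$ is the union of the partial graph of $c+\tilde u'_c$ at differentiability points with a countable family of vertical segments at the jump set $J=\{\theta_n\}$, each of length $j_n:=\tilde u'_c(\theta_n^-)-\tilde u'_c(\theta_n^+)\geqslant 0$.

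The key step is to parametrize this set continuously. Since $\tilde u'_c-K\theta$ is non-increasing and $|\tilde u'_c|\leqslant L$, the sum of jumps $T_0:=\sum_{\theta_n\in[0,1)}j_n$ over one period is finite (bounded by $2L+K$). Set $T=1+T_0$, and define $\gamma:\R\to\R^2$ as the curve walking the pseudograph: between consecutive jump points it advances horizontally at unit speed along $(\theta,c+\tilde u'_c(\theta))$, and at each jump $\theta_n$ it pauses and descends vertically at unit speed from $(\theta_n,c+\tilde u'_c(\theta_n^-))$ to $(\theta_n,c+\tilde u'_c(\theta_n^+))$, consuming parameter length $j_n$. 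Because the horizontal speed is bounded by $1$ and the vertical speed by $\max(1,L)$, the map $\gamma$ is $(1+L)$-Lipschitz in the $\ell^1$ metric on $\R^2$, hence Lipschitz in the Euclidean metric. By construction its image is exactly the lifted full pseudograph, and distinct parameters map to distinct points: the horizontal coordinate is non-decreasing and strictly increasing off the jump-intervals, while on each jump-interval the vertical coordinate is strictly decreasing.

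By $\Z$-invariance of $\tilde u_c$ one has $\gamma(s+T)=\gamma(s)+(1,0)$, so $\gamma$ descends to a continuous injection $\bar\gamma:\R/T\Z\to\A$ with image equal to $\mathcal{PG}(c+u'_c)$. A continuous bijection from a compact circle is a homeomorphism, so $\mathcal{PG}(c+u'_c)$ is a compact Lipschitz $1$-manifold. Composing $\bar\gamma$ with $\pi_1$ yields a degree-$1$ map $\R/T\Z\to\T$, hence this circle is essential. The main technical obstacle is formalizing the parametrization in the second paragraph, namely checking continuity of $\gamma$ across accumulations of jump points in $J$: this reduces to verifying that $\tilde u'_c(\theta^\pm)$ are genuine one-sided limits of $\tilde u'_c$ at points of differentiability approaching $\theta$, together with the summability $\sum_n j_n<\infty$ that prevents pathological oscillations on a bounded interval.
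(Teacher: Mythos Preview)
Your construction has a genuine gap in the Lipschitz claim. You assert that along the graph portions of $\gamma$ the vertical speed is bounded by $L$, the Lipschitz constant of $u_c$. But $L$ bounds $|\tilde u'_c|$, not $|\tilde u''_c|$: on a graph part you are moving at horizontal unit speed, so the vertical speed is $\bigl|\tfrac{d}{d\theta}\tilde u'_c(\theta)\bigr|$, and nothing in the hypotheses prevents this from being arbitrarily large. Semi-concavity gives only a one-sided bound $\tilde u''_c\leqslant K$ (in the distributional sense), and weak K.A.M.\ solutions are not $C^{1,1}$ in general. So your $\gamma$ is a continuous injection (once the accumulation-of-jumps issue is handled), hence proves $\mathcal{PG}(c+u'_c)$ is a topological essential circle, but it is \emph{not} Lipschitz as written.

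The fix is to change the parameter so that it absorbs the vertical variation. Concretely, set $\Psi(\theta,p)=(1+K)\theta-p$ on the lifted full pseudograph. If $(\theta_1,p_1),(\theta_2,p_2)$ lie on the pseudograph with $\theta_1<\theta_2$, then $p_2-K\theta_2\leqslant c+\tilde u'_c(\theta_2^-)-K\theta_2\leqslant c+\tilde u'_c(\theta_1^+)-K\theta_1\leqslant p_1-K\theta_1$ by $K$-semi-concavity, whence $\Psi(\theta_2,p_2)-\Psi(\theta_1,p_1)\geqslant \theta_2-\theta_1$; together with the trivial case $\theta_1=\theta_2$ this shows $\Psi$ is injective with $1$-Lipschitz inverse in the first coordinate, and then $p=(1+K)\theta-\Psi$ gives a $(2+K)$-Lipschitz inverse in the second. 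Surjectivity onto $\R$ follows from the one-sided continuity of $\tilde u'_c(\cdot^\pm)$ and boundedness of $\tilde u'_c$, and the equivariance $\Psi(\theta+1,p)=\Psi(\theta,p)+(1+K)$ descends to a bi-Lipschitz identification with a circle.

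The paper takes a different but closely related route: it quotes \cite{Arna2} to the effect that for small $\tau>0$ the time-$(-\tau)$ map $\varphi_{-\tau}$ of the pendulum flow sends $\mathcal{PG}(c+u'_c)$ to the graph of a Lipschitz function. For small $\tau$ the pendulum flow is close to the shear $(\theta,p)\mapsto(\theta-\tau p,p)$, so this is the same mechanism as the linear map $\Psi$ above, packaged dynamically. Your direct parametrization is more elementary and self-contained once the parameter is corrected; the paper's argument is shorter but relies on an external reference.
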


It follows that the set $\A_0$ is open and connected (we will see at the end that it is in fact $\A$).
Indeed, by Jordan's theorem and Proposition \ref{Pordreirrat}, for $c<c'$ such that $\rho(c)<\rho(c')$, the set $\big\{(t,p)\in \A , \quad c+\partial u_c(t)<p<c'+\partial u_{c'} (t)\big\}$ is open and connected. Now $\A_0$ is an increasing union of such sets. 
 
\begin{propos}
The following equality holds:
$$\A_0 = \bigcup_{c\in \R} \mathcal{PG} (c+u'_c).$$
\end{propos}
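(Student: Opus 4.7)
The plan is to establish the two inclusions of $\A_0 = \bigcup_{c\in\R}\mathcal{PG}(c+u'_c)$ separately.

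First, for $\bigcup_c\mathcal{PG}(c+u'_c) \subset \A_0$, I fix $(\theta_0,r_0) \in \mathcal{PG}(c_0+u'_{c_0})$. Since $\alpha$ is convex and superlinear, $\rho=\alpha'$ takes arbitrarily large positive and negative values, so one can pick $c_-, c_+ \in \R$ with $\rho(c_-) < \rho(c_0) < \rho(c_+)$. The second bullet of Proposition \ref{Pordreirrat} (which compares elements of the subdifferentials, i.e.\ of the full pseudographs) then yields $p < r_0$ for every $(\theta_0,p) \in \mathcal{PG}(c_-+u'_{c_-})$ and $p > r_0$ for every $(\theta_0,p) \in \mathcal{PG}(c_++u'_{c_+})$, giving $\eta_-(\theta_0) < r_0 < \eta_+(\theta_0)$.

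For the converse, fix $(\theta_0,r_0) \in \A_0$. The fiber $\mathcal{PG}(c+u'_c)\cap(\{\theta_0\}\times\R)$ is a non-empty compact segment; denote its lower and upper endpoints (projected to the second coordinate) by $\phi_-(c)$ and $\phi_+(c)$. By Proposition \ref{Pordreirrat}, both $\phi_\pm$ are non-decreasing. Equi-semi-concavity of the family $(u_c)$ on compact sets of $c$, continuity of $c\mapsto u_c$, and Proposition \ref{hausdorff} ensure that $c\mapsto \mathcal{PG}(c+u'_c)$ is Hausdorff continuous, from which a standard extraction argument gives that $\phi_+$ is upper semi-continuous and $\phi_-$ is lower semi-continuous. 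Moreover, the extremes of the fiber of the full pseudograph are the one-sided derivatives of $u_c$ at $\theta_0$ shifted by $c$, and these are limits of values in $\cg(c+u'_c)$; consequently,
\[\sup_{c\in\R}\phi_+(c) = \eta_+(\theta_0), \qquad \inf_{c\in\R}\phi_-(c) = \eta_-(\theta_0).\]

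Now set $C_+ = \{c\in\R : \phi_+(c)\geq r_0\}$ and $C_- = \{c\in\R : \phi_-(c)\leq r_0\}$. By the above, both sets are non-empty, and by monotonicity together with semi-continuity, $C_+$ is either $\R$ or a closed half-line $[c_*,+\infty)$, and $C_-$ is either $\R$ or $(-\infty,c^*]$. If $C_+\cap C_-\neq \emptyset$, any $c$ in the intersection satisfies $\phi_-(c) \leq r_0 \leq \phi_+(c)$, hence $(\theta_0,r_0)\in \mathcal{PG}(c+u'_c)$. Otherwise one would have $c^*<c_*$ and any $c\in(c^*,c_*)$ would satisfy $\phi_-(c)>r_0>\phi_+(c)$, contradicting the inequality $\phi_-(c)\leq\phi_+(c)$.

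The main obstacle I anticipate is the pair of equalities $\sup_c\phi_+=\eta_+(\theta_0)$ and $\inf_c\phi_-=\eta_-(\theta_0)$ together with the semi-continuity of $\phi_\pm$: they demand a careful comparison between $\overline{\cg(c+u'_c)}$ (used to define $\eta_\pm$) and $\mathcal{PG}(c+u'_c)$ (used to define $\phi_\pm$). Both are handled by the one-sided-derivative characterisation of the extreme points of the subdifferential of a semi-concave function, which shows that the endpoints of $\mathcal{PG}(c+u'_c)\cap(\{\theta_0\}\times\R)$ are limits of points in $\cg(c+u'_c)$ and thus belong to $\overline{\cg(c+u'_c)}$.
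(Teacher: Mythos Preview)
Your argument is correct and follows a different, more direct route than the paper. The paper first shows that $\mathcal{B}=\bigcup_c\mathcal{PG}(c+u'_c)$ is closed in $\A_0$, then argues by contradiction: if a ball $B\subset\A_0\setminus\mathcal{B}$ existed, the essential circles $\mathcal{PG}(c+u'_c)$ would have to lie either above or below $B$, and Hausdorff continuity of $c\mapsto\mathcal{PG}(c+u'_c)$ together with connectedness of $\R$ would force them all on one side, contradicting the behaviour for $c\to\pm\infty$. Your approach instead works fiber by fiber, exploiting the order structure: the functions $\phi_\pm$ are monotone and semi-continuous, so an elementary real-variable argument (intersecting two half-lines) produces the desired $c$. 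This avoids Jordan's theorem and the topological separation lemma, at the price of invoking the monotonicity of $c\mapsto c+u'_c$ more heavily.

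One correction: when you claim that $\phi_\pm$ are non-decreasing, Proposition~\ref{Pordreirrat} is not the right reference, since it only compares pseudographs when $\rho(c_1)<\rho(c_2)$. What you need is point~\eqref{croissant} of Theorem~\ref{Tgenecont}, which gives $c+u'_c(\theta)\leqslant c'+u'_{c'}(\theta)$ for all $c\leqslant c'$ at common differentiability points; passing to one-sided limits (the left and right derivatives of a semi-concave function being limits of derivatives at nearby differentiability points) then yields monotonicity of $\phi_+$ and $\phi_-$. With this adjustment the proof goes through.
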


\begin{proof}
 We   denote by $\mathcal B= \bigcup_{c\in \R} \mathcal{PG} (c+u'_c)$. Observe that $\cb\subset \A_0$.

First we prove that $\mathcal B$ is closed in $\A_0$.   Let $(t_n,p_n)\in \mathcal{PG}(c_n+u'_{c_n})$ be a sequence converging to $(t,p)\in \A_0$. By definition of $\A_0$, there are $C_0<C_1$ and $(t,P_0) \in \mathcal{PG}(C_0+u'_{C_0})$,  $(t,P_1) \in \mathcal{PG}(C_1+u'_{C_1})$ such that such that $P_0<p<P_1$. 
Now let $c_-<C_0<C_1<c_+$ be such that $\rho(c_-)$, $\rho(c_+)$ are irrational and $$\rho(c_-)<\rho(C_0)<\rho(C_1)<\rho(c_+).$$
  As the pseudographs are vertically ordered (Proposition \ref{Pordreirrat}), $(t,p)$ is trapped in the open sub-annulus between $\mathcal{PG}(c_-+u'_{c_-})$ and $\mathcal{PG}(c_++u'_{c_+})$. It follows that for $n$ large enough, so is $(t_n,p_n)$.   Hence $\mathcal{PG}(c_n+u'_{c_n})$ is a full pseudograph that contains a point strictly between  $\mathcal{PG}(c_-+u'_{c_-})$and $\mathcal{PG}(c_++u'_{c_+})$. Proposition \ref{Pordreirrat} implies that  $\rho(c_-)\leqslant\rho(c_n)\leqslant \rho (c_+)$. As $\rho(c_-)$, $\rho(c_+)$ are irrational, there is a unique weak K.A.M. solution for these rotation numbers and then $\rho(c_n)\notin \{ \rho(c_-), \rho (c_+)\}$. We deduce that
$\rho(c_-)<\rho(c_n)< \rho (c_+)$  and then that $c_-<c_n<c_+$.

Up to extracting, we may assume that $c_n \to c_\infty$ and by continuity of the pseudographs  with respect to $c$ (for the Hausdorff distance, see Proposition \ref{hausdorff}), it follows that $(t,p)\in \mathcal{PG}(c_\infty+u'_{c_\infty})\subset \A_0$.

Next we prove that $\mathcal B=\A_0$. We argue by contradiction, by the first part, if this is not the case, there is an open ball $B=(\theta_0,\theta_1)\times (r_0,r_1)$ such that $\overline B\subset \A_0 \setminus \mathcal B$. 

We say that a topological essential circle $\mathcal C$ is above $B$ if $B$ is included in the lower connected component of $\A \setminus \mathcal C$\footnote{Recall that by Jordan's theorem, $\A \setminus \mathcal C$ has two open connected components, one we call upper that contains $\T \times (k,+\infty)$  and one  we call lower, that contains $\T \times (-\infty, -k)$ for $k$ large enough.} and $\mathcal C$ is under $B$ if $B$ is included in the upper connected component of $\A \setminus \mathcal C$. Therefore, if we set $EC_B$ the set of essential circles   $\mathcal C \subset \A\setminus B$, $EC_B$ is the union of circles above $B$:  $EC_B^+$ and those under $B$: $EC_B^-$.

We will prove that 
\begin{lemma}
Both $EC_B^+$ and $EC_B^-$ are open subsets of $EC_B$ for the Hausdorff distance.
\end{lemma}
\begin{proof}
We prove it for $EC_B^+$. Let $\mathcal C^+$ be a circle above $B$. As the lower connected component of $\A \setminus \mathcal C^+$ is path connected, there is a continuous  path $\gamma : [0,+\infty) \to \A \backslash \cC^+$ such that $\gamma(0)\in B$ and $\gamma(t) = (0,-t)$ for all the $t$ large enough. Let $\varepsilon>0$ be such that  $\mathcal C^+$ is at distance greater than $\varepsilon$ from $\gamma$. If $\mathcal C^-$ is any circle under $B$, then it must intersect $\gamma$. Hence $d_H(\mathcal C^-,\mathcal C^+)>\varepsilon$ where $d_H$ stands for the Hausdorff distance. This proves the lemma. 
\end{proof}
We will obtain a contradiction as $\R$ is connected and the map $c\mapsto \mathcal{PG}(c+u'_c)$ is continuous for the Hausdorff distance, provided  we prove that for $c$ large, $\mathcal{PG}(c+u'_c)$ is above $B$ while for $c$ small $\mathcal{PG}(c+u'_c)$ is under $B$.

\begin{lemma}
For $c$ large, $\mathcal{PG}(c+u'_c)$ is above $B$ while for $c$ small $\mathcal{PG}(c+u'_c)$ is under $B$.
\end{lemma}
\begin{proof}
We establish only the first fact. Let $\theta_*\in (\theta_0,\theta_1)$. By definition of $\eta_+$, there exists $C$ such that 
$$\forall c>C, \ \forall (\theta_*, p)\in  \mathcal{PG} (c+u_c'), \quad p>r_1.$$

It follows that for $t>0$ small enough, we have
$$\forall (\theta_*,p) \in \varphi_{-t}\big(\mathcal{PG} (c+u_c')\big), \quad  p>r_1$$
 where $\varphi$ denotes here the flow of the pendulum and
 
 $$\varphi_{-t}\big(\mathcal{PG} (c+u_c')\big)\cap B=\varnothing.$$
 
  But it is proved in \cite{Arna2} that $\varphi_{-t}\big(\mathcal{PG} (c+u_c')\big)$ is the Lipschitz graph  of a function $\alpha_t : \T \to \R$ for small $t>0$. Hence it follows from the intermediate value theorem that $\alpha(\theta) >r_1$ for $\theta \in (\theta_0, \theta_1)$ and it becomes obvious that  $B = (\theta_0,\theta_1)\times (r_0,r_1)$ is under $\varphi_{-t}\big(\mathcal{PG} (c+u_c')\big)$. Letting $t\to 0$ and passing to the limit, we obtain that $B$ is under $\mathcal{PG} (c+u_c')$.
\end{proof}

\end{proof}

In order to conclude, we have to prove that $\A = \A_0$ which is equivalent to proving that $\eta_+$ is identically $+\infty $ and $\eta_-$ is identically $-\infty$. We will establish the result for $u_+$.

\begin{lemma}\label{Lhorscomp}
Let $[a,b]$ be a segment, there exists $C >0$ depending on 
 $[a, b]$ such that if $|c|>C$ then 
$$ \forall \theta\in [0,1], \theta' \in [a,b], \quad  S(\theta,\theta') + c(\theta-\theta') > \min_{n\in \Z}S(\theta,\theta'+n) + c(\theta-\theta'-n).$$

\end{lemma}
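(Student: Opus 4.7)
The plan is to compare the value of the functional $n \mapsto S(\theta, \theta'+n) + c(\theta - \theta' -n)$ at $n=0$ with its value at $n=\pm 1$, and show that by a compactness argument the "sideways" shift $n = \pm 1$ wins when $|c|$ is large enough.

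First I would set up the relevant auxiliary functions. Define on the compact rectangle $K = [0,1] \times [a,b]$ the continuous functions
\[
\Phi_+(\theta,\theta') = S(\theta, \theta' + 1) - S(\theta, \theta'), \qquad \Phi_-(\theta,\theta') = S(\theta, \theta') - S(\theta, \theta' - 1),
\]
and let $M_+ = \max_K \Phi_+$ and $m_- = \min_K \Phi_-$, both finite by continuity and compactness. I would then set $C = \max(|M_+|, |m_-|) + 1$.

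Next I would perform the two comparisons. For a given $(\theta, \theta') \in K$, a direct computation gives
\[
\bigl(S(\theta, \theta'+1) + c(\theta - \theta' - 1)\bigr) - \bigl(S(\theta, \theta') + c(\theta - \theta')\bigr) = \Phi_+(\theta, \theta') - c,
\]
which is strictly negative whenever $c > M_+$, in particular whenever $c > C$; so the shift $n=1$ beats $n=0$. Symmetrically,
\[
\bigl(S(\theta, \theta'-1) + c(\theta - \theta' + 1)\bigr) - \bigl(S(\theta, \theta') + c(\theta - \theta')\bigr) = -\Phi_-(\theta, \theta') + c,
\]
which is strictly negative whenever $c < m_-$, hence whenever $c < -C$; so the shift $n=-1$ beats $n=0$. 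In either case, the infimum over $n \in \Z$ on the right-hand side of the claimed inequality is strictly smaller than the value at $n=0$, which is the required strict inequality.

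I do not expect a real obstacle: superlinearity of $S$ ensures that $S(\theta, \theta' + n) + c(\theta - \theta' - n) \to +\infty$ as $|n| \to \infty$, so the $\min_{n\in\Z}$ in the statement is attained and the above comparison at $n = \pm 1$ is enough to conclude. The only thing one has to be slightly careful about is that the constant $C$ must be chosen uniformly in $(\theta,\theta') \in K$, which is precisely why one passes to the max/min of $\Phi_\pm$ over the compact set $K$ rather than working pointwise.
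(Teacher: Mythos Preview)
Your proof is correct and follows essentially the same approach as the paper: compare the value at $n=0$ with the value at $n=\pm 1$ and use compactness to get a uniform constant. The only cosmetic difference is that the paper bounds $|S(\theta,\theta'\pm 1)-S(\theta,\theta')|$ via the derivative bound $\Delta=\max_{[0,1]\times[a-1,b+1]}\bigl|\partial S/\partial\theta'\bigr|$ and the mean value theorem, whereas you bound the finite differences $\Phi_\pm$ directly as continuous functions on a compact set; the argument is otherwise identical.
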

\begin{proof}
Let us set $\Delta = \max \left\{ \Big|\frac{\partial S}{\partial \theta'}(\theta,\theta')\Big| , \ \ \theta\in [0,1], \theta' \in [a-1, b+1]\right\}$ and $C=\Delta +1$.

If $|c|>C$ two cases may occur:
\begin{itemize}
\item either $c>\Delta+1$. In this case, if $(\theta,\theta')\in  [0,1]\times [a,b]$, by Taylor's inequality we find 
$$S(\theta,\theta')+ c(\theta-\theta') >S(\theta,\theta')+ c\big(\theta-(\theta'+1)\big)+\Delta \geqslant  S(\theta,\theta'+1)+ c\big(\theta-(\theta'+1)\big);$$
\item or $c <-\Delta -1$, in which case 
$$S(\theta,\theta')+ c(\theta-\theta') >S(\theta,\theta')+ c\big(\theta-(\theta'-1)\big)+\Delta \geqslant  S(\theta,\theta'-1)+ c\big(\theta-(\theta'-1)\big).$$

\end{itemize}

\end{proof}

\begin{cor}
The function $\eta_+$ is identically $+\infty$.
\end{cor}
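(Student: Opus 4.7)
The plan is to show that, as $c\to +\infty$, every point $(\tilde\theta_0, r)\in \overline{\cg(c+\tilde u'_c)}$ with $\tilde\theta_0\in[0,1)$ satisfies $r\to+\infty$ uniformly in $\tilde\theta_0$. Since $\mathcal{PG}(c+u'_c)$ is an essential circle by Lemma \ref{Lfullman}, this will immediately force $\eta_+(\theta)=+\infty$ for every $\theta\in \T$.

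First I would reduce the problem to controlling a backward Lax--Oleinik preimage. Setting $(\tilde\theta_{-1},r_{-1}):=F^{-1}(\tilde\theta_0,r)$, property \ref{ptinvgraph} places it in $\cg(c+\tilde u'_c)$, and the definition of the generating function gives $r=\frac{\partial S}{\partial\widetilde\Theta}(\tilde\theta_{-1},\tilde\theta_0)$. Since the twist condition makes $\tilde\theta\mapsto\frac{\partial S}{\partial\widetilde\Theta}(\tilde\theta,\widetilde\Theta_0)$ a decreasing diffeomorphism of $\R$ onto $\R$, it suffices to prove that $\tilde\theta_{-1}\to-\infty$ as $c\to+\infty$, uniformly for $\tilde\theta_0\in[0,1)$.

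To establish this I would compare $\tilde\theta_0$ with an Aubry--Mather orbit at rotation number $\rho(c)=\alpha'(c)$. Pick $x_0\in\widetilde{\mathcal M}(\rho(c))$ with $x_0\leqslant\tilde\theta_0<x_0+1$; both $x_0$ and $x_0+1$ lie in $\overline{\cg(c+\tilde u'_c)}$ (the Mather set sits inside every weak K.A.M. pseudograph with matching rotation number), with $F^{-1}$-preimages $x_{-1}$ and $x_{-1}+1$ by $\Z$-equivariance of $F$. Applying the order preservation Lemma \ref{orderWK.A.M.} to the triple $x_0\leqslant\tilde\theta_0\leqslant x_0+1$ yields the sandwich $x_{-1}\leqslant\tilde\theta_{-1}\leqslant x_{-1}+1$. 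Since $\widetilde{\mathcal M}(\rho(c))$ is well-ordered and its dynamics extend to a lift of an orientation-preserving circle homeomorphism of rotation number $\rho(c)$, the classical Poincar\'e bound gives $x_{-1}\leqslant x_0-\rho(c)+1$, hence $\tilde\theta_{-1}\leqslant 3-\rho(c)$.

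To conclude, property \ref{first} tells us that $\alpha$ is convex and superlinear, so $\rho(c)=\alpha'(c)\to+\infty$ as $c\to+\infty$; therefore $\tilde\theta_{-1}\to-\infty$ uniformly in $\tilde\theta_0\in[0,1)$, and $r\to+\infty$ by the twist. The most delicate step is the Aubry--Mather sandwich: it is what converts the easy asymptotic $\rho(c)\to+\infty$ into a concrete one-step bound on the Lax--Oleinik preimage. Lemma \ref{Lhorscomp} offers a complementary but non-directional route forcing $|\tilde\theta_{-1}|\to+\infty$ for $|c|$ large, yet identifying the correct sign of $\tilde\theta_{-1}$ genuinely needs the rotation-number/order-preservation input described above.
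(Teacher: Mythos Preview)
Your argument is correct and takes a genuinely different route from the paper. The paper proves the corollary by combining Lemma~\ref{Lhorscomp} (to force $\tilde\theta_{-1}\notin[-B,B]$ for large $|c|$) with a sign argument: since $\int_0^1(c+\tilde u'_c)=c>0$ there is at least one point where $c+\tilde u'_c>0$, and then the vertical ordering of Proposition~\ref{Pordreirrat} against $\mathcal{PG}(u'_0)$ promotes this to $c+\tilde u'_c>A+1$ everywhere. Your proof bypasses Lemma~\ref{Lhorscomp} entirely: the Aubry--Mather sandwich via Lemma~\ref{orderWK.A.M.} together with the one-step Poincar\'e estimate $|x_{-1}-x_0+\rho(c)|<1$ for lifts of circle homeomorphisms gives the \emph{signed} bound $\tilde\theta_{-1}\leqslant 3-\rho(c)$ directly, and superlinearity of $\alpha$ yields $\rho(c)=\alpha'(c)\to+\infty$. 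The paper's approach is more elementary in that it relies only on pointwise properties of $S$ and the already-established ordering of pseudographs; yours is more dynamical, trading the calculus lemma for the rotation-number machinery developed earlier in the section. Your closing remark slightly undersells the paper's method: Lemma~\ref{Lhorscomp} alone is indeed non-directional, but the paper resolves the sign through the integral/ordering argument, not through rotation numbers.
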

\begin{proof}
Let  us fix $A>0$. We assume that for all $(\theta,p)\in  \mathcal{PG}(u'_0)$, then $|p|\leqslant A$ (or in other words, $u_0$ is $A$-Lipschitz).  As every map $\theta\mapsto \frac{\partial S}{\partial \Theta}(\theta, \Theta_0)$ is a decreasing diffeomorphism of $\R$, there exists a constant $B>0$ such that for every $\Theta_0\in [0, 1]$, we have
$$\theta>B\Rightarrow  \frac{\partial S}{\partial \Theta}(\theta, \Theta_0)<-(A+1)\quad{\rm and}\quad \theta<-B\Rightarrow \frac{\partial S}{\partial \Theta}(\theta, \Theta_0)>A+1.$$
Let $C$ be the constant given by Lemma \ref{Lhorscomp} for the segment $[-B,B]$  and let us choose   $c>\sup\{ B, C\}$.    Let $\tilde\theta_0\in [0, 1]$ be any derivability point of $u_c$.   Because of Lemma \ref{Lhorscomp}, if $\tilde u_c$ is a lift of $u_c$ and if $\tilde\theta_{-1}$ verifies 
$$\tilde u_c (\tilde\theta_0) = \inf_{\tilde\theta\in \R} \tilde u_c(\tilde\theta) + S(\tilde\theta,\tilde\theta_0) +c(\tilde\theta-\tilde\theta_0)= \tilde  u_c(\tilde\theta_{-1}) + S(\tilde\theta_{-1},\tilde\theta_0) + c(\tilde\theta-\tilde\theta_0),$$
then $\tilde \theta_{-1} \notin [-B,B]$ and then $\Big|\frac{\partial S}{\partial \Theta}(\tilde\theta_{-1}, \tilde\theta_0)\Big|>A+1$.

 We deduce from point \ref{ptdifftt} of section \ref{ssweakk} that $F\big(\tilde\theta_{-1}, c+\tilde u_c'(\tilde \theta_{-1})\big)=\big(\tilde \theta_0, c+\tilde u_c'(\tilde\theta_0)\big)$ and then
 $$c+\tilde u_c'(\tilde\theta_0) =   \frac{\partial S}{\partial \Theta} (\tilde\theta_{-1},\tilde\theta_0),$$
and then $|c+\tilde u_c'(\theta_0)| >A+1$.\\
As  $\int_0^1 \big(c+\tilde u'_c(s)\big) ds = c >0$, we can choose $\theta_0$ such that  $c+\tilde u'_c(\theta_0) >0$ and so $c+\tilde u_c'(\theta_0)>A+1$. 

As the pseudographs are vertically ordered (Proposition \ref{Pordreirrat}), $\mathcal{PG}(c+u'_c)$ is above $\mathcal{PG}(u'_0)$. We conclude that for all derivability point $\theta$ of $u_c $ then $c+\tilde u_c'(\tilde\theta) >A+1$. Finally, the whole full pseudograph $\mathcal{PG}(c+u'_c)$ lies above the circle $\{(t,A) , \ \ t\in \T\}$.



We have just established that if $c> B$, then $\mathcal{PG}(c+u'_c)$ lies above the circle $\{(t,A) , \ \ t\in \T\}$, that concludes the proof.

\end{proof}

Using technics given in \cite{Arna4},  we will prove in Proposition \ref{Pconfullps} of Appendix  \ref{AppB3} that  the map that maps $c$ on the   full pseudograph\footnote{see the definition in subsection \ref{ssweakk}}  $\mathcal{PG}(c+  u_c') =\{ (0,c)+\partial  u_c (t), \quad t\in \T\}$ of $c+u_c'$ is continuous for the Hausdorff distance.

 Point (3) of Theorem \ref{Tpseudofol} is a result of Proposition \ref{Pordreirrat}.

\vspace{5mm}

\section{Proof of the implication (\ref{pt2Tgeneder}) $\Rightarrow$  (\ref{pt1Tgeneder})  in   Theorem \ref{Tgeneder}}\label{secsecond}
We use the same notations as in Theorem \ref{Tgenecont}. We assume that the map $u$ is   $C^1$. \\
Then the graph of every $\eta_c=c+\frac{\partial u_c}{\partial \theta}$ is a  continuous graph that is backward invariant, hence invariant.
If for $c_1\not=c_2$ the two graphs of $\eta_{c_1}$ and $\eta_{c_2}$ have a non-empty intersection, then their common rotation number is rational because an ESTwD  has at most one invariant curve with a fixed irrational rotation number (see \cite{He1} or Theorem \ref{min-irr} below).   Moreover, for every $c\in [c_1, c_2]$, we have $\rho(c)=\rho(c_1)$.\\
 Using results   of \cite{Ban} (see section 5),  we know that above any $\theta\in \T$, there are at most two $r_1, r_2\in\R$ such that the orbit of $(\theta, r_i)$ is minimizing with rotation number $\rho(c_1)$. As $c_1\not=c_2$, there exists then $\theta\in \T$ such that $r_1=\eta_{c_1}(\theta)\not= \eta_{c_2}(\theta)=r_2$. But for $c\in [c_1, c_2]$, the orbit of $\big(\theta, \eta_c(\theta)\big)$ is minimizing with rotation number equal to $\rho(c_1)$ and then $\eta_c(\theta)\in \{ r_1, r_2\}$. As $c\mapsto \eta_c(\theta)$ is continuous with values in $\{\eta_{c_1}(\theta), \eta_{c_2}(\theta)\}$ and satisfies $\eta_{c_1}(\theta)\not=\eta_{c_2}(\theta)$, we obtain a contradiction.\\
 
   So finally the graphs of the $\eta_c$ define a lamination of $\A$ and then $f$ is $C^0$-integrable.

\section{Properties of infinite minimizing sequences and weak K.A.M. solutions}\label{sec4}

In this section we study properties of sequences $(\tilde\theta_i)_{i\in \Z_-}\in \R^{\Z_-}$ that are minimizing. By symmetry, similar statements hold for sequences   $(\tilde\theta_i)_{i\in \Z_+}$. We start with the following improvement of Lemma \ref{Lrotnumber}  that proves the beginning of Theorem \ref{Ttwistvertical}.

\begin{propos}\label{Protnumber}
Let $(\tilde\theta_i)_{i\in \Z_-}\in \R^{\Z_-}$ be a minimizing sequence, then the limit 
$\lim\limits_{k\to -\infty} \frac{\tilde\theta_k}{k} $ exists.
\end{propos}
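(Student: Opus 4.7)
The plan is to argue by contradiction using Aubry's fundamental lemma (Proposition \ref{PAubryfund}). Suppose the limit $\lim_{k\to -\infty} \tilde\theta_k/k$ does not exist. Then there are two accumulation values $\alpha < \beta$ of $(\tilde\theta_k/k)_{k\in \Z_-}$. Pick any $\rho\in(\alpha,\beta)$. By property (5) of Subsection \ref{ssAM}, there exists a minimizing Aubry-Mather set with rotation number $\rho$, hence a (doubly infinite) minimizing sequence $(x_k)_{k\in\Z}$ with $\lim_{|k|\to\infty} x_k/k = \rho$.

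The key step is to exploit the asymptotic separation. Since $x_k/k\to \rho$ and since $\tilde\theta_k/k$ has accumulation values $\alpha < \rho < \beta$, we deduce that
\[
\frac{\tilde\theta_k - x_k}{k} \longrightarrow \alpha - \rho < 0 \quad \text{along one subsequence } k\to-\infty,
\]
and $\to \beta-\rho > 0$ along another. Because $k<0$, this means that $\tilde\theta_k-x_k \to +\infty$ along the first subsequence and $\tilde\theta_k-x_k \to -\infty$ along the second. In particular, the sign of $\tilde\theta_k-x_k$ changes infinitely many times as $k\to-\infty$, so according to the definition preceding Proposition \ref{PAubryfund}, the two sequences cross infinitely many times in $\Z_-$.

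On the other hand, the two sequences are distinct (they have different asymptotic ratios) and both are minimizing. Aubry's fundamental lemma then yields a contradiction: choose any finite segment $[N,M]\subset \Z_-$ containing (strictly inside) at least two crossings, which is possible since there are infinitely many. The restrictions of $(\tilde\theta_k)$ and $(x_k)$ to $[N,M]$ are two distinct minimizing finite sequences crossing more than once strictly between the endpoints, contradicting Proposition \ref{PAubryfund}.

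I do not anticipate a serious obstacle here: the main point is conceptual, namely realising that Aubry's lemma applied to \emph{any} pair of minimizing segments, combined with comparison against a reference orbit of a well-chosen intermediate rotation number, forces the existence of a rotation number for $(\tilde\theta_k)$. The only minor care required is the verification that crossings arising from sign changes of $\tilde\theta_k-x_k$ genuinely match the paper's definition of a crossing (either $\tilde\theta_k=x_k$ or $(\tilde\theta_k-x_k)(\tilde\theta_{k+1}-x_{k+1})<0$), and that we can isolate at least two of them strictly inside a finite window, both of which are immediate from the fact that infinitely many sign changes accumulate at $-\infty$.
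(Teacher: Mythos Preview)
Your proof is correct and follows a genuinely different route from the paper's.

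The paper does not argue by contradiction. Instead, it invokes Theorem \ref{Tpseudofol} (the covering of $\A$ by full pseudographs) to find a cohomology class $c$ with $(\tilde\theta_0,r_0)\in\mathcal{PG}(c+\tilde u'_c)$, then takes the two backward orbits $(x_k)$ and $(x'_k)$ issued from the extreme points of $\partial\tilde u_c(\tilde\theta_0)$. These calibrate $\tilde u_c$, so by Lemma \ref{Lrotnumber} both have rotation number $\rho(c)$; the twist condition and Aubry's lemma then give $x'_k<\tilde\theta_k<x_k$ for all $k<0$, and squeezing yields the limit together with its value $\rho(c)$.

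Your argument is more elementary: it uses only classical Aubry--Mather theory (existence of minimizing orbits of any rotation number) and the non-crossing lemma, bypassing the weak K.A.M. machinery entirely. This is essentially Bangert's original approach, which the paper acknowledges. What the paper's argument buys is the explicit identification of the rotation number as $\rho(c)$ for the $c$ picked out by the pseudograph covering, a fact that feeds directly into the subsequent results (e.g.\ Proposition \ref{coince}). Your approach establishes existence of the limit cleanly but would require a separate step to pin down its value in terms of the weak K.A.M. data.
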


\begin{proof}
Let us set for $i\in \mathbb Z_-$, $r_i = \frac{\partial S}{\partial \widetilde\Theta} (\tilde\theta_{i-1},\tilde\theta_i)$ so that $(\tilde\theta_i,r_i)_{i\in \mathbb Z_-}$ is a piece of orbit of $F$. By Theorem \ref{Tpseudofol}, there exists $c\in \R$ and a weak K.A.M. solution $\tilde u_c$, for $\widetilde T^c$, such that $(\tilde\theta_0,r_0)\in \mathcal{PG} (c+\tilde u'_c)$. Let $\{\tilde\theta_0\} \times [p_0,p'_0] = \partial \tilde u_c ( \tilde \theta_0) $ and for $k\in \mathbb Z_-$, let us define $x_k = \pi_1\circ F^k ( \tilde \theta_0,c+p_0)$ and $x'_k = \pi_1\circ F^k ( \tilde \theta_0,c+p'_0)$. As $(\tilde \theta_0,c+p_0), (\tilde \theta_0,c+p_0') \in \overline{ \mathcal{G} (c+\tilde u'_c)}$, the sequences $(x_k)_{k\in \mathbb Z_-} $ and $(x'_k)_{k\in \mathbb Z_-} $ are minimizing and 
$$\lim_{k\to -\infty} \frac{x_k}{k} = \lim_{k\to -\infty} \frac{x'_k}{k} = \rho(c)  $$
thanks to Lemma \ref{Lrotnumber}.  Note that if $c+p_0 =r_0 $ or if $c+p_0' = r_0$ the result holds.

In the other cases, as $c+p_0 < r_0 < c+p_0'$, it follows from the twist condition that $ x_{-1}> \tilde \theta_{-1} > x'_{-1}$. By Aubry's fundamental lemma (Proposition \ref{PAubryfund}), we infer that for all $k<0$, $ x_{k}> \tilde \theta_{k} > x'_{k}$ and the result follows.
\end{proof}

 Before continuing our study, we need to introduce some notations. In the rest of this section, $(\tilde\theta_i)_{i\in \Z_-}\in \R^{\Z_-}$ will be a minimizing sequence and associated to it, $r_i = \frac{\partial S}{\partial \widetilde\Theta} (\tilde\theta_{i-1},\tilde\theta_i)$ so that $(\tilde\theta_i,r_i)_{i\in \mathbb Z_-}$ is a piece of orbit of $F$. We will set $\rho_0$ the limit given by Proposition \ref{Protnumber}.
 
 We anticipate on a Theorem (that implies Theorem \ref{solK.A.M.calib}) that will be proved later in two steps by distinguishing wether $\rho_0$ is rational or not:
 \begin{theorem}\label{caliball}
Let  $(\tilde\theta_i)_{i\in \Z_-}\in \R^{\Z_-}$ be a minimizing sequence. There exists a cohomology class $c\in \R$ and a weak K.A.M. solution $\tilde u_c$ for $\widetilde T^c$ such that $(\tilde\theta_i)_{i\in \Z_-}\in \R^{\Z_-}$ calibrates $\tilde u_c$.
 \end{theorem}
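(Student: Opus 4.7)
My plan is to combine Proposition~\ref{Protnumber}, which supplies the rotation number $\rho_0=\lim_{k\to-\infty}\tilde\theta_k/k$ of the given sequence, with a limiting construction of weak K.A.M.\ solutions incorporating longer and longer segments of the sequence. First I would pick a cohomology class $c$ with $\rho(c)=\rho_0$: in the irrational case Proposition~\ref{unicite} makes $c$ unique, whereas in the rational case $c$ ranges over the interval $[a_1,a_2]=\rho^{-1}(\{\rho_0\})$, and the right choice of $c$ inside this interval will be dictated by the asymptotic behaviour of the sequence on the Mather set. Normalising by $\tilde u_c(\tilde\theta_0)=0$, the values $\tilde u_c$ must take on the orbit are forced by the calibration relation \eqref{calib} to be
\[
V_k \;:=\; -\sum_{i=k}^{-1}S(\tilde\theta_i,\tilde\theta_{i+1})+c(\tilde\theta_0-\tilde\theta_k)+k\alpha(c), \qquad k\leq 0,
\]
so the task is to produce a weak K.A.M.\ solution $\tilde u_c$ for $\widehat T^c$ whose values on $\{\tilde\theta_k\}_{k\leq 0}$ are exactly these $V_k$.

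For each $N\geq 1$ I would produce a weak K.A.M.\ solution $\tilde u^{(N)}$ for $\widehat T^c$ calibrated by the finite segment $(\tilde\theta_{-N},\dots,\tilde\theta_0)$. The idea is to iterate the negative Lax--Oleinik operator $\widehat T^c$ many times starting from an initial datum pinned near $\tilde\theta_{-N}$ with value $V_{-N}$: the minimising property of the full sequence then forces the successive infima to be realised along the orbit itself, yielding $\tilde u^{(N)}(\tilde\theta_k)=V_k$ for $-N\leq k\leq 0$. By the uniform semi-concavity and uniform Lipschitz estimates for weak K.A.M.\ solutions at fixed cohomology (point~\ref{same} of Subsection~\ref{ssweakk}), the family $(\tilde u^{(N)})$ is equicontinuous and equibounded after suitable normalisation, so the Ascoli theorem combined with Lemma~\ref{Llimweak} extracts a uniformly convergent subsequence whose limit $\tilde u_c$ is again a weak K.A.M.\ solution. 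Passing to the limit in the finite calibrations gives the full calibration of $\tilde u_c$ by $(\tilde\theta_k)_{k\leq 0}$.

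The main obstacle I anticipate is twofold. First, I must show that $(V_k)_{k\leq 0}$ is bounded, since otherwise the normalisation above is impossible. The upper bound should come from comparing the orbit's action with $\mathcal S^c_{-k}(\tilde\theta_k,\tilde\theta_0)$ and then invoking the domination inequality for any weak K.A.M.\ solution; the lower bound should come from Mather's identity $\int S\,d\mu=c\rho_0-\alpha(c)$ for a Mather measure of rotation $\rho_0$ combined with a comparison of the sequence's time-averaged action with this quantity. Second, and more subtly, the candidate assignment $\tilde\theta_k\mapsto V_k$ is $c$-dominated only if no path from $\tilde\theta_k$ ending at a shifted endpoint $\tilde\theta_{k'}+m$ with $m\in\Z\setminus\{0\}$ undercuts the orbit's action, which is a strictly stronger condition than the sequence being minimising in $\R$, and the reason why the correct choice of $c$ really matters.

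In the irrational case, the classification of minimising orbits with irrational rotation number, combined with Proposition~\ref{unicite} and Aubry's fundamental lemma~\ref{PAubryfund} in the spirit of the proof of Proposition~\ref{Protnumber}, should identify $(\tilde\theta_0,r_0)$ as an element of $\overline{\mathcal{G}(c+\tilde u'_c)}$ for the unique (up to constants) weak K.A.M.\ solution, and calibration then follows directly from point~\ref{ptinvgraph}. The rational case is genuinely more delicate: there the sequence accumulates as $k\to-\infty$ on a Mather periodic orbit $O$, and both the choice of $c\in[a_1,a_2]$ and the construction of $\tilde u_c$ must be coherent, so that $O$ lies in the pseudograph and the sequence appears as a heteroclinic semi-orbit landing on $O$. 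This is why I expect the authors to split the proof into two cases, the rational case carrying most of the technical weight.
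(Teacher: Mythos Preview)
Your approximation programme has a gap that you yourself flag as the ``second obstacle'' but do not close. The claim ``iterate $\widehat T^c$ from an initial datum pinned at $\tilde\theta_{-N}$ and the minimising property forces the infima to be realised along the orbit'' fails precisely because minimising in $\R$ says nothing about competitors of the form $\tilde\theta_k+m$ with $m\in\Z\setminus\{0\}$: there is no reason the Lax--Oleinik iterate should take the value $V_{-N+j}$ at $\tilde\theta_{-N+j}$. Even if it did, $(\widehat T^c)^N v$ is not a weak K.A.M.\ solution, and further iteration towards a fixed point can destroy the finite calibration you have arranged. Your sketch for the boundedness of $(V_k)$ is also incomplete. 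So the ``build the solution from the orbit'' route, as written, does not go through.

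The paper avoids this entirely by going in the opposite direction: instead of constructing a solution tailored to the orbit, it shows the orbit calibrates one of the solutions $u_a,u_b$ at the endpoints of $\rho^{-1}(\{\rho_0\})$ already built in Theorem~\ref{Tgenecont}. The engine is Proposition~\ref{cross2}: if two minimising sequences share the endpoint $\tilde\theta_0$, are asymptotic as $n\to-\infty$, and one of them calibrates some $u_c$, then so does the other. The comparison sequence is the backward orbit of $\bigl(\tilde\theta_0,\,r^\pm_{\rho_0}(\tilde\theta_0)\bigr)$, which calibrates $u_a$ or $u_b$ by construction, and the asymptotic closeness comes from the squeeze $y^{-n}(\tilde\theta_0)\le\tilde\theta_n\le y^{+n}(\tilde\theta_0)$ of Proposition~\ref{coince}. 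When $\rho_0\notin\Q$ the two Mather bounds coalesce and the squeeze is immediate (Theorem~\ref{min-irr}); when $\rho_0=p/q$ one first shows (Proposition~\ref{noncrossing}, via Aubry non-crossing applied to the sequence and its own $(p,q)$-translate) that $(\tilde\theta_n)$ is asymptotic to exactly one of $y^{\pm n}$, which also selects between $u_a$ and $u_b$ (Proposition~\ref{minrat}). Your final paragraph moves in this direction for the irrational case, but the decisive ingredient, Proposition~\ref{cross2}, and the explicit calibrating comparison sequence, are absent from your plan.
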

We can already deduce some properties of minimizing sequences reminiscent of orbits of circle homeomorphisms. 

\begin{cor}\label{permin}
Let $(\tilde\theta_i)_{i\in \Z_-}\in \R^{\Z_-}$ be a minimizing sequence and $\rho_0\in \R$ its rotation number given by Proposition \ref{Protnumber}. If there exist $p$ and $q $  integers with $q<0$ and $\tilde \theta_q = \tilde \theta_0+p$ then $(\tilde\theta_i+p)_{i\in \Z_-} = (\tilde\theta_{i+q})_{i\in \Z_-}\in \R^{\Z_-}$. In this case, $(\tilde\theta_i)_{i\in \Z_-}\subset \widetilde {\mathcal{M}}(p/q)$.
\end{cor}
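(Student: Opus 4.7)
The plan is to invoke Theorem \ref{caliball} to obtain a cohomology class $c$ and a weak K.A.M.\ solution $\tilde u_c$ for $\widetilde T^c$ calibrated by our sequence. Applying the calibration identity \eqref{calib} at $k=q$ and using the $\Z$-periodicity of $\tilde u_c$, which gives $\tilde u_c(\tilde\theta_q)=\tilde u_c(\tilde\theta_0+p)=\tilde u_c(\tilde\theta_0)$, one obtains
\[
\sum_{i=q}^{-1} S(\tilde\theta_i,\tilde\theta_{i+1}) = -cp - |q|\alpha(c).
\]
Iterating the domination inequality along any finite tuple $(\xi_i)_{i=q,\ldots,0}$ with $\xi_q-\xi_0=p$, and invoking the same periodicity $\tilde u_c(\xi_q)=\tilde u_c(\xi_0)$ at the endpoints, yields the reverse bound $\sum_{i=q}^{-1} S(\xi_i,\xi_{i+1}) \geqslant -cp - |q|\alpha(c)$. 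Thus our piece of orbit realizes the absolute minimum of this action among all constrained tuples.

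Next I would exploit this minimality variationally. The one-parameter family $\xi_i(t) = \tilde\theta_i + t$ preserves the constraint $\xi_q-\xi_0 = p$, so the function $A(t) = \sum_{i=q}^{-1} S(\tilde\theta_i + t, \tilde\theta_{i+1}+t)$ attains its minimum at $t=0$. Differentiating and using that $\frac{\partial S}{\partial \tilde\theta}(\tilde\theta_i,\tilde\theta_{i+1}) = -r_i$ and $\frac{\partial S}{\partial \widetilde\Theta}(\tilde\theta_i,\tilde\theta_{i+1}) = r_{i+1}$, the derivative telescopes:
\[
0 = A'(0) = \sum_{i=q}^{-1}(r_{i+1}-r_i) = r_0 - r_q,
\]
so $r_q = r_0$.

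Combined with $\tilde\theta_q = \tilde\theta_0 + p$, this shows $(\tilde\theta_q,r_q) = T_p(\tilde\theta_0, r_0)$, where $T_p\colon (x,y)\mapsto (x+p,y)$ is the $\Z$-translation, which commutes with $F$ since $p\in\Z$. Applying $F^{-j}$ and using equivariance gives $(\tilde\theta_{q-j}, r_{q-j}) = T_p(\tilde\theta_{-j}, r_{-j})$ for every $j \geqslant 0$, i.e.\ $\tilde\theta_{i+q} = \tilde\theta_i + p$ and $r_{i+q} = r_i$ for all $i \leqslant 0$; this is the first assertion. The orbit $(\tilde\theta_i,r_i)_{i\leqslant 0}$ is therefore $(q,p)$-periodic, projects in $\A$ to a minimizing periodic orbit of $f$ of rotation number $p/q$, and any such orbit is a minimal well-ordered Aubry--Mather set, hence contained in $\cm^*(p/q)$; projecting to $\T$ yields $(\tilde\theta_i)_{i\in\Z_-}\subset \widetilde\cm(p/q)$. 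The main conceptual step is the second one --- recognizing the orbit piece as an absolute minimum for the action among all $|q|$-step sequences of winding $p$ --- after which the variational computation immediately forces $r_0 = r_q$ and the rest follows by equivariance.
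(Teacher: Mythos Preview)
Your proof is correct. Both you and the paper invoke Theorem \ref{caliball} to obtain a weak K.A.M.\ solution $\tilde u_c$ calibrated by the sequence, and both finish identically once $r_0=r_q$ is established. The difference lies in how that equality is obtained. The paper argues geometrically: since $q<0$, property \ref{calibprop} gives $(\tilde\theta_q,r_q)\in\cg(c+\tilde u'_c)$, so $\tilde u_c$ is differentiable at $\tilde\theta_q=\tilde\theta_0+p$; periodicity then forces differentiability at $\tilde\theta_0$, so that $(\tilde\theta_0,r_0)\in\pg(c+\tilde u'_c)$ collapses to a single point and $r_0=c+\tilde u'_c(\tilde\theta_0)=c+\tilde u'_c(\tilde\theta_q)=r_q$. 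Your route is variational: you combine calibration and domination to recognize the orbit segment as an absolute minimizer of the $|q|$-step action among all tuples with winding $p$, and then read off $r_0=r_q$ from the vanishing of the first variation under diagonal translation. The paper's argument is shorter and exploits the pseudograph structure directly; yours is more computational but sidesteps the slightly delicate passage from $\pg$ to $\cg$ at the endpoint $\tilde\theta_0$.
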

\begin{proof}
Let $c\in \R$ and $\tilde u_c$  be a weak K.A.M. solution given by Theorem \ref{caliball} such that $(\tilde\theta_i)_{i\in \Z_-}\in \R^{\Z_-}$ calibrates $\tilde u_c$. We know from \ref{calibprop}, recalling properties of weak K.A.M. solutions, that  $(\tilde\theta_0,r_0) \in \pg(c+\tilde u'_c)$  and $(\tilde\theta_k,r_k) \in \cg(c+\tilde u'_c)$ for all $k<0$. In particular, by periodicity,   $\tilde u_c$ is derivable at $\tilde \theta_0$ and $r_0= c+\tilde u'_c (\tilde \theta_0) =  c+\tilde u'_c (\tilde \theta_q) = r_q$. Then we obtain that 
$$\tilde \theta_{k+q} = \pi_1\circ F^k(\tilde\theta_q, r_q) =  \pi_1\circ F^k(\tilde\theta_0, r_0)+p = \tilde\theta_k + p,$$ for all $k\leqslant 0$. The last assertion follows from Aubry-Mather theory \cite[Theorem 5.1]{Ban}.
\end{proof}

The following result though not stated this way is present in \cite{Ban2}:
\begin{cor}\label{circlelike}
Let  $(\tilde\theta_i)_{i\in \Z_-}\in \R^{\Z_-}$ be a minimizing sequence and $\rho_0\in \R$ its rotation number given by Proposition \ref{Protnumber}. Let $p$ and $q $ be integers with $q<0$.
\begin{itemize}
\item if $p/q <\rho_0$, then $\tilde\theta_q - \tilde\theta_0 <p$;
\item  if $p/q>\rho_0$, then $\tilde\theta_q - \tilde\theta_0 >p$.
\end{itemize}
In particular $|\tilde\theta_k - \tilde \theta_0 - k\rho_0|<1$ for all $k\leqslant 0$.

\end{cor}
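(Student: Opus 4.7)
The plan is to prove the first item by contradiction, deduce the second by symmetry, and then obtain the bound $|\tilde\theta_k-\tilde\theta_0-k\rho_0|<1$ by applying both items at $p=\lfloor k\rho_0\rfloor$ and $p=\lceil k\rho_0\rceil$ (with $q=k$), which traps $\tilde\theta_k-\tilde\theta_0$ in an open interval of length at most $1$ around $k\rho_0$.

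Assume $p/q<\rho_0$. The equality $\tilde\theta_q-\tilde\theta_0=p$ is excluded by Corollary \ref{permin}, which would force $(\tilde\theta_i)$ to be $(p,q)$-periodic with rotation number $p/q\neq\rho_0$. Suppose for contradiction that $\tilde\theta_q-\tilde\theta_0>p$. The strategy is to produce a minimizing sequence $(z_k)_{k\le 0}$ of rotation number $p/q$ with $z_0=\tilde\theta_0$ and $z_q\le\tilde\theta_0+p$, and to compare it with $(\tilde\theta_k)$ via Aubry's fundamental lemma (Proposition \ref{PAubryfund}). Being distinct minimizing sequences of different rotation numbers, $(\tilde\theta_k)_{k\le 0}$ and $(z_k)_{k\le 0}$ cross at most once on $\Z_-$. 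The equality $z_0=\tilde\theta_0$ accounts for one crossing at $k=0$; together with the asymptotic $z_k-\tilde\theta_k\sim -k(\rho_0-p/q)\to+\infty$ as $k\to-\infty$, this forces $z_k>\tilde\theta_k$ strictly for every $k<0$. In particular $\tilde\theta_q<z_q\le\tilde\theta_0+p$, contradicting the assumption.

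To construct $(z_k)$, I would invoke Theorem \ref{caliball} at rotation $p/q$: there exists a weak K.A.M. solution $u_{c'}$ with $\rho(c')=p/q$, and since the full pseudograph $\mathcal{PG}(c'+u'_{c'})$ projects onto every vertical, some $(\tilde\theta_0,s)\in\overline{\cg(c'+u'_{c'})}$ exists, whose backward orbit under $f$ is a minimizing sequence of rotation $p/q$ starting at $\tilde\theta_0$. The main obstacle is ensuring the upper bound $z_q\le\tilde\theta_0+p$. When $\tilde\theta_0$ lies in the projection of $\widetilde{\mathcal M}(p/q)$, one takes $(z_k)$ to be the $(p,q)$-periodic Mather orbit through $\tilde\theta_0$, for which $z_q=\tilde\theta_0+p$ automatically. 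Otherwise $\tilde\theta_0$ lies strictly between two adjacent Mather starting points $y_0^*<\tilde\theta_0<y_0^{**}$, and the bound demands a careful choice of $s$ in the fiber of $\mathcal{PG}(c'+u'_{c'})$ over $\tilde\theta_0$, combined with a secondary application of Aubry's fundamental lemma to $(z_k)$ and the Mather periodic orbits bounding the heteroclinic region in which $(z_k)$ lies.
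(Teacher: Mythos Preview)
Your overall strategy---rule out equality via Corollary \ref{permin}, derive a contradiction from a crossing argument, then deduce the final bound from the two items---is sound, and the last paragraph agrees with the paper. The gap is in the construction of the auxiliary sequence $(z_k)$.

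You need a minimizing sequence of rotation number $p/q$ with $z_0=\tilde\theta_0$ \emph{and} $z_q\le\tilde\theta_0+p$. You note this is immediate when $\tilde\theta_0\in\widetilde{\mathcal M}(p/q)$, and otherwise propose a ``careful choice of $s$ in the fiber'' plus a ``secondary application of Aubry's fundamental lemma''. But this is exactly the content of Proposition \ref{existence+-} (the sequence $(\tilde\theta_i^+)$ there), and that proposition is proved \emph{using} Corollary \ref{circlelike}: one approximates $p/q$ from above by rotation numbers $\rho_n$, invokes the present corollary for each approximating sequence, and passes to the limit. So completing your construction in the natural way is circular. More generally, the assertion ``some backward minimizing sequence of rotation $p/q$ through $\tilde\theta_0$ satisfies $z_q\le z_0+p$'' is, via Proposition \ref{noncrossing} and Theorem \ref{equivration}, essentially equivalent to the dichotomy you are trying to establish; it is not available at this stage.

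The paper sidesteps the auxiliary sequence entirely. Using Theorem \ref{caliball} it finds a weak K.A.M. solution $\tilde u_c$ calibrated by $(\tilde\theta_i)$, observes that the integer translate $(\tilde\theta_{k+q}-p)_{k\le 0}$ calibrates the \emph{same} $\tilde u_c$ (by $\Z$-periodicity of $S$ and $\tilde u_c$), and applies Lemma \ref{orderWK.A.M.} to this pair: if $\tilde\theta_q-p>\tilde\theta_0$, the order propagates backward, so $\tilde\theta_{k+q}-p>\tilde\theta_k$ for all $k\le 0$. Iterating, $(\tilde\theta_{nq}-np)_{n\ge 0}$ is increasing; dividing by $nq<0$ and letting $n\to\infty$ gives $\rho_0\le p/q$, a contradiction. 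The idea you are missing is this self-comparison: compare the sequence with its own translate rather than with a sequence at a different rotation number.
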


\begin{proof}
Let us prove the first item, the second being similar. Equality $\tilde\theta_q - \tilde\theta_0 =p$  is excluded by the previous result. Let $c\in \R$ and $\tilde u_c$  be a weak K.A.M. solution given by Theorem \ref{caliball} such that $(\tilde\theta_i)_{i\in \Z_-}\in \R^{\Z_-}$ calibrates $\tilde u_c$. And let $p$ and $q $ be integers with $q<0$ such that $p/q <\rho_0$.

Let us now assume that $\tilde\theta_q - \tilde\theta_0 >p>q\rho_0$. As the sequence $(\tilde\theta_{k+q}-p)_{k\leqslant 0}$ also calibrates $\tilde u_c$, it follows from Lemma \ref{orderWK.A.M.} and an induction that $\tilde\theta_{k+q}-p > \tilde\theta_k$ for all $k\leqslant 0$.  Another induction then yields that the sequences $(\tilde\theta_{k+nq}-np)_{n\geqslant 0}$ are increasing. Applying for $k=0$ and dividing by $nq$, we deduce that (recall $q<0$)
$$\frac{\tilde\theta_{nq}}{nq} - \frac{p}{q} <\frac{\tilde\theta_0}{nq}.$$
Letting $n\to +\infty$ the inequality $\rho_0 \leqslant \frac pq$ follows that is a contradiction.

To establish the last assertion, notice that if for some $q\leqslant 0$, $|\tilde\theta_q - \tilde \theta_0 - q\rho_0|\geqslant 1$ then one of the following holds
$$\exists p\in \Z , \quad \tilde\theta_q - \tilde \theta_0 \leqslant p < q\rho_0,$$
$$\exists p\in \Z , \quad \tilde\theta_q - \tilde \theta_0 \geqslant p > q\rho_0.$$
This is not possible by the beginning of the Corollary that was just proved.
\end{proof}

 \begin{nota}
 If $\rho_0\in \R$ and $x\in \R$, we define two numbers: 
 \begin{itemize}
 \item
 $ r^+_{\rho_0} (x)  $ is the largest $r\in \R$ such that $(x,r)\in \mathcal{PG} (c+\tilde u'_c)$ for some weak K.A.M. solution  $\tilde u_c$, associated some $c\in \rho^{-1}(\{\rho_0\})$.
 \item $ r^-_{\rho_0} (x)  $ is the smallest $r\in \R$ such that $(x,r)\in \mathcal{PG} (c+\tilde u'_c)$ for some weak K.A.M. solution  $\tilde u_c$, associated some $c\in \rho^{-1}(\{\rho_0\})$.
 \end{itemize}
 \end{nota}
 
 
 If $\rho_0\in \R$ then  if  $ \rho^{-1}(\{\rho_0\})= [a,b]$, we established that $u_a$ and $u_b$ are unique up to constants. Then  $ r^+_{\rho_0} (x)-b$ is the left derivative of $\tilde u_b$ at $x$ and  $ r^-_{\rho_0} (x)-a$ is the right derivative of $\tilde u_a$ at $x$.

 \begin{nota}
 If $\rho_0\in \R$ and $x\in \R$, we define two numbers $y^+_{\rho_0} (x)$ and $y^-_{\rho_0} (x)$ such that  $y^+_{\rho_0} (x) = \min\{y\in \widetilde {\mathcal{M}} (\rho_0), \ \ y\geqslant x\}$ and $y^-_{\rho_0} (x) = \max\{y\in \widetilde {\mathcal{M}} (\rho_0), \ \ y\leqslant x\}$ where $\widetilde {\mathcal{M}} (\rho_0)$ is  the  lift of the projected Mather set of rotation number $\rho_0$.
Obviously, $x\in [y^-_{\rho_0} (x) , y^+_{\rho_0} (x)]$.

 We denote by $\widetilde{\cm^*\big(\rho_0\big)}$ the lift to $\R^2$ of the Mather set $\cm^*\big(\rho_0\big)$. Then for every 
$c\in  \rho^{-1}(\{\rho_0\})$ and $u_c$   weak K.A.M. solution for $T^c$, $$(y^{\pm }_{\rho_0} (x), p^{\pm }_{\rho_0} (x))=(y^{\pm }_{\rho_0} (x),c+\tilde u'_c\big( y^{\pm }_{\rho_0} (x)))$$ is the unique point of $\widetilde {\mathcal{M}} (\rho_0)$ that is above $y^{\pm }_{\rho_0} (x)$. Moreover, the sequences $\big(y^{\pm n}_{\rho_0} (x)\big)_{n\in \Z}$ where 
$$ y^{\pm n}_{\rho_0} (x) = \pi_1\circ F^n\big( y^{\pm }_{\rho_0} (x), p^{\pm }_{\rho_0} (x)\big)= \pi_1\circ F^n\big( y^{\pm }_{\rho_0} (x), c+ \tilde u'_c\big( y^{\pm n}_{\rho_0} (x)\big)\big)$$
are contained in $\widetilde {\mathcal{M}} (\rho_0)$ and minimizing.

 
 When not necessary, the subscripts $\rho_0$ will be omitted.

 \end{nota}

\begin{propos}\label{coince}
For all $n\in \Z_-$, $y^{-n}(\tilde\theta_0) \leqslant \tilde\theta_n \leqslant  y^{+n}(\tilde\theta_0)$. In particular, $y^\pm ( \tilde\theta_n) = y^{\pm n}(\tilde\theta_0)$.
\end{propos}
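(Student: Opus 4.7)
The plan is to invoke Theorem~\ref{caliball} to select a cohomology class $c \in \rho^{-1}(\{\rho_0\})$ and a weak K.A.M. solution $\tilde u_c$ for $\widetilde T^c$ that is calibrated by the minimizing semi-orbit $(\tilde\theta_i)_{i\in\Z_-}$. The Mather set $\widetilde{\cm^*}(\rho_0)$ is then contained in the strict partial graph $\cg(c+\tilde u'_c)$ by the properties of weak K.A.M. solutions recalled in Subsection~\ref{ssweakk}, so the two Mather orbits $\bigl(y^{+n}(\tilde\theta_0)\bigr)_{n\leqslant 0}$ and $\bigl(y^{-n}(\tilde\theta_0)\bigr)_{n\leqslant 0}$ also calibrate $\tilde u_c$. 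I will moreover use that $\tilde u_c$ is differentiable at every point of $\widetilde{\mathcal{M}}(\rho_0)$: applying property~\ref{calibprop} to arbitrary backward shifts of the bi-infinite Mather orbit shows that every point of this orbit belongs to $\cg(c+\tilde u'_c)$, i.e.\ lies over a derivability point of $\tilde u_c$.

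The comparison is then carried out by iterating Lemma~\ref{orderWK.A.M.}. If $\tilde\theta_0 < y^+(\tilde\theta_0)$, the two calibrations identify $\tilde\theta_{-1}$ and $y^{+(-1)}(\tilde\theta_0)$ as minimizers in the definitions of $\widetilde T^c\tilde u_c(\tilde\theta_0)$ and $\widetilde T^c\tilde u_c\bigl(y^+(\tilde\theta_0)\bigr)$ respectively; the semi-concave version of the lemma produces $\tilde\theta_{-1} < y^{+(-1)}(\tilde\theta_0)$, and an immediate induction yields $\tilde\theta_n < y^{+n}(\tilde\theta_0)$ for every $n \leqslant 0$. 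In the remaining case $\tilde\theta_0 \in \widetilde{\mathcal{M}}(\rho_0)$ one has $y^+(\tilde\theta_0) = \tilde\theta_0$; the derivability of $\tilde u_c$ at $\tilde\theta_0$ established above forces the subdifferential there to reduce to a point, so $r_0$ must equal the Mather $p$-value and the whole sequence $(\tilde\theta_n)_{n\leqslant 0}$ coincides with the Mather orbit through $\tilde\theta_0$. A symmetric argument handles the lower bound $y^{-n}(\tilde\theta_0) \leqslant \tilde\theta_n$.

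For the ``in particular'' assertion, property~\ref{Ptcirclehomeom} ensures that the restriction of $\pi_1\circ F$ to $\widetilde{\cm^*}(\rho_0)$ is an orientation-preserving bi-Lipschitz homeomorphism onto its image, hence preserves the order on $\widetilde{\mathcal{M}}(\rho_0)$. Since by definition no point of $\widetilde{\mathcal{M}}(\rho_0)$ lies strictly between $y^-(\tilde\theta_0)$ and $y^+(\tilde\theta_0)$, none lies strictly between $y^{-n}(\tilde\theta_0)$ and $y^{+n}(\tilde\theta_0)$ either; combined with the inequalities just established this forces $y^\pm(\tilde\theta_n) = y^{\pm n}(\tilde\theta_0)$. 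The main obstacle throughout is precisely the boundary case $\tilde\theta_0 \in \widetilde{\mathcal{M}}(\rho_0)$, where Lemma~\ref{orderWK.A.M.} gives no direct information and one must rely on the derivability of the weak K.A.M. solution on the Mather set to close the argument.
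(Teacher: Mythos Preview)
Your argument is logically circular. You invoke Theorem~\ref{caliball} to conclude that the minimizing sequence $(\tilde\theta_i)_{i\leqslant 0}$ itself calibrates some weak K.A.M.\ solution $\tilde u_c$, but Theorem~\ref{caliball} is only established later, in Theorem~\ref{min-irr} (irrational case) and Proposition~\ref{minrat} (rational case), and \emph{both of those proofs explicitly invoke Proposition~\ref{coince}}. So Theorem~\ref{caliball} cannot be used as an input here.

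The paper circumvents this by never requiring $(\tilde\theta_i)$ itself to calibrate anything. It goes back to the setup of Proposition~\ref{Protnumber}: via Theorem~\ref{Tpseudofol} (already proved at this stage) one only knows that $(\tilde\theta_0,r_0)\in\pg(c+\tilde u'_c)$ for some weak K.A.M.\ solution $\tilde u_c$. The two auxiliary sequences $x_k=\pi_1\circ F^k(\tilde\theta_0,c+p_0)$ and $x'_k=\pi_1\circ F^k(\tilde\theta_0,c+p'_0)$, issued from the endpoints of $\partial\tilde u_c(\tilde\theta_0)$, calibrate $\tilde u_c$ by property~\ref{ptinvgraph}, and they squeeze $\tilde\theta_k$ via the twist condition and Aubry's fundamental lemma: $x'_k\leqslant\tilde\theta_k\leqslant x_k$. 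Since $x_0=\tilde\theta_0\leqslant y^+(\tilde\theta_0)$ and both $(x_k)$ and $(y^{+k})$ realise the infimum in the Lax--Oleinik operator, Lemma~\ref{orderWK.A.M.} and an induction give $x_k\leqslant y^{+k}(\tilde\theta_0)$; symmetrically $y^{-k}(\tilde\theta_0)\leqslant x'_k$. Combining the two sandwiches yields the claim. Your treatment of the ``in particular'' assertion is fine and does not depend on the problematic step.
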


\begin{proof}
We use the notations of the proof of the previous Proposition \ref{Protnumber} and recall that $x_k' \leqslant \tilde \theta_k \leqslant x_k$ for all $k\leqslant 0$. Using Lemma \ref{orderWK.A.M.}, a straightforward induction applied to $\tilde u_c$ yields that   for all $n\in \Z_-$, $y^{-n}(\tilde\theta_0) \leqslant x'_n$ and that $x_n \leqslant  y^{+n}(\tilde\theta_0)$. The result follows.

\end{proof}

Next we give a property on minimizing sequences that almost cross twice:

\begin{propos}\label{cross2}
Let $(\tilde\theta_i)_{i\in \Z_-}\in \R^{\Z_-}$ and $(\tilde\theta_i')_{i\in \Z_-}\in \R^{\Z_-}$ be two minimizing sequences. Assume that
\begin{itemize}
\item $\tilde \theta_0 =\tilde \theta_0'$,
\item $\lim\limits _{n\to-\infty} \tilde \theta_n -\tilde \theta_n'=0$,
\item there exists $c\in \R$ and $u_c : \T\to \R$, weak K.A.M. solution for $T^c$ such that $(\tilde\theta_i')_{i\in \Z_-}$ calibrates $\tilde u_c$, meaning that 
$$\forall n<0 , \quad \tilde u_c(\tilde \theta_0') - \tilde u_c(\tilde \theta_n') = \sum_{k=n}^{-1} S(\tilde \theta'_i, \tilde \theta'_{i+1}) +c( \tilde \theta_n' - \tilde \theta_0')-n\alpha(c).$$
\end{itemize}
Then $(\tilde\theta_i,r_i)_{i\in \Z_-}\in \R^{\Z_-}$ calibrates $\tilde u_c$.
\end{propos}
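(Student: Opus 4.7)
The plan is to prove that the \emph{calibration defect}
$$A_m := \sum_{i=m}^{-1} S(\tilde\theta_i, \tilde\theta_{i+1}) + c(\tilde\theta_m - \tilde\theta_0) - m\alpha(c) - \tilde u_c(\tilde\theta_0) + \tilde u_c(\tilde\theta_m)$$
vanishes for every $m \leqslant 0$, which by property~\ref{calibprop} is exactly the statement that $(\tilde\theta_i)_{i\in\Z_-}$ calibrates $\tilde u_c$. Two easy observations set up the strategy. First, $A_m \geqslant 0$, by the $c$-domination of $\tilde u_c$ applied along the minimizing segment $(\tilde\theta_i)_{i=m}^0$. Second, writing the difference $A_{m_1} - A_{m_2}$ for $m_1 < m_2 \leqslant 0$ as the same $c$-domination gap on the intermediate minimizing subsegment $(\tilde\theta_i)_{i=m_1}^{m_2}$ gives $A_{m_1} \geqslant A_{m_2}$. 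Hence $(A_m)_{m \leqslant 0}$ is non-negative and non-increasing in $m$, so it will be enough to show $\lim_{m\to-\infty} A_m = 0$.

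Using that the primed sequence calibrates $\tilde u_c$, so that its own defect $A_m'$ is identically zero, and that $\tilde\theta_0 = \tilde\theta_0'$, subtraction yields
$$A_m = \Sigma_m + c(\tilde\theta_m - \tilde\theta_m') + \tilde u_c(\tilde\theta_m) - \tilde u_c(\tilde\theta_m'), \qquad \Sigma_m := \sum_{i=m}^{-1} \left[S(\tilde\theta_i, \tilde\theta_{i+1}) - S(\tilde\theta_i', \tilde\theta_{i+1}')\right].$$
By hypothesis $\tilde\theta_m - \tilde\theta_m' \to 0$, so the non-$\Sigma_m$ terms vanish in the limit (the $\tilde u_c$-difference by Lipschitz continuity of $\tilde u_c$). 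The remaining and crucial task is to prove $\Sigma_m \to 0$.

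For this I plan to use a sandwich via two competitor sequences obtained by splicing at the cut time $m$. Testing the minimality of the primed segment on $[m,0]$ against the competitor $(\tilde\theta_m', \tilde\theta_{m+1}, \ldots, \tilde\theta_{-1}, \tilde\theta_0)$ (which has the same endpoints) and rearranging so that the interior action terms cancel in pairs --- only the $i=m$ pair surviving --- produces
$$\Sigma_m \geqslant S(\tilde\theta_m, \tilde\theta_{m+1}) - S(\tilde\theta_m', \tilde\theta_{m+1}),$$
while the symmetric test of the unprimed segment against $(\tilde\theta_m, \tilde\theta_{m+1}', \ldots, \tilde\theta_{-1}', \tilde\theta_0)$ yields
$$\Sigma_m \leqslant S(\tilde\theta_m, \tilde\theta_{m+1}') - S(\tilde\theta_m', \tilde\theta_{m+1}').$$

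The main obstacle is then to show that these two $S$-differences tend to $0$. Both are of the form $S(a,b) - S(a',b)$ with $|a - a'| = |\tilde\theta_m - \tilde\theta_m'| \to 0$, so the question reduces to uniform Lipschitz control of $S$ at all the relevant pairs. I will use that by the diagonal $\Z$-periodicity $S(\tilde\theta + 1, \widetilde\Theta + 1) = S(\tilde\theta, \widetilde\Theta)$, the partial derivatives of $S$ are bounded on every strip $\{|\widetilde\Theta - \tilde\theta| \leqslant M\}$; to bring all the relevant pairs into one such strip, I will invoke that the primed orbit $(\tilde\theta_k', r_k')_{k\leqslant 0}$ lies in the pseudograph $\mathcal{PG}(c+\tilde u_c')$, a Lipschitz essential circle by Lemma~\ref{Lfullman}, so the $r_k'$ are uniformly bounded and hence the successive $\tilde\theta$-increments $|\tilde\theta_{k+1}' - \tilde\theta_k'|$ are bounded by the twist property; the asymptotic closeness $\tilde\theta_k - \tilde\theta_k' \to 0$ then transfers this bound to the unprimed sequence. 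This gives $\lim_{m\to-\infty} A_m = 0$, and combined with the monotonicity from the first step forces $A_m \equiv 0$, which is the claim.
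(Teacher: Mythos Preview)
Your proof is correct and follows essentially the same idea as the paper's: both splice the two sequences at a far-back index $m$, use minimality of the resulting competitor to control the action difference, and appeal to continuity of $S$ and $\tilde u_c$ together with $\tilde\theta_m-\tilde\theta_m'\to 0$. The only cosmetic difference is that the paper argues by contradiction with a single spliced competitor, whereas you argue directly via the defect $A_m$, its monotonicity, and a two-sided sandwich on $\Sigma_m$ (the lower-bound competitor is in fact redundant, since $A_m\geqslant 0$ already forces $\liminf\Sigma_m\geqslant 0$).
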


\begin{proof}
Let us argue by contradiction and assume that 
$$\tilde u_c(\tilde \theta_0) - \tilde u_c(\tilde \theta_{n_0}) < \sum_{k=n_0}^{-1} S(\tilde \theta_i, \tilde \theta_{i+1}) +c( \tilde \theta_{n_0} - \tilde \theta_0)-n_0\alpha(c) - 2\varepsilon,$$
 for some $n_0<0$ and $\varepsilon >0$.
 Recall now that if $n<n_0$ then 
 $$\tilde u_c(\tilde \theta_{n_0}) - \tilde u_c(\tilde \theta_{n}) \leqslant  \sum_{k=n}^{n_0-1} S(\tilde \theta_i, \tilde \theta_{i+1}) -(n-n_0)\alpha(c)+c( \tilde \theta_{n} - \tilde \theta_{n_0}) ,$$ 
 and summing we find that 
 $$\forall n<n_0, \quad \tilde u_c(\tilde \theta_0) - \tilde u_c(\tilde \theta_{n}) < \sum_{k=n}^{-1} S(\tilde \theta_i, \tilde \theta_{i+1})-n\alpha(c) +c( \tilde \theta_{n} - \tilde \theta_0) - \varepsilon.$$
 Pick $n<n_0$ such that $|S(\tilde \theta'_{n},\tilde \theta'_{n+1})- S(\tilde \theta_{n},\tilde \theta'_{n+1})|<\varepsilon$ and $| \tilde u_c(\tilde \theta_n')-\tilde u_c(\tilde \theta_n) +c( \tilde \theta_n' - \tilde \theta_n)|<\varepsilon$. We obtain that 
 \begin{multline*}
 S(\tilde \theta_{n},\tilde \theta'_{n+1})+\sum_{k=n+1}^{-1} S(\tilde \theta'_i, \tilde \theta'_{i+1}) <\sum_{k=n}^{-1} S(\tilde \theta'_i, \tilde \theta'_{i+1}) +\varepsilon \\
 =\tilde u_c(\tilde \theta_0') - \tilde u_c(\tilde \theta_n') +n\alpha(c)-c( \tilde \theta_n' - \tilde \theta_0')+\varepsilon 
 \\< \tilde u_c(\tilde \theta_0') - \tilde u_c(\tilde \theta_n)+n\alpha(c) -c( \tilde \theta_n - \tilde \theta_0')+2\varepsilon
 < \sum_{k=n}^{-1} S(\tilde \theta_i, \tilde \theta_{i+1}) .
 \end{multline*}
As $\tilde \theta_0= \tilde \theta'_0$, this contradicts the fact that $(\tilde\theta_i)_{i\in \Z_-}$ is minimizing.

\end{proof}

In order to finish our study, we now discuss in function of the rationality of $\rho_0$.

\subsection{Case where $\rho_0 \notin \Q$} In this case we recover that $\{c_0\} = \rho^{-1} (\{\rho_0\})$ is a singleton and that $u_c$ is unique up to constants. 

For all $x\in \R$, Aubry-Mather theory says that $\big(y^{\pm n}_{\rho_0} (x)\big)_{n\in \Z}$ are orbits of the lift of a circle homeomorphism of rotation number $\rho_0$. Looking back at their definition we see that if they do not coincide,  the circle homeomorphism is a Denjoy counterexample and $\big(y^-(x),y^+(x)\big)$ projects to a wandering interval of this homeomorphism. In all cases, $\lim\limits_{n\to \pm\infty} |y^{-n}(x)-y^{+n}(x)| = 0$.

We deduce the following Theorem that contains Mather's and Bangert's result\footnote{ Recall that for all $i\leqslant 0$ we set $r_i = \frac{\partial S}{\partial \widetilde\Theta} (\tilde\theta_{i-1},\tilde\theta_i)$.} and proves Theorem \ref{solK.A.M.calib} for irrational rotation numbers:

\begin{theorem}\label{min-irr}
 There exists a unique $c_0\in \R$ such that $\rho(c_0) = \rho_0$. Moreover, there exists a unique weak K.A.M. solution $\tilde u_{c_0}$ at cohomology $c_0$ such that $\tilde u_{c_0} (0)=0$. Any minimizing sequence $(\tilde\theta_i)_{i\in \Z_-}\in \R^{\Z_-}$ with rotation number $\rho_0$ calibrates $\tilde u_{c_0}$.
  In particular,
   $(\tilde\theta_i,r_i)_{i<0}\subset \mathcal G (c_0+\tilde u'_{c_0})$ and $(\tilde\theta_0,r_0) \in \pg (c_0+\tilde u'_{c_0})$.
\end{theorem}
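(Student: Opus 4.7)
The plan is to split the proof into three steps: (i) show that any minimizing sequence with rotation number $\rho_0$ calibrates a weak K.A.M. solution at some $c\in\rho^{-1}(\{\rho_0\})$; (ii) conclude that $\rho^{-1}(\{\rho_0\})$ reduces to a single point $\{c_0\}$; (iii) derive uniqueness of the normalized solution $\tilde u_{c_0}$ and the final geometric assertions. Existence of $c_0$ is already built into Theorem \ref{Tgenecont} combined with the continuity and surjectivity of $\rho$, so only the uniqueness aspects need to be argued.

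For step (i), proceed as in the proof of Proposition \ref{Protnumber}: given the minimizing sequence $(\tilde\theta_i)_{i\in\Z_-}$, there exist $c\in\R$, a weak K.A.M. solution $\tilde u_c$ at cohomology $c$, and two extremal calibrating backward orbits $(x_n)_{n\leqslant 0}$ and $(x'_n)_{n\leqslant 0}$ emanating from $\tilde\theta_0$, all three sequences being sandwiched between $y^{-n}_{\rho_0}(\tilde\theta_0)$ and $y^{+n}_{\rho_0}(\tilde\theta_0)$ by Proposition \ref{coince}. Since $\rho_0$ is irrational, the dynamics on $\cm(\rho_0)$ is a (semi-)conjugate image of the rotation $R_{\rho_0}$, so the wandering intervals shrink: $|y^{+n}_{\rho_0}(\tilde\theta_0)-y^{-n}_{\rho_0}(\tilde\theta_0)|\to 0$ as $n\to-\infty$. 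Therefore $|x_n-\tilde\theta_n|\to 0$, and Proposition \ref{cross2} applied with $\tilde\theta'_n=x_n$ yields that $(\tilde\theta_n)$ calibrates $\tilde u_c$. Lemma \ref{Lrotnumber} then forces $\rho(c)=\rho_0$.

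For step (ii), write $\rho^{-1}(\{\rho_0\})=[a,b]$ and suppose $a<b$ for contradiction. By Proposition \ref{unicite}, the solutions $\tilde u_a$ and $\tilde u_b$ are each unique up to constants. Fix a Mather backward orbit $(\tilde\theta_n)_{n\leqslant 0}$ with $\tilde\theta_0\in\widetilde\cm(\rho_0)$; by step (i) it calibrates both $\tilde u_a$ and $\tilde u_b$. Subtracting the calibration identities \eqref{calib} at $c=a$ and $c=b$ and using that $\alpha$ is affine with slope $\rho_0$ on $[a,b]$ yields
\begin{equation*}
(\tilde u_a-\tilde u_b)(\tilde\theta_n) - (\tilde u_a-\tilde u_b)(\tilde\theta_0) = (b-a)\bigl(\tilde\theta_n - \tilde\theta_0 - n\rho_0\bigr).
\end{equation*}
If $\cm^*(\rho_0)$ is a full invariant circle, then $\pg(c+\tilde u'_c)$ coincides with this circle for every $c\in[a,b]$, and its vertical average, equal to $c$, immediately gives $a=b$. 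Otherwise $\cm^*(\rho_0)$ is a Cantor set and, by Poincar\'e's theory, the dynamics on $\cm(\rho_0)$ is semi-conjugate via some lift $\tilde h(x)=x+\phi(x)$ (with $\phi$ continuous and $1$-periodic) to the rotation $R_{\rho_0}$; then $\tilde\theta_n-\tilde\theta_0-n\rho_0=\phi(\tilde\theta_0)-\phi(\tilde\theta_n)$, so the displayed identity rearranges to the statement that $\tilde u_a-\tilde u_b+(b-a)\phi$ is constant on $\{\tilde\theta_n\}_{n\leqslant 0}$. Applying step (i) to orbits issued from arbitrary starting points (not only Mather points) and invoking continuity together with the density of backward orbit projections, $\tilde u_a-\tilde u_b+(b-a)\phi$ is forced to be constant on $\R$. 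But on each preimage of a wandering interval $\tilde h$ is constant while $\phi$ has slope $-1$, and a careful accounting of this rigidity against the $1$-periodicity of $\tilde u_a-\tilde u_b$ forces $b=a$, a contradiction.

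For step (iii), with $a=b=c_0$ established, Proposition \ref{unicite} gives uniqueness of $\tilde u_{c_0}$ up to an additive constant, and the normalization $\tilde u_{c_0}(0)=0$ pins it down. The ``in particular'' clause follows from property \ref{ptinvgraph} of subsection \ref{ssweakk}: the calibration of $(\tilde\theta_i)_{i\in\Z_-}$ at cohomology $c_0$ places $(\tilde\theta_0,r_0)\in\pg(c_0+\tilde u'_{c_0})$ and $(\tilde\theta_i,r_i)\in\cg(c_0+\tilde u'_{c_0})$ for every $i<0$. The main obstacle is step (ii): the derived identity is \emph{individually} compatible with $b-a\ne 0$ (both sides remain bounded by Corollary \ref{circlelike}), so the contradiction has to come from combining the semi-conjugation rigidity of the Denjoy case with the flat-spot structure of $\alpha$, or, in the case of a full invariant circle, from the direct cohomology computation.
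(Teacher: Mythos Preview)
Your step (i) is fine and mirrors the paper's argument: sandwiching between $y^{\pm n}(\tilde\theta_0)$, using that the gap shrinks because $\rho_0$ is irrational, and then invoking Proposition~\ref{cross2}. One small imprecision: as written you only obtain calibration for \emph{some} $c$ coming from Theorem~\ref{Tpseudofol}, whereas in step (ii) you freely use calibration by \emph{both} $\tilde u_a$ and $\tilde u_b$. The paper fixes this by running the sandwich argument with the specific comparison orbit $x_n=\pi_1\circ F^n\big(\tilde\theta_0,r^+_{\rho_0}(\tilde\theta_0)\big)$, which calibrates $\tilde u_b$; by symmetry one gets $\tilde u_a$.

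The real gap is step (ii), and you essentially concede it yourself. Subtracting the two calibration identities gives your displayed relation along a single backward orbit, and on Mather orbits this indeed rewrites as constancy of $\tilde u_a-\tilde u_b+(b-a)\phi$. But the passage ``applying step (i) to orbits issued from arbitrary starting points \dots\ $\tilde u_a-\tilde u_b+(b-a)\phi$ is forced to be constant on $\R$'' is not justified: for $\tilde\theta_0$ in a gap the identity $\tilde\theta_n-\tilde\theta_0-n\rho_0=\phi(\tilde\theta_0)-\phi(\tilde\theta_n)$ fails, so you cannot extend the constancy off $\widetilde\cm(\rho_0)$ this way. And even granting constancy on $\R$, your ``careful accounting of this rigidity against $1$-periodicity'' is a promissory note, not an argument; both $\tilde u_a-\tilde u_b$ and $\phi$ are $1$-periodic, so no contradiction is immediate.

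The paper avoids this detour entirely. Once step (i) is in hand (every minimizing sequence with rotation number $\rho_0$ calibrates $\tilde u_b$), take any $c\in[a,b]$ and any weak K.A.M.\ solution $\tilde v$ at $c$. At a common derivability point $\tilde\theta_0$ of $\tilde v$ and $\tilde u_b$, the calibrating sequence for $\tilde v$ is unique and, by step (i), also calibrates $\tilde u_b$; by uniqueness it coincides with the calibrating sequence for $\tilde u_b$. Hence
\[
c+\tilde v'(\tilde\theta_0)=\frac{\partial S}{\partial\Theta}(\tilde\theta_{-1},\tilde\theta_0)=b+\tilde u_b'(\tilde\theta_0)
\]
at almost every $\tilde\theta_0$, and integrating over $[0,1]$ gives $c=b$ and $\tilde v=\tilde u_b+\mathrm{const}$. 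This simultaneously yields $a=b=c_0$ and the uniqueness of the solution, with no case distinction and no appeal to the semi-conjugation. Your step (iii) is then correct as stated.
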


\begin{proof}
Let $[a,b] = \rho^{-1}(\{\rho_0\})$. We establish first that if $(\tilde\theta_i)_{i\in \Z_-}\in \R^{\Z_-}$ is a minimizing sequence  with rotation number $\rho_0$, then it calibrates the weak K.A.M. solution $\tilde u_b$.
Let us define for all $n\leqslant 0$, $x_n=\pi_1\circ F^n\big(\tilde \theta_0, r^+(\tilde \theta_0)\big)$\footnote{ See the notations after Corollary \ref{circlelike} for $r^+$.} . It follows that $(x_n)_{n\leqslant 0}$ calibrates $\tilde u_b$ and is therefore minimizing. Arguing as in Proposition \ref{Protnumber} and by Proposition \ref{coince} we discover that
$$\forall n\in \Z_-, \quad y^{-n}(\tilde\theta_0) \leqslant x_n \leqslant  \tilde\theta_n \leqslant  y^{+n}(\tilde\theta_0).$$ 
Using the previous discussion and applying Proposition \ref{cross2} we obtain the result.

Let now $c\in [a,b]$ and $\tilde v : \R \to \R$ be the lift of a  weak K.A.M. solution at cohomology $c$. Let $\tilde \theta_0 \in \R$ be a point of derivability of both $\tilde v$ and $\tilde u_b$. It follows there are respectively a unique sequence $(\tilde\theta_i^c )_{i\in \Z_-}\in \R^{\Z_-}$ calibrating $\tilde v$ and a unique sequence $(\tilde\theta_i^b )_{i\in \Z_-}\in \R^{\Z_-}$ calibrating $\tilde u_b$ such that $\tilde\theta_0=\tilde\theta_0^c=\tilde\theta_0^b$. It follows from the beginning of the proof that $(\tilde\theta_i^c )_{i\in \Z_-}\in \R^{\Z_-}$ calibrates $\tilde u_b$ and by uniqueness that $(\tilde\theta_i^c )_{i\in \Z_-}\in \R^{\Z_-}=(\tilde\theta_i^b )_{i\in \Z_-}\in \R^{\Z_-}$. As a consequence, the equality
$$b+\tilde u_b'(\tilde\theta_0) = c+ \tilde v'(\tilde\theta_0) =  \frac{\partial S}{\partial \Theta} (\tilde\theta^c_{-1},\tilde\theta_0) $$ 
is obtained. As this equality holds almost everywhere, we conclude that
$$b = \int_0^1 \big(b+\tilde u_b'(\tilde\theta)\big) d \tilde\theta =\int_0^1\big( c+ \tilde v'(\tilde\theta)\big) d\tilde\theta = c,$$ 
and then that $\tilde u_b - \tilde v$ is constant.
\end{proof}

\subsection{Case where $\rho_0\in \Q$} In this case we denote $[a,b]=\rho^{-1} (\{\rho_0\})$ and we write $\rho_0 = p/q$ in irreducible form with $q> 0$. It follows that for all $x\in \R$, $\big(y^{\pm n}_{\rho_0} (x)\big)_{n\in \Z} =\big(y^{\pm n+q}_{\rho_0} (x)+p\big)_{n\in \Z} $. Moreover, we have seen previously that $u_a$ and $u_b$ are unique up to constants. Hereafter, unless specified otherwise, $(\tilde \theta_i)_{i\leqslant 0} $ is a minimizing sequence with rotation number $p/q$.

In the spirit of Aubry-Mather theory, we start by a property of non-crossing of a minimizing sequence with its translates, in the spirit of Corollary \ref{circlelike}:

\begin{propos}\label{noncrossing}
Assume $(\tilde \theta_i)_{i\leqslant 0} $ does not verify  $(\tilde \theta_i)_{i\leqslant 0}  = (\tilde \theta_{i-q}-p)_{i\leqslant 0}$. One of the two holds:
\begin{itemize}
\item $ \tilde \theta_0 \leqslant\tilde \theta_{-q}-p$, $ \tilde \theta_i < \tilde\theta_{i-q}-p$ for all $i<0$ and $\lim\limits_{i\to-\infty} |\tilde\theta_i -y^{+i}(\tilde\theta_0)  | = 0$.
\item $ \tilde \theta_0 \geqslant \tilde\theta_{-q}-p$, $ \tilde \theta_i > \tilde\theta_{i-q}-p$ for all $i<0$ and $\lim\limits_{i\to-\infty} |\tilde\theta_i -y^{-i}(\tilde\theta_0)  | = 0$.
\end{itemize}

\end{propos}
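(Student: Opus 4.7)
The plan is to compare $(\tilde\theta_i)_{i\leqslant 0}$ with its translate $(\tilde\theta'_i)_{i\leqslant 0}:=(\tilde\theta_{i-q}-p)_{i\leqslant 0}$. By $\mathbb{Z}^2$-periodicity of $S$, $(\tilde\theta'_i)$ is itself a minimizing sequence with rotation number $\rho_0$, and by hypothesis it differs from $(\tilde\theta_i)$. The first task is to rule out any equality $\tilde\theta_{i_0}=\tilde\theta'_{i_0}$ at $i_0\leqslant 0$: for $i_0=0$, the equality $\tilde\theta_{-q}=\tilde\theta_0+p$ is exactly the hypothesis of Corollary \ref{permin} (applied with the corollary's $q$ taken to be $-q<0$) and directly forces $(\tilde\theta_i)=(\tilde\theta'_i)$, a contradiction. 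For $i_0<0$, apply the same corollary to the shifted minimizing sequence $(\tilde\theta_{i_0+j})_{j\leqslant 0}$ to obtain $\tilde\theta_{j-q}=\tilde\theta_j+p$ for all $j\leqslant i_0$; the associated momenta then automatically agree at $j=i_0$ by $\mathbb{Z}^2$-periodicity of $\partial_\Theta S$, and iterating $F$ forward from $i_0$ extends the equality through $j\in[i_0,0]$, again yielding $(\tilde\theta_i)=(\tilde\theta'_i)$.

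With coincidence at any index ruled out, Aubry's fundamental lemma (Proposition \ref{PAubryfund}) applied to the two distinct minimizing sequences $(\tilde\theta_i)$ and $(\tilde\theta'_i)$ over arbitrarily long finite segments forces the sign of $\tilde\theta_i-\tilde\theta'_i$ to be constant on $\mathbb{Z}_-$; this produces the dichotomy of strict inequalities for $i<0$ and the corresponding non-strict inequalities at the endpoint $i=0$ claimed in the proposition.

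For the asymptotic convergence, focus on the case $\tilde\theta_i>\tilde\theta_{i-q}-p$, the other being symmetric. Restricting attention to the arithmetic subsequence $n\mapsto\tilde\theta_{nq}$ for $n\leqslant 0$ and renormalizing by the appropriate integer linear drift, the sign condition translates into strict monotonicity of this subsequence, while the $q$-periodicity of the Mather orbit combined with the trapping $y^{-n}(\tilde\theta_0)\leqslant\tilde\theta_n\leqslant y^{+n}(\tilde\theta_0)$ from Proposition \ref{coince} confines it to a bounded interval. Monotonicity and boundedness then force convergence as $n\to-\infty$, and the subsequential limit of the backward $F$-orbit through $(\tilde\theta_0,r_0)$ is a well-ordered, $F$-invariant minimizing set contained in a pseudograph of a weak K.A.M. solution of rotation $\rho_0$, hence sits in the Mather set $\cm^*(\rho_0)$, a finite union of periodic orbits at rational rotation. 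The main obstacle I anticipate is the last step: verifying that this subsequential limit identifies precisely the periodic orbit through $y^-(\tilde\theta_0)$ consistent with the chosen sign, rather than straddling two distinct components of $\cm^*(\rho_0)$, and then promoting the arithmetic-subsequence convergence to $|\tilde\theta_n-y^{-n}(\tilde\theta_0)|\to 0$ for every integer $n$; I expect this to rely on the rigid geometry of Mather sets for rational rotations and the biLipschitz control on the projected dynamics restricted to the corresponding pseudograph.
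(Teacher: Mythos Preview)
Your argument has a logical circularity. You invoke Corollary~\ref{permin} to exclude any coincidence $\tilde\theta_{i_0}=\tilde\theta_{i_0-q}-p$, but the proof of Corollary~\ref{permin} rests on Theorem~\ref{caliball} (every backward minimizing sequence calibrates some weak K.A.M.\ solution), and the rational case of Theorem~\ref{caliball} is established precisely through Propositions~\ref{noncrossing} and~\ref{minrat}. So \ref{permin} is not available at this point. The paper sidesteps this by a different mechanism: it \emph{provisionally allows} a crossing at or just below some index $i_0$, proves the asymptotic convergence directly for $i<i_0$, and only then rules out $i_0<0$ by observing that the two minimizing sequences would be simultaneously crossing and $\alpha$-asymptotic, which contradicts \cite[Lemma~3.9]{Ban}. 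Note too that the statement deliberately keeps a non-strict inequality at $i=0$; excluding that equality is postponed to Proposition~\ref{minrat}, after Theorem~\ref{caliball} (and hence Corollary~\ref{permin}) has been secured.

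Your anticipated ``main obstacle'' is not one. Once $\tilde\theta_i<\tilde\theta_{i-q}-p$ holds for all $i<i_0$, fix any such $i$: the sequence $k\mapsto\tilde\theta_{i-kq}-kp$ is strictly increasing and, by Proposition~\ref{coince} together with the $q$-periodicity $y^{+(i-kq)}(\tilde\theta_0)=y^{+i}(\tilde\theta_0)+kp$, bounded above by $y^{+i}(\tilde\theta_0)$. Its limit $z_i$ satisfies $z_{i-q}-p=z_i$ and is minimizing (as a limit of minimizing segments), hence $(z_i)\subset\widetilde{\cm}(\rho_0)$; since $\tilde\theta_i<z_i\leqslant y^{+i}(\tilde\theta_0)$ and $y^{+i}(\tilde\theta_0)$ is by definition the smallest Mather point $\geqslant\tilde\theta_i$, one gets $z_i=y^{+i}(\tilde\theta_0)$ immediately. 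Convergence along each of the finitely many residue classes modulo $q$ then yields $|\tilde\theta_n-y^{+n}(\tilde\theta_0)|\to 0$ as $n\to-\infty$, with no recourse to weak K.A.M.\ pseudographs or biLipschitz control.
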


\begin{proof}
If $(\tilde \theta_i)_{i\leqslant 0} $ and $ (\tilde \theta_{i-q}-p)_{i\leqslant 0}$ do not cross, we set $i_0=0$. Otherwise, there exists $i_0\leqslant 0$ such that  $(\tilde \theta_i)_{i\leqslant 0} $ and $ (\tilde \theta_{i-q}-p)_{i\leqslant 0}$ cross either at $i_0$ or between $i_0 $ and $i_0+1$ for some $i_0<0$. As two minimizing sequences cross at most once, it follows that in all the previous cases, either $ \tilde \theta_i < \tilde\theta_{i-q}-p$ for all $i<i_0$ or $ \tilde \theta_i > \tilde\theta_{i-q}-p$ for all $i<i_0$. Let us assume the first holds, the second case is treated similarly. Then for $i<i_0$ the sequence $(\tilde\theta_{i-kq}-kp)_{k\geqslant 0}$ is increasing. Moreover, by Proposition \ref{coince}, $\tilde\theta_{i-kq}-kp \leqslant y^{+i-kq}(\tilde\theta_0)-kp =  y^{+i}(\tilde\theta_0)$. Hence the limit $z_i = \lim\limits_{k\to+\infty} \tilde\theta_{i-kq}-kp$ exists and verifies that $z_{i-q}-p = z_i$. As this sequence is minimizing (as a limit of minimizing sequences) and projects to a periodic sequence on the circle, we deduce that $(z_i)_{i<i_0}\subset  \widetilde {\mathcal{M}} (\rho_0)$ and then $(z_i)_{i<i_0} = y^{+i}(\tilde\theta_0)$ by definition of $y^{+i}(\tilde\theta_0)$. 

If now $(\tilde \theta_i)_{i\leqslant 0} $ and $ (\tilde \theta_{i-q}-p)_{i\leqslant 0}$ cross either at $i_0$ or between $i_0 $ and $i_0+1$ for some $i_0<0$, as we have also proven they are $\alpha$-asymptotic, we obtain a contradiction with \cite[Lemma 3.9]{Ban}.

\end{proof}

The following result ends the proof of Theorem \ref{caliball} and also gives precisions to Proposition \ref{noncrossing} by excluding the possibility of an equality  $ \tilde \theta_0 = \tilde \theta_{-q}-p$ under its hypotheses.

\begin{propos}\label{minrat}
Using the previous notations, if $\lim\limits_{i\to-\infty} |\tilde\theta_i -y^{+i}(\tilde\theta_0)  | = 0$ then $(\tilde \theta_i)_{i\leqslant 0} $ calibrates  $\tilde u_b$. 

 If $\lim\limits_{i\to-\infty} |\tilde\theta_i -y^{-i}(\tilde\theta_0)  | = 0$ then $(\tilde \theta_i)_{i\leqslant 0} $ calibrates  $\tilde u_a$. 
 
 In particular, $\tilde \theta_0 = \tilde\theta_{-q}-p$ if and only if $(\tilde \theta_i)_{i\leqslant 0}  = (\tilde \theta_{i-q}-p)_{i\leqslant 0}$.
 
 In all cases, 
$(\tilde\theta_i,r_i)_{i<0}\subset \mathcal G (a+\tilde u'_a)\cup \mathcal G (b+\tilde u'_b) $ and $(\tilde\theta_i,r_i)_{i\leqslant 0}\subset \mathcal{P G} (a+\tilde u'_a)\cup \mathcal{P G} (b+\tilde u'_b) $.

\end{propos}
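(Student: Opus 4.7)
I prove the first bullet; the second follows by symmetry. The strategy is to exhibit a calibrating sequence for $\tilde u_b$ that starts at $\tilde\theta_0$ and shares the backward asymptotic of $(\tilde\theta_i)$, then invoke Proposition~\ref{cross2}. The candidate is the backward $F$-orbit $(\tilde\theta_i^*, r_i^*)_{i\leq 0}$ of the point $(\tilde\theta_0, r^+(\tilde\theta_0)) \in \mathcal{PG}(b + \tilde u'_b)$. By the backward invariance in \ref{ptinvgraph} this orbit lies in $\mathcal G(b + \tilde u'_b)$ for $i<0$, and by \ref{calibprop} the projected sequence $(\tilde\theta_i^*)_{i\leq 0}$ calibrates $\tilde u_b$, is minimizing, and (by Lemma~\ref{Lrotnumber}) has rotation number $\rho_0$.

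Applying Proposition~\ref{noncrossing} to $(\tilde\theta_i^*)$, it is either Mather periodic (in which case $\tilde\theta_0 \in \widetilde{\mathcal M}(\rho_0)$ and the conclusion is immediate) or asymptotic to $(y^{+i}(\tilde\theta_0))$ or $(y^{-i}(\tilde\theta_0))$. Because $r^+(\tilde\theta_0)$ is the largest $r$-value on any pseudograph of rotation number $\rho_0$ at the abscissa $\tilde\theta_0$, the twist condition combined with the bracketing $y^{-i}(\tilde\theta_0) \leq \tilde\theta_i^* \leq y^{+i}(\tilde\theta_0)$ of Proposition~\ref{coince} pins the asymptotic of $(\tilde\theta_i^*)$ to $(y^{+i}(\tilde\theta_0))$. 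Since $\tilde\theta_0^* = \tilde\theta_0$ and $|\tilde\theta_i - \tilde\theta_i^*| \to 0$, Proposition~\ref{cross2} transfers calibration of $\tilde u_b$ from $(\tilde\theta_i^*)$ to $(\tilde\theta_i)$. The second bullet is proved identically, replacing $(r^+, \tilde u_b, y^{+i})$ by $(r^-, \tilde u_a, y^{-i})$.

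For the periodicity equivalence, if $\tilde\theta_0 = \tilde\theta_{-q} - p$, Proposition~\ref{noncrossing} and the first two bullets give that $(\tilde\theta_i)$ calibrates some $\tilde u_c$ with $c \in \{a,b\}$. The $\Z$-periodicity of $\tilde u'_c$ then yields $r_0 = c + \tilde u'_c(\tilde\theta_0) = c + \tilde u'_c(\tilde\theta_0 + p) = r_{-q}$, so the two $F$-orbits $(\tilde\theta_i, r_i)_{i\leq 0}$ and $(\tilde\theta_{i-q} - p, r_{i-q})_{i\leq 0}$ start at the same point of $\R^2$ and hence coincide; the converse is trivial. The final inclusion statement follows because every minimizing sequence of rotation number $\rho_0$ is covered by one of the alternatives of Proposition~\ref{noncrossing} and so calibrates $\tilde u_a$ or $\tilde u_b$, after which \ref{calibprop} locates the orbit pieces in the asserted unions of pseudographs (for $i<0$) and full pseudographs (for $i\leq 0$).

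The main obstacle is the asymptotic identification in the second paragraph: it is a side-selection problem on the Lipschitz circle $\mathcal{PG}(b+\tilde u'_b)$, and rests on the extremal characterization of $r^+$ via the left derivative of $\tilde u_b$ together with the monotonicity encoded in the twist condition. The remainder of the argument is routine, using only Proposition~\ref{cross2} and the $\Z$-periodicity of weak K.A.M. solutions.
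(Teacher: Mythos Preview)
Your overall strategy --- construct a comparison sequence through $\tilde\theta_0$ that calibrates the target solution, show it shares the backward asymptotic of $(\tilde\theta_i)$, and invoke Proposition~\ref{cross2} --- is exactly the paper's approach. The gap is in your side--selection step. You claim that the backward orbit $(\tilde\theta_i^*)$ of $(\tilde\theta_0, r^+(\tilde\theta_0))$ is asymptotic to $(y^{+i}(\tilde\theta_0))$, but the twist condition gives the opposite direction. Since $\theta\mapsto \partial S/\partial\Theta(\theta,\Theta_0)$ is decreasing, the larger the $r$-coordinate at $\tilde\theta_0$ the \emph{smaller} the $\theta$-coordinate of $F^{-1}$; as $r^+(\tilde\theta_0)\geq r_0$, one gets $\tilde\theta_{-1}^*\leq\tilde\theta_{-1}$, hence (by Aubry's lemma) $\tilde\theta_i^*\leq\tilde\theta_i$ for all $i<0$. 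Together with the bracketing $y^{-i}\leq\tilde\theta_i^*$ of Proposition~\ref{coince}, this squeezes $(\tilde\theta_i^*)$ against $(\tilde\theta_i)$ only under the hypothesis $\tilde\theta_i\to y^{-i}$, not $\tilde\theta_i\to y^{+i}$.

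The paper in fact runs your argument with $r^-$ in place of $r^+$: setting $x_n=\pi_1\circ F^n\big(\tilde\theta_0, r^-(\tilde\theta_0)\big)$ (which calibrates $\tilde u_a$), the correct twist direction yields $\tilde\theta_n\leq x_n\leq y^{+n}(\tilde\theta_0)$, and under the hypothesis $\tilde\theta_n\to y^{+n}$ Proposition~\ref{cross2} gives that $(\tilde\theta_i)$ calibrates $\tilde u_a$ --- not $\tilde u_b$. In other words, the proof in the paper establishes the pairing $y^{+}\leftrightarrow \tilde u_a$ and $y^{-}\leftrightarrow \tilde u_b$, and the roles of $\tilde u_a$ and $\tilde u_b$ in the first two bullets of the stated proposition are inadvertently swapped; this is confirmed by Proposition~\ref{existence+-}, Theorem~\ref{equivration}, and the proof of Proposition~\ref{ordrerat}, which all use the opposite association. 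Once you correct the twist direction (equivalently, use $r^-$ and $\tilde u_a$ for the $y^+$-asymptotic case), your argument coincides with the paper's. Your treatment of the periodicity clause is fine, with the small remark that differentiability of $\tilde u_c$ at $\tilde\theta_0$ should be justified via differentiability at $\tilde\theta_{-q}=\tilde\theta_0+p$, which holds because $-q<0$ forces $(\tilde\theta_{-q},r_{-q})\in\cg(c+\tilde u'_c)$.
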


\begin{proof}
Let us prove the first assertion, the rest is done in a similar way. If  $\lim\limits_{i\to-\infty} |\tilde\theta_i -y^{+i}(\tilde\theta_0)  | = 0$, let us define for all $n\leqslant 0$, $x_n=\pi_1\circ F^n\big(\tilde \theta_0, r^-(\tilde \theta_0)\big)$. It follows that $(x_n)_{n\leqslant 0}$ calibrates $\tilde u_a$ and is therefore minimizing. Arguing as in Proposition \ref{Protnumber} and by Proposition \ref{coince} we discover that for all $n\in \Z_-$, $y^{-n}(\tilde\theta_0) \leqslant   \tilde\theta_n \leqslant x_n \leqslant  y^{+n}(\tilde\theta_0)$. 
Applying Proposition \ref{cross2} we obtain the result.

The next assertion in the Theorem is a direct consequence of Corollary \ref{permin}. The rest follows from general properties of weak K.A.M. solutions.
 
\end{proof}

 In particular we have finished proving Theorem \ref{solK.A.M.calib} establishing more precisely that the cohomology can be taken to be $a$ or $b$.\\

Next, the reciprocal question of existence of minimizing sequences verifying certain conditions is addressed:

\begin{propos}\label{existence+-}
For all $\tilde\theta_0\in \R$ there exist two minimizing sequences $(\tilde \theta_i^\pm)_{i\leqslant 0} $
with rotation number $p/q$ such that $\tilde\theta_0^+ = \tilde\theta_0^- = \tilde \theta_0$ and 
\begin{itemize}
\item $\lim\limits_{i\to-\infty} |\tilde\theta_i^+ -y^{+i}(\tilde\theta_0)  | = 0$ (and $(\tilde \theta_i^+)_{i\leqslant 0}$ calibrates $\tilde u_a$);
\item $\lim\limits_{i\to-\infty} |\tilde\theta_i^- -y^{-i}(\tilde\theta_0)  | = 0$ (and $(\tilde \theta_i^-)_{i\leqslant 0}$ calibrates $\tilde u_b$);
\end{itemize}

\end{propos}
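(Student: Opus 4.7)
The plan is to exhibit $(\tilde\theta_i^+)$ as the backward $F$-orbit of the point $(\tilde\theta_0, r^-(\tilde\theta_0))$ and $(\tilde\theta_i^-)$ as the backward $F$-orbit of $(\tilde\theta_0, r^+(\tilde\theta_0))$, then verify that they have the required asymptotic behaviors by a crossing/sandwich argument combined with Proposition~\ref{minrat}. In the trivial case $\tilde\theta_0 \in \widetilde{\cm}(p/q)$ the unique orbit through $\tilde\theta_0$ is the $q$-periodic Mather orbit, and I simply take both $\tilde\theta^+$ and $\tilde\theta^-$ equal to it: both asymptotic conditions are vacuous since $y^-(\tilde\theta_0) = \tilde\theta_0 = y^+(\tilde\theta_0)$. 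So I henceforth assume $y^-(\tilde\theta_0) < \tilde\theta_0 < y^+(\tilde\theta_0)$ strictly.

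By definition of $r^\mp$, the two starting points lie in $\overline{\mathcal G(a+\tilde u'_a)}$ and $\overline{\mathcal G(b+\tilde u'_b)}$ respectively. The backward invariance of the pseudograph (property~\ref{ptinvgraph}) together with the calibration property~\ref{calibprop} then forces $(\tilde\theta_i^+)$ to calibrate $\tilde u_a$ and $(\tilde\theta_i^-)$ to calibrate $\tilde u_b$; both sequences are minimizing, and by Lemma~\ref{Lrotnumber} their rotation number is $p/q$. Neither sequence is periodic (since $\tilde\theta_0 \notin \widetilde{\cm}(p/q)$), so Proposition~\ref{noncrossing} asserts that each sequence is either $+$-asymptotic or $-$-asymptotic.

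To identify the correct asymptotic behavior I compare the two orbits with each other. Since $\tilde\theta_0$ is wandering, $r^-(\tilde\theta_0) < r^+(\tilde\theta_0)$ strictly, and the inverse twist condition gives $\tilde\theta_{-1}^+ > \tilde\theta_{-1}^-$. Aubry's fundamental lemma (Proposition~\ref{PAubryfund}) forbids two distinct minimizing sequences from crossing more than once, and ours already meet at $i=0$, hence $\tilde\theta_i^+ > \tilde\theta_i^-$ for every $i<0$. Combining this strict inequality with the sandwich $y^{-i}(\tilde\theta_0) \leqslant \tilde\theta_i^\pm \leqslant y^{+i}(\tilde\theta_0)$ from Proposition~\ref{coince} rules out the swap configuration ``$(\tilde\theta^+)$ is $-$-asymptotic and $(\tilde\theta^-)$ is $+$-asymptotic'', since that would force $\tilde\theta_i^-$ to overtake $\tilde\theta_i^+$ for large $|i|$. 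The two configurations in which both sequences share the same asymptotic direction are excluded by Proposition~\ref{minrat}: orbits with the same asymptotic direction must calibrate the same weak K.A.M.~solution, whereas ours calibrate $\tilde u_a$ and $\tilde u_b$ respectively. Only the desired configuration remains, yielding the two asymptotic statements.

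The main obstacle will be this final case-analysis step. Its effectiveness hinges on the strict separation of the pseudographs of $\tilde u_a$ and $\tilde u_b$ outside the Mather set, encoded in Proposition~\ref{minrat}: without it, one cannot distinguish which weak K.A.M.~solution is forced by a given asymptotic direction. Once this distinction is available, the sandwich and the Aubry crossing inequality close the argument cleanly, and the construction for $(\tilde\theta_i^-)$ is read off symmetrically from the same pair of candidate orbits.
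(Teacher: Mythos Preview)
Your proposal has a circularity problem. The crux of your exclusion argument is that both orbits cannot share the same asymptotic direction: you claim that if, say, both were $+$-asymptotic then by Proposition~\ref{minrat} both would calibrate $\tilde u_a$, contradicting that $(\tilde\theta_i^-)$ calibrates $\tilde u_b$. But calibrating $\tilde u_a$ and calibrating $\tilde u_b$ are \emph{not} mutually exclusive at this stage of the paper. What you really need is that outside $\widetilde{\cm}(p/q)$ the graphs $\cg(a+\tilde u'_a)$ and $\cg(b+\tilde u'_b)$ are disjoint; this is precisely Proposition~\ref{ordrerat}, which is proved \emph{after} Proposition~\ref{existence+-}, via Theorem~\ref{equivration}, which in turn invokes Proposition~\ref{existence+-}. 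The same circularity undermines your claim that $r^-(\tilde\theta_0)<r^+(\tilde\theta_0)$ strictly: at this point you only know $r^-\leqslant r^+$, and nothing yet forbids $a=b$ with $\tilde\theta_0\notin\widetilde{\cm}(p/q)$, in which case your two candidate orbits collapse to a single one and only one asymptotic behavior is produced.

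The paper avoids this trap by an entirely different mechanism: to build $(\tilde\theta_i^-)$, it takes a decreasing sequence $c_n\searrow b$ with $\rho(c_n)>p/q$, chooses for each $n$ a minimizing sequence $(\tilde\theta_k^n)_{k\leqslant 0}$ of rotation number $\rho(c_n)$ starting at $\tilde\theta_0$, and extracts a limit. Because $\rho(c_n)>p/q$, Corollary~\ref{circlelike} gives the strict inequality $\tilde\theta^n_{-q}>\tilde\theta_0+p$ for every $n$; this persists at the limit (equality being ruled out by Corollary~\ref{permin}), and Proposition~\ref{noncrossing} then forces the limit orbit to be $-$-asymptotic. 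The key point is that the strict inequality comes from comparing rotation numbers across different cohomology classes, not from any separation of the pseudographs at cohomology $a$ and $b$.
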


\begin{proof}
If $\tilde \theta_0\in  \widetilde {\mathcal{M}} (p/q)$ then by Aubry-Mather theory, $y^{+i}(\tilde\theta_0) = y^{-i}(\tilde\theta_0)$ for all $i\leqslant 0$ and $\big(y^{\pm i}(\tilde\theta_0) \big)_{i\leqslant 0}$ is the only minimizing orbit starting at $\tilde \theta_0$ with rotation number $p/q$.

We handle now the other and more interesting case where $y^{-0}(\tilde\theta_0) < \tilde \theta_0< y^{+0}(\tilde\theta_0) $ and prove the existence of $(\tilde \theta_i^-)_{i\leqslant 0}$ as the existence of $(\tilde \theta_i^+)_{i\leqslant 0}$ is established in a very similar way.

Let $(c_n)_{n> 0}$ be a decreasing sequence of real numbers converging to $b$. Setting for $n>0$, $\rho_n = \rho(c_n)$ it follows that $(\rho_n)_{n>0}$ is nonincreasing, converges to $p/q$ and that $\rho_n>p/q$ for all $n>0$. For all $n>0$, let $(\tilde\theta_k^n)_{k\leqslant 0}$ be a minimizing sequence with rotation number $\rho_n$ and such that $\tilde\theta_0^n = \tilde\theta_0$ (take any calibrating sequence for a weak K.A.M. solution at cohomology $c_n$ starting at $ \tilde\theta_0$). Up to extracting, we may assume  that for all $k\leqslant 0$, the sequence $(\tilde\theta_k^n)_{n>0}$  converges to a $\tilde\theta_k^-$. It follows that $(\tilde\theta_k^-)_{k\leqslant 0}$ is minimizing, has rotation number $p/q$ and verifies $\tilde\theta_0^- = \tilde\theta_0$. 

Applying Corollary \ref{circlelike} yields the inequalities
$$\forall n>0, \quad \tilde\theta^n_{-q} >\tilde\theta_0+p.$$ 
Passing to the limit we get $\tilde\theta_{-q}^->\tilde\theta_0+p$, as equality is prohibited by Corollary \ref{permin}. The rest now follows from Proposition \ref{noncrossing}.
\end{proof}

 This leads to a reciprocal to Proposition \ref{minrat}:
 
 \begin{theorem}\label{equivration}
 Let $\tilde\theta_0 \notin \widetilde{\mathcal M}(p/q)$ and $(\tilde\theta_i)_{i\leqslant 0}$ a minimizing sequence with rotation number $p/q$. The following assertions are equivalent:
 \begin{enumerate}
 \item $ \tilde \theta_0 <\tilde \theta_{-q}-p$ (resp. $ \tilde \theta_0 >\tilde \theta_{-q}-p$);
 \item $\lim\limits_{i\to-\infty} |\tilde\theta_i -y^{+i}(\tilde\theta_0)  | = 0$ (resp. $\lim\limits_{i\to-\infty} |\tilde\theta_i -y^{-i}(\tilde\theta_0)  | = 0$);
 \item $(\tilde\theta_i)_{i\leqslant 0}$ calibrates $\tilde u_a$ (resp. $(\tilde\theta_i)_{i\leqslant 0}$ calibrates $\tilde u_b$).
 \end{enumerate}

 \end{theorem}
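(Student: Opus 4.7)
Proof plan.

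Given the hypothesis $\tilde\theta_0\notin\widetilde{\mathcal M}(p/q)$, Corollary \ref{permin} rules out the equality $\tilde\theta_0=\tilde\theta_{-q}-p$ (otherwise the orbit would lie in the Mather set), and moreover $y^-(\tilde\theta_0)<y^+(\tilde\theta_0)$ strictly. The equivalence (1)$\Leftrightarrow$(2) is then immediate from Proposition \ref{noncrossing}: its two alternatives are exhaustive and are distinguished by the sign of $\tilde\theta_0-(\tilde\theta_{-q}-p)$, while $y^+$ and $y^-$ are distinct periodic orbits so a minimizing sequence can be $\alpha$-asymptotic to at most one of them. The implication (2)$\Rightarrow$(3) is the content of Proposition \ref{minrat}.

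The only substantial step is (3)$\Rightarrow$(1), which I reduce to the following lemma: for every $\tilde\theta\notin\widetilde{\mathcal M}(p/q)$, if both $\tilde u_a$ and $\tilde u_b$ are differentiable at $\tilde\theta$, then $a+\tilde u'_a(\tilde\theta)<b+\tilde u'_b(\tilde\theta)$ strictly. To prove the lemma, apply Proposition \ref{existence+-} at $\tilde\theta$ to obtain two minimizing sequences $(\tilde\theta^+_i)$ (calibrating $\tilde u_a$, $\alpha$-asymptotic to $y^+$) and $(\tilde\theta^-_i)$ (calibrating $\tilde u_b$, $\alpha$-asymptotic to $y^-$), both starting at $\tilde\theta$. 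Since $y^+\neq y^-$ the sequences differ for $i<0$; combining Aubry's non-crossing (Proposition \ref{PAubryfund}) with the asymptotic behaviour pins down the strict order $\tilde\theta^+_i>\tilde\theta^-_i$ for all $i<0$. The twist condition, via the relation $r_0^\pm=\tfrac{\partial S}{\partial\widetilde\Theta}(\tilde\theta^\pm_{-1},\tilde\theta)$ and the fact that $\tilde\theta_{-1}\mapsto \tfrac{\partial S}{\partial\widetilde\Theta}(\tilde\theta_{-1},\tilde\theta)$ is a decreasing diffeomorphism (built into the definition of a generating function), then yields $r_0^+<r_0^-$. At a common differentiability point the subdifferentials collapse: $r_0^+\in a+\partial u_a(\tilde\theta)=\{a+\tilde u'_a(\tilde\theta)\}$ and $r_0^-\in b+\partial u_b(\tilde\theta)=\{b+\tilde u'_b(\tilde\theta)\}$, whence $a+\tilde u'_a(\tilde\theta)=r_0^+<r_0^-=b+\tilde u'_b(\tilde\theta)$.

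Now for (3)$\Rightarrow$(1) I argue by contradiction. Assume $(\tilde\theta_i)$ calibrates $\tilde u_a$ but $\tilde\theta_0>\tilde\theta_{-q}-p$. Proposition \ref{noncrossing} forces $\tilde\theta_i\to y^{-i}(\tilde\theta_0)$, and Proposition \ref{minrat} then shows that $(\tilde\theta_i)$ also calibrates $\tilde u_b$. Item \ref{calibprop} of Section \ref{ssweakk} ensures that for every $i<0$ one has $(\tilde\theta_i,r_i)\in\mathcal G(a+\tilde u'_a)\cap\mathcal G(b+\tilde u'_b)$, so both $u_a$ and $u_b$ are differentiable at $\tilde\theta_i$ with $a+\tilde u'_a(\tilde\theta_i)=r_i=b+\tilde u'_b(\tilde\theta_i)$. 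The contrapositive of the lemma then forces $\tilde\theta_i\in\widetilde{\mathcal M}(p/q)$ for every $i<0$, and forward $F$-invariance of $\widetilde{\mathcal M^*}(p/q)$ propagates this to $(\tilde\theta_0,r_0)=F(\tilde\theta_{-1},r_{-1})\in\widetilde{\mathcal M^*}(p/q)$, contradicting the hypothesis. Hence (1) holds, and (2) follows from Proposition \ref{noncrossing}. The "$-$" branch is entirely symmetric. The principal difficulty is the lemma, and specifically the extraction of the strict comparison $r_0^+<r_0^-$ from Proposition \ref{existence+-} via the twist condition.
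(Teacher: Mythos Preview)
Your argument is correct, and it takes a genuinely different route from the paper's.

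The paper proves $(3)\Rightarrow(1)$ by an approximation argument: first at derivability points of $\tilde u_a$ it identifies the unique calibrating sequence with the sequence $(\tilde\theta_i^+)$ of Proposition~\ref{existence+-} (getting the non-strict inequality $\tilde\theta_0^n\leqslant\tilde\theta_{-q}^n-p$); then for a general $\tilde\theta_0$ it approaches by a decreasing sequence of derivability points, passes to the limit to produce a specific calibrating sequence $(\tilde\theta'_i)$ with $\tilde\theta_0<\tilde\theta'_{-q}-p$, and finally compares any other calibrating sequence with $(\tilde\theta'_i)$ via the twist and Aubry non-crossing to conclude.

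You instead prove a separation lemma (strict inequality $a+\tilde u'_a<b+\tilde u'_b$ at common derivability points off the Mather set) directly from Proposition~\ref{existence+-} and the twist, and then run a clean contradiction: a sequence calibrating $\tilde u_a$ with the wrong inequality would also calibrate $\tilde u_b$, forcing equality of the two pseudographs along the backward orbit, hence pushing the orbit into the Mather set. Your lemma is essentially the derivability-point version of Proposition~\ref{ordrerat}, which in the paper is \emph{deduced from} Theorem~\ref{equivration}; you reverse that dependency, and your derivation of the lemma is self-contained because Proposition~\ref{existence+-} already supplies the asymptotics without invoking the theorem you are proving. What your approach buys is a more geometric argument that avoids the limit-of-derivability-points step; what the paper's approach buys is that it does not need to establish any comparison with $\tilde u_b$ at all.

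One small point worth making explicit: to propagate $\tilde\theta_{-1}\in\widetilde{\mathcal M}(p/q)$ to $(\tilde\theta_{-1},r_{-1})\in\widetilde{\mathcal M^*}(p/q)$ you use that $\widetilde{\mathcal M^*}(p/q)$ is a graph contained in $\mathcal G(a+\tilde u'_a)$ and that $r_{-1}=a+\tilde u'_a(\tilde\theta_{-1})$; this is standard but deserves one sentence.
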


\begin{proof}
The only thing left to prove is that (3) implies (1). 

Let us assume in a first step that $\tilde\theta_0$ is a point of derivability of  $\tilde u_a$. Let $(\tilde \theta_i^\pm)_{i\leqslant 0} $ be the sequences given by Proposition \ref{existence+-} and let $(r_i^\pm)_{i\leqslant 0}$ be the associated sequences such that $(\tilde \theta_i^\pm,r_i^\pm)_{i\leqslant 0} $ are orbits of $F$. As $(\tilde \theta_i^+)_{i\leqslant 0} $ calibrates $\tilde u_a$, we discover that $r_0^+ = a+\tilde u_a'(\tilde\theta_0)$.

Assume now $\tilde\theta_0$ is arbitrary and $\{\tilde\theta_0\} \times [R_0,R_0'] = \partial \tilde u_a(\tilde\theta_0)$. Let $(\tilde\theta^n_0)_{n\in \N}$ be a decreasing sequence converging to $\tilde\theta_0$, of derivability points of $\tilde u_a$. It follows that $\tilde u'_a(\tilde\theta_0^n ) \to R_0'$.
For all $n\geqslant 0$, let $(\tilde\theta^n_i)_{i\leqslant 0}$ be the unique sequence calibrating $\tilde u_a$, starting at $\tilde \theta_0^n$. Finally, let $(\tilde\theta'_i)_{i\leqslant 0}$ be the limit of the sequences  $(\tilde\theta^n_i)_{i\leqslant 0}$ so that for all $i\leqslant 0$, $\tilde\theta_i = \pi_1\circ F^i (\tilde\theta_0,a+R'_0)$. Thanks to the beginning of the proof, for all $n\geqslant 0$, the inequality $ \tilde \theta_0^n \leqslant \tilde \theta_{-q}^n-p$. Passing to the limit we discover that 
$ \tilde \theta_0 <\tilde \theta'_{-q}-p$, the inequality being strict as $\tilde\theta_0 \notin \widetilde{\mathcal M}(p/q)$.

If now $(\tilde\theta_i)_{i\leqslant 0}$ is any minimizing sequence starting at $\tilde\theta_0$ that calibrates $\tilde u_a$ and $(\tilde\theta_i,r_i)_{i\leqslant 0}$ the associated orbit of $F$ we know that $(\tilde\theta_i)_{i\leqslant 0}$ and $(\tilde\theta'_i)_{i\leqslant 0}$ can only cross at $0$. As $r_0 \in [R_0, R_0']$, the twist condition implies that $\tilde\theta_{-1} \geqslant \tilde\theta'_{-1}$. We therefore conclude that $\tilde\theta_0 <\tilde\theta'_{-q}-p \leqslant \tilde\theta_{-q}-p $ that was to be proven.

\end{proof}

We deduce a further property concerning the pseudographs of $u_a$ and $u_b$:

\begin{propos}\label{ordrerat}
The full pseudographs 
$\pg(a+u_a)$ and $\pg(b+u_b)$ only intersect above $\cm(p/q)$.
\end{propos}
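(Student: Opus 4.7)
The plan is to prove the contrapositive: any common point $(\theta_0, r_0) \in \pg(a+u'_a) \cap \pg(b+u'_b)$ must satisfy $\theta_0 \in \cm(p/q)$. I would argue by contradiction, assuming $\theta_0 \notin \cm(p/q)$ and fixing a lift $\tilde\theta_0$.

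First I would exploit the semi-concavity of $u_a$ and $u_b$ to describe the pseudographs above $\theta_0$. Each Clarke subdifferential $\partial u_c(\theta_0)$ is the closed vertical segment with endpoints the one-sided derivatives $u'_{c,+}(\theta_0) \leq u'_{c,-}(\theta_0)$. Passing the inequality of Theorem~\ref{Tgenecont}(4) to limits through differentiability points yields $a+u'_{a,\pm}(\theta_0) \leq b+u'_{b,\pm}(\theta_0)$, so the non-emptiness of the intersection above $\theta_0$ forces $b+u'_{b,+}(\theta_0) \leq a+u'_{a,-}(\theta_0)$. The two natural comparison points are $P_a := (\theta_0, a+u'_{a,-}(\theta_0)) \in \overline{\cg(a+u'_a)}$ and $P_b := (\theta_0, b+u'_{b,+}(\theta_0)) \in \overline{\cg(b+u'_b)}$, whose backward orbits $(\tilde\theta^a_i)_{i\leq 0}$ and $(\tilde\theta^b_i)_{i\leq 0}$ calibrate $\tilde u_a$ and $\tilde u_b$ respectively, with common start $\tilde\theta_0$.

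If $P_a = P_b$, the shared backward orbit calibrates both $\tilde u_a$ and $\tilde u_b$, so Theorem~\ref{equivration} under $\tilde\theta_0 \notin \widetilde\cm(p/q)$ produces the incompatible strict inequalities $\tilde\theta_0 < \tilde\theta_{-q}-p$ (from $u_a$) and $\tilde\theta_0 > \tilde\theta_{-q}-p$ (from $u_b$), a direct contradiction.

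The main obstacle lies in the strict case $P_a \neq P_b$. Here the twist property gives $\tilde\theta^a_{-1} < \tilde\theta^b_{-1}$, but Aubry's fundamental lemma alone is insufficient since it tolerates one interior crossing. To convert the static Theorem~\ref{equivration} inequalities into a quantitative contradiction, I would iterate Lemma~\ref{orderWK.A.M.}: the $q$-shift $(\tilde\theta^a_{i-q}-p)_{i\leq 0}$ also calibrates $\tilde u_a$ and starts at $\tilde\theta^a_{-q}-p > \tilde\theta_0$, so the strict monotonicity coming from semi-concavity yields inductively $\tilde\theta^a_{-nq} > \tilde\theta_0 + np$ for every $n \geq 1$; symmetrically $\tilde\theta^b_{-nq} < \tilde\theta_0 + np$. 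Combining with the $O(1)$ rotation-number estimate $|\tilde\theta^\bullet_{-nq} - \tilde\theta_0 + np| < 1$ from Corollary~\ref{circlelike} yields $|2np|<1$ for every $n\geq 1$, which forces the integer $p$ to vanish and hence $q=1$ by irreducibility. In this remaining degenerate case, Theorem~\ref{equivration} gives $\tilde\theta^a_{-1} > \tilde\theta_0 > \tilde\theta^b_{-1}$, in direct contradiction with the twist inequality $\tilde\theta^a_{-1} < \tilde\theta^b_{-1}$. All cases yield contradictions, so $\theta_0 \in \cm(p/q)$.
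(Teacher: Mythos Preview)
Your argument is correct, but it takes an unnecessarily circuitous route in the main case $P_a\neq P_b$, and the detour stems from a misreading of Aubry's fundamental lemma.

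The paper's proof is essentially the contrapositive of your Case~2, done in one stroke. For $\tilde\theta_0\notin\widetilde\cm(p/q)$ it sets $\tilde\theta^a_i=\pi_1\circ F^i(\tilde\theta_0,a+R_0^a)$ (top of the $a$-segment) and $\tilde\theta^b_i=\pi_1\circ F^i(\tilde\theta_0,b+r_0^b)$ (bottom of the $b$-segment), reads off from Theorem~\ref{equivration} that $\tilde\theta^a_i\to y^{+i}(\tilde\theta_0)$ and $\tilde\theta^b_i\to y^{-i}(\tilde\theta_0)$, so that $\tilde\theta^a_i>\tilde\theta^b_i$ for $i$ very negative. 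Since the two minimizing sequences coincide at $i=0$, Aubry's lemma forbids any further crossing, hence $\tilde\theta^a_{-1}>\tilde\theta^b_{-1}$, and the twist condition gives directly $b+r_0^b>a+R_0^a$. No contradiction argument, no iteration, no $|2np|<1$.

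Your worry that ``Aubry's fundamental lemma alone is insufficient since it tolerates one interior crossing'' is misplaced. For two distinct semi-infinite minimizing sequences on $\Z_-$, the single permitted crossing \emph{includes} coincidence at the endpoint $i=0$; the min/max argument on any finite window $[k,0]$ shows that a second crossing for $i<0$ would contradict minimality of one of the two segments. So in your Case~2, once the twist gives $\tilde\theta^a_{-1}<\tilde\theta^b_{-1}$, Aubry immediately yields $\tilde\theta^a_{-q}<\tilde\theta^b_{-q}$, which already contradicts $\tilde\theta^a_{-q}>\tilde\theta_0+p>\tilde\theta^b_{-q}$ from Theorem~\ref{equivration}. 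Your $|2np|<1$ computation via Corollary~\ref{circlelike} and the reduction to $p=0$, $q=1$ are valid but entirely avoidable.
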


\begin{proof}
Let $\tilde\theta_0 \notin \widetilde{\mathcal M}(p/q)$. We denote  $\{\tilde\theta_0\} \times [r_0^a,R_0^{a}] = \partial \tilde u_a(\tilde\theta_0)$ and  $\{\tilde\theta_0\} \times [r_0^b,R_0^{b}] = \partial \tilde u_b(\tilde\theta_0)$. We will prove that $b+r_0^b> a+R_0^a$ thus establishing that  $\pg(b+\tilde u_b)$ is strictly above $\pg(a+\tilde u_a)$ on the interval $\big(y^-(\tilde\theta_0), y^+(\tilde\theta_0)\big)$. We set $\tilde\theta_0^a=\tilde\theta_0^b=\tilde\theta_0$ and for $i<0$,
$\tilde\theta_i^a = \pi_1\circ F^i(\tilde\theta_0, a+R_0^a)$ and $\tilde\theta_i^b = \pi_1\circ F^i(\tilde\theta_0, b+r_0^b)$. From the previous Theorem \ref{equivration} we know that $\lim\limits_{i\to-\infty} |\tilde\theta_i^a -y^{+i}(\tilde\theta_0)  | = 0$ and that $\lim\limits_{i\to-\infty} |\tilde\theta_i^b -y^{-i}(\tilde\theta_0)  | = 0$. Therefore for $i$ small enough, $\tilde\theta_i^a>\tilde\theta_i^b$. As both minimizing sequences only can cross at $0$, it follows that $\tilde\theta_{-1}^a>\tilde\theta_{-1}^b$. We conclude from the twist condition that $b+r_0^b> a+R_0^a$.
\end{proof}

As a Corollary we recover a famous result of Mather and Bangert:

\begin{cor}
The set $\rho^{-1}(p/q)$ is a singleton if and only if $\widetilde{\mathcal{M}}(p/q) = \T$.
\end{cor}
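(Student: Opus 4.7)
The plan is to prove both implications using the dichotomy between minimizing sequences calibrating $\tilde u_a$ and those calibrating $\tilde u_b$, as captured by Proposition \ref{existence+-} and Theorem \ref{equivration}.

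For the direction $\mathcal M(p/q) = \T \Rightarrow \rho^{-1}(p/q)$ is a singleton: if the projected Mather set covers the entire circle, then $\mathcal M^*(p/q)$ is a Lipschitz graph $\{(\theta,\eta(\theta)) : \theta \in \T\}$ by the graph property of well-ordered sets. Writing $[a,b] = \rho^{-1}(p/q)$, any partial graph $\cg(c+u'_c)$ with $c \in [a,b]$ contains $\mathcal M^*(p/q)$ (by point \ref{PtAubrypseudo} of subsection \ref{ssweakk}) and is itself contained in a Lipschitz graph (since $u_c$ is semi-concave). These two facts force $c+u'_c(\theta) = \eta(\theta)$ at almost every $\theta$, so integration over $\T$ gives $c = \int_0^1 \eta(\theta)\, d\theta$, an expression independent of $c$. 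Hence $a = b$.

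For the converse, I assume $\rho^{-1}(p/q) = \{c_0\}$ so that $a = b = c_0$, and suppose for contradiction that $\mathcal M(p/q) \neq \T$. Pick $\tilde\theta_0 \notin \widetilde{\mathcal M}(p/q)$. Proposition \ref{existence+-} then yields two minimizing sequences $(\tilde\theta^\pm_i)_{i\leqslant 0}$ starting at $\tilde\theta_0$ with rotation number $p/q$, one calibrating $\tilde u_a$ and the other calibrating $\tilde u_b$. Since $a = b$, the solutions $\tilde u_a$ and $\tilde u_b$ coincide up to an additive constant, so the sequence calibrating $\tilde u_b$ simultaneously calibrates $\tilde u_a$. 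The two branches of the equivalence in Theorem \ref{equivration}, applied to this single sequence, yield the incompatible inequalities $\tilde\theta_0 < \tilde\theta_{-q} - p$ (from calibration of $\tilde u_a$) and $\tilde\theta_0 > \tilde\theta_{-q} - p$ (from calibration of $\tilde u_b$), a contradiction.

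The main delicate point is the symmetric use of Theorem \ref{equivration} in the degenerate case $a = b$: one must check that each of its two implications remains meaningful when $\tilde u_a = \tilde u_b$. This is the case because both implications are obtained separately for each endpoint via Proposition \ref{existence+-}, whose construction (approximating $p/q$ by irrational rotation numbers from each side) does not rely on $a < b$. Once this is verified, the contradiction sketched above closes the argument and the equivalence follows.
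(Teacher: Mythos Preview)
Your forward direction is essentially the paper's: once $\mathcal M(p/q)=\T$, the Mather set is a full Lipschitz graph contained in every $\cg(c+u'_c)$, forcing $c+u'_c=\eta$ almost everywhere and hence $c=\int_0^1\eta$.

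For the reverse direction your argument is correct but differs from the paper's. The paper invokes Proposition \ref{ordrerat}: outside $\cm(p/q)$ the full pseudographs of $u_a$ and $u_b$ are strictly separated, so $a+u'_a<b+u'_b$ on an open set, and integrating yields $a<b$. You instead short-circuit this by applying both branches of Theorem \ref{equivration} to a single calibrating sequence when $a=b$, obtaining the incompatible inequalities $\tilde\theta_0<\tilde\theta_{-q}-p$ and $\tilde\theta_0>\tilde\theta_{-q}-p$. Since Theorem \ref{equivration} is proved (via Proposition \ref{existence+-} and Proposition \ref{noncrossing}) without ever assuming $a<b$, your concern about the degenerate case is unfounded: the theorem applies verbatim, and the contradiction you derive is genuine. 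Your route is slightly more economical, bypassing Proposition \ref{ordrerat} entirely, though the paper's proof of that proposition already contains the same dynamical separation you exploit, so the two arguments are close cousins rather than truly distinct strategies.
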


\begin{proof}
If $\widetilde{\mathcal{M}}(p/q) = \T$ then there is an invariant, $1$--periodic Lipschitz graph $\tilde\theta \mapsto r_{\tilde\theta}$ for all $c\in \rho^{-1}(p/q)$, if $\tilde u : \R \to \R$ is a corresponding weak K.A.M. solution, it is of class $C^1$ and $c+\tilde u'(\tilde\theta) = r_{\tilde\theta}$ for all $\tilde\theta\in \R$. It follows that $c = \int_0^1 r_x d x$ is unique.

Reciprocally, if $\widetilde{\mathcal{M}}(p/q) \neq \T$, thanks to the preceding Proposition \ref{ordrerat}, there is $\tilde\theta_0 \notin  \widetilde{\mathcal{M}}(p/q)$ such that $\tilde u_a$ and $\tilde u_b$ both are derivable at $\tilde\theta_0$ with $a+\tilde u'_a(\tilde\theta_0) < b + \tilde u'_b(\tilde\theta_0)$ and by property of semi--concave functions, this inequality is strict in a neighborhood of $\tilde\theta_0$. As $a+\tilde u'_a(\tilde\theta) \leqslant  b + \tilde u'_b(\tilde\theta)$ holds almost everywhere, integrating on $[0,1]$ we find $a<b$.
\end{proof}

As a conclusion we obtain the following property on weak K.A.M. solutions:

\begin{theorem}\label{K.A.M.f}
Let $c\in [a,b]$ and $u_c$ be a weak K.A.M. solution for $T^c$. Let $\big(\tilde \theta_0, c+\tilde u'_c(\tilde \theta_0)\big) \in \mathcal G (c+\tilde u'_c)$ and $(\tilde \theta_i)_{i\leqslant 0} $ the associated minimizing sequence that calibrates $\tilde u_c$. 
\begin{itemize}
\item If $\lim\limits_{i\to -\infty} \tilde \theta_i - y^{-i}(\tilde \theta_0)=0$ then $\big(\tilde \theta_0, c+\tilde u'_c(\tilde \theta_0)\big) \in \mathcal G (b+\tilde u'_b)$.
\item If $\lim\limits_{i\to -\infty} \tilde \theta_i - y^{+i}(\tilde \theta_0)=0$ then $\big(\tilde \theta_0, c+\tilde u'_c(\tilde \theta_0)\big) \in \mathcal G (a+\tilde u'_a)$.
\end{itemize}
 
\end{theorem}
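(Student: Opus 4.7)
Under the first bullet, Theorem~\ref{equivration} would first give that $(\tilde\theta_i)$ calibrates $\tilde u_b$ as well as $\tilde u_c$. I would focus on this first bullet, the second being symmetric via $\tilde u_a$ and $y^+$; and I would first dispose of the easy case $\tilde\theta_0\in\widetilde{\mathcal M}(p/q)$, in which the calibrating sequence is forced to be the Mather orbit and the conclusion is immediate. Henceforth assume $\tilde\theta_0\notin\widetilde{\mathcal M}(p/q)$, so that $y^-(\tilde\theta_0)<\tilde\theta_0$.

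The heart of the proof would be to subtract the calibration identities \eqref{calib} written along $(\tilde\theta_i)$ for $\tilde u_b$ and for $\tilde u_c$. Using that $\alpha$ is affine with slope $p/q$ on $[a,b]$, so that $\alpha(b)-\alpha(c)=(p/q)(b-c)$, this yields, for every $k\leqslant 0$,
\begin{equation*}
(\tilde u_b-\tilde u_c)(\tilde\theta_0)-(\tilde u_b-\tilde u_c)(\tilde\theta_k) = (b-c)\bigl[(\tilde\theta_k-\tilde\theta_0)-k(p/q)\bigr].
\end{equation*}
Along the subsequence $k=-nq$, the identity $y^{-,-nq}(\tilde\theta_0)=y^-(\tilde\theta_0)-np$ (coming from the $q$-periodicity of the Mather orbit up to the $p$-shift), combined with the hypothesis $\tilde\theta_i-y^{-i}(\tilde\theta_0)\to 0$, gives $\tilde\theta_{-nq}+np\to y^-(\tilde\theta_0)$; the $1$-periodicity and continuity of $\tilde u_b-\tilde u_c$ then allow passage to the limit in the left-hand side, yielding $\Phi(\tilde\theta_0)=\Phi(y^-(\tilde\theta_0))$, where $\Phi(s):=\tilde u_b(s)-\tilde u_c(s)+(b-c)s$.

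Finally, since $\Phi$ is non-decreasing (by Theorem~\ref{Tgenecont}\eqref{croissant}) and $y^-(\tilde\theta_0)\leqslant\tilde\theta_0$, $\Phi$ must be constant on the interval $[y^-(\tilde\theta_0),\tilde\theta_0]$, so its left derivative at $\tilde\theta_0$ vanishes; combined with the assumed differentiability of $\tilde u_c$ at $\tilde\theta_0$, this forces $b+\tilde u_b^-(\tilde\theta_0)=c+\tilde u_c'(\tilde\theta_0)=r_0$ for the left derivative of $\tilde u_b$. The non-decrease of $\Phi$ to the right of $\tilde\theta_0$ gives $b+\tilde u_b^+(\tilde\theta_0)\geqslant r_0$, whereas semi-concavity of $\tilde u_b$ gives $\tilde u_b^+\leqslant\tilde u_b^-$; together these force equality $b+\tilde u_b^+(\tilde\theta_0)=b+\tilde u_b^-(\tilde\theta_0)=r_0$, hence differentiability of $\tilde u_b$ at $\tilde\theta_0$, i.e.\ $(\tilde\theta_0,r_0)\in\mathcal G(b+\tilde u_b')$. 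The main obstacle I anticipate is the limit passage in the middle step: only along $k=-nq$ do the $q$-periodicity of the Mather orbit (with $p$-shift) and the $1$-periodicity of $\tilde u_b-\tilde u_c$ align to produce a finite non-trivial limit, and both pieces of data must be used in coordinated fashion. A secondary technical point, if $u_c$ is only assumed to be a weak K.A.M.\ solution rather than the Lipschitz selection from Theorem~\ref{Tgenecont}, is to re-derive the monotonicity of $\Phi$; this can be obtained from the pseudograph ordering of Proposition~\ref{ordrerat}, the non-crossing lemma (Proposition~\ref{PAubryfund}) and the uniqueness up to constants of $\tilde u_b$.
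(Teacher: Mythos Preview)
Your argument is correct and genuinely different from the paper's. The paper argues locally: since the first hypothesis forces $\tilde\theta_{-q}-p<\tilde\theta_0$ (Proposition~\ref{noncrossing}), this open condition persists for nearby derivability points $\tilde\theta'_0\notin N$, so the corresponding calibrating sequences of $\tilde u_c$ also calibrate $\tilde u_b$ (Proposition~\ref{minrat}), whence $c+\tilde u'_c(\tilde\theta'_0)=b+\tilde u'_b(\tilde\theta'_0)$ for all such $\tilde\theta'_0$; the conclusion then follows by letting $\tilde\theta'_0\to\tilde\theta_0$ and invoking the semi-concave regularity recalled at the start of the paper's proof. Your route is more algebraic: once $(\tilde\theta_i)$ is known to calibrate both $\tilde u_c$ and $\tilde u_b$ (via Theorem~\ref{equivration}), you subtract the two calibration identities, exploit the affinity of $\alpha$ on $[a,b]$ and the $q$-periodicity of the Mather orbit to obtain $\Phi(\tilde\theta_0)=\Phi\bigl(y^-(\tilde\theta_0)\bigr)$, and conclude via monotonicity of $\Phi$. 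This avoids the neighbourhood argument entirely and makes transparent why differentiability of $\tilde u_b$ at $\tilde\theta_0$ is forced. The paper's approach, on the other hand, has the virtue of yielding the stronger local statement that $c+\tilde u'_c$ and $b+\tilde u'_b$ agree on a whole one-sided neighbourhood of $\tilde\theta_0$, which feeds directly into the description of $\mathcal{PG}(c+\tilde u'_c)$ following Corollary~\ref{inclusioncle}.

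The only soft spot is your justification of the monotonicity of $\Phi$ when $u_c$ is an \emph{arbitrary} weak K.A.M.\ solution at $c\in[a,b]$: Theorem~\ref{Tgenecont}\eqref{croissant} is proved for the particular Lipschitz selection, and Proposition~\ref{ordrerat} compares $\tilde u_a$ with $\tilde u_b$, not with $\tilde u_c$. A clean non-circular fix is to take $c_n\downarrow b$ with $\rho(c_n)>p/q$ and invoke Proposition~\ref{Pordreirrat} to get $c+\tilde u'_c<c_n+\tilde u'_{c_n}$ a.e.; passing to the limit using the construction of $u_b$ in Proposition~\ref{unicite} gives $c+\tilde u'_c\leqslant b+\tilde u'_b$ a.e., hence $\Phi$ is non-decreasing on all of $\R$. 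With this amendment your proof is complete.
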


\begin{proof}
Let us  recall that for a semi-concave function $v : \R \to \R$, if $v'(x_0)$ exists then $x_0$ is a continuity point of $x\mapsto \partial v(x)$. On the other side, if $N\subset \R$ has Lebesgue measure $0$ and contains the nonderivable points of $v$,  and if $\lim\limits_{\substack{x\to x_0 \\ x\notin N}} v'(x) = p $ exists, then $v'(x_0) = p$ exists.

Coming back to the Theorem, the result obviously holds is $\tilde\theta_0 \in  \widetilde {\mathcal{M}} (\rho_0)$. We now assume otherwise.

Let us introduce $N\subset \R$ to be the countable set of points where either $\tilde u_a$, $\tilde u_b$ or $\tilde u_c$ is not derivable. 

Let us prove the first point. In this case, by Proposition \ref{noncrossing}, $\tilde \theta_{-q} -p < \tilde\theta_0$. There exists $\varepsilon >0$ such that if $\tilde\theta'_0 \in \R\setminus N$ verifies $|\tilde\theta_0 -\tilde \theta'_0|<\varepsilon$, then $\tilde \theta'_{-q} -p < \tilde\theta'_0$ where 
$\tilde \theta'_i = \pi_1\circ F^i\big( \tilde \theta_0', c+\tilde u'_c(\tilde \theta'_0) \big)$.  Up to taking $\varepsilon$ smaller, then $\tilde \theta_0'\in \big(y^{-}(\tilde \theta_0),y^{+}(\tilde \theta_0)\big)$, it then follows from Proposition \ref{noncrossing} and Theorem \ref{minrat} that $(\tilde \theta'_i)_{i\leqslant 0}$ calibrates $\tilde u_b$ and then that 
$$\forall i\leqslant 0,\quad \tilde\theta_i'= \pi_1\circ F^i\big( \tilde \theta_0', b+\tilde u'_b(\tilde \theta'_0) \big)    . $$
Finally, we have established that 
$$\lim\limits_{\substack{\tilde \theta'_0\to \tilde\theta_0 \\ \tilde \theta'_0\notin N}} b+\tilde u'_b(\tilde \theta'_0)  =\lim\limits_{\substack{\tilde \theta'_0\to \tilde\theta_0 \\ \tilde \theta'_0\notin N}} c+\tilde u'_c(\tilde \theta'_0) = c+\tilde u'_c(\tilde \theta_0)  ,$$
that proves our result.
\end{proof}

 The following  corollary ends the proof of Theorem \ref{TPlacesolK.A.M.}.

\begin{cor}\label{inclusioncle}
Let $c\in [a,b]$ and $u_c$ be a weak K.A.M. solution for $T^c$ then 
$$\mathcal G (c+\tilde u'_c) \subset \mathcal G (a+\tilde u'_a)\cup \mathcal G (b+\tilde u'_b),$$
$$\overline{\mathcal G (c+\tilde u'_c)} \subset \overline{ \mathcal G (a+\tilde u'_a)} \cup \overline{\mathcal G (b+\tilde u'_b)}.$$
\end{cor}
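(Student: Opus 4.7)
The plan is to reduce the corollary to Theorem \ref{K.A.M.f} together with Proposition \ref{noncrossing}, doing a trichotomy on the calibrating sequence starting at an arbitrary point of $\mathcal{G}(c+\tilde u'_c)$.

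First I would fix an arbitrary point $\big(\tilde\theta_0, c+\tilde u'_c(\tilde\theta_0)\big) \in \mathcal{G}(c+\tilde u'_c)$ and let $(\tilde\theta_i)_{i\leqslant 0}$ be the unique minimizing sequence starting at $\tilde\theta_0$ that calibrates $\tilde u_c$ (as $c\in[a,b]$, this sequence has rotation number $p/q$ by properties of weak K.A.M. solutions recalled in subsection \ref{ssweakk}). Then I would apply Proposition \ref{noncrossing} to split into three mutually exclusive cases: either $(\tilde\theta_i)_{i\leqslant 0}=(\tilde\theta_{i-q}-p)_{i\leqslant 0}$, or $\tilde\theta_0<\tilde\theta_{-q}-p$, or $\tilde\theta_0>\tilde\theta_{-q}-p$ (the equality case being excluded by Proposition \ref{minrat} unless the sequence is periodic).

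In the periodic case, Corollary \ref{permin} gives $(\tilde\theta_i)_{i\leqslant 0}\subset\widetilde{\mathcal{M}}(p/q)$. Since every weak K.A.M. solution is differentiable on $\widetilde{\mathcal{M}}(p/q)$ with the graph of its derivative containing $\widetilde{\cm^*}(p/q)$ (point recalled in subsection \ref{ssmoreK.A.M.}), the point $\big(\tilde\theta_0, c+\tilde u'_c(\tilde\theta_0)\big)$ lies in $\mathcal{G}(a+\tilde u'_a)\cap\mathcal{G}(b+\tilde u'_b)$. In the second case, Proposition \ref{noncrossing} yields $\lim_{i\to-\infty}|\tilde\theta_i-y^{+i}(\tilde\theta_0)|=0$, and then Theorem \ref{K.A.M.f} places $\big(\tilde\theta_0, c+\tilde u'_c(\tilde\theta_0)\big)$ in $\mathcal{G}(a+\tilde u'_a)$. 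In the third case, symmetrically, the same two results place the point in $\mathcal{G}(b+\tilde u'_b)$. This establishes the first inclusion.

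For the closure statement, since $\mathcal{G}(a+\tilde u'_a)\cup\mathcal{G}(b+\tilde u'_b)\subset\overline{\mathcal{G}(a+\tilde u'_a)}\cup\overline{\mathcal{G}(b+\tilde u'_b)}$ and the right-hand side is closed (as a finite union of closed sets), the first inclusion yields $\mathcal{G}(c+\tilde u'_c)\subset\overline{\mathcal{G}(a+\tilde u'_a)}\cup\overline{\mathcal{G}(b+\tilde u'_b)}$; taking closures on the left gives the second inclusion. No serious obstacle is expected here: the real work has already been done in Theorem \ref{K.A.M.f} and in the trichotomy of Proposition \ref{noncrossing}, and the corollary is essentially a book-keeping statement assembling those two results. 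The only mild subtlety is handling the periodic case carefully so that one correctly invokes the fact that the (projected) Mather set $\cm(p/q)$ lies in the graph of every weak K.A.M. solution at cohomology in $[a,b]$.
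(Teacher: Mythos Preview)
Your proposal is correct and follows exactly the intended route: the corollary is immediate from Theorem~\ref{K.A.M.f} once one observes, via Proposition~\ref{noncrossing} (and Proposition~\ref{minrat} for the equality case), that any calibrating sequence for $\tilde u_c$ falls into one of the two asymptotic alternatives handled by that theorem. The paper presents the corollary without proof precisely because this trichotomy-and-apply argument is the natural reading; your handling of the periodic case and of the closure statement is also fine.
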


We may also provide a description of what  $\mathcal {PG} (c+\tilde u'_c)$ looks like, similar to the classical example of the pendulum. The open region between $\mathcal {PG} (a+\tilde u'_a)$ and $\mathcal {PG} (b+\tilde u'_b)$ has a connected component between two consecutive points of the projected Mather set that  projects on an interval $(y^-,y^+)$. Then, either  $\mathcal {PG} (c+\tilde u'_c)$ coincides with  $\mathcal {PG} (a+\tilde u'_a)$ on  $(y^-,y^+)$, either $\mathcal {PG} (c+\tilde u'_c)$ coincides with  $\mathcal {PG} (b+\tilde u'_b)$ on  $(y^-,y^+)$, either there exists $z\in (y^-,y^+)$ such that $\mathcal {PG} (c+\tilde u'_c)$ coincides with  $\mathcal {PG} (b+\tilde u'_b)$ on  $(y^-,z)$ and $\mathcal {PG} (c+\tilde u'_c)$ coincides with  $\mathcal {PG} (a+\tilde u'_a)$ on  $(z,y^+)$.

\subsection{Pseudographs of weak K.A.M. solutions}

We end by a crucial property of pseudo-graphs associated to weak K.A.M. solutions that we believe is of independent interest:

\begin{propos}\label{imageWK.A.M.}
Let $c\in \R$ and $u_c : \T\to \R$ be a weak K.A.M. solution for $T^c$. Then $f^{-1}\big( \mathcal{PG} (c+ u'_c)\big)$ is the graph of a continuous function.
\end{propos}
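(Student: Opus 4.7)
The plan is to show that $f^{-1}\big(\mathcal{PG}(c+u'_c)\big)$ is a compact essential topological circle in $\A$ on which the projection $\pi_1$ restricts to a homeomorphism onto $\T$. Compactness and the essential-circle property are immediate: Lemma \ref{Lfullman} provides that $\mathcal{PG}(c+u'_c)$ is a compact Lipschitz essential circle, and $f^{-1}$ is a homeomorphism of $\A$ isotopic to the identity, so the image is again a compact essential circle.

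To prove that $\pi_1$ restricts to a homeomorphism, I would parametrize $\mathcal{PG}(c+u'_c)$ by a continuous degree-one loop $\gamma : \T \to \mathcal{PG}(c+u'_c)$ in which $X(\tau) = \pi_1 \circ \gamma(\tau)$ is non-decreasing: on open regions where $u_c$ is differentiable, $X$ strictly increases and $R(\tau) = c+u'_c(X(\tau))$; at each non-differentiability point $X_0$, with $\partial u_c(X_0) = [p^-,p^+]$ and $p^-<p^+$ (since semi-concavity forces $u'_c$ to jump downward), the vertical segment $\{X_0\}\times [c+p^-,c+p^+]$ is traversed with $X$ constant and $R$ strictly decreasing from $c+p^+$ to $c+p^-$. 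I would then show that $\theta(\tau) := \pi_1 \circ f^{-1} \circ \gamma(\tau)$ is continuous and strictly increasing by examining the two types of arcs separately:
\begin{itemize}
\item On a smooth arc, point \ref{ptinvgraph} in subsection \ref{ssweakk} applied to the weak K.A.M. solution $u_c$ gives $f^{-1}\big(X, c+u'_c(X)\big) = \big(y(X), c+u'_c(y(X))\big)$, where $y(X)$ is the unique Lax--Oleinik minimizer at $X$; Lemma \ref{orderWK.A.M.} in its strict semi-concave form shows that $y(X)$ is strictly increasing, hence so is $\theta(\tau)$.
\item On a vertical segment at $X_0$, writing $f^{-1}(X_0,R) = (y(R),r(R))$ and using the generating-function relation $R = \partial_2 S(y, X_0)$, implicit differentiation gives $\partial y / \partial R = 1/\partial^2_{12} S(y, X_0)$, which is strictly negative by the twist condition; since $R$ is strictly decreasing along the segment, $\theta$ increases strictly.
\item At each transition between a smooth arc and a vertical segment, continuity of $\theta(\tau)$ follows from continuity of $f^{-1}$ together with the matching of one-sided limits: as $X \nearrow X_0$ along the graph part, $y(X) \to y^-$, and this value coincides with $\pi_1 \circ f^{-1}(X_0, c+p^+)$, the entry point of the vertical-segment preimage; symmetrically at the exit.
\end{itemize}

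The conclusion would then follow: $\theta : \T \to \T$ is continuous, strictly monotonic, and of topological degree one (since $f^{-1}$ is isotopic to the identity and $\mathcal{PG}(c+u'_c)$ is essential), hence a homeomorphism. Therefore $f^{-1}\big(\mathcal{PG}(c+u'_c)\big)$ is the graph of the continuous function $\theta \mapsto \pi_2 \circ f^{-1} \circ \gamma \circ \theta^{-1}(\theta)$. The main obstacle is handling the vertical segments of $\mathcal{PG}$ coming from non-differentiability points of $u_c$: their preimages under $f^{-1}$ are not contained in $\mathcal{G}(c+u'_c)$ but form smooth arcs that must glue seamlessly with the preimages of the graph part; the twist condition is precisely what keeps these interpolating arcs monotonic in $\theta$, while Lemma \ref{orderWK.A.M.} supplies the matching monotonicity on the smooth parts, preventing the image circle from folding back over itself.
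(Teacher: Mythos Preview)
Your proof is correct and follows essentially the same approach as the paper's: both parametrize the full pseudograph by a loop with nondecreasing first coordinate and show that $\pi_1\circ F^{-1}\circ\tilde\gamma$ is strictly increasing, using Lemma \ref{orderWK.A.M.} on the graph part and the twist condition on the vertical segments. The only organizational difference is that you argue locally on each arc type and then glue via continuity at the transitions, whereas the paper handles arbitrary pairs $t<t'$ directly by introducing intermediate parameters $t_1,t_2$ landing in $\overline{\cg(c+\tilde u_c')}$; the underlying ideas and key inputs are identical.
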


\begin{proof}
Recall that by \cite{Arna2}, $ \mathcal{PG} (c+ u'_c)$ is a Lipschitz embedded circle (see Lemma \ref{Lfullman}) that we can parametrize by a map $\gamma : \T \to \A$. Is $\tilde \gamma : \R \to \R^2$ is a lift of $\gamma$ we assume without loss of generality that  $\pi_1\circ \tilde \gamma$ is nondecreasing.

If $t<t'$ are real numbers  we consider two cases: if $\pi_1\circ \tilde \gamma (t) = \pi_1\circ \tilde \gamma (t')$ then  $\pi_2\circ \tilde \gamma (t) > \pi_2\circ \tilde \gamma (t')$ (because $\tilde u_c$ is semi-concave) and   $\pi_1\circ F^{-1}\big(\tilde \gamma (t)\big)< \pi_1\circ F^{-1}\big( \tilde \gamma (t')\big)$ because of the twist condition. 

If now $\pi_1\circ \tilde \gamma (t) < \pi_1\circ \tilde \gamma (t')$ then we consider $t\leqslant t_1 < t_2 \leqslant t'$ such that $\pi_1\circ \tilde \gamma (t) = \pi_1\circ \tilde \gamma (t_1)$, $\pi_1\circ \tilde \gamma (t') = \pi_1\circ \tilde \gamma (t_2)$ and $\{ \tilde \gamma (t_1), \tilde \gamma (t_2) \} \in  \overline{\cg (c+\tilde u_c')}$. It follows that $\pi_2\circ \tilde \gamma (t) > \pi_2\circ \tilde \gamma (t_1)$ and $\pi_2\circ \tilde \gamma (t_2) > \pi_2\circ \tilde \gamma (t')$. Moreover, we deduce from Lemma \ref{orderWK.A.M.} that $\pi_1\circ F^{-1}\big(\tilde \gamma (t_1)\big) \leqslant \pi_1\circ F^{-1}\big( \tilde \gamma (t_2)\big)$. Moreover, as $F^{-1}\big(\overline{\cg (c+\tilde u_c')}\big)\subset \cg (c+\tilde u_c')$ the previous inequality is strict. We conclude that

$$\pi_1\circ F^{-1}\big(\tilde \gamma (t)\big)\leqslant     \pi_1\circ F^{-1}\big(\tilde \gamma (t_1)\big) < \pi_1\circ F^{-1}\big( \tilde \gamma (t_2)\big)  \leqslant  \pi_1\circ F^{-1}\big( \tilde \gamma (t')\big).$$
We have established that the function $\pi_1\circ F^{-1}\circ \tilde\gamma$ is increasing and that proves the Proposition.
\end{proof}

 \begin{remk}
The preceding result may be interpreted in terms of positive Lax-Oleinik maps. Indeed if one defines $T_+^c \tilde u_c (x) = \max\limits_{x'\in \R}  \tilde u_c (x') - S(x,x') +c(x'-x)$ then one may deduce that $T_+^c \tilde u_c$ is a $C^1$ function and that $F^{-1}\big( \mathcal{PG} (c+\tilde u'_c)\big) = \mathcal{G} (c+T_+^c \tilde u'_c)$. This is related to Lasry-Lyons type results, see \cite{Be4,Be5,Za,Za2}.

 \end{remk}
 
In Aubry-Mather Theory, it is known that given a rotation number $\rho$, on each vertical $V_\theta$ there is
\begin{itemize}
\item at most one bi-infinite minimizing orbit of rotation number $\rho$ intersecting $V_\theta$ if $\rho\notin \Q$,
\item at most two bi-infinite minimizing orbit of rotation number $\rho$ intersecting $V_\theta$ if $\rho\in  \Q$,
\end{itemize}
in the latter case if there are two, one is $\alpha$-asymptotic to $\big(y^{-i}(\theta)\big)_{i\in \Z}$ and $\omega$-asymptotic to  $\big(y^{+i}(\theta)\big)_{i\in \Z}$ and the other is $\omega$-asymptotic to $\big(y^{-i}(\theta)\big)_{i\in \Z}$ and $\alpha$-asymptotic to  $\big(y^{+i}(\theta)\big)_{i\in \Z}$.

In our study of one-sided infinite minimizing sequence we obtain as a corollary a similar statement. The only difference is that instead of taking as reference the vertical foliation, we take its image by $f$. 

As an application of the previous Theorem we obtain:

\begin{theorem}\label{bang}
Let $\theta\in \T$ and $\rho_0 \in \R$, then
\begin{itemize}
\item if $\rho_0\notin \Q$, there exists at most one $(x,p)\in f(V_\theta)$ such that $\big(\pi_1\circ f^i(x,p)\big)_{i\in \Z_-}$ is minimizing with rotation number $\rho_0$;
\item if $\rho_0\in \Q$, there exists at most two $(x,p)\in f(V_\theta)$ such that $\big(\pi_1\circ f^i(x,p)\big)_{i\in \Z_-}$ is minimizing with rotation number $\rho_0$.
\end{itemize}
In the latter case, if there are two such points $(x_1,p_1) $ and $(x_2,p_2)$ with $x_1<x_2$ then 
$\big(\pi_1\circ f^i(x_1,p_1)\big)_{i\in \Z_-} $ is $\alpha$-asymptotic to $\big(y^{+(i-1)}(\theta)\big)_{i\in \Z}$ and $\big(\pi_1\circ f^i(x_2,p_2)\big)_{i\in \Z_-} $ is $\alpha$-asymptotic to $\big(y^{-(i-1)}(\theta)\big)_{i\in \Z}$.
\end{theorem}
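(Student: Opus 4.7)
The plan is to reduce the existence of a backward‑minimizing point $(x,p) \in f(V_\theta)$ with rotation number $\rho_0$ to membership in one of a small number of pseudographs $\pg(c + u_c')$, and then to exploit Proposition \ref{imageWK.A.M.}, which states that $f^{-1}$ of such a pseudograph is a continuous graph, to bound the number of preimages on the vertical $V_\theta$.

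First, I would fix $(x,p) \in f(V_\theta)$ whose backward orbit is minimizing with rotation number $\rho_0$. By Theorem \ref{caliball} (i.e.\ Theorem \ref{min-irr} in the irrational case, Proposition \ref{minrat} in the rational case), this orbit calibrates a weak K.A.M.\ solution $u_c$, which forces $(x,p) \in \pg(c + u_c')$ for some $c \in \rho^{-1}(\{\rho_0\})$. The admissible set of cohomology classes is $\{c_0\}$ in the irrational case and $\{a,b\}$ in the rational case. Applying $f^{-1}$ yields $f^{-1}(x,p) \in V_\theta \cap f^{-1}(\pg(c + u_c'))$. By Proposition \ref{imageWK.A.M.}, $f^{-1}(\pg(c + u_c'))$ is a continuous graph, so it meets $V_\theta$ in a single point. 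Hence the number of eligible $(x,p)$ is bounded by the number of admissible cohomology classes: at most one in the irrational case, at most two in the rational case.

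For the asymptotic statement in the rational case with two distinct points $(x_1,p_1)$ and $(x_2,p_2)$ satisfying $x_1<x_2$, I would set $(\theta,q_k) := f^{-1}(x_k,p_k)$. The twist condition immediately gives $q_1<q_2$. Using point \ref{ptinvgraph} of subsection \ref{ssweakk}, each $(\theta,q_k)$ lies in $\cg(a+u_a') \cup \cg(b+u_b')$; in particular $u_a$ and $u_b$ are differentiable at $\theta$ and $q_k \in \{a+u_a'(\theta),\, b+u_b'(\theta)\}$. Since $q_1 \ne q_2$, these two values must differ, forcing $\theta \notin \widetilde{\mathcal M}(p/q)$; Proposition \ref{ordrerat} then gives $a+u_a'(\theta) < b+u_b'(\theta)$, so $q_1 = a+u_a'(\theta)$ and $q_2 = b+u_b'(\theta)$. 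Consequently the backward orbit of $(x_1,p_1)$ calibrates $u_a$ and that of $(x_2,p_2)$ calibrates $u_b$, and Theorem \ref{equivration} yields the advertised $\alpha$-asymptotic behavior toward the $y^+$-branch for $(x_1,p_1)$ and the $y^-$-branch for $(x_2,p_2)$.

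The main obstacle will be bookkeeping: one has to check that every eligible $(x,p)$ really does come from one of the pseudographs at $c \in \{c_0\}$ or $c \in \{a,b\}$ and not from some other weak K.A.M.\ solution (a point guaranteed by Theorem \ref{caliball} but worth spelling out), and one has to track the one-step index shift in the asymptotic conclusion, since Theorem \ref{equivration} is stated for sequences starting at $(\theta,q_k)$ whereas the conclusion concerns the sequence starting one $f$-step later at $(x_k,p_k)$.
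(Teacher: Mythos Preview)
Your proposal is correct and follows essentially the same route as the paper: reduce to membership in $\pg(c_0+u_{c_0}')$ (irrational case) or $\pg(a+u_a')\cup\pg(b+u_b')$ (rational case) via Theorem \ref{min-irr}/Proposition \ref{minrat}, then invoke Proposition \ref{imageWK.A.M.} so that each $f^{-1}(\pg(\cdot))$ meets $V_\theta$ in a single point. Your treatment of the asymptotic clause (using the twist to order $q_1<q_2$, Proposition \ref{ordrerat} to identify which pseudograph each preimage lies on, and Theorem \ref{equivration} to read off the $\alpha$-limits) is more explicit than the paper's proof, which leaves that part implicit; just be careful, as you note, with the one-step index shift when translating between the sequence based at $(\theta,q_k)$ and the one based at $(x_k,p_k)$.
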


\begin{proof}
If $\rho_0 \notin \Q$ the only possible such point is $f\big(V_\theta \cap  f^{-1}\big( \mathcal{PG} (c+ u'_c)\big)\big)$ where $\{c\} = \rho^{-1}(\{\rho_0\})$.

If $\rho \in \Q$ the only possible such points are $f\big(V_\theta \cap  f^{-1}\big( \mathcal{PG} (a+ u'_a)\big)\big)$ and $f\big(V_\theta \cap  f^{-1}\big( \mathcal{PG} (b+ u'_b)\big)\big)$ where $[a,b] = \rho^{-1}(\{\rho_0\})$.

\end{proof}
 This proves the end of Theorem \ref{Ttwistvertical}.

 \appendix
 
 \section{Examples}\label{AppA}
 \subsection{An example a semi-concave function that is not a weak K.A.M. solution for $\widehat T^c$ and that satisfies $f^{-1}\big(\overline{\cg(c+u')}\big)\subset \cg(c+u')$}\label{sspseudo}
 Let us begin by introducing $g_t:\A\rightarrow\A$ as being the time $t$ map  of the Hamiltonian flow of the double pendulum Hamiltonian
 $$H(\theta, r)=\frac{1}{2}r^2+\cos (4\pi\theta).$$
 If  $t>0$ is small enough, $g_t$ is an ESTwD.\\
 Observe that $H$ is a so-called Tonelli Hamiltonian (see \cite{Fa3} for the definition) with associated Lagrangian $L(\theta, v)=\frac{1}{2}v^2-\cos (4\pi\theta)$. The global minimum $-1$ of $L$ is attained in $(0, 0)$ and $(\frac{1}{2}, 0)$.\\
 If $G_t$ is the time $t$ map of the lift of $H$ to $\R^2$, then $G_t$ is a lift of $g_t$ and if $G_s(\theta, r)=(\theta_s, r_s)$, a generating function of $G_t$ is 
 $$S_t(\theta, \theta_t)=\int_0^tL(\theta_s, \dot\theta_s)ds.$$
 By using this formula, observe that the only ergodic minimizing measures for the cohomology class $0$ are the Dirac measure at $0$ and $\frac{1}{2}$.\\

 Then we denote by $h:\A\rightarrow \A$ the map that is defined by $h(\theta, r)=(\theta+\frac{1}{2}, r)$. Then $f=h\circ g_t=g_t\circ h$  is again an ESTwD and $H$ is an  integral for $f$, which means that $H\circ f=H$.\\
 It is easy to check that a generating function of a lift $F$ of $f$ is given by
 $$S(\theta, \Theta)= S_t\big(\theta, \Theta-\frac{1}{2}\big).$$
 From this, we deduce that the Mather set corresponding to the cohomology class zero (and the rotation number $\frac{1}{2}$) is the support of a unique ergodic measure, that is the mean of two Dirac measure $\frac{1}{2}(\delta_{(0, 0)}+\delta_{(\frac{1}{2}, 0)})$.\\
 As there is only one such minimizing measure, we know that there is a unique, up to constants, weak K.A.M. solution $u$ with cohomology class $0$. But there are a lot of graphs of $v'$ with $v:\T\rightarrow \R$ semi--concave that are invariant by $f$. The first one we draw corresponds to the weak K.A.M. solution whose graph is strictly mapped into itself by   $f^{-1}$. Perturbing slightly the pseudograph in the level $\{ H=1\}$, we obtain another backward invariant pseudograph that doesn't correspond to a weak K.A.M. solution.
 
 In the right drawing \ref{pgnwk}, the perturbation of the pseudograph must be small enough so that, in the right eye on the upper manifold, the piece of pseudograph that goes beyond the vertical dotted line is mapped  by $f^{-1}$ in the upper piece of pseudograph of the left eye. With the notations of the figure, $f^{-1}\big(d, s^+(d)\big) = \big(e,s^+(e)\big)$.
 

 \begin{figure}[h!]
 \begin{minipage}[c]{.46\linewidth}

  \begin{tikzpicture}[scale = .6]
    \draw[domain=0:4][samples=250] plot (2
   *\x,2* abs{sin(pi*\x/2 r)} );
    \draw[domain=0:4][samples=250] plot (2
   *\x,-2* abs{sin(pi*\x/2 r)} );
   
   \draw  (0,-3)--(0,3) ;
     \draw  (8,-3)--(8,3) ;
     \draw (0,0) -- (8,0);

\draw (0,0) node[left]{0} ;
\draw (8,0) node[right]{1} ;

  \draw[color=red][line width=2pt][domain=0:1][samples=250] plot (2
   *\x,2* abs{sin(pi*\x/2 r)} );
    \draw[color=red][line width=2pt][domain=1:2][samples=250] plot (2
   *\x,-2* abs{sin(pi*\x/2 r)} );

  \draw[color=red][line width=2pt][domain=2:3][samples=250] plot (2
   *\x,2* abs{sin(pi*\x/2 r)} );
    \draw[color=red][line width=2pt][domain=3:4][samples=250] plot (2
   *\x,-2* abs{sin(pi*\x/2 r)} );

\draw [color=red][dashed] [line width=2pt](2,2* abs{sin(pi*1/2 r)})--(2,-2* abs{sin(pi*1/2 r)}) ;
\draw [color=red] [dashed][line width=2pt](6,2* abs{sin(pi*1/2 r)})--(6,-2* abs{sin(pi*1/2 r)}) ;

\end{tikzpicture}

\caption{The  pseudograph of the weak K.A.M. solution at cohomology $0$}

\end{minipage} \hfill
\begin{minipage}[c]{.46\linewidth}

  \begin{tikzpicture}[scale = .6]
    \draw[domain=0:4][samples=250] plot (2
   *\x,2* abs{sin(pi*\x/2 r)} );
    \draw[domain=0:4][samples=250] plot (2
   *\x,-2* abs{sin(pi*\x/2 r)} );
   
   \draw  (0,-3)--(0,3) ;
     \draw  (8,-3)--(8,3) ;
     \draw (0,0) -- (8,0);

\draw (0,0) node[left]{0} ;
\draw (8,0) node[right]{1} ;

  \draw[color=red][line width=2pt][domain=0:.8][samples=250] plot (2
   *\x,2* abs{sin(pi*\x/2 r)} );
    \draw[color=red][line width=2pt][domain=.8:2][samples=250] plot (2
   *\x,-2* abs{sin(pi*\x/2 r)} );

  \draw[color=red][line width=2pt][domain=2:3.2][samples=250] plot (2
   *\x,2* abs{sin(pi*\x/2 r)} );
    \draw[color=red][line width=2pt][domain=3.2:4][samples=250] plot (2
   *\x,-2* abs{sin(pi*\x/2 r)} );

\draw [color=red][dashed] [line width=2pt](1.6,2* abs{sin(pi*.4 r)})--(1.6,-2* abs{sin(pi*.4 r)}) ;
\draw [color=red] [dashed][line width=2pt](6.4,2* abs{sin(pi*.4 r)})--(6.4,-2* abs{sin(pi*.4 r)}) ;

\draw [dashed] [line width=1pt](2,2* abs{sin(pi*1/2 r)})--(2,-2* abs{sin(pi*1/2 r)}) ;
\draw  [dashed][line width=1pt](6,2* abs{sin(pi*1/2 r)})--(6,-2* abs{sin(pi*1/2 r)}) ;

\draw (6.4,0) node[below right] {$d$} node{$\bullet$};
\draw (1.2,0) node[below left] {$e$} node{$\bullet$};

\draw (6.4,2*abs{sin(pi*.4 r)}) node[above] {{\small {$(d,s^+(d))$}}} node{$\bullet$};
\draw (1.2,2*abs{sin(pi*.3 r)})  node{$\bullet$};
\draw (1.2,.25+2*abs{sin(pi*.3 r)}) node[above] {{\small$(e,s^+(e))$}} ;

\end{tikzpicture}

\caption{A backward invariant pseudograph   that is not a weak K.A.M. solution}
\label{pgnwk}
\end{minipage}
\end{figure}

   \begin{remk}
   The previous example also shows that Corollary \ref{inclusioncle} is not an equivalence in the sense that if the pseudograph of a semi--concave  function $c+u'_c$ satisfies the inclusions of Corollary \ref{inclusioncle}, then $u_c$ is not necessarily a weak K.A.M. solution at cohomology $c$.
   \end{remk}

 \subsection{Cases where the discounted solution doesn't depend continuously on $c$}\label{sscomplint}
 
 Let us start this appendix of counterexamples with a positive result. We will show that even if discounted solutions may depend in a discontinuous way on $c$, the same is not true for their derivative. In what follows we use the notion of Clarke sub-derivative introduced earlier
  in Definition \ref{Clarke}.

Let us recall that by Proposition \ref{hausdorff},
if $g_n : \T\to \R$ are equi-semi-concave functions  converging to $g : \T \to \R$, then $\mathcal{PG} (g_n')$ converges to $\mathcal{PG} ( g')$ for the Hausdorff distance.

Let us now state our result:
\begin{propos}\label{discount}
Let $f:\A \to \A$ be an ESTwD. For $c\in \R$, we denote by $\cu_c$ the weak K.A.M. discounted solution. Then the map $c\mapsto \mathcal{PG} ( \cu_c')$ is continuous.
\end{propos}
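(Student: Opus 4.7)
\emph{Plan.} Following a compactness and limit-identification scheme, let $(c_n)_{n\in\N}$ converge to $c$ in $\R$. Since the pseudograph depends only on the derivative of the function, I replace $\cu_{c_n}$ by its normalization $v_n = \cu_{c_n} - \cu_{c_n}(0)$, so that $v_n(0)=0$ and $\mathcal{PG}(v'_n) = \mathcal{PG}(\cu'_{c_n})$. By property \ref{same} of subsection \ref{ssweakk}, the family $(v_n)$ is equi-semi-concave, hence equi-Lipschitz, and the normalization at $0$ makes it equi-bounded. Ascoli's theorem thus makes it relatively compact in $C^0(\T,\R)$, and Proposition \ref{hausdorff} turns any uniform sub-limit into a Hausdorff sub-limit of pseudographs. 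It therefore suffices to show that every uniform accumulation point $v$ of $(v_n)$ satisfies $\mathcal{PG}(v') = \mathcal{PG}(\cu'_c)$; a standard subsequence argument then upgrades this to Hausdorff convergence of the entire sequence $\mathcal{PG}(\cu'_{c_n})$ to $\mathcal{PG}(\cu'_c)$.

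By Lemma \ref{Llimweak}, any such $v$ is a weak K.A.M. solution at cohomology $c$. The \emph{main obstacle} is to prove $\mathcal{PG}(v') = \mathcal{PG}(\cu'_c)$ without knowing $v = \cu_c - \cu_c(0)$ in general: the whole purpose of this subsection is precisely that $c \mapsto \cu_c$ may fail to be continuous, so $v$ need not equal the normalization of $\cu_c$. Since two semi-concave functions on $\T$ share the same pseudograph if and only if they differ by an additive constant, it is enough to show that $v$ and $\cu_c$ differ by a constant. When $\rho(c)$ is irrational this is immediate from Proposition \ref{unicite}. When $\rho(c)$ is rational --- the only delicate case --- I would invoke the variational characterization of $\cu_c$ from \cite{DFIZ2,SuThi}: $\cu_c$ is singled out among weak K.A.M. solutions of cohomology $c$ by a canonical restriction (up to an additive constant) to the projected Mather set $\cm(\rho(c))$, defined by an averaging over Mather measures that depends continuously on $c$. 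By point \eqref{K.A.M.ext} of subsection \ref{ssmoreK.A.M.}, such a dominated restriction determines the weak K.A.M. solution on all of $\T$. Passing to the limit $c_{n_k}\to c$ in this canonical restriction --- using upper semi-continuity of the Mather set for the Hausdorff distance together with equi-Lipschitzness of the $v_{n_k}$ --- one concludes that $v|_{\cm(\rho(c))}$ coincides with $\cu_c|_{\cm(\rho(c))}$ up to a constant, and the full equality $v = \cu_c + \mathrm{const}$, hence $\mathcal{PG}(v') = \mathcal{PG}(\cu'_c)$, follows.
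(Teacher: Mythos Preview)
Your compactness--limit-identification scheme is sound up through the point where you obtain an accumulation point $v$ that is a weak K.A.M. solution at cohomology $c$, and your handling of the irrational case (and implicitly of the endpoints $c_1,c_2$ of a rational plateau, via Proposition \ref{unicite}) is correct. The gap is in the interior of a rational plateau $(c_1,c_2)$.

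There you claim that $\cu_c$ is ``singled out \dots\ by a canonical restriction (up to an additive constant) to $\cm(\rho(c))$, defined by an averaging over Mather measures that depends continuously on $c$'', and that this passes to the limit. But the characterization actually available from \cite{DFIZ2} is the one the paper quotes: $\cu_c(x)=\sup_u u(x)$ over $c$-dominated $u$ with $\int u\,d\mu\le 0$ for every Mather measure $\mu$, together with $\int \cu_c\,d\mu\le 0$. This is an \emph{inequality} constraint, not an averaging formula, and it does not survive your normalization: for $v_n=\cu_{c_n}-\cu_{c_n}(0)$ one only gets $\int v_n\,d\mu\le -\cu_{c_n}(0)$, with no control on the right-hand side. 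Even without normalizing, the constraint only yields one inequality in the limit (something like $v\le \cu_c$), not the other; the extremality of $\cu_{c_n}$ among $c_n$-dominated functions does not transfer to extremality of $v$ among $c$-dominated functions. So the identification $v=\cu_c+\mathrm{const}$ is not established, and this is precisely where the discontinuity of $c\mapsto\cu_c$ exhibited in the surrounding subsection can live.

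The paper sidesteps this limit-identification problem entirely. On each rational plateau $(c_1,c_2)$ it shows directly that $c\mapsto \cu_c$ is \emph{concave}: since the set of Mather measures is constant on $[c_1,c_2]$ and $\alpha$ is affine there, a convex combination $\lambda\cu_c+(1-\lambda)\cu_{c'}$ is $(\lambda c+(1-\lambda)c')$-dominated and still satisfies $\int\cdot\,d\mu\le 0$, hence lies below $\cu_{\lambda c+(1-\lambda)c'}$ by the sup characterization. Concavity on the open interval gives continuity of $c\mapsto\cu_c$ itself there, and Proposition \ref{hausdorff} then gives continuity of the pseudographs; the endpoints are handled by uniqueness as you do. This concavity trick is the missing idea in your argument.
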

As a straightforward corollary, we deduce for instance that if $c_n \to c$ and $x_n \to x$ and if  the $\cu'_{c_n}(x_n)$ exist, as well as $\cu'(c)(x)$, then $\cu'_{c_n}(x_n) \to \cu'(c)(x)$.
 
 
%

%

\begin{proof}[Proof of Proposition \ref{discount}]
If $\rho(c_0)\in \R\setminus \Q$, there is a unique weak K.A.M. solution up to constants, hence continuity of $\mathcal{PG} ( \cu_c')$ at $c_0$ follows from Proposition \ref{hausdorff}.

If $\rho(c) = r\in \Q$, let us denote $\rho^{-1}(r) = [c_1,c_2]$. Again, continuity at $c_1$ and $c_2$ is obvious as there is a unique weak K.A.M solution at these cohomology classes (see Proposition \ref{unicite}).

It remains to study what happens inside $(c_1,c_2)$ and we will prove that in this interval, the map $c\mapsto \cu_c$ is concave. Let us set $\cm_r $ the set of Mather measures corresponding to any cohomology class $c\in (c_1,c_2)$. Recall that as seen in \eqref{Matherinv} page \pageref{Matherinv}, this set does not depend on $c$. Moreover, the function $\alpha$ is affine on $(c_1,c_2)$. 

From \cite{DFIZ2}, we know that $\cu_c(x) = \sup_{u} u(x)$, where the supremum is taken amongst (continuous) $c$-dominated functions $u:\T\to \R$ such that $\int u(x) d\mu(x,y) \leqslant 0$ for all $\mu \in \cm_r$. Moreover, it is proven that $\int \cu_c(x) d\mu(x,y) \leqslant 0$  for all $\mu \in \cm_r$. Let now $c,c' \in (c_1,c_2)$ and $\lambda \in [0,1]$. Let us set $v= \lambda \cu_c + (1-\lambda)\cu_{c'}$.

As $\int \cu_c(x) d\mu(x,y) \leqslant 0$ and $\int \cu'_c(x) d\mu(x,y) \leqslant 0$   for all $\mu \in \cm_r$ we deduce that $\int v(x) d\mu(x,y) \leqslant 0$  for all $\mu \in \cm_r$.

Moreover, passing to lifts (with the same $\sim$ notation as previously), from

$$\forall \theta, \theta'\in \R, \quad  \widetilde \cu_c(\theta)-\widetilde \cu_c(\theta')\leqslant S(\theta', \theta)+c(\theta'-\theta)+ \alpha(c);$$
$$\forall \theta, \theta'\in \R, \quad  \widetilde \cu_{c'}(\theta)-\widetilde \cu_{c'}(\theta')\leqslant S(\theta', \theta)+c'(\theta'-\theta)+ \alpha(c');$$
and recalling that $\alpha\big(\lambda c + (1-\lambda)c'\big) = \lambda \alpha(c) + (1-\lambda)\alpha(c')$, we get

$$\forall \theta, \theta'\in \R, \quad  \tilde v(\theta)-\tilde v(\theta')\leqslant S(\theta', \theta)+\big(\lambda c + (1-\lambda)c'\big)(\theta'-\theta)+\alpha\big(\lambda c+(1-\lambda) c'\big).$$
Hence $v$ is $\big(\lambda c+(1-\lambda) c'\big)$-dominated. We conclude that $v\leqslant \cu_{\lambda c + (1-\lambda) c'}$, proving the claim, and the Proposition.

\end{proof}
 \begin{remk}
 The previous proof is intimately linked to the $1$-dimensional setting we work with. Indeed, it was communicated to us by Patrick Bernard that as soon as we move up to dimension $2$, there are examples on $\T^2$ for which it is not possible to construct a function $c\mapsto u_c$ that maps to each cohomology class a weak K.A.M. solution and such that $c\mapsto u'_c$ is continuous (in any possible way). 
 \end{remk}
We obtain as a corollary:

\begin{cor}
The function $\cu(x,c) = \cu_c(x) - \cu_c(0)$ also satisfies the conclusions of Theorem \ref{Tgenecont}.
\end{cor}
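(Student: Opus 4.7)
The plan is to verify each of the five conclusions of Theorem \ref{Tgenecont} for $\cu(x,c)=\cu_c(x)-\cu_c(0)$. Item (1) is immediate by construction; item (3) holds since $\cu_c-\cu_c(0)$ differs from the weak K.A.M. solution $\cu_c$ only by an additive constant, so semi-concavity, backward invariance of the pseudograph, and minimality of negative orbits all persist. For item (2), I would invoke Proposition \ref{discount}: Hausdorff continuity of $c\mapsto \mathcal{PG}(\cu'_c)$, combined with the standard equi-semi-concavity of weak K.A.M. solutions on rotation-bounded compact sets and Proposition \ref{hausdorff}, gives joint continuity of $\cu$ on $\T\times\R$ (together with the normalization $\cu(0,c)=0$ to pin down the additive constant) and continuity of $\partial_\theta \cu$ on its domain, via the standard fact that pointwise convergence of derivatives of equi-semi-concave functions holds at common differentiability points. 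For the Lipschitz content of (5), once monotonicity (4) is in hand, integrating the inequality $c+\cu'_c(\theta)\leq c'+\cu'_{c'}(\theta)$ over $[0,\tilde\theta]\subset[0,1]$ and exploiting periodicity of the $\cu_c$ gives the 1-Lipschitz property in $c$, exactly as at the end of the proof of Theorem \ref{Tgenecont}(5); local Lipschitz continuity in $\theta$ is then automatic from uniform semi-concavity.

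For the monotonicity (4), I would split on rotation numbers. When $\rho(c)<\rho(c')$, Proposition \ref{Pordreirrat} yields the inequality directly. When $\rho(c)=\rho(c')$, set $[a,b]=\rho^{-1}(\rho(c))$. The subcases where one of $c,c'$ equals $a$ or $b$ are handled by taking sequences $c_n<a$ with $\rho(c_n)<\rho(a)$ or $c'_n>b$ with $\rho(c'_n)>\rho(b)$, passing to the limit in the strict inequality of Proposition \ref{Pordreirrat} using the Hausdorff continuity of pseudographs from Proposition \ref{discount} to ensure convergence at common differentiability points. This yields $a+\cu'_a(\theta)\leq c'+\cu'_{c'}(\theta)$ and $c+\cu'_c(\theta)\leq b+\cu'_b(\theta)$ for any $c,c'\in[a,b]$.

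The main obstacle is the interior subcase where $c<c'$ both lie in $(a,b)$. Here I would exploit the structural description following Corollary \ref{inclusioncle}: above each gap $(y^-,y^+)$ of the projected Mather set, $\mathcal{PG}(c+\cu'_c)$ either coincides entirely with $\mathcal{PG}(a+\cu'_a)$, entirely with $\mathcal{PG}(b+\cu'_b)$, or switches from the $b$-pseudograph on $(y^-,z(c))$ to the $a$-pseudograph on $(z(c),y^+)$ at a unique switch point $z(c)$. Since $b+\cu'_b>a+\cu'_a$ strictly above each gap by Proposition \ref{ordrerat}, the monotonicity of $c+\cu'_c$ in $c$ at points of joint differentiability is equivalent to the monotonicity of these switch points: $c\leq c'\Rightarrow z(c)\leq z(c')$ in each gap. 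I would attempt to establish this by combining the concavity of $c\mapsto \cu_c$ on $[a,b]$ proven in the course of Proposition \ref{discount} with the supremum characterization of $\cu_c$ recalled in that proof, or alternatively by lifting to the $\lambda$-discounted approximations $\cu_{c,\lambda}$ (where uniqueness of the fixed point permits a direct Lax-Oleinik monotonicity argument in the spirit of the lemma used in the proof of Theorem \ref{Tgenecont}(4)) and passing to the limit $\lambda\to 0^+$, which preserves non-strict inequalities. This reduction of (4) to monotonicity of switch points, and the variational argument establishing it, is the delicate step of the proof.
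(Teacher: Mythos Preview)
The paper provides no proof for this corollary; it is asserted as an immediate consequence of Proposition~\ref{discount}. Your proposal therefore supplies considerably more detail than the paper does, and your overall structure is sound: items (1) and (3) are indeed trivial, item (2) follows from Proposition~\ref{discount} together with the equi-semi-concavity arguments you cite, and (5) follows from (4) exactly as in the paper's proof of Theorem~\ref{Tgenecont}.

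Your identification of (4) as the only non-obvious point is correct. However, your main line of attack through the switch-point structure of Corollary~\ref{inclusioncle} and Proposition~\ref{ordrerat} is an unnecessary detour. The concavity of $c\mapsto \cu_c$ on plateaus of $\rho$, while true, does not by itself give monotonicity of switch points, and the ``supremum characterization'' argument you allude to is left vague. By contrast, the alternative you mention in passing---lifting to the discounted approximations $u_{c,\lambda}$---is the direct route and is almost certainly what the authors have in mind, since the corollary is precisely about the discounted selection. The monotonicity lemma used in the proof of Theorem~\ref{Tgenecont}(4) extends verbatim to the discounted Lax-Oleinik operator: if $\tilde\theta\mapsto(\tilde v_2-\tilde v_1)(\tilde\theta)+(c_2-c_1)\tilde\theta$ is non-decreasing, the same holds after applying the discounted operators, because in Case~2 of that lemma's proof one only needs $\lambda A+B\geq 0$ whenever $A+B\geq 0$ and $B\geq 0$, which is automatic for $\lambda\in(0,1]$. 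Starting from $v_1=v_2=0$ and iterating gives the monotonicity for each $u_{c,\lambda}$; passing to the limit yields (4) for $\cu_c$ globally in $c$, with no need to split on rotation numbers.

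In short: your proposal is correct, but you should promote the discounted-approximation argument from an afterthought to the main proof of (4), and drop the switch-point discussion.
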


We now  give a $C^\infty$ integrable example for which the discounted method doesn't select a transversely continuous weak K.A.M. solution. 
 
 \begin{exa}
 We use the notation of Theorem \ref{Tgeneder}. We define $F_0, H:\A\rightarrow \A$ by $F_0(\theta, r)=(\theta +r, r)$ and $H(\theta, r)=(h(\theta), \frac{r}{h'(\theta)})$ where $h:\T\rightarrow \T$ is a smooth orientation preserving diffeomorphism of $\T$ such that $h(t)=t+d(t)$ and  $d:\T\rightarrow \R$ satisfies $d(0)=0$ and 
 \begin{equation}\label{Edemi}\int_\T d(t)dt>\frac{d(\frac{1}{2})}{2}.
 \end{equation}  Observe that $h^{-1}(t)=t-d\circ h^{-1}(t)$.  As the symplectic diffeomorphism $H$ maps a vertical $\{ \theta\}\times \R$ onto a vertical $\{ h(\theta)\}\times \R$ and preserves the transversal orientation, the smooth diffeomorphism\footnote{{Note that $F_0$ is the time-$1$ map of the Hamiltonian function $f_0(\theta, r) = \frac 12 r^2$. It follows that $F$, being conjugated to $F_0$ by a symplectic map, is itself the time-$1$ map of the Tonelli Hamiltonian $f_0\circ H^{-1}$. }}  $F=H\circ F_0\circ H^{-1}$ is also a symplectic  $C^\infty$ integrable ESTwD. The new invariant foliation is the set of the graphs of $\eta_c(\theta)=\frac{c}{h'\textrm{$\big($}h^{-1}(\theta)\textrm{$\big)$}}=c(h^{-1})'(\theta)$. Hence we have $u_c(\theta)=-cd\circ h^{-1}(\theta)$. Observe that the function $u$ is smooth.\\
Then $H_c(\theta)=\theta+\frac{\partial u_c}{\partial c}(\theta)=\theta-d\circ h^{-1}(\theta)=h^{-1}(\theta)$. Hence the measure defined on $\T$ by $\mu ([0, \theta])=h^{-1}(\theta)$, i.e. the measure with density $\frac{1}{h'\circ h^{-1}}$, is invariant by the restricted-projected Dynamics $g_c$. When the rotation number $ \rho(c)$ of $g_c$ is irrational, this is the only measure invariant by $g_c$.\\
 Let us recall that the discounted solution $\cu_c$ that is selected in \cite{SuThi} and  \cite{DFIZ2} is the weak K.A.M. solution that is the supremum of the subsolutions that satisfy for every   minimizing $g_c$-invariant measure $\mu$: $\int u_cd\mu\leqslant 0$. When $c$ is irrational, we deduce that
 $$\cu_c(\theta)=u_c(\theta)-\int u_c(t)d\mu(t)=c\left(\int_\T  d\circ h^{-1}(t)(h^{-1})'(t)dt -d\circ h^{-1}(\theta)\right);$$
 i.e. 
\begin{equation}\label{Eirrat} \cu_c(\theta)=c\left(\int_\T d(t)dt-d\circ h^{-1}(\theta)\right)=u_c(\theta)+c\int_\T d(t)dt.\end{equation}
  Assume now that $c=\frac{1}{2}$. Then 
  $$g_{\frac{1}{2}}(0)=h\circ R_{\frac{1}{2}}\circ h^{-1}(0)=h\Big(\frac{1}{2}\Big)=\frac{1}{2}+d\Big(\frac{1}{2}\Big)\quad{\rm and}\quad g_{\frac{1}{2}}\left( \frac{1}{2}+d\Big(\frac{1}{2}\Big)\right)=0.$$
  The mean of the two Dirac measures
  $$\nu=\frac{1}{2}\left( \delta_0+\delta_{\frac{1}{2}+d(\frac{1}{2})}\right)$$
  is a measure that is invariant by $g_\frac{1}{2}$. Hence $\cu_{\frac{1}{2}}(\theta)=u_{\frac{1}{2}}(\theta)-K$ with $K\geqslant\int_\T u_{\frac{1}{2}}d\nu$. We deduce that $$K\geqslant \frac{1}{2}\left( u_{\frac{1}{2}}(0)+u_{\frac{1}{2}}\bigg(\frac{1}{2}+d\Big(\frac{1}{2}\Big)\bigg)\right)=-\frac{1}{4}\left( d\circ h^{-1}(0)+d\circ h^{-1} \bigg(\frac{1}{2}+d\Big(\frac{1}{2}\Big)\bigg)\right);$$
  i.e.
  $$K\geqslant -\frac{1}{4}d\Big(\frac{1}{2}\Big).$$
By  Inequality  (\ref{Edemi}), we know that $\varepsilon=\int_\T d(t)dt-\frac{d(\frac{1}{2})}{2}>0$.  We have then 
  $$\cu_{\frac{1}{2}}(\theta)\leqslant u_{\frac{1}{2}}(\theta)+ \frac{1}{4}d\Big(\frac{1}{2}\Big)= u_{\frac{1}{2}}(\theta)+\frac{1}{2}\int_\T d(t)dt-\frac{\varepsilon}{2}
  $$
  Using Equation (\ref{Eirrat}), we deduce that
  $$\limsup_{c\rightarrow\frac{1}{2}}\cu_c(\theta)\geqslant \cu_{\frac{1}{2}}(\theta)+\frac{\varepsilon}{2}.$$
  Hence $(\theta, c)\mapsto \cu_c(\theta)$ is not continuous.\\
  Observe that in the integrable case, there exists a unique weak K.A.M. solution in each cohomology class up to the addition of a constant. Hence selecting a weak K.A.M. solution in every cohomology class is reduced in this case to choosing a constant. Using this remark, it can be proved that for the integrable case, the discounted choice is lower semi-continuous.
 \end{exa}

 \subsection{An example of weak K.A.M. solution with a calibrating orbit starting from the interior of a vertical bar}\label{SSJPMarco}
 
 We have seen that for an ESTwD $f$, if   $u_c$ is a weak K.A.M. solution  for $\widehat T^c$ and $(\tilde\theta_k)_{k\in \Z_-}$  calibrates its lift $\tilde u_c$, then setting  for $k\in \mathbb Z_-$, $r_k = \frac{\partial S}{\partial \widetilde\Theta} (\tilde\theta_{k-1},\tilde\theta_k)$, the sequence $(\tilde\theta_k,r_k)_{k\in \Z_-}$ is a piece of orbit of $F$ such that $(\tilde \theta_0 , r_0) \in \pg(c+\tilde u'_c)$ and for all $k<0$, $(\tilde \theta_k , r_k) \in \cg(c+\tilde u'_c)$. We now construct an example of such a situation where  $(\tilde \theta_0 , r_0) \notin  \overline{\cg(c+\tilde u'_c)}$. It can be proven that such a situation cannot happen if $f$ is the time--$t$ map of an autonomous Tonelli Hamiltonian flow, for any $t>0$. 

Let us start from the classical pendulum Hamiltonian $H : T^*\T \to \R$ defined by 
$$\forall (\theta,p)\in  T^*\T , \quad H(\theta,p) = \frac12 |p|^2 + \cos(2\pi\theta).$$

Let $s^+ : \theta \mapsto \sqrt{2-2\cos(s\pi\theta)}$ be the function whose graph, $\mathcal S^+$, is the upper part of the level set $H^{-1}(\{1\})$ and $c_0 = \int_0^{1} s^+(\theta) d \theta$. Finally, let $t_0>0$ be a small enough real number. It is then known that if $\Phi_H$ denotes the Hamiltonian flow of $H$, for $t_0$ small enough, $\Phi_H^{t_0}$ is an ESTwD that we denote by $f_0$. We also denote by $S_0 : \R^2\to \R$ a generating function associated to the lift  $F_0 : \R^2 \to \R^2$ of $f_0$ that fixes $(0,0)$. It can be proven that at cohomology $c_0$, there is a unique weak K.A.M. solution $u_0$ for  $\widehat T^{c_0}$ such that $u_0(0) = 0$ and it is given by 

$$\forall \theta \in \R, \quad u_0(\theta) = \int_0^\theta s^+(t) dt -c_0\theta .$$
This function is $C^1$ and $\cg(c_0+u'_0) = \pg(c_0+u'_0) = \mathcal S^+$. Moreover, $\mathcal S^+$ is invariant by $f_0$.

The dynamics of $f_0$ restricted to $ \mathcal S^+\backslash\{ (0, 0)\}$ is going from the left to the right with a fixed point $(0,0) = (1,0)$. Let  $[a_{-1},a_0)\subset (0,1)$  a fundamental domain of the projected dynamics restricted to $ \mathcal S^+\backslash\{ (0, 0)\}$. This means that if $\mathcal S^+_{|[a_{-1},a_0)} = \big\{\big(\theta, s^+(\theta)\big), \ \theta \in [a_{-1},a_0) \big\}$, then $\mathcal S^+ $ is the disjoint union of $\{(0,0)\}$ and of the $f_0^n(\mathcal S^+_{|[a_{-1},a_0)})$ when $n\in \Z$. In particular, $f_0\big(a_{-1}, s^+(a_{-1})\big) = (a_0, s^+(a_0)\big)$.

Let  $\varphi : \T \to [0,+\infty)$ be a $C^2$ function supported in $[a_{-1},a_0]$, we define the diffeomorphism $v_\varphi : \A \to \A$ by $(\theta,r)\mapsto \big( \theta, r+\varphi'(\theta)\big)$ and then $f_\varphi = v_\varphi \circ f_0$ that is also an EStwD. A direct computation shows that $S_\varphi : \R^2 \to \R$, defined by $(\tilde \theta, \wt)\mapsto S_0(\tilde \theta, \wt)+\varphi(\wt) $ is the generating function of $F_\varphi$,  the lift of $f_\varphi$ that fixes $(0,0)$ (we still denote by $\varphi : \R \to \R$ the lift of $\varphi : \T \to \R$). As $\varphi\geqslant 0$, it follows that $S_\varphi \geqslant S_0$.

For $F_0$, the projected Mather set at cohomology $c_0$ is  $\{k,\ \ k\in \Z\}$, as $(k,0) $, $k\in \Z$ are the only fixed points of $F_0$ in $\mathcal S^+$. Also, the rotation number at cohomology $c_0$ is $0$. We deduce that if $k\in \Z$, then $S_0(k,k) = \min\limits_{y\in \R} S_0(y,y)$ by \cite{Ban}.
It follows that for $F_\varphi$, the projected Mather set for the $0$ rotation number is also $\{k, \ \ k\in \Z\}$, by \cite{Ban}, as if $x\in \R$ is in this projected Mather set, then $S_\varphi(x,x) = \min\limits_{y\in \R} S_\varphi(y,y) =  \min\limits_{y\in \R} S_0(y,y)$. 

Let now $c_\varphi\in \R$ be the biggest cohomology class such that $\rho(c_\varphi) = 0$ for $F_\varphi$. Let $\tilde u_\varphi : \R \to \R$ be the corresponding weak K.A.M. solution at cohomology $c_\varphi$ such that $\tilde u_\varphi(0)=0$. In the following lemmas, we study properties of $\tilde u_\varphi$. We introduce $a_{-2}\in [0,a_{-1})$ such that $F_0\big(a_{-2},c_0+\tilde u'_0(a_{-2})\big) = \big(a_{-1},c_0+\tilde u'_0(a_{-1})\big)$.

\begin{lemma}
The function $\tilde u_\varphi$ is $C^1$ on $[0,a_{-1}]$ and  equality $(c_0+\tilde u'_0)_{|[0,a_{-1}]}=(c_\varphi+\tilde u'_\varphi)_{|[0,a_{-1}]}$ holds.
\end{lemma}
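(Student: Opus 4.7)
The key geometric fact is that $F_\varphi = F_0$ on $[0,a_{-1}]\times \R$ (since $\varphi'\equiv 0$ there), so the upper separatrix orbits of $F_0$ through this region are also orbits of $F_\varphi$. My plan is to use this together with Theorem \ref{equivration} to prove that $c_\varphi+\partial\tilde u_\varphi(\theta_0) = \{s^+(\theta_0)\}$ for every $\theta_0\in [0,a_{-1}]$, which will immediately yield both derivability and the announced equality with $c_0 + \tilde u_0'$.

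First, for $\theta_0\in(0,a_{-1}]$ I would consider the backward $F_0$-orbit of $(\theta_0,s^+(\theta_0))$ along $\mathcal S^+$: its projection $(\tilde\theta_k)_{k\leq 0}$ stays in $(0,\theta_0]$ and tends to $0$ as $k\to-\infty$. Since $v_\varphi$ is the identity outside $[a_{-1},a_0]$, this is also a backward $F_\varphi$-orbit. I check $F_\varphi$-minimality via the decomposition $S_\varphi = S_0 + \varphi\circ\pi_2$: for any competitor $(\tilde\theta'_k)$ with the same endpoints,
\[
\sum_k S_\varphi(\tilde\theta'_k,\tilde\theta'_{k+1}) = \sum_k S_0(\tilde\theta'_k,\tilde\theta'_{k+1}) + \sum_k \varphi(\tilde\theta'_{k+1}) \geq \sum_k S_0(\tilde\theta_k,\tilde\theta_{k+1}) = \sum_k S_\varphi(\tilde\theta_k,\tilde\theta_{k+1}),
\]
the inequality coming from $F_0$-minimality of $\mathcal S^+$-segments and $\varphi\geq 0$, the final equality because $\varphi$ vanishes along the orbit. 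Since this minimizing orbit has rotation number $0$ and is $\alpha$-asymptotic to $y^-(\theta_0)=0$ (the nearest Mather integer on the left), Theorem \ref{equivration} identifies it as calibrating $\tilde u_b = \tilde u_{c_\varphi}$, where $b=c_\varphi$ is the upper end of $\rho^{-1}(\{0\})$. By property \ref{calibprop}, this gives $s^+(\theta_0)\in c_\varphi+\partial\tilde u_\varphi(\theta_0)$.

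For the reverse inclusion I rely on the fact that the Clarke subdifferential $c_\varphi+\partial\tilde u_\varphi(\theta_0)$ is a closed interval whose extreme values lie in $\overline{\cg(c_\varphi+\tilde u_\varphi')}$. Each such extreme momentum $r_0$ generates, via backward invariance, an orbit contained in $\cg(c_\varphi+\tilde u_\varphi')$ which calibrates $\tilde u_{c_\varphi}$ and is therefore again asymptotic to $y^-(\theta_0)=0$ by Theorem \ref{equivration}. Outside the support of $\varphi$ this orbit coincides with the backward $F_0$-orbit, and eventually enters the hyperbolic neighborhood of the saddle $(0,0)$ where $F_\varphi=F_0$; the only backward $F_0$-orbits with $\alpha$-limit $(0,0)$ in the upper half-plane lie on the unstable manifold $\mathcal S^+$, and $F_0$-invariance of $\mathcal S^+$ forces $(\theta_0,r_0)\in\mathcal S^+$, i.e.\ $r_0=s^+(\theta_0)$. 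Thus $c_\varphi+\partial\tilde u_\varphi(\theta_0)=\{s^+(\theta_0)\}$ is a singleton, giving derivability at $\theta_0$ with $\tilde u_\varphi'(\theta_0)=s^+(\theta_0)-c_\varphi$. At $\theta_0=0$, the Mather point $(0,0)\in\cm^*(0)\subset \cg(c_\varphi+\tilde u_\varphi')$ already yields $\tilde u_\varphi'(0)=-c_\varphi=s^+(0)-c_\varphi$. Continuity of $s^+$ on $[0,a_{-1}]$ then delivers the $C^1$-regularity, and the equality with $c_0+\tilde u_0'$ follows because both functions equal $s^+$ on this interval. The delicate step will be the subdifferential analysis: ensuring that every element of $c_\varphi+\partial\tilde u_\varphi(\theta_0)$ is realized by a \emph{genuine} backward-calibrating orbit (rather than being an artifact of the convex-hull definition) and then controlling that orbit as it passes through the hyperbolic neighborhood of the saddle $(0,0)$.
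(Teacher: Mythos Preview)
Your approach is genuinely different from the paper's and is essentially correct, but there is a real gap in the reverse-inclusion step. You write that ``$F_0$-invariance of $\mathcal S^+$ forces $(\theta_0,r_0)\in\mathcal S^+$,'' propagating forward from the tail of the orbit (which you correctly place on the local unstable manifold of $(0,0)$). The problem is that the orbit $(\tilde\theta_k,r_k)_{k\le0}$ is an $F_\varphi$-orbit, not an $F_0$-orbit, and the forward propagation only works if every step satisfies $F_\varphi=F_0$, i.e.\ if $\tilde\theta_{k}\le a_{-1}$ for all $k\le0$. Nothing you have said excludes the possibility that the orbit wanders into $(a_{-1},a_0)$ before returning to $\tilde\theta_0\le a_{-1}$; your remark ``outside the support of $\varphi$ this orbit coincides with the backward $F_0$-orbit'' acknowledges the issue without resolving it. The fix is short: since the $\mathcal S^+$-orbit $(a_{-1},a_{-2},\dots)$ also calibrates $\tilde u_\varphi$ (by your own forward argument), Lemma~\ref{orderWK.A.M.} applied to $v=\tilde u_\varphi$ gives $\tilde\theta_k\le a_{k-1}<a_{-1}$ for all $k\le-1$, and then each forward step $F_\varphi(\tilde\theta_k,r_k)$ lands at first coordinate $\le a_{-1}$, so $F_\varphi=F_0$ along the whole orbit and your unstable-manifold conclusion goes through.

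For comparison, the paper avoids hyperbolic dynamics entirely. It uses only the calibration formula $c_\varphi+\tilde u'_\varphi(\tilde\theta_k)=\frac{\partial S_\varphi}{\partial\widetilde\Theta}(\tilde\theta_{k-1},\tilde\theta_k)=\frac{\partial S_0}{\partial\widetilde\Theta}(\tilde\theta_{k-1},\tilde\theta_k)=c_0+\tilde u'_0(\tilde\theta_k)$ for $k\le-1$ (where $\varphi'(\tilde\theta_k)=0$), which yields the result on $[0,a_{-2}]$ since $\tilde\theta_{-1}$ sweeps this interval as $\tilde\theta_0$ sweeps $[0,a_{-1}]$; the remaining strip $(a_{-2},a_{-1}]$ is then handled by a short almost-everywhere argument. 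Your route is more geometric and gives the singleton subdifferential directly at every point, at the cost of invoking the unstable manifold and needing the extra ordering step above.
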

\begin{proof}
Let $\tilde \theta_0 \in [0,a_{-1}]$ and let $(\tilde \theta_k)_{k\leqslant 0}$ be the unique minimizing chain starting at $\tilde \theta_0$ that calibrates $\tilde u_0$ (for $S_0$). From what was recalled above, $\tilde \theta_k \to 0$ and is non--decreasing with $k$. Moreover, as the $\theta_k$'s are not in the support of $\varphi$, the sequence $(\tilde \theta_k)_{k\leqslant 0}$ is also minimizing for $S_\varphi$ (because $S_\varphi \geqslant S_0$). Hence, by Proposition \ref{minrat}, $(\tilde \theta_k)_{k\leqslant 0}$ calibrates $\tilde u_\varphi$ (for $S_\varphi$). It follows that for $k\leqslant -1$, $\tilde u_\varphi$ is derivable at $\tilde\theta_k$ and 
$$c_\varphi + \tilde u'_\varphi(\tilde\theta_k) = \frac{\partial S_\varphi}{\partial \widetilde\Theta} (\tilde\theta_{k-1},\tilde\theta_k) = \frac{\partial S_0}{\partial \widetilde\Theta} (\tilde\theta_{k-1},\tilde\theta_k)=c_0+\tilde u'_0 (\tilde\theta_k).$$
When $\tilde\theta_0$ sweeps $[0,a_{-1}]$, $\tilde\theta_{-1}$ takes all values in $[0,a_{-2}]$.

We then extend what was just established to $(a_{-2},a_{-1}]$. Let $\tilde \theta_0 \in (a_{-2},a_{-1}]$ a point where $\tilde u_\varphi $ is derivable. The previous argument shows that if $(\tilde \theta_k)_{k\leqslant 0}$ is the unique calibrating chain for $u_\varphi$, then is is also the unique calibrating chain for $\tilde u_0$. Hence using the previous result,
$$\big(\tilde \theta_0, c_\varphi + \tilde u'_\varphi(\tilde\theta_0)\big) = F_\varphi\big(\tilde \theta_{-1}, c_\varphi + \tilde u'_\varphi(\tilde\theta_{-1})\big) = F_0\big(\tilde \theta_{-1}, c_0 + \tilde u'_0(\tilde\theta_{-1})\big) = \big(\tilde \theta_0, c_0 + \tilde u'_0(\tilde\theta_0)\big).$$
In the above, we used the fact that $\tilde \theta_{-1} \in [a_{-2},a_{-1}]$ lies away from the support of $\varphi$ and then $F_0$ and $F_\varphi$ coincide on the fiber above $\tilde \theta_{-1}$.
As a conclusion, restricted to $ [a_{-2},a_{-1}]$, the Lipschitz functions $\tilde u_\varphi$ and  $t\mapsto \tilde u_0 + (c_0-c_\varphi)t$ have the same derivative almost everywhere and same value at $a_{-2}$, then they are equal.

\end{proof}

The next lemma provides the values of $\tilde u_\varphi$ on $(a_{-1},a_0]$.

\begin{lemma}
The function $\tilde u_\varphi$ is $C^1$ on  $(a_{-1},a_0]$ and for all $t\in  (a_{-1},a_0]$, $c_\varphi +\tilde u'_\varphi (t) = c_0+\tilde u'_0(t) + \varphi'(t)$.
\end{lemma}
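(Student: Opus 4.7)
The strategy is to show that for $\tilde\theta_0 \in (a_{-1},a_0]$ a derivability point of $\tilde u_\varphi$, the unique calibrating sequence $(\tilde\theta_k)_{k\leqslant 0}$ for $\tilde u_\varphi = \tilde u_b$ coincides with the backward $F_0$-orbit $(\tilde\theta^0_k)_{k\leqslant 0}$ on $\mathcal S^+$ starting at $\tilde\theta_0$. Once this is established, applying $F_\varphi = v_\varphi \circ F_0$ together with $\varphi'(\tilde\theta^0_{-1}) = 0$ (as $\tilde\theta^0_{-1} \in (a_{-2},a_{-1}]$ lies outside the support of $\varphi$) and the previous lemma, which ensures $c_\varphi + \tilde u'_\varphi(\tilde\theta^0_{-1}) = s^+(\tilde\theta^0_{-1})$, yields
\[
\bigl(\tilde\theta_0, c_\varphi + \tilde u'_\varphi(\tilde\theta_0)\bigr) = F_\varphi\bigl(\tilde\theta^0_{-1}, s^+(\tilde\theta^0_{-1})\bigr) = v_\varphi\bigl(\tilde\theta_0, s^+(\tilde\theta_0)\bigr) = \bigl(\tilde\theta_0,\, s^+(\tilde\theta_0) + \varphi'(\tilde\theta_0)\bigr),
\]
which is the desired formula $c_\varphi + \tilde u'_\varphi = c_0 + \tilde u'_0 + \varphi'$ on $(a_{-1},a_0]$.

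To produce the coincidence $(\tilde\theta_k) = (\tilde\theta^0_k)$, I would first note that $\tilde\theta^0_k \in (0,a_{-1}]$ for every $k \leqslant -1$ by the fundamental-domain description of the $F_0$-dynamics on $\mathcal S^+$, so the $F_0$-orbit only meets $\mathrm{supp}(\varphi)$ at the endpoint $\tilde\theta_0$. Using $S_\varphi = S_0 + \varphi \circ \widetilde\Theta$ and $\varphi \geqslant 0$, one checks that $(\tilde\theta^0_k)$ is minimizing for $S_\varphi$ as well: for any competitor with the same extremities, the $S_0$-action is at least that of $(\tilde\theta^0_k)$ by $S_0$-minimality, while the $\varphi$-contribution along any such competitor is at least $\varphi(\tilde\theta_0)$, which is precisely the full $\varphi$-contribution along $(\tilde\theta^0_k)$. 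On the other hand, the true calibrating sequence $(\tilde\theta_k)$ satisfies $\tilde\theta_k \to 0$ by Proposition \ref{minrat} applied to $\tilde u_\varphi = \tilde u_b$ (since $y^{-k}(\tilde\theta_0) = 0$ for all $k$). Both sequences start at $\tilde\theta_0$ and share the same backward limit $0$, so Proposition \ref{cross2} forces $(\tilde\theta^0_k)$ to calibrate $\tilde u_\varphi$, and uniqueness of calibrating chains at derivability points yields $(\tilde\theta_k) = (\tilde\theta^0_k)$; in particular $\tilde\theta_{-1} \in (a_{-2},a_{-1}]$.

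Finally, to upgrade the pointwise identity at derivability points to $C^1$ regularity on all of $(a_{-1},a_0]$, I would appeal to semi-concavity of $\tilde u_\varphi$ together with the continuity of the right-hand side $s^+ + \varphi'$: for a semi-concave function whose derivative agrees almost everywhere with a continuous function, the Clarke subdifferential is everywhere reduced to a singleton, so the function is $C^1$ with the prescribed derivative. The main obstacle in the plan is the $S_\varphi$-minimality verification for $(\tilde\theta^0_k)$; this is where the combination $\varphi \geqslant 0$ and the geometric fact that the $F_0$-orbit on $\mathcal S^+$ skirts the support of $\varphi$ except at its endpoint $\tilde\theta_0$ is essential.
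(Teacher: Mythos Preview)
Your proof is correct and follows a route close to the paper's argument for the \emph{previous} lemma rather than its proof of this one. You identify the full calibrating chain $(\tilde\theta_k)_{k\leqslant 0}$ with the $F_0$-orbit $(\tilde\theta^0_k)_{k\leqslant 0}$ on $\mathcal S^+$ by checking $S_\varphi$-minimality of the latter (via $\varphi\geqslant 0$ and the fact that the orbit meets $\mathrm{supp}(\varphi)$ only at its endpoint) and then invoking the calibration theory for $\tilde u_b=\tilde u_\varphi$. The paper instead takes a shortcut: it only needs to know that $\tilde\theta_{-1}\in(a_{-2},a_{-1})$, and obtains this from the order-preservation Lemma~\ref{orderWK.A.M.} after verifying that the single boundary chain $(a_{-2},a_{-1},a_0)$ calibrates $\tilde u_\varphi$. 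Your approach yields more information (the whole chain) at the cost of a slightly longer argument; the paper's is more economical since only the location of $\tilde\theta_{-1}$ is needed to apply the previous lemma and compute $F_\varphi$.

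One minor remark: to deduce $\tilde\theta_k\to 0$ for the true calibrating sequence you are really using Theorem~\ref{equivration} (direction (3)$\Rightarrow$(2)), not Proposition~\ref{minrat}. More directly, you could apply Proposition~\ref{minrat} straight to $(\tilde\theta^0_k)$---it is $S_\varphi$-minimizing and converges to $0=y^{-k}(\tilde\theta_0)$, hence calibrates $\tilde u_b$---bypassing both the ``true'' chain and Proposition~\ref{cross2} entirely. The $C^1$ upgrade via semi-concavity and continuity of $s^++\varphi'$ is exactly what the paper uses implicitly (``the inclusion must be an equality'').
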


\begin{proof}
Let us now consider the chain $(a_{-2},a_{-1},a_0)$, that calibrates $\tilde u_0$ and is minimizing for $S_0$.
By the same arguments used in the previous lemmas, the same chain $(a_{-2},a_{-1},a_0)$ is also minimizing for $S_\varphi$ and calibrates $\tilde u_\varphi$. It follows from Lemma \ref{orderWK.A.M.} that if $(\tilde\theta_k)_{k\leqslant 0}$ calibrates $\tilde u_\varphi$, and if $\tilde \theta_0 \in(a_{-1},a_0)$, then $\tilde\theta_{-1} \in (a_{-2},a_{-1})$. If now $\tilde\theta_0$ is a derivability point of $\tilde u_\varphi$, then $\big(\tilde\theta_0 , c_\varphi + \tilde u'_\varphi (\theta_0)\big) = F_\varphi\big(\tilde\theta_{-1} , c_\varphi + \tilde u'_\varphi (\theta_{-1})\big) \in \cg ( c_0 + \tilde u'_0 + \varphi')_{| (a_{-1},a_0)}$.
Indeed, $F_\varphi\big( \cg ( c_0 + \tilde u'_0)_{| (a_{-2},a_{-1})}\big) = v_\varphi\big( \cg ( c_0 + \tilde u'_0)_{| (a_{-1},a_0)}\big) =  \cg ( c_0 + \tilde u'_0 + \varphi')_{| (a_{-1},a_0)}$.

We conclude that $ \cg ( c_\varphi + \tilde u'_\varphi)_{| (a_{-1},a_0)}\subset  \cg ( c_0 + \tilde u'_0 + \varphi')_{| (a_{-1},a_0)}$. As previously, this implies that the inclusion must be an equality and this proves the lemma.
\end{proof}

We now specify how to chose $\varphi$ in order to obtain our example:\\
{\bf Hypothesis:} Let $a_1>a_0$ such that $F_0\big(a_0,c_0+\tilde u'_0(a_0)\big) = \big(a_1,c_0+\tilde u'_0(a_1)\big)$. We assume that $\varphi$ is chosen as follows: there exists $d\in(a_0,a_1)$ such that  $F_\varphi\big( \cg ( c_\varphi + \tilde u'_\varphi)_{| (a_{-1},a_0)}\big) = F_0\big( \cg ( c_\varphi + \tilde u'_\varphi)_{| (a_{-1},a_0)}\big)$ is the union of a graph above $(a_0,d)$,  a graph above $ (d,a_1)$ and  a non trivial vertical interval above $\{d\}$.

\begin{figure}[h!]
\begin{center}

 \begin{tikzpicture}[scale = 1.2]
    \draw[color=red][line width=2pt][domain=0:4][samples=250] plot (2*\x,3* abs{sin(pi*\x /4 r)} );
   
   \draw  (0,-4)--(0,4) ;
     \draw  (8,-4)--(8,4) ;
     \draw (0,0) -- (8,0);

\draw (0,0) node[left]{0} ;
\draw (8,0) node[right]{1} ;

\draw  (4,-.1)--(4,.1) ;
\draw (4,0) node[below]{$a_0$} ;

\draw  (6,-.1)--(6,.1) ;
\draw (6,0) node[below]{$a_1$} ;

\draw  (2,-.1)--(2,.1) ;
\draw (2,0) node[below]{$a_{-1}$} ;

\draw  (1,-.1)--(1,.1) ;
\draw (1,0) node[below]{$a_{-2}$} ;

  \draw[color=blue][line width=1pt][domain=1.25:1.75][samples=250] plot (2*\x,{3* abs{sin(pi*\x /4 r)} -500*(\x-1.25)*(\x-1.25)*(\x-1.75)*(\x-1.25)*(\x-1.75) -500*(\x-1.25)*(\x-1.75)*(\x-1.75)*(\x-1.25)*(\x-1.75) } );
  
    \draw[color=blue][line width=1pt][domain=2.25:2.5][samples=250] plot (2*\x,{3* abs{sin(pi*\x /4 r)} +5*(\x-2.25)*(\x-2.25)*(sqrt(2.5-\x)+1) } );
    
       \draw[color=blue][line width=1pt][domain=2.5:2.75][samples=250] plot (2*\x,{3* abs{sin(pi*\x /4 r)} -5*(\x-2.75)*(\x-2.75)*(sqrt(\x-2.5)+1) } );

 \draw[color=blue][line width=1pt] (5, {3* abs{sin(pi*2.5 /4 r)} +5*(2.5-2.25)*(2.5-2.25)*(sqrt(2.5-2.5)+1) }) -- (5, {3* abs{sin(pi*2.5 /4 r)} -5*(2.5-2.25)*(2.5-2.25)*(sqrt(2.5-2.5)+1) });

\draw  (5,-.1)--(5,.1) ;
\draw (5,0) node[below]{$d$} ;

\end{tikzpicture}

\caption{In red the graph of $c_0+u'_0$. In blue, the perturbation giving $c_\varphi + u'_\varphi$ on $[0, a_1]$.}
\label{double1}
\end{center}
\end{figure}
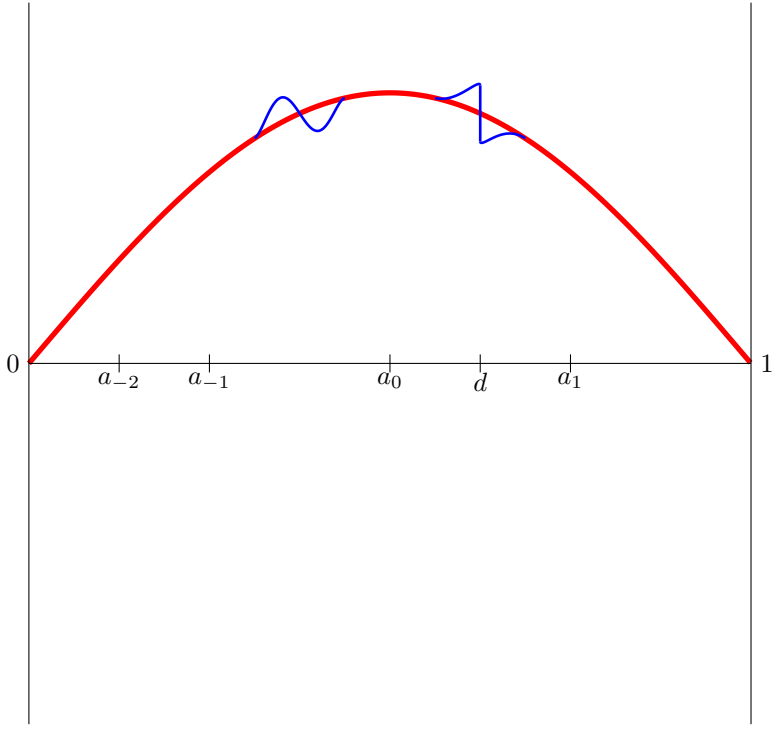

The next lemma provides a description of the weak K.A.M. solution for $S_\varphi$ on $(a_0,a_1)$.

\begin{lemma}
Under the previous hypothesis, 
$$\pg(c_\varphi +u'_\varphi)_{|(a_0,a_1)} = F_\varphi\big( \cg ( c_\varphi + \tilde u'_\varphi)_{| (a_{-1},a_0)}\big).$$
\end{lemma}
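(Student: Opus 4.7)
The plan is to establish the equality by two inclusions, building on Proposition~\ref{imageWK.A.M.} (which identifies $F_\varphi^{-1}(\pg)$ as a continuous graph over $\T$), the backward invariance $F_\varphi^{-1}(\overline{\cg}) \subset \cg$ from item~\ref{ptinvgraph}, Lemma~\ref{orderWK.A.M.} on the monotonicity of first steps of calibrating chains, and the $C^1$ description of $\tilde u_\varphi$ on $(a_{-1}, a_0]$ supplied by the preceding lemma. As a preliminary observation, I would first verify that $\tilde u_\varphi$ is derivable on $(a_0, a_1) \setminus \{d\}$: for such $x$, any minimizer $y'$ of the Lax--Oleinik functional $y' \mapsto \tilde u_\varphi(y') + S_\varphi(y', x) + c_\varphi(y' - x)$ lies in $[a_{-1}, a_0]$ by Lemma~\ref{orderWK.A.M.} applied to the calibrating chains $(a_{-2}, a_{-1}, a_0)$ and $(a_{-1}, a_0, a_1)$, and the fold hypothesis gives a unique such critical point (hence minimizer) in this interval, so item~\ref{ptdifftt} forces $\widehat T^{c_\varphi} \tilde u_\varphi = \tilde u_\varphi$ to be derivable at $x$.

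For the forward inclusion $F_\varphi(\cg(c_\varphi + \tilde u'_\varphi)_{|(a_{-1}, a_0)}) \subset \pg(c_\varphi + u'_\varphi)_{|(a_0, a_1)}$, I would fix $y \in (a_{-1}, a_0)$ and set $(x, p) = F_\varphi(y, c_\varphi + \tilde u'_\varphi(y))$; by hypothesis $x \in [a_0, a_1]$. If $x \in (a_0, a_1) \setminus \{d\}$, the preliminary step and the uniqueness of the minimizer combined with item~\ref{calibprop} yield the calibration identity $\tilde u_\varphi(x) - \tilde u_\varphi(y) = S_\varphi(y, x) + c_\varphi(y - x) + \alpha(c_\varphi)$ and hence $(x, p) = (x, c_\varphi + \tilde u'_\varphi(x)) \in \cg \subset \pg$. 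If $x = d$, continuity of $y' \mapsto F_\varphi(y', c_\varphi + \tilde u'_\varphi(y'))$ shows the image $(d, p)$ is a limit of points in $\cg_{|(a_0, a_1) \setminus \{d\}}$, placing it in the vertical segment $\{d\} \times (c_\varphi + \partial \tilde u_\varphi(d))$ of $\pg$ at $d$.

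For the reverse inclusion, let $(x, p) \in \pg(c_\varphi + u'_\varphi)_{|(a_0, a_1)}$ and let $(y, q) = F_\varphi^{-1}(x, p)$ be the unique preimage provided by Proposition~\ref{imageWK.A.M.}. Lemma~\ref{orderWK.A.M.} applied to the calibrating chains ending at $a_0$ and $a_1$ (whose first steps are $a_{-1}$ and $a_0$ respectively) yields $y \in [a_{-1}, a_0]$, with strict inclusion $y \in (a_{-1}, a_0)$ since $x \notin \{a_0, a_1\}$. Because $\tilde u_\varphi$ is $C^1$ on $(a_{-1}, a_0)$, the forward inclusion just established places $(y, c_\varphi + \tilde u'_\varphi(y)) \in F_\varphi^{-1}(\pg)$, and since this preimage is a single-valued graph over $\T$, we conclude $q = c_\varphi + \tilde u'_\varphi(y)$, whence $(x, p) \in F_\varphi(\cg(c_\varphi + \tilde u'_\varphi)_{|(a_{-1}, a_0)})$.

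The hardest part will be the careful matching of the vertical segment of $\pg$ at $d$ with the vertical interval of $F_\varphi(\cg_{|(a_{-1}, a_0)})$ at $d$ postulated in the hypothesis. Both segments share the same endpoints $\lim_{x \to d^\pm}(c_\varphi + \tilde u'_\varphi(x)) = c_\varphi + \tilde u'_\pm(d)$: for $\pg$ this follows from the definition of the Clarke subdifferential together with semi-concavity; for the image, it follows from the continuity of $F_\varphi$ and the $C^1$-ness of $\tilde u_\varphi$ on $(a_{-1}, a_0)$, which ensures the two branches of $F_\varphi(\cg_{|(a_{-1}, a_0)})$ approach these very values as $y$ approaches the boundary of the subinterval of $(a_{-1}, a_0)$ mapped to $d$. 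This matching is what closes the loop between the two inclusions at the single non-derivability point of $\tilde u_\varphi$ inside $(a_0, a_1)$.
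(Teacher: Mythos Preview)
Your argument is essentially correct and reaches the same conclusion as the paper, but the organization differs. The paper proceeds in one stroke: it shows $\cg(c_\varphi+\tilde u'_\varphi)_{|(a_0,a_1)}\subset F_\varphi\big(\cg(c_\varphi+\tilde u'_\varphi)_{|(a_{-1},a_0)}\big)$ by the same calibration-and-Lemma~\ref{orderWK.A.M.} mechanism used in the two preceding lemmas, then observes that the right-hand side is a graph over $(a_0,d)\cup(d,a_1)$ by the standing hypothesis, which forces $\tilde u_\varphi$ to be $C^1$ there and the two graphs to coincide. The vertical segments at $d$ then match automatically by connectedness, and the full-pseudograph equality follows. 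You instead prove both inclusions separately and bring in Proposition~\ref{imageWK.A.M.} as an extra tool, which makes the structure more symmetric and the vertical-segment matching more explicit; this buys clarity at the cost of length.

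One small imprecision to flag in your reverse inclusion: when $(x,p)$ lies in the \emph{open} part of the vertical segment at $d$ (so $(x,p)\notin\overline{\cg(c_\varphi+\tilde u'_\varphi)}$), the point $y=\pi_1\circ F_\varphi^{-1}(x,p)$ is not a priori a minimizer of the Lax--Oleinik functional, and Lemma~\ref{orderWK.A.M.} does not apply to it directly. What does work is the monotonicity built into the proof of Proposition~\ref{imageWK.A.M.} (namely that $\pi_1\circ F_\varphi^{-1}$ is increasing along $\pg$), together with the twist condition on the vertical $\{d\}\times\R$: at the two endpoints of the segment (which \emph{are} in $\overline{\cg}$) Lemma~\ref{orderWK.A.M.} places $y$ in $(a_{-1},a_0)$, and monotonicity then traps the interior preimages in the same interval. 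With this correction your argument is complete.
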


\begin{proof}
Arguing as in the two previous lemmas, we find that $\cg(c_\varphi +u'_\varphi)_{|(a_0,a_1)} \subset F_\varphi\big( \cg ( c_\varphi + \tilde u'_\varphi)_{| (a_{-1},a_0)}\big)$. As the right hand side set is a graph above  $(a_0,d)\cup (d,a_1)$, again arguing as previously we obtain that $u_\varphi$ is $C^1$ on  $(a_0,d)\cup (d,a_1)$ and that $\cg(c_\varphi +u'_\varphi)_{|(a_0,d)\cup (d,a_1)}  =F_\varphi\big( \cg ( c_\varphi + \tilde u'_\varphi)_{| (a_{-1},a_0)}\big) \setminus  \{d\}\times \R $.
The result follows.
\end{proof}

Next we prove that this construction indeed yields the desired example. To that end, we prove that any negative orbit of $F_\varphi$ starting from the vertical bar of $ \pg ( c_\varphi + \tilde u'_\varphi)$ above $d$ calibrates $\tilde u_\varphi$.

\begin{propos}
Let  $(\tilde\theta_0,r_0)\in  \pg ( c_\varphi + \tilde u'_\varphi)\cap \{d\}\times \R$ and $(\tilde \theta_{-1},r_{-1}) = F_\varphi^{-1}(\tilde\theta_0,r_0)$. Then 
$$\tilde u_\varphi (\tilde\theta_0) - \tilde u_\varphi(\tilde\theta_{-1}) = S_\varphi(\tilde\theta_{-1},\tilde\theta_0) + c_\varphi(\tilde\theta_{-1}-\tilde\theta_0)+\alpha_\varphi(c_\varphi),
$$
where $\alpha_\varphi$ is Mather's function associated to $F_\varphi$.
\end{propos}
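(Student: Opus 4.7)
The plan is to identify $(\tilde\theta_{-1},r_{-1})$ explicitly as a parametrized preimage inside a smooth portion of the graph, and then to verify the calibration identity by showing that an auxiliary function is constant on the preimage of the vertical bar. By the standing hypothesis and the lemma preceding the proposition, the vertical fiber $\pg(c_\varphi + \tilde u'_\varphi) \cap (\{d\}\times\R)$ coincides with the $F_\varphi$-image of $\{(t,c_\varphi+\tilde u'_\varphi(t)) : t\in [t_-,t_+]\} \subset \cg(c_\varphi + \tilde u'_\varphi)$ for a unique non-degenerate subinterval $[t_-,t_+] \subset (a_{-1},a_0)$; equivalently, $\pi_1 \circ F_\varphi(t,c_\varphi+\tilde u'_\varphi(t)) = d$ for every $t\in[t_-,t_+]$. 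Since $F_\varphi$ is a diffeomorphism, any $(\tilde\theta_0,r_0) = (d,r_0)$ in this bar can be written uniquely as $F_\varphi(t_0,c_\varphi+\tilde u'_\varphi(t_0))$ with $t_0 \in [t_-,t_+]$, so $(\tilde\theta_{-1},r_{-1}) = (t_0,c_\varphi+\tilde u'_\varphi(t_0))$ and the identity to prove becomes $\phi(t_0) = 0$, where
$$\phi(t) := \tilde u_\varphi(t) + S_\varphi(t,d) + c_\varphi(t-d) + \alpha_\varphi(c_\varphi) - \tilde u_\varphi(d).$$

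The next step will be to note that $\phi \geq 0$ on $\R$, as a direct consequence of the $c_\varphi$-domination property of the weak K.A.M. solution $\tilde u_\varphi$. On $(a_{-1},a_0)$, the two lemmas preceding the proposition ensure that $\tilde u_\varphi$ is $C^1$, hence $\phi$ is $C^1$ on $[t_-,t_+]$. Using the defining relation \eqref{genfundef} of the generating function, the condition $\pi_1\circ F_\varphi(t,c_\varphi+\tilde u'_\varphi(t)) = d$ translates to $c_\varphi + \tilde u'_\varphi(t) = -\tfrac{\partial S_\varphi}{\partial \tilde\theta}(t,d)$, from which a short computation yields $\phi'(t) = 0$ for every $t\in[t_-,t_+]$. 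Therefore $\phi$ is constant on $[t_-,t_+]$.

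To close the argument, I will evaluate $\phi$ at the endpoint $t_-$ (the case of $t_+$ being symmetric): the point $(t_-,c_\varphi+\tilde u'_\varphi(t_-))$ lies in $\cg(c_\varphi + \tilde u'_\varphi)$ and its image under $F_\varphi$ is one of the two extremities of the vertical bar above $d$. Using the semi-concavity of $\tilde u_\varphi$ together with the one-sided $C^1$ regularity of $\tilde u_\varphi$ at $d$ coming from the $(a_0,d)$ and $(d,a_1)$ sides, this extremity belongs to $\overline{\cg(c_\varphi + \tilde u'_\varphi)}$. Applying property \ref{ptinvgraph} for weak K.A.M. solutions (the calibration identity at $k=-1$) then gives $\phi(t_-) = 0$, and combining with the constancy of $\phi$ on $[t_-,t_+]$ we obtain $\phi(t_0) = 0$, which is the claimed equation.

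The main technical point to settle carefully will be the vanishing of $\phi'$ on the whole interval $[t_-,t_+]$; this rests on the $C^1$ regularity on $(a_{-1},a_0)$ and the generating-function computation above, and everything else is routine bookkeeping from facts already established in the paper. A subsidiary point, namely that both extremities of the vertical bar above $d$ actually lie in $\overline{\cg(c_\varphi+\tilde u'_\varphi)}$ in order to apply \ref{ptinvgraph} at the chosen endpoint, follows directly from semi-concavity and the description of $\pg(c_\varphi+u'_\varphi)$ near $d$ given by the preceding lemmas.
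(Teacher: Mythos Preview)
Your proposal is correct and follows essentially the same approach as the paper's own proof. The paper parametrizes the preimage interval as $[\tilde\theta(0),\tilde\theta(1)]$, establishes the calibration equality at the endpoints using that the extremities of the vertical bar lie in $\overline{\cg(c_\varphi+\tilde u'_\varphi)}$ (your appeal to \ref{ptinvgraph}), and then propagates the equality to interior points via the integral identity $\int_0^t [\tilde u'_\varphi(\tilde\theta(s))+c_\varphi]\tilde\theta'(s)\,ds = -\int_0^t \frac{\partial S_\varphi}{\partial\tilde\theta}(\tilde\theta(s),d)\tilde\theta'(s)\,ds$, which is exactly your statement that $\phi'\equiv 0$ on the interval; your packaging via the auxiliary function $\phi$ is a clean reformulation of the same computation.
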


\begin{proof}
If $\tilde\theta_0\in (a_0,a_1)$ and $\tilde\theta_0 \neq d $, then $\tilde u_\varphi$ is derivable at $\tilde\theta_0$. Setting 
$(\tilde \theta_{-1},r_{-1}) = F_\varphi^{-1}\big(\tilde\theta_0,c_\varphi + \tilde u'_\varphi(\tilde\theta_0)\big)$, then 
$$\tilde u_\varphi (\tilde\theta_0) - \tilde u_\varphi(\tilde\theta_{-1}) = S_\varphi(\tilde\theta_{-1},\tilde\theta_0) + c_\varphi(\tilde\theta_{-1}-\tilde\theta_0)+\alpha_\varphi(c_\varphi),$$
using classical results on weak K.A.M. solutions recalled page \pageref{calib} \big(see Equation \eqref{calib}\big). 

Let now $[\tilde\theta(0),\tilde\theta(1)]  = \pi_1\big((\pi_1\circ F_{\varphi | \cg ( c_\varphi + \tilde u'_\varphi)} )^{-1}(\{d\}) \big) \subset (a_{-1},a_0)  $. For $t\in [0,1]$ we define $\tilde\theta(t) = (1-t)\tilde \theta(0)+t\tilde\theta(1)$ and $\big(d,R(t)\big) = F_{\varphi}\big( \tilde\theta(t), c_\varphi + \tilde u'_\varphi\big(\tilde\theta(t)\big)\big)$. As for $i\in \{0,1\}$, $ F_\varphi\big( \tilde\theta(i), c_\varphi + \tilde u'_\varphi\big(\tilde\theta(i)\big)\big)\in \overline{ \cg ( c_\varphi + \tilde u'_\varphi)}$, equality
$$\tilde u_\varphi (d) - \tilde u_\varphi\big(\tilde\theta(i)\big) = S_\varphi(\tilde\theta(i),d )+ c_\varphi(\tilde\theta(i)-d)+\alpha_\varphi(c_\varphi),
$$
still holds. Let now $t\in (0,1)$, one computes using the definition of the generating function $S_{\varphi} $ (see Equations \eqref{genfundef}) that

\begin{multline*}
\tilde u_\varphi (d) - \tilde u_\varphi\big(\tilde\theta(t)\big)- c_\varphi(\tilde\theta(t)-d)  \\
=  \tilde u_\varphi (d) - \tilde u_\varphi\big(\tilde\theta(0)\big)- c_\varphi(\tilde\theta(0)-d)         -\int_0^t \big[\tilde u'_\varphi \big(\tilde\theta(s)\big) + c_\varphi\big]\theta'(s) ds \\
= S_\varphi(\tilde\theta(0),d )+\alpha_\varphi(c_\varphi) + \int_0^t \frac{\partial S}{\partial \tilde\theta}(\tilde\theta(s),d)\tilde\theta'(s) ds \\
 = S_\varphi(\tilde\theta(t),d )+\alpha_\varphi(c_\varphi) .
\end{multline*}
This finishes the proof.
\end{proof}

In order to conclude, we explain how to construct the function $\varphi $ as desired. In fact, we rather construct  $F_\varphi\big( \cg ( c_\varphi + \tilde u'_\varphi)_{| (a_{-1},a_0)}\big)$. To that end, let $\varepsilon_0>0$ be a small real number to be specified through the construction. Let $d\in (a_0,a_1)$ and assume $\varepsilon_0<\min(d-a_0, a_1-d)$. We set $R = \{(\tilde\theta, s^+(\tilde\theta)+r), \ \ \theta\in [d-\varepsilon_0,d+\varepsilon_0], |r|\leqslant \varepsilon_0\}$. Let $\Psi_0 : R\mapsto  [d-\varepsilon_0,d+\varepsilon_0]\times[-\varepsilon_0,\varepsilon_0]$ defined by  $\Psi_0(\tilde \theta, r)=(\tilde\theta, r-s^+(\tilde\theta))$.
 Obviously, $\Psi_0$ preserves each vertical $V_{\tilde\theta}$.

Let $\tilde \theta \in (a_{-1},a_0)$, and let $\tilde\theta_0 \in (a_0,a_1)$ such that $F_0(V_{\tilde\theta})$ and $\mathcal S^+$ intersect at $\big(\tilde\theta_0,s^+(\tilde\theta_0)\big)$. As $\mathcal S^+$ is $F_0$ invariant and $F_0$ is a twist map, by \cite{Arna1}, at this intersection point, the slope of $F_0(V_{\tilde\theta})$ is greater than $(s^{+})'(\tilde\theta_0)$. If $(\tilde\theta_0,r) \in  [d-\varepsilon_0,d+\varepsilon_0]\times[-\varepsilon_0,\varepsilon_0]$, and $\tilde\theta$ is the unique real number such that $(\tilde\theta_0,r)    \in \Psi_0\circ F_0 ( V_{\tilde\theta})$ let $v_1(\tilde\theta_0,r)$ be the slope of $\Psi_0\circ F_0 ( V_{\tilde\theta})$ at $(\tilde\theta_0,r)$. Then up to taking $\varepsilon_0$ smaller, by the previous fact and continuity, we may assume that  $v_1(\tilde\theta_0,r)>0$ for all $(\tilde\theta_0,r) \in  [d-\varepsilon_0,d+\varepsilon_0]\times[-\varepsilon_0,\varepsilon_0]$. Let then $\varepsilon_1>0$ such that $v_1(\tilde\theta_0,r)>\varepsilon_1$ for all $(\tilde\theta_0,r) \in  [d-\varepsilon_0,d+\varepsilon_0]\times[-\varepsilon_0,\varepsilon_0]$.

Finally, let $\varepsilon_2>0$ and $\Psi_1 : (\tilde\theta,r)\mapsto (\tilde\theta-\varepsilon_2r,r)$. Each vertical is sent by $\Psi_1$ to a straight line of slope $-\varepsilon_2^{-1}$. We assume that $\varepsilon_2$ is chosen small enough so that in $\Psi_1([d-\varepsilon_0,d+\varepsilon_0]\times[-\varepsilon_0,\varepsilon_0])$ the curves $\Psi_1\circ \Psi_0\circ F_0 ( V_{\tilde\theta})$ still are graphs of increasing functions with derivative greater than $\varepsilon_1$.

Let now $\rho : \R \to \R$ be the $C^\infty$ function supported in $[-1,1]$ defined by 
$$\forall x\in [-1,1],\quad \rho(x) = \left(\int_{-1}^1 \exp[(s^2-1)^{-1}]ds \right)^{-1} \exp[(x^2-1)^{-1}],
$$
and if $\varepsilon>0$ we define $\rho_\varepsilon : x\mapsto \varepsilon^{-1}\rho(\varepsilon^{-1}x)$.
For $s>0$ small  enough, we define the function $g_{s}$ as the  continuous piecewise affine function that vanishes outside of $ [d-\varepsilon_0+s,d+\varepsilon_0-s]$ and  that is $x\mapsto \frac{d-x}{\varepsilon_2}$ for $x\in[d-s,d+s]$ and that is affine on each remaining connected component of $\R$.
Finally, we set $h_s =\rho_{s/2}* g_s $ where $*$ stands for the regular convolution product.

There exists $\varepsilon_3>0$ such that for $s<\varepsilon_3$ the following hold:
\begin{enumerate}
\item $h_s$ is $C^{\infty}$ and well defined and the non--vanishing part of its graph is included in $\Psi_1([d-\varepsilon_0,d+\varepsilon_0]\times[-\varepsilon_0,\varepsilon_0])$,
\item $h_s$ coincides with $x \mapsto  \frac{d-x}{\varepsilon_2}$ on $[d-s/2, d+s/2]$ and has derivative greater than $ -\varepsilon_2^{-1}$ elsewhere,
\item $h_s' <\varepsilon_1$ on $\R$.
\end{enumerate}
The second point implies that $(\Psi_1\circ \Psi_0)^{-1}\big(\cg(h_{s|[a_0,a_1]})\big) $ coincides with $V_d $ on a non trivial segment, it coincides with $\mathcal S^+$ on neighborhoods of $a_0$ and $a_1$, and it is the graph of a smooth function apart for the vertical part on $V_d$.

The third point implies that  $F_0^{-1}\Big( (\Psi_1\circ \Psi_0)^{-1}\big(\cg(h_{s|[a_0,a_1]})\big)\Big) $ is transverse to the vertical foliation. Hence it is the graph of a smooth function $(s^++\varphi_s) $ where $\varphi_s$ is supported on $[a_{-1},a_0]$. We now wish to set $\varphi ( \tilde\theta) =  \int_{a_{-1}}^{\tilde\theta} \varphi_s(t)dt$. The problem is that there is a priori no reason that $\int_{a_{-1}}^{a_0} \varphi_s(t)dt = 0$ so that $\varphi$ would be compactly supported.

To remedy this, we slightly modify our construction. If $s,s' < \varepsilon_3$ we set $h_{s,s'} = h_s$ on $[a_0, d]$ and $h_{s,s'} = h_{s'}$ on $[d,a_1]$. Again, it coincides with $V_d $ on a non trivial segment, it coincides with $\mathcal S^+$ on neighborhoods of $a_0$ and $a_1$, and it is the graph of a smooth function apart for the vertical part on $V_d$. Then, $F_0^{-1}\Big( (\Psi_1\circ \Psi_0)^{-1}\big(\cg(h_{s|[a_0,a_1]})\big)\Big) $ is the graph of a smooth function  $(s^++\varphi_{s,s'}) $ where $\varphi_{s,s'}$ is supported on $[a_{-1},a_0]$. Now, by the intermediate value theorem, it is possible, given $s$ small, to find $s'$ such that $\int_{a_{-1}}^{a_0} \varphi_{s,s'}(t)dt = 0$ and defining $\varphi ( \tilde\theta) =  \int_{a_{-1}}^{\tilde\theta} \varphi_{s,s'}(t)dt$ on $[a_{-1},a_0]$ we obtain the desired function.

 \begin{remk}
 \begin{enumerate}
 \item In the constructed example, all backward orbits starting on the vertical bar of $ \pg ( c_\varphi + \tilde u'_\varphi)$ above $d$ calibrate the weak K.A.M. solution. Modifying slightly the example is is also possible to have a unique backward orbit starting on the vertical bar of $ \pg ( c_\varphi + \tilde u'_\varphi)$ above $d$ calibrate the weak K.A.M. solution.
 \item The same construction can be made, starting from an invariant circle of arbitrary rotation number.
 \end{enumerate}

 \end{remk}

 \section{Some results concerning the full pseudographs}\label{Apfulpseudo}

 Most of the results that follow are standard and even hold in all dimension. One can find them in similar of different formulations in \cite{cansin}. However, we provide proofs for the reader's convenience.

 \subsection{An equivalent definition}
\hglue 14truecm 
 
 \begin{defi}
 Let $u:\R\rightarrow \R$ be a $K$ semi-concave function. Then $p\in\R$ is a {\em super-derivative} of $u$ at $x\in\R$ if
$$\forall y\in \R,\quad u(y)-u(x)-p(y-x)\leqslant \frac{K}{2}(y-x)^2.$$
We denote the set of super-derivatives of $u$ at $x$ by $\partial^+u(x)$.   It is a convex set.
\end{defi}
Observe that a derivative is always a super-derivative.  If $u:\R\rightarrow \R$ is $K$-semi-concave, then  $x\mapsto u(x)-\frac{K}{2}x^2$ is concave and thus  locally Lipschitz, and $x\mapsto u'(x)-Kx$ is non-increasing. Hence a $1$-periodic $K$-semi-concave function is $K$-Lipschitz.

 Observe also that $\displaystyle{\bigcup_{x\in \T}\{ x\}\times \partial^+u(x)}$ is compact.
\begin{propos}
 Let $u:\R\rightarrow \R$ be a $K$-semi-concave function. Then, for every $x\in\R$, we have 
 $$\partial u (x)=\{ x\}\times \partial ^+u(x).$$
\end{propos}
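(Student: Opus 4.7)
The plan is to reduce to the concave case and then exploit standard one-dimensional convex analysis. Set $v(\tilde\theta) = u(\tilde\theta) - \frac{K}{2}\tilde\theta^2$. A direct computation (expanding $v(y) - v(x)$ and using the defining inequality of semi-concavity) shows that $v$ is concave on $\R$, that $p \in \partial^+u(x)$ if and only if $p - Kx$ is a super-gradient of $v$ at $x$, and that $u$ admits left and right derivatives $u'_\pm(x) := v'_\pm(x) + Kx$ everywhere with $u'_+(x) \leqslant u'_-(x)$. Since for a concave function on $\R$ the super-differential at any point coincides with the segment between the right and left derivatives, this yields
\[
\partial^+u(x) = [u'_+(x), u'_-(x)],
\]
so $\{x\} \times \partial^+u(x)$ is a compact interval in the fiber, already convex. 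It then remains to prove both inclusions with $\partial u(x)$.

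For $\partial u(x) \subset \{x\} \times \partial^+u(x)$, since $\partial^+u(x)$ is convex it is enough to check that any $(x,p) \in \overline{\mathcal{G}(u')}$ satisfies $p \in \partial^+u(x)$. I would pick $(x_n, u'(x_n)) \to (x,p)$ with $u$ differentiable at $x_n$; the semi-concavity inequality applied at $x_n$ with super-gradient $u'(x_n)$ gives, for every $y$,
\[
u(y) - u(x_n) - u'(x_n)(y - x_n) \leqslant \tfrac{K}{2}(y - x_n)^2,
\]
and passing to the limit $n \to \infty$ (using continuity of $u$) yields $u(y) - u(x) - p(y-x) \leqslant \tfrac{K}{2}(y-x)^2$, i.e. $p \in \partial^+u(x)$.

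For the reverse inclusion it suffices, again by convexity of $\partial u(x)$, to show that the two endpoints $u'_\pm(x)$ of $\partial^+u(x)$ belong to $\partial u(x)$. Because a concave function on $\R$ has at most countably many points of non-differentiability, $u$ is differentiable on a dense subset of $\R$. I would pick a sequence $x_n \to x^+$ of differentiability points of $u$; then $u'(x_n) = u'_+(x_n)$, and by the classical right-continuity of $v'_+$ at $x$ for concave $v$ one obtains $u'(x_n) \to u'_+(x)$. Hence $(x, u'_+(x)) \in \overline{\mathcal{G}(u')} \subset \partial u(x)$, and symmetrically, using a sequence $x_n \to x^-$ of differentiability points, $(x, u'_-(x)) \in \partial u(x)$. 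Combined with the first inclusion this gives the equality.

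The only non-trivial ingredient is the classical catalogue of properties of concave functions on $\R$ (existence of one-sided derivatives everywhere, the characterization of the super-differential as $[v'_+,v'_-]$, and right-/left-continuity of $v'_\pm$), which I would cite from a standard reference rather than reprove.
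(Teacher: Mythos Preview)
Your proof is correct. The first inclusion is handled essentially as in the paper: both you and the paper pass the semi-concavity inequality to the limit along a sequence $(x_n,u'(x_n))\to(x,p)$ to get $p\in\partial^+u(x)$, then invoke convexity of $\partial^+u(x)$.

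The difference lies in the reverse inclusion. You reduce to the concave function $v(\tilde\theta)=u(\tilde\theta)-\tfrac{K}{2}\tilde\theta^2$ and then quote standard one-dimensional convex analysis (existence of one-sided derivatives, $\partial^+u(x)=[u'_+(x),u'_-(x)]$, and right-/left-continuity of $v'_\pm$) to conclude that $u'(x_n)\to u'_\pm(x)$ along differentiability points approaching $x$ from the appropriate side. The paper instead argues in a self-contained way: writing $\partial^+u(x)=[p_-,p_+]$, it plugs $p_-$ and $p_+$ into the defining inequality to bound the difference quotients, then uses the integral identity $\tfrac{u(y)-u(x)}{y-x}=\tfrac{1}{y-x}\int_x^y u'(t)\,dt$ to extract sequences of differentiability points $x_n\to x^-$, $y_n\to x^+$ with $\limsup u'(x_n)\geqslant p_+$ and $\liminf u'(y_n)\leqslant p_-$; combined with the closedness of $\partial^+u(x)$ this forces the limits to be exactly $p_\pm$. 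Your route is cleaner if one is willing to cite the convex-analysis package, while the paper's averaging trick avoids external references at the price of a small extra computation.
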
 
 Hence the full pseudograph of $u$ is also the subbundle of all the super-derivatives of $u$.

 \begin{proof}
Let us prove the inclusion $\partial u (x)\subset\{ x\}\times \partial ^+u(x)$.  Let us consider $(x, p)\in\partial u (x)$.  Then there exist $(x,p_-), (x, p_+)\in \overline{\cg(u')}$ such that $p_-\leqslant p\leqslant p_+$ and there exist two sequences $(x_n, p_n), (y_n, q_n)\in\cg(u')$ that respectively converge to $(x, p_-)$, $(x,p_+)$. Every derivative is a super-derivative and a limit of super-derivatives is a super-derivative. Hence, we have $p_-, p_+\in \partial^+u(x)$. By convexity of $\partial^+u(x)$, we deduce that $p\in \partial^+u(x)$.


Let us now prove the reverse inclusion. Being $K$-semi-concave, $u$ is $K$-Lipschitz, hence the set of all its super-derivatives is bounded (by $K$). If $x\in \R$, we have then $\partial^+u(x)=[p_-, p_+]$ with $-K\leqslant p_-\leqslant p_+\leqslant K$. We will prove that $(x,p_-), (x, p_+)\in \partial u(x)$. We have
\begin{multline*}
\forall y\in \R,\quad  u(y)-u(x)-p_-(y-x)\leqslant \frac{K}{2}(y-x)^2
\\
{\rm and}\quad u(y)-u(x)-p_+(y-x)\leqslant \frac{K}{2}(y-x)^2.
\end{multline*}
This implies that
\begin{itemize}
\item for $y>x$, we have 
$$\frac{u(y)-u(x)}{y-x}\leqslant p_-+\frac{K}{2}(y-x);$$
\item for $y<x$, we have 
$$\frac{u(y)-u(x)}{y-x}\geqslant p_++\frac{K}{2}(y-x).$$
\end{itemize}
Recall that $\frac{u(y)-u(x)}{y-x}=\frac{1}{y-x}\int_x^yu'(t)dt$. This gives the existence of two sequences $(x_n)\in (-\infty, x)$ and $(y_n)\in (x, +\infty)$ that converge to $x$ where $u$ is differentiable and
$$\limsup u'(x_n)\geqslant p_+\quad{\rm and}\quad \liminf u'(y_n)\leqslant p_-.$$
As we know that a derivative is a super-derivative, that the set of super-derivatives is closed and that $\partial^+u(x)=[p_-, p_+]$, we deduce that 
$$\big(x, \lim  u'(x_n)\big)=(x,  p_+)\in \partial u(x) \quad{\rm and}\quad \big(x, \lim  u'(y_n)\big)=(x,  p_-)\in \partial u(x).$$
 \end{proof}
 \subsection{Proof of Lemma \ref{Lfullman}}\label{ssLfullman}
 We just recall the argument of the proof of
 \begin{lemma}
For all $c\in \R$, $\mathcal{PG}(c+u'_c)$ is a Lipschitz one dimensional compact manifold that is an essential circle. 
\end{lemma}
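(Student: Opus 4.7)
The plan is to reduce to the case of a concave function via an affine shear, and then exhibit an explicit Lipschitz parametrization of the resulting complete superdifferential graph.

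First I would lift $u_c$ to a $K$-semi-concave function $\tilde u_c \colon \R \to \R$ (with $K$ coming from the uniform semi-concavity constant of weak K.A.M.\ solutions recalled in subsection \ref{ssweakk}). Setting $\tilde v(t) := \tilde u_c(t) - \tfrac{K}{2}t^2$, which is now concave on $\R$, one has $\partial \tilde u_c(t) = \partial \tilde v(t) + Kt$. Therefore the affine shear $(t,p) \mapsto (t, q) := (t, p - c - Kt)$ sends the lifted full pseudograph $\widetilde{\mathcal{PG}}(c+u_c')$ bijectively onto the complete graph $\tilde\Gamma := \{(t,q) \in \R^2 : q \in \partial \tilde v(t)\}$ of the superdifferential of the concave function $\tilde v$. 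Since the shear is bi-Lipschitz, it suffices to prove the statement for $\tilde\Gamma$.

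Next, I would exhibit a Lipschitz parametrization of $\tilde\Gamma$. For concave $\tilde v$, the superdifferential multifunction is non-increasing: if $t_1 < t_2$ and $q_i \in \partial \tilde v(t_i)$, then adding the two superdifferential inequalities $\tilde v(t_2) \leq \tilde v(t_1) + q_1(t_2-t_1)$ and $\tilde v(t_1) \leq \tilde v(t_2) + q_2(t_1-t_2)$ yields $q_1 \geq q_2$. I would then parametrize $\tilde\Gamma$ by the scalar $u := t - q$. For any two points $P_i = (t_i, q_i) \in \tilde\Gamma$ with $t_1 \leq t_2$, the identity
$$u(P_2) - u(P_1) = (t_2 - t_1) + (q_1 - q_2) = |t_2 - t_1| + |q_1 - q_2|$$
simultaneously proves that $u$ is strictly increasing (hence injective on $\tilde\Gamma$, with the case $t_1 = t_2$ forcing $q_1 = q_2$) and that its inverse $\phi \colon u \mapsto (t(u), q(u))$ is $1$-Lipschitz in the $\ell^1$-norm on $\R^2$. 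In particular $\phi$ is continuous, so $\tilde\Gamma$ is the image of $\R$ under $\phi$, and it is a topological $1$-manifold.

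Finally, undoing the shear gives a bi-Lipschitz parametrization of $\widetilde{\mathcal{PG}}(c+u_c')$ in the original $(t,p)$-coordinates. Since $\tilde u_c(t+1) = \tilde u_c(t)$, one has $q(t+1) = q(t) - K$, so the horizontal period $t \mapsto t+1$ corresponds to an increment $\Delta u = 1 + K$. Passing to the quotient yields a bi-Lipschitz homeomorphism $\R/(1+K)\Z \to \mathcal{PG}(c+u_c') \subset \A$, exhibiting the full pseudograph as a compact, connected, Lipschitz $1$-submanifold. Essentiality is then automatic: as $u$ ranges over one period $[0, 1+K]$, the first coordinate $t$ sweeps $[0,1]$ once, so the projection $\pi_1 \colon \mathcal{PG}(c+u_c') \to \T$ has degree $1$, which prevents the loop from being null-homotopic in $\A$. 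The only subtlety I expect is to choose the correct scalar parameter, namely the diagonal $u = t - q$ rather than $t$ or $q$ alone, so that both horizontal arcs and vertical jump segments of the pseudograph are traversed at non-degenerate Lipschitz speed; this is precisely what the monotonicity of the subdifferential of a concave function guarantees.
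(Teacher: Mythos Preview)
Your argument is correct and complete. The key identity $|u(P_1)-u(P_2)| = |t_1-t_2|+|q_1-q_2|$ on $\tilde\Gamma$ (which holds in all cases thanks to the monotonicity of $\partial\tilde v$) actually makes your parametrization an $\ell^1$-\emph{isometry}, so both directions of the bi-Lipschitz claim are immediate; the periodicity and degree computations are also right.

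The paper takes a completely different route: it simply invokes \cite{Arna2}, where it is shown that for any $K$-semi-concave $u$ there exists $\tau>0$ such that the image $\varphi_{-\tau}\big(\mathcal{PG}(c+u')\big)$ under the backward pendulum flow is a genuine Lipschitz graph over $\T$; since $\varphi_{-\tau}$ is a smooth diffeomorphism of $\A$ isotopic to the identity, the conclusion follows in one line. Your approach is more elementary and fully self-contained (only convexity of one-variable functions is used), and it makes the Lipschitz constant explicit. The paper's approach, on the other hand, ties the statement to a dynamical regularization mechanism that is reused elsewhere in the article (for instance in the proof that the full pseudographs are above or below the ball $B$ in the covering argument for Theorem \ref{Tpseudofol}), so it fits the overall narrative but at the cost of relying on an external reference.
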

\begin{proof}
It is proved in  \cite{Arna2}, that for every $c\in \R$ and every $K$-semi-concave function $u:\T\rightarrow \R$, there exists $\tau>0$ such that $\varphi_{-\tau}\big(\mathcal{PG}(c+u')\big)$ is the graph of a Lipschitz function, where $(\varphi_t)$ is the flow of the pendulum. This gives the wanted result.
\end{proof}

 \subsection{Proof of Proposition \ref{hausdorff}}\label{AppB3}
 Let us now prove the following proposition\footnote{{The statement holds in arbitrary dimension and follows from the same result for concave functions. We present here a simple proof relying on the $1$-dimensional setting.}}. 
 
 \begin{propos}\label{Pconfullps}
Let $(f_n)_{n\in \mathbb N} $ be a sequence of equi-semi-concave functions from $\T$ to $\R$ that converges (uniformly) to a function $f$ (that is hence also semi-concave).

Then $\big(\mathcal{PG}(f'_n) \big)$ converges to $\mathcal{PG}(f')$ for the Hausdorff distance.
\end{propos}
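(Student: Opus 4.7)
The plan is to invoke the equivalent description from the preceding subsection, which identifies $\mathcal{PG}(g')$ with the set $\{(x,p) : p \in \partial^+ g(x)\}$ of super-derivatives, and then to prove Hausdorff convergence via the two standard semi-continuity statements. Since the whole family $(f_n)$ and its limit $f$ are $K$-semi-concave for a common constant $K$, and every $1$-periodic $K$-semi-concave function is $K$-Lipschitz, every super-derivative has absolute value at most $K$; hence all the pseudographs $\mathcal{PG}(f_n')$ and $\mathcal{PG}(f')$ sit in the fixed compact set $\T \times [-K,K]$. Convergence in Hausdorff distance is therefore equivalent to: \textbf{(a)} every accumulation point of any sequence $(x_n,p_n) \in \mathcal{PG}(f_n')$ belongs to $\mathcal{PG}(f')$; and \textbf{(b)} every point of $\mathcal{PG}(f')$ is a limit of such a sequence.

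Statement (a) will follow at once by passing to the limit in the super-derivative inequality on lifts
$$\tilde f_n(y) - \tilde f_n(x_n) - p_n(y - x_n) \leqslant \tfrac{K}{2}(y-x_n)^2 \qquad (\forall\, y \in \R),$$
using the local uniform convergence $\tilde f_n \to \tilde f$.

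The bulk of the work is (b), and this is where I expect the only real difficulty. The naive candidate — a maximizer of $z \mapsto \tilde f_n(z) - p z - \tfrac{K}{2}(z-x)^2$ — breaks down because the analogous function for $\tilde f$ need not have $x$ as its \emph{unique} maximum, so its maximizers may accumulate elsewhere than at $x$. My plan is to circumvent this by slightly inflating the semi-concavity constant. Choose any $K' > K$ and set
$$H_n(z) = \tilde f_n(z) - p z - \tfrac{K'}{2}(z-x)^2, \qquad H(z) = \tilde f(z) - pz - \tfrac{K'}{2}(z-x)^2.$$
From $p \in \partial^+ f(x)$ (for the constant $K$) one reads off $H(y) - H(x) \leqslant \tfrac{K-K'}{2}(y-x)^2 < 0$ for every $y \neq x$, so $x$ is the unique maximizer of $H$ on $\R$; and the uniform bound on $\|\tilde f_n\|_\infty$ forces $H_n(z) \to -\infty$ as $|z-x| \to \infty$ uniformly in $n$. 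Any maximizer $x_n$ of $H_n$ therefore lies in a fixed compact, and uniform convergence $H_n \to H$ on that compact together with strict uniqueness of the maximum of $H$ yields $x_n \to x$. Expanding the inequality $H_n(z) \leqslant H_n(x_n)$ gives
$$\tilde f_n(z) - \tilde f_n(x_n) \leqslant \bigl[p + K'(x_n - x)\bigr](z - x_n) + \tfrac{K'}{2}(z-x_n)^2 \qquad (\forall\, z \in \R),$$
so $p_n := p + K'(x_n - x)$ is a $K'$-super-derivative of $\tilde f_n$ at $x_n$. Since $f_n$ is also $K'$-semi-concave, applying the equivalent-definition proposition with the constant $K'$ gives $(x_n,p_n) \in \mathcal{PG}(f_n')$, and $x_n \to x$ automatically yields $p_n \to p$, finishing (b).
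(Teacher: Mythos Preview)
Your proof is correct. Part (a) matches the paper exactly: pass to the limit in the super-derivative inequality. Your treatment of (b) via the strictly concave auxiliary function $H_n$ (with inflated constant $K'>K$) is sound; the key observations---that $x$ is the \emph{unique} maximizer of $H$ thanks to $K'>K$, that the maximizers $x_n$ stay in a fixed compact and hence converge to $x$, and that expanding $H_n(z)\leqslant H_n(x_n)$ exhibits $p_n=p+K'(x_n-x)$ as a $K'$-super-derivative of $\tilde f_n$ at $x_n$---are all verified correctly. Since $f_n$ is also $K'$-semi-concave, the equivalent-definition proposition applied with the constant $K'$ identifies the $K'$-super-derivatives with the Clarke subdifferential $\partial f_n(x_n)$, so $(x_n,p_n)\in\mathcal{PG}(f_n')$ as you claim.

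The paper, however, takes a genuinely different route for (b). Instead of constructing an approximating sequence, it argues topologically by contradiction: if some ball around a point of $\mathcal{PG}(f')$ were missed by all $\mathcal{PG}(f_n')$ for large $n$, then (using part (a)) each $\mathcal{PG}(f_n')$ would be trapped in a small neighborhood of a simple \emph{arc} rather than a loop, and therefore could not be an essential circle separating the annulus---contradicting Lemma~\ref{Lfullman}. This is a quick argument, but it leans on the one-dimensional geometry (that full pseudographs are essential Lipschitz circles). Your variational construction is the more robust one: it makes no use of the topology of $\T$ and extends verbatim to equi-semi-concave functions on $\T^d$ or $\R^d$, which is precisely the generality the paper alludes to in its footnote but chooses not to prove.
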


\begin{proof}
Let us prove that the lim sup of the $\mathcal{PG}(f'_n) $ is in $\mathcal{PG}(f')$. Up to a subsequence, we consider $(x_n, p_n)\in \mathcal{PG}(f'_n) $ that converges to some $(x,p)$, and we want to prove that $(x,p)\in \mathcal{PG}(f')$. We have
$$\forall n, \forall y\in \R, \quad f_n(y)-f_n(x_n)-p_n(y-x_n)\leqslant \frac{K}{2}(y-x_n)^2.$$
Taking the limit, we deduce that $(x,p)\in \mathcal{PG}(f')$.

Let us now assume that $\big(\mathcal{PG}(f'_n) \big)$ doesn't converge to $\mathcal{PG}(f')$. There exists a point $(x, p)\in \mathcal{PG}(f')$,  $r>0$ and $N\geqslant 1$ such that, up to a subsequence, 
$$\forall n\geqslant N, \quad\mathcal{PG}(f'_n)\cap B\big((x, p), r\big)=\varnothing.$$
Hence, for $n$ large enough, $\mathcal{PG}(f'_n)$ is contained in a small neighbourhood of a simple arc (and not loop).
This implies that for $n$ large enough, $\mathcal{PG}(f'_n)$ doesn't separate the annulus into two unbounded connected components, a contradiction.

\end{proof}

\section{Sketch of the proof of point \ref{K.A.M.ext} page \pageref{K.A.M.ext}}\label{appendix-3}

We wish to explain why if $u:\cm\big(\rho(c)\big)\rightarrow \R$ is dominated, then there exists only one extension $U$ of $u$ to $\T$ that is a weak K.A.M. solution for $\widehat T^c$ that is given by  
$$\forall x\in \T, \quad U(x) = \inf_{\substack{\pi(\theta)\in  \cm\textrm{$\big($}\rho(c)\textrm{$\big)$}\\ \pi(\theta')=x}} \tilde u(\theta) + \cs^c(\theta,\theta')$$

 where 
$\cs^c(\theta,\Theta)=\inf\limits_{n\in\N} \big(\cs^c_n(\theta, \Theta)+n\alpha(c)\big)$.

\begin{itemize}
\item It is a general fact that if $\pi(\theta)\in \cm\textrm{$\big($}\rho(c)\textrm{$\big)$}$ the function $\theta'\mapsto \cs^c(\theta,\theta')$ is a weak K.A.M solution that vanishes at $\theta'=\theta$ (see \cite[Definition 2.1 and Proposition 2.8]{Za1} recalling that the function $\cs^c$ corresponds to the lift of the Ma\~n\' e potential $\varphi$ in the reference and that our Mather set $\cm\textrm{$\big($}\rho(c)\textrm{$\big)$}$ is included in the Aubry set). As the set of weak K.A.M. is invariant by addition of constants and an infimum of weak K.A.M. solutions is a weak K.A.M. solution (\cite[Lemma 2.33]{Za1}) it follows that $U$ is a weak K.A.M. solution.
\item To prove that $U=u$ on $\cm\textrm{$\big($}\rho(c)\textrm{$\big)$}$ just notice that as $u$ is dominated, if $x\in \cm\big(\rho(c)\big)$ and $\pi(\theta)=x$
$$\forall \theta' \in \pi^{-1}\left(\cm\big(\rho(c)\big)\right),\quad \tilde u(\theta')+\cs^c(\theta',\theta) \geqslant \tilde u(\theta) = u(x) +\cs^c(\theta,\theta).$$
\item It remains to prove that $U$ is unique. This follows from the fact that if two weak K.A.M. solutions $U_1$ and $U_2$  coincide on $\cm\big(\rho(c)\big)$ they are equal.

Let $x_0\in \T$. One constructs inductively a sequence $(x_n)_{n\leqslant 0}$ such that 
$$\forall n< 0, \quad U_1(x_0)=U_1(x_n)+\sum_{k=n}^{-1}S^c(x_k,x_{k+1}).$$
As $U_2$ is a weak K.A.M. (hence dominated) one also has 
$$\forall n< 0, \quad U_2(x_0)\leqslant U_2(x_n)+\sum_{k=n}^{-1}S^c(x_k,x_{k+1}).$$
Hence $U_2(x_0)-U_1(x_0) \leqslant U_2(x_n) - U_1(x_n)$. To conclude, one proves, using a Krylov-Bogoliubov type argument that there exists a subsequence $(x_{\varphi(n)})$ that converges to a point $x\in \cm\big(\rho(c)\big)$, hence proving that $U_2(x_0)-U_1(x_0) \leqslant 0$. Then the result follows by a symmetrical argument.
\end{itemize}

\bibliographystyle{amsplain}

\end{document}